\documentclass{amsart}
\usepackage{amsthm}
\usepackage{amstext}
\usepackage{enumitem}
\usepackage{amssymb}
\usepackage{amsmath,calligra,mathrsfs}
\usepackage[all,cmtip]{xy}
\usepackage{dsfont}
\usepackage{hyperref}
\hypersetup{
	colorlinks   = true,          %Colors links instead of boxes
	urlcolor     = blue,          %Color for external hyperlinks
	linkcolor    = blue,          %Color of internal links
	citecolor   = blue             %Color of citations
}
\usepackage{graphicx}

\usepackage{comment}
\usepackage{xcolor}
\usepackage{tikz-cd}
\usepackage{mathtools}
\usepackage{cancel}

\newenvironment{detail}{\color{magenta}}{} %If you want to see details

\theoremstyle{plain}
\newtheorem{thm}{Theorem}[section]
\newtheorem{lem}[thm]{Lemma}
\newtheorem{prop}[thm]{Proposition}
\newtheorem{cor}[thm]{Corollary}

\theoremstyle{definition}
\newtheorem{defn}[thm]{Definition}
\newtheorem{exmp}[thm]{Example}

\theoremstyle{remark}
\newtheorem{rem}[thm]{Remark}

\theoremstyle{plain}

\DeclareMathOperator{\id}{id}
\DeclareMathOperator{\rank}{rank}
\DeclareMathOperator{\Spec}{Spec}
\DeclareMathOperator{\Proj}{Proj}

\DeclareMathOperator{\sheafhom}{\mathscr{H}\text{\kern -3pt {\calligra\large om}}\,}

\DeclareMathOperator{\Jac}{Jac}

\DeclareMathOperator{\CH}{CH}

\DeclareMathOperator{\Sym}{Sym}

\DeclareMathOperator{\rplane}{\Lambda}

\DeclareMathOperator{\bl}{bl}

\newcommand{\defi}[1]{\textsf{#1}} % for defined terms

\newcommand{\kbar}{{\overline{k}}}

\def\Ker{\text{Ker}}
\def\Im{\text{Im}}

\let\phi\varphi
\def\P{\mathbb{P}}
\def\O{\mathcal{O}}
\def\Z{\mathbb{Z}}
\def\C{\mathbb{C}}
\def\Q{\mathbb{Q}}

\def\F{\mathbb{F}}
\def\L{\mathbb{L}}
\def\A{\mathbb{A}}

\DeclareMathOperator{\Gr}{Gr}

\DeclareMathOperator{\Sing}{Sing}
\def\OG{\mathrm{OG}}

\makeatletter
\let\@wraptoccontribs\wraptoccontribs
\makeatother

\excludecomment{detail}

\title{Motivic classes of Fano schemes of lines on intersections of two quadrics}
\author{Lena Ji}
\address{Department of Mathematics, University of Illinois Urbana-Champaign, 214 Harker Hall, 1305 W. Green Street, Urbana, IL 61801}
\email{lenaji.math@gmail.com}
\urladdr{https://lji.web.illinois.edu/}
\author{Fumiaki Suzuki}
\address{BICMR\\
Peking University\\
5 Yiheyuan Road, Haidian District, Beijing 100871\\
China
}
\email{suzuki@bicmr.pku.edu.cn}
\urladdr{https://fumiaki-suzuki.github.io/}
\contrib[with an appendix by]{Pieter Belmans}
\address{Mathematical Institute, Utrecht University, Budapestlaan 6, 3584CD Utrecht, Netherlands}
\email{p.belmans@uu.nl}

\subjclass[2020]{14D20, 14D06, 14C25, 14F08}

\thanks{During the preparation of this article, L.J. was supported in part by NSF grant DMS-2501990 and by gift SFI-MPS-TSM-00013959 from the Simons Foundation.
F.S. was supported by the ERC grant “RationAlgic” (grant no. 948066).
}

\begin{document}

\begin{abstract}
Let $X\subset\mathbb P^N$ be a smooth complete intersection of two quadrics. We find formulas in the Grothendieck ring of varieties for the class of $X$ and for the class of its Fano scheme of lines, thereby proving the first two cases of a conjecture of Belmans et al.
We also find a formula for the class of the relative Fano schemes of linear subspaces (of any dimension) in the fibers of quadric fibrations over curves, under a simple degeneration assumption. As consequences, we compute the rational Chow motive of the relative Fano scheme, and we provide evidence for a conjecture of Shah on a residual category in a semiorthogonal decomposition of the relative Fano scheme of lines.
\end{abstract}

\maketitle

\section{Introduction}

Let \(X\) be a smooth complete intersection of two quadrics in \(\mathbb P^N\) over an algebraically closed field of characteristic \(\neq 2\). There is an associated curve that governs much of the geometry of \(X\):
if \(N\) is odd then the discriminant of the associated pencil defines a hyperelliptic curve, and if \(N\) is even there is a ``stacky \(\mathbb P^1\)" analogue.
This curve is present in the geometry of the linear spaces on \(X\), and, in characteristic zero, it determines the residual component of the derived category of \(X\) \cite{BondalOrlov,Kuznetsov08}.

The Fano schemes \(F_r(X)\) of \(r\)-dimensional linear subspaces on \(X\), for \(0\leq r\leq\lfloor\frac{N}{2}\rfloor-1\), are smooth projective varieties that interpolate between \(X\) and geometric objects associated to the curve. For \(r=0\) we have \(F_0(X)=X\); for \(r\) maximal, \(F_r(X)\) is the Jacobian of the hyperelliptic curve if \(N\) is odd and is a finite scheme of length \(2^N\) if \(N\) is even \cite{Gauthier54,Reid-thesis}, and for the second maximal value of \(r\) the Fano scheme is isomorphic to the moduli space of certain rank 2 vector bundles on the associated curve \cite{DesaleRamanan,Casagrande15}.
Recently, in characteristic zero, Belmans--Bose--Frei--Gould--Hotchkiss--Lamarche--Petok--Rodriguez Avila--Shah
proposed conjectural semiorthogonal decompositions for the derived categories of the Fano schemes \(F_r(X)\) \cite{BBFGHLPRAS}, which would generalize the results of Bondal--Orlov and Kuznetsov on the derived category of \(X\). Namely, Belmans et al. conjectured that the derived category of \(F_r(X)\) decomposes into exceptional objects and, if \(N\) is odd, copies of the derived category of symmetric powers \(\Sym^i C\) of the associated curve for \(1\leq i\leq r+1\) \cite[Conjectures A and D]{BBFGHLPRAS}. They also proposed a parallel identity for the class of \(F_r(X)\) in the Grothendieck ring of varieties, which involves the Lefschetz motive and, if \(N\) is odd, symmetric powers of the curve \cite[Conjecture B and page 5]{BBFGHLPRAS}.
As evidence for this latter conjecture, Belmans et al. determined the Hodge structures of the cohomology of \(F_r(X)\), extending results of Chen--Vilonen--Xue \cite{CVX17,CVX20}. This provides evidence for the conjectural identities since the Grothendieck ring of varieties admits a realization map to Hodge structures, but it does not determine the motivic class of \(F_r(X)\).

In this paper, we address the first uniform cases of the conjectures of Belmans et al. on the class of \(F_r(X)\) in the Grothendieck ring of varieties. Namely, we prove their conjectural identities for \(X\) (Proposition~\ref{prop:r=0-case}) and the Fano scheme of lines \(F_1(X)\), and, as consequences, recover results about their Hodge structures.

\begin{thm}\label{thm:main-odd}
    Over an algebraically closed field \(k\) of characteristic \(\neq 2\), let \(g\geq 3\), let \(X\subset\P^{2g+1}\) be a smooth complete intersection of two quadrics, and let \(C\) be the associated genus \(g\) hyperelliptic curve. Then the following equality holds in \(\widetilde{K}_0(\mathrm{Var}/k)\):
    \begin{equation*}
        \begin{split}
            [F_1(X)] &= [\Sym^2 C] \L^{2g-4} + [C] \L^{g-2} \left( \sum_{i=0}^{2g-3} \L^i - \L^{g-2} - \L^{g-1} \right) \\
            & \phantom{=}  + \left(\sum_{i=0}^{2g-2} \L^i\right) \left(\sum_{i=0}^{g-2}\L^{2i}\right)- (\L^{g-2} + \L^{g+2}) \sum_{i=0}^{2g-6}\L^i - (\L^{g-1} + \L^g + \L^{g+1} + \L^{3g-7} + \L^{3g-6} + \L^{3g-5}) .
        \end{split}
    \end{equation*}
    Furthermore, the coefficients of \([\Sym^2 C]\), \([C]\), and the constant term are all effective, i.e., polynomials in \(\mathbb Z_{\geq 0}[\mathbb L]\), and they are respectively equal to \(M_{g,1,2}\), \(M_{g,1,1}\), and \(M_{g,1,0}\) defined in \cite[(4)]{BBFGHLPRAS}.
    In particular, \cite[Conjecture B]{BBFGHLPRAS} is true for \(F_1(X)\).
\end{thm}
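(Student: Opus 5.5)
The plan is to compute \([F_1(X)]\) by realizing it inside a relative Fano scheme of a quadric fibration over \(\P^1\), which is the setup the abstract alludes to. Choose a general line \(\ell\subset X\), or alternatively work with the pencil of quadrics \(\{Q_t\}_{t\in\P^1}\) containing \(X\). Consider the incidence variety
\[
\mathcal F=\{(t,L): L\subset Q_t \text{ a line}\}\to\P^1 ,
\]
whose fiber over \(t\) is \(F_1(Q_t)\). Inside \(\mathcal F\), the locus of lines lying in every quadric of the pencil is \(F_1(X)\), and it sits over every \(t\). A cleaner route is the map \(G\to\P^1\) defined on lines \(L\subset\P^{2g+1}\) not contained in \(X\) but with \(L\cap X\) a subscheme of length \(\geq 2\), or with \(L\) in some \(Q_t\). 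A line \(L\not\subset X\) lies in at most one \(Q_t\): if it lay in two, it would lie in all of them, hence in \(X\). This gives the decomposition
\[
[\mathcal F]=[F_1(X)]\,[\P^1]+[\mathcal F\setminus (F_1(X)\times\P^1)],
\]
and the complement injects into \(\Gr(2,2g+2)\) with image the set of lines \(L\) on which the restricted pencil \(Q_t|_L\), a pencil of binary quadrics, has a member vanishing identically. Equivalently, the two restrictions \(Q_1|_L,Q_2|_L\) are linearly dependent but not both zero. That image can be stratified over \(\Gr(2,2g+2)\) by linear algebra (a rank condition on a \(3\times 2\) matrix of coefficients), so its class is an explicit polynomial in \(\L\) up to the contribution of the \(\P^1\) parameter. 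Solving for \([F_1(X)]\) then requires \([\mathcal F]\) and division by \([\P^1]\), which is not allowed in \(K_0\). So instead I would compute \([\mathcal F]\) directly and compare it with \(\mathcal F\setminus(F_1(X)\times\P^1)\) only over the open set where this is a genuine stratification. I would rather follow the paper's intended framework and use a general relative-Fano-scheme formula for quadric fibrations over a curve with simple degenerations.

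The key steps, in order, are as follows. First, compute \([\mathcal F]\) fiberwise over \(\P^1\). Over the \(2g+2\) points where \(Q_t\) is a cone over a smooth quadric in \(\P^{2g}\), \(F_1(Q_t)\) has a known class: lines through the vertex plus lines in the base, lifted. Over the open complement, \(Q_t\) is a smooth even-dimensional quadric of dimension \(2g\), and \(F_1(Q_t)\) is an isotropic Grassmannian whose class is a fixed polynomial in \(\L\). However, the fibration of isotropic data is not Zariski-locally trivial in a naive sense. The maximal isotropic spaces produce the double cover \(C\to\P^1\), and \(F_1(Q_t)\) for a smooth even-dimensional quadric decomposes via Bruhat cells into pieces that are Zariski-locally trivial except for one middle-dimensional cell pair, which is swapped by monodromy. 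I expect this to give \([\mathcal F]=P(\L)[\P^1]+(\text{correction})\,([C]-[\P^1])\L^{g-1}\) plus degeneracy corrections. Second, relate \(\mathcal F\) to \(F_1(X)\). The cleanest relation I see is via the map \(\mathcal F\setminus(F_1(X)\times \P^1)\to\Gr(2,2g+2)\), together with a separate treatment of lines meeting \(X\) in a length-2 scheme. Those involve pairs of points of \(X\), and this is where \([\Sym^2 C]\) should enter, through \(\Sym^2\) of the curve of singular members and the classes \([X]\) from Proposition~\ref{prop:r=0-case}. Third, collect terms and solve for \([F_1(X)]\). Finally, check effectivity and agreement with \(M_{g,1,j}\) by direct polynomial comparison with \cite[(4)]{BBFGHLPRAS}. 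This last step is routine algebra.

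The main obstacle is the second step: producing a geometric relation in which \([F_1(X)]\) appears with coefficient \(1\), not \([\P^1]\), without circular cancellation. My first attempt would be a different incidence. Fix \(t\) and consider the family of lines \(L\subset Q_t\) meeting the base locus appropriately. Alternatively, use the classical construction that projects from a line \(\ell\subset X\): the blowup of \(X\) along \(\ell\) maps birationally to \(\P^{2g-1}\) and relates lines meeting \(\ell\) to a lower-dimensional intersection of quadrics. I would then set up an induction on \(g\) that starts at a directly computable base case, \(g=3\) for \(F_1\) in \(\P^7\), and track \([\Sym^2 C]\) through it. This step determines whether the \(\Sym^2 C\) term arises with the coefficient \(\L^{2g-4}\), and I expect most of the real work to go there.
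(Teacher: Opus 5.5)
Your proposal has a genuine gap exactly where you place the main obstacle, and the route you outline cannot close it. Every relation you set up counts $F_1(X)$ with a multiplier. In $\mathcal F=F_1(\mathcal Q/\P^1)$ it appears as $[F_1(X)][\P^1]$. Your second step (lines $L\not\subset X$ with $L\cap X$ of length $2$, encoded by pairs of points of $X$) only changes the multiplier. A length-$2$ subscheme $Z\subset X$ either spans a line of $X$, and these $Z$ form a $\P^2$-bundle over $F_1(X)$, or it spans a line lying in exactly one $Q_t$ and meeting $X$ exactly in $Z$. Comparing the two descriptions gives $[X^{[2]}]-[F_1(\mathcal Q/\P^1)]=[F_1(X)]\L^2$. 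This is the relation coming from Shah's flip, which the paper's introduction explicitly sets aside as a proof strategy: $\L$ is a zero divisor in $K_0(\mathrm{Var}/k)$ \cite{Borisov18}, and, as you note yourself, $[\P^1]$ cannot be cancelled either. Consistently, in this route $[\Sym^2C]$ enters through $\Sym^2X$ with coefficient $\L^{2g-2}$, which is $\L^2$ times the coefficient you want; it does not come from a symmetric square of the curve of singular members. (Also, the image of $\mathcal F\setminus(F_1(X)\times\P^1)$ in $\Gr(2,2g+2)$ does not have class a polynomial in $\L$. Its class is $[\mathcal F]-[F_1(X)][\P^1]$, which carries the $[C]$ and $[\Sym^2C]$ contributions, so stratifying it is the original problem again. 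Your first step is fine; it is Corollary~\ref{cor:relative-F_1}.)

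The missing idea is a birational model for $F_1(X)$ itself, with controlled indeterminacy. Your projection-from-$\ell$ remark points the right way but stops short. Fix a general line $\ell\subset X$. For a general line $m$, the intersection $X\cap\langle\ell,m\rangle\subset\P^3$ is $\ell+m+m_0+m_1$ of Reid type, where $m_0,m_1$ meet $\ell$. The lines $m'$ meeting $\ell$ with $\langle\ell,m'\rangle\not\subset X$ form the open set $\mathcal Q^{(1)}_\ell\setminus\mathcal Q_{Y_\ell}$ of the hyperbolic reduction. The assignment $m\mapsto\{m_0,m_1\}$ is then the birational map $F_1(X)\dashrightarrow\Sym^2(\mathcal Q^{(1)}_\ell\setminus\mathcal Q_{Y_\ell})$ of Theorem~\ref{thm:js-bir-description-F_r(X)}, an isomorphism between explicit open sets (Proposition~\ref{prop:v-w-isom}). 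The real work is computing the classes of all exceptional strata on both sides: planes, $3$-planes and quadric surfaces through $\ell$, the configurations $\ell+P$, $m+P$, $\ell+m+2m'$, $2\ell+m_0+m_1$, and concurrent lines. This uses hyperbolic reductions and the relative Fano formula applied to $\mathcal Q^{(1)}_\ell$ and $\mathcal Q_{Y_\ell}$. The one divisorial class left uncomputed, $[\Sigma_2\sqcup\Sigma_3\sqcup\Sigma_4]$, appears on both sides and cancels. The result is Theorem~\ref{thm:towards-thm}, in which $[F_1(X)]$ has coefficient $1$. The term $[\Sym^2C]\L^{2g-4}$ then comes from $[\mathcal Q^{(1)}_\ell]=[C]\L^{g-2}+[\P^1][\P^{g-3}](1+\L^{g-1})$ together with Totaro's formula $[\Sym^2(C\times\A^{g-2})]=[\Sym^2C]\L^{2g-4}$. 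The leftover terms involve $Y_\ell\cong F_2(X)_\ell$, a smooth intersection of two quadrics in $\P^{2g-5}$ whose associated curve has genus $g-3$. That is why the induction goes from $g-3$ to $g$ and needs the three base cases $g=3,4,5$, not a single base case $g=3$.
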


\begin{thm}\label{thm:main-even}
    Over an algebraically closed field \(k\) of characteristic \(\neq 2\), let \(g\geq 3\), and let \(X\subset\P^{2g}\) be a smooth complete intersection of two quadrics. Then the following equality holds in \(\widetilde{K}_0(\mathrm{Var}/k)\):
    \[[F_1(X)] = 2g(g+2)\L^{2g-4} + (\L+1) \sum_{i=0}^{g-3}(i+1)(\L^{2i} + \L^{4g-9-2i})+(2g+1)(\L^{g-2}+\L^{2g-3})\sum_{i=0}^{g-3}\L^i .\]
    In particular, the class of \(F_1(X)\) is of Tate type and only depends on the dimension of \(X\), and the analogue of \cite[Conjecture B]{BBFGHLPRAS} given on \cite[page 5]{BBFGHLPRAS} is true for \(F_1(X)\).
\end{thm}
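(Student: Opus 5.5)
Our plan is to reduce the even case to the odd case, via a quadric fibration over a curve and its relative Fano scheme of lines; this is presumably the role of the paper's general formula for relative Fano schemes under a simple degeneration assumption. Let \(X\subset\P^{2g}\) be the smooth complete intersection \(Q_1\cap Q_2\). Write it as a hyperplane section \(X=Y\cap H\) of a smooth complete intersection \(Y\subset\P^{2g+1}\) of two quadrics in the same pencil. Theorem~\ref{thm:main-odd} computes \([F_1(Y)]\) in terms of \(C\), where \(C\) is the genus \(g\) hyperelliptic curve attached to \(Y\).

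We would rather compute \([F_1(X)]\) directly, as follows. Fix a point \(p\in X\) and project from it, or alternatively use the pencil structure: consider the incidence variety
\[\{(\ell,t)\ :\ \ell\subset Q_t\}\]
fibered over the pencil \(\P^1\ni t\). A cleaner route stratifies \(F_1(X)\) by its relation to a fixed plane section. Instead, I would use the standard device: the variety of pairs \((\ell,\Pi)\), where \(\Pi\) is a \(3\)-plane containing \(\ell\) on which the pencil restricts in a controlled way. This is complicated, so we adopt the following concrete plan. The pencil \(\{Q_t\}\) restricted to a line \(\ell\) defines a map to \(\P(\Sym^2)\). We then compute \([F_1(X)]\) via the relative Fano scheme of lines \(F_1(\mathcal Q/\P^1)\) of the quadric fibration \(\mathcal Q\to\P^1\), where \(\mathcal Q=\{(x,t): x\in Q_t\}\) is the blowup of \(\P^{2g}\) along \(X\). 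A line in \(\P^{2g}\) lies in some \(Q_t\) either for a unique \(t\) or for all \(t\); the latter happens exactly when the line lies in \(X\). Hence
\[[F_1(\mathcal Q/\P^1)] = [F_1(X)]\,[\P^1] + \bigl([\Gr(2,2g+1)]\text{-stratum of lines lying in exactly one }Q_t\bigr).\]
The lines lying in exactly one \(Q_t\) are the lines \(\ell\) not contained in \(X\) such that \(Q_1|_\ell\) and \(Q_2|_\ell\) are proportional. This set equals \(F_1(\mathcal Q/\P^1)\) minus \(F_1(X)\times\P^1\).

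Key steps, in order, are the following. First, compute \([F_1(\mathcal Q/\P^1)]\) from the general fibration formula. The smooth fibers are \(2g-1\)-dimensional quadrics, whose Fano schemes of lines are explicit Tate classes. The \(2g+1\) singular fibers are cones over smooth quadrics, with simple degeneration, and their Fano schemes of lines are computed by stratifying into lines through the vertex and lines not through it. Second, compute the class of the variety \(Z\) of lines \(\ell\subset\P^{2g}\) on which \(Q_1|_\ell,Q_2|_\ell\) are linearly dependent. This is the degeneracy locus of a map \(\O^2\to\Sym^2 S^\vee\) on \(\Gr(2,2g+1)\), and we stratify it by the rank: rank \(1\) gives \(Z\setminus F_1(X)\), and rank \(0\) gives \(F_1(X)\). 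Third, solve the resulting pair of equations for \([F_1(X)]\).

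The main obstacle is the second step, since computing \([Z]\) motivically (and not only cohomologically) is hard. We would fibre it over the point \(t\in\P^1\) with \(\ell\subset Q_t\), which reintroduces \(F_1(\mathcal Q/\P^1)\), so the equations become circular. To break the circularity, we would instead use a different fibration: project from a line, or use that for even \(N\) all the classes involved are Tate. It would then suffice to compute point counts over finite fields, together with the fact that \(F_1(X)\) is paved or stratified into pieces with Tate classes. Concretely, the fallback is to stratify \(F_1(X)\) via the Fano scheme \(F_{g-1}(X)\), which consists of \(2^{2g}\) points (Reid). Each line is compared with the maximal linear subspaces it meets, and summing over incidence strata yields the stated polynomial. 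Verifying the final formula should then be routine bookkeeping.
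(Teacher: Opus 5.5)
There is a genuine gap: none of the routes you list determines \([F_1(X)]\). Your decomposition \(F_1(\mathcal Q/\P^1)=(F_1(X)\times\P^1)\sqcup Z^\circ\), with \(Z^\circ=Z\setminus F_1(X)\), is correct. The class \([F_1(\mathcal Q/\P^1)]\) is also computable (Corollary~\ref{cor:relative-F_1} with \(S=\P^1\), \(n=2g-1\), \(\delta=2g+1\)). But this is a single relation with the extra unknown \([Z^\circ]\), and, as you observe, fibering \(Z\) over the \(t\) with \(\ell\subset Q_t\) only reproduces it.

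More seriously, an independent computation of \([Z]\) would still not suffice. Your two relations combine to \([F_1(\mathcal Q/\P^1)]-[Z]=[F_1(X)]\L\), so you would learn only \([F_1(X)]\L\). Since \(\L\) is a zero divisor in the Grothendieck ring (Borisov), it cannot be cancelled in general. This is exactly why the paper notes that Shah's flip relation \([X^{[2]}]-[F_1(\mathcal Q/\P^1)]=[F_1(X)]\L^2\) does not prove the theorem.

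Your fallbacks do not repair this. Point counts determine a class only once you know it lies in \(\Z[\L]\), which is part of what is being proved, and there are no point counts to use in characteristic \(0\). Stratifying by incidence with the \(2^{2g}\) maximal \((g-1)\)-planes fails for dimension reasons. Lines through a point of \(X\) form a \((2g-5)\)-dimensional family, so lines meeting a fixed \((g-1)\)-plane form a family of dimension at most \(3g-6<4g-8=\dim F_1(X)\). A general line therefore meets none of these planes, and the open stratum is as hard as the original problem.

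The missing idea is a birational model of \(F_1(X)\) itself, with computable class, in which \([F_1(X)]\) appears with coefficient \(1\). The paper fixes a general line \(\ell\subset X\) and uses the birational map \(F_1(X)\dashrightarrow\Sym^2(\mathcal Q^{(1)}_\ell\setminus\mathcal Q_{Y_\ell})\), \(m\mapsto(m_0,m_1)\), where \(X\cap\langle\ell,m\rangle=\ell+m+m_0+m_1\) is singular of Reid type. The class of \(\Sym^2\mathcal Q^{(1)}_\ell\) is known from the Kuznetsov--Shinder formula for hyperbolic reductions together with the Kapranov zeta function. Every exceptional locus on both sides is then computed by cut-and-paste (Section~\ref{sec:r=1}), which yields Theorem~\ref{thm:towards-thm}.

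The relative Fano scheme formula does enter, but it is applied to the hyperbolic reductions \(\mathcal Q^{(1)}_\ell\to\P^1\) and \(\mathcal Q_{Y_\ell}\to\P^1\), not to the pencil of \(X\) itself. The remaining terms involve \(Y_\ell\), a smooth complete intersection of two quadrics in \(\P^{2g-6}\). The stated polynomial then follows by induction from \(g-3\) to \(g\), with base cases \(g=3,4,5\).
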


Here \(\widetilde{K}_0(\mathrm{Var}/k)\) is the quotient of the Grothendieck ring of varieties over \(k\) by the ideal generated by surjective radicial morphisms (Definition~\ref{defn:K0-Ktilde0}). In characteristic zero, \(\widetilde{K}_0(\mathrm{Var}/k)\) is isomorphic to the Grothendieck ring of varieties over \(k\).

The motivic classes of \(F_1(X)\) computed in
Theorems~\ref{thm:main-odd} and~\ref{thm:main-even} have several consequences via the realization maps.
When $k=\C$, these identities compute the cohomology of \(F_1(X)\) and moreover the Hodge structures of $H^*(F_1(X),\Q)$ (Corollary~\ref{cor:complex}\eqref{item:cor-odd-HS},\eqref{item:cor-even-HS}), giving a new and geometric proof of the \(r=1\) case of \cite[Theorem 1.1]{CVX17}, \cite[Theorem 1.1]{CVX20}, and \cite[Theorem C, Proposition E]{BBFGHLPRAS}, and they show that $H^*(F_1(X),\Z)$ is torsion-free (Corollary \ref{cor:complex}\eqref{item:cor-cohomology-tf}). When $k=\F_q$, they compute the number of lines on $X$ defined over $\F_{q^e}$ for sufficiently divisible $e\geq 1$ (Corollary~\ref{cor:finite}).
Over \(k=\mathbb C\), the identities for the class of \(F_1(X)\) also give a decomposition of the rational numerical Chow motive of \(F_1(X)\) (see proof of Corollary~\ref{cor:motive-F_r}), and they also verify the image  in \(K_0(\mathrm{Cat}/k)\) of Belmans et al.'s conjectural semiorthogonal decompositions of the derived category of \(F_1(X)\) (Corollary~\ref{cor:K_0-cat}).

Our second main result, which we also use as an input to the proofs of Theorems~\ref{thm:main-odd} and~\ref{thm:main-even}, is a motivic description for the relative Fano scheme of lines in a quadric fibration with simple degeneration over a curve.
More generally, we compute the class of the relative Fano scheme of \(r\)-planes for any relative dimension \(r\):

\begin{thm}\label{thm:relative-F_r}
    Over an algebraically closed field \(k\) of characteristic \(\neq 2\), let \(S\) be a smooth connected curve, let \(n\geq 1\), and let \(\pi\colon\mathcal Q\to S\) be a fibration of \(n\)-dimensional quadrics over \(k\). Assume that \(\pi\) has simple degeneration and that the degeneracy locus of \(\pi\) is a smooth divisor (see Section~\ref{sec:prelim-hyperbolic-red}).
    Then for \(0\leq r \leq \lfloor\frac{n+1}{2}\rfloor\), the following equalities hold in the Grothendieck ring of varieties over \(k\):
    \[ [F_r(\mathcal Q/S)] = \begin{cases}
        [S][\OG(r+1,2g+2)] + ([\widetilde S]-2[S])[\OG(r,2g+1)]\L^{g-r} & \text{if \(n=2g\) is even, \(r\leq g-1\)}, \\
        [\widetilde{S}][\OG(g+1,2g+2)] & \text{if \(n=2g\) is even, }r=g, \\ 
        [S][\OG(r+1,2g+1)]+\delta[\OG(r,2g)]\L^{g-r} & \text{if \(n=2g-1\) is odd, \(r\leq g-1\)}, \\
        2\delta [\OG(g,2g)] & \text{if \(n=2g-1\) is odd, }r=g.
    \end{cases} \]
    Here \(\delta\) is the number of singular fibers of \(\pi\), and, if \(n=2g\) is even, then \(\widetilde{S}\to S\) is the discriminant double cover of \(\pi\) (Section~\ref{sec:prelim-F_r}).
\end{thm}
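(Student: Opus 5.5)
The plan is to stratify \(S\) into the smooth locus \(S^\circ\) and the finitely many points \(s_1,\dots,s_\delta\) of the degeneracy locus. Then
\[[F_r(\mathcal Q/S)] = [F_r(\mathcal Q^\circ/S^\circ)] + \sum_{j=1}^{\delta}[F_r(Q_{s_j})].\]
I would compute the two pieces separately.

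\emph{Smooth part.} Over \(S^\circ\) the fibers are smooth \(n\)-dimensional quadrics, so \(F_r(\mathcal Q^\circ/S^\circ)\to S^\circ\) is a fibration whose geometric fibers are orthogonal Grassmannians.

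When \(n=2g-1\) is odd, \(F_r\) of a smooth quadric in \(\P^{2g}\) is \(\OG(r+1,2g+1)\). For \(r\le g-1\) this is connected. Its class is computed by the Bruhat cell decomposition, i.e.\ by iterated Zariski-locally trivial fibrations: pick a point on the quadric, then pass to the residual quadric. I would check that this argument runs in families. The space of pairs (point \(p\), \(r\)-plane through \(p\)) fibers over \(\mathcal Q^\circ\) with fiber \(F_{r-1}\) of the quadric \(p^\perp/p\), which has dimension \(n-2\). It also fibers over \(F_r\) with fiber \(\P^r\). Induction on \(r\), starting from \(r=0\) with \([\mathcal Q^\circ]=[S^\circ][Q]\), then gives
\[[F_r(\mathcal Q^\circ/S^\circ)]=[S^\circ][\OG(r+1,2g+1)].\]
The base case holds because a smooth quadric fibration over a curve over an algebraically closed field has a section by Tsen's theorem, so it is Zariski-locally a family of split quadrics, and the class multiplies. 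Dividing by \([\P^r]\) is not allowed in \(K_0\). I would therefore avoid it: use the incidence variety only to set up the recursion, and prove multiplicativity by showing that the relative Fano scheme is Zariski-locally trivial over \(S^\circ\). This local triviality follows from the Zariski-local triviality of the quadric bundle, which comes from hyperbolic reduction via the Tsen section.

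When \(n=2g\) is even, the fibers \(\OG(r+1,2g+2)\) are connected for \(r\le g-1\), and the same argument gives \([S^\circ][\OG(r+1,2g+2)]\). For \(r=g\) the Stein factorization passes through the double cover \(\widetilde S^\circ\), and I get \([\widetilde S^\circ][\OG_+(g+1,2g+2)]\). Each component of \(\OG(g+1,2g+2)\) has class \(\tfrac12[\OG(g+1,2g+2)]\), and these halves agree with \([\OG(g,2g+1)]\).

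\emph{Singular fibers.} Simple degeneration means that \(Q_{s_j}\) is a cone with vertex a point \(v\) over a smooth quadric \(\bar Q\) of dimension \(n-1\). I stratify \(F_r(Q_{s_j})\) into \(r\)-planes through \(v\) and \(r\)-planes not through \(v\).
\begin{itemize}
\item The \(r\)-planes through \(v\) correspond to \(F_{r-1}(\bar Q)=\OG(r,n+1)\).
\item The \(r\)-planes not through \(v\) project isomorphically to \(r\)-planes \(\Lambda\subset\bar Q\). The lifts of a given \(\Lambda\) form an affine space \(\A^{r+1}\), namely \(\mathrm{Hom}(\Lambda\text{-cone},\text{line})\).
\end{itemize}
This gives
\[[F_r(Q_{s_j})]=[\OG(r+1,n+1)]\L^{r+1}+[\OG(r,n+1)].\]
Combining with \([S^\circ]=[S]-\delta\) and using the Bruhat recursion
\[[\OG(r+1,m+1)]=[\OG(r+1,m)]\L^{r+1}+[\OG(r,m-1)]\cdot(\text{quadric factor}),\]
equivalently the cell count \([\OG(r+1,m+1)]-\L^{r+1}[\OG(r+1,m)]\), the correction at each singular point should simplify to \([\OG(r,n)]\L^{g-r}\) with the indexing of the statement. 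In the even case the \(\delta\) singular points are exactly the branch points of \(\widetilde S\to S\), so \(\delta=2[S]-[\widetilde S]\) in \(K_0\) in the appropriate sense. More precisely, \([\widetilde S]-2[S]=-\delta\) as classes, because \(\widetilde S\) is étale over \(S^\circ\) and bijective over the branch points. In the maximal cases the cone stratum is empty or trivial. For \(r=g\) with \(n=2g-1\), \(S^\circ\) contributes nothing, since maximal isotropic spaces of odd smooth quadrics have dimension \(g-1\). Each singular fiber then contributes the \(g\)-planes through the vertex, namely \(\OG(g,2g)\), which has two components, consistent with the term \(2\delta\).

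The main obstacle I expect is the identity among orthogonal Grassmannian classes: verifying that \([\OG(r+1,n+1)]\L^{r+1}+[\OG(r,n+1)]-[\text{smooth fiber}]\) equals exactly \(\pm[\OG(r,\cdot)]\L^{g-r}\). I would first try the explicit product formulas for \([\OG]\) as polynomials in \(\L\), and if that gets messy, an inductive fibration argument on \(r\).
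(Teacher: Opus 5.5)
Your odd-case argument ($n=2g-1$) is essentially the paper's first proof. The generic fiber is split, so $F_r(\mathcal Q^\circ/S^\circ)$ is an $\OG(r+1,2g+1)$-bundle, and the singular fibers are handled by the cone stratification. For $r=g-1$, note that the planes missing the vertex form an $\A^g$-bundle over $F_{g-1}(Q^{2g-2})$, which has two components, so they contribute $2[\OG(g,2g)]\L^g$. The cases $r=g$ are also fine, because after base change to $\widetilde S$ the discriminant becomes trivial.

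The even case $n=2g$, $r\le g-1$, however, breaks at the step ``the same argument gives $[S^\circ][\OG(r+1,2g+2)]$''. A section of an even-dimensional smooth quadric fibration does not make it Zariski-locally split. Over $k(S)$, Tsen lets you split off $g$ hyperbolic planes, but the remaining binary form $\langle a,b\rangle$ is isotropic only if $-ab$, the discriminant, is a square. It is not a square when $\widetilde S\to S$ is nontrivial, e.g.\ whenever $\delta>0$. Equivalently, iterated hyperbolic reduction ends at $\mathcal Q^{(g-1)}\cong\widetilde S$ (Lemma~\ref{lem:hyperbolic-reduction-properties-not-pencil}\eqref{item:hyperbolic-reduction-even-maximal}). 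So the generic fiber is quasi-split but not split, and $F_r(\mathcal Q^\circ/S^\circ)$ is a nontrivial twisted form of $S^\circ\times\OG(r+1,2g+2)$. Already your base case fails: Proposition~\ref{prop:KS-hyperbolic-reduction}\eqref{item:KS-hyperbolic-reduction-formula} gives $[\mathcal Q^\circ]=[S^\circ][\P^{g-1}](1+\L^{g+1})+[\widetilde S^\circ]\L^g$, whereas $[S^\circ][Q^{2g}]=[S^\circ]\bigl([\P^{g-1}](1+\L^{g+1})+2\L^g\bigr)$.

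You then compensate with $[\widetilde S]-2[S]=-\delta$, which is also false. A nontrivial \'etale double cover is not piecewise trivial. For instance, for $\P^1\to\P^1$ branched at two points, $[\widetilde S]-2[S]=-1-\L\neq-2$; for a hyperelliptic $\widetilde S$ of positive genus the Hodge realization even detects $H^1$. The two errors combine into the formula with $-\delta$ in place of $[\widetilde S]-2[S]$, which is wrong. This discrepancy is exactly what produces the $[C]$-terms in $[F_1(\mathcal Q^{(1)}_\ell/\P^1)]$ used later.

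Your singular-fiber computation is correct. By $q$-Pascal, $[\OG(r+1,2g+2)]-\L^{r+1}[\OG(r+1,2g+1)]=(1+\L^{g-r})[\OG(r,2g+1)]$, so each singular fiber has class $[\OG(r+1,2g+2)]-[\OG(r,2g+1)]\L^{g-r}$. What is missing is the smooth part, which must equal $[S^\circ][\OG(r+1,2g+2)]+([\widetilde S^\circ]-2[S^\circ])[\OG(r,2g+1)]\L^{g-r}$. No argument that treats $\mathcal Q^\circ\to S^\circ$ as a split family can produce the second term.

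The paper gets this term by induction on $r$ in $K_0(\mathrm{Var}/S)$:
\begin{enumerate}
\item It relates $F_r(\mathcal Q/S)$, via a correspondence with a $\P^r$-bundle over $F_{r-1}(\mathcal Q^{(0)}_s/S)\cong F_r(\mathcal Q/S)_s$, to a hyperbolic reduction of a base change of $\mathcal Q$ along an $(r-1)$-section.
\item That section degenerates only over planes through the vertices, and the paper corrects over that locus.
\item It iterates down to $\mathcal Q^{(g-1)}\cong\widetilde S$, using $[\widetilde S\times_S\widetilde S]=2[\widetilde S]-\delta$.
\item It sums with the telescoping identities of Lemma~\ref{lem:telescoping-sum-computation}.
\end{enumerate}
A relative Bruhat decomposition for the quasi-split orthogonal group might give an alternative route, with pairs of Schubert cells exchanged by the outer automorphism becoming affine bundles over $\widetilde S^\circ$. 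That would be a different argument, and you would still have to carry it out.
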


We also prove an equivalent version of Theorem~\ref{thm:relative-F_r} with expanded coefficients (Theorem~\ref{thm:relative-F_r-expanded-coeffs}).
Over \(\mathbb C\), we use Theorem~\ref{thm:relative-F_r-expanded-coeffs} to give an explicit decomposition of the rational Chow motive of \(F_r(\mathcal Q/S)\) that involves only the base curve, the discriminant data, and the Lefschetz motive;
in particular, the rational Chow motive is of abelian type (Corollary~\ref{cor:motive-F_r}). Furthermore, we show that \(F_r(\mathcal Q/S)\) satisfies the Kimura finite-dimensionality conjecture and the Murre, standard, and Hodge conjectures.

Shah has recently studied these relative Fano schemes over \(\mathbb C\) from the categorical viewpoint \cite{Shah-rel-Fr}. Under a rank hypothesis (which is satisfied in the simple degeneration case for \(1\leq r\leq\lfloor\frac{n}{2}\rfloor\)), he constructed Clifford components in their derived categories, and, in the case of lines, a semiorthogonal decomposition up to a residual category \cite[Theorems 1.1 and 1.2]{Shah-rel-Fr}.
Thus, Theorem~\ref{thm:relative-F_r} may be viewed as a motivic counterpart of Shah's results. In the case of the relative Fano scheme of lines, Theorem~\ref{thm:relative-F_r} verifies the image in \(K_0(\mathrm{Cat}/\mathbb C)\) of Shah's conjectural description of this residual category in the case of quadric fibrations over a smooth curve with simple degeneration (Corollary~\ref{cor:rel-F_r-K_0-cat}).

In a separate work, over \(\mathbb C\), Shah constructed a standard flip between the Hilbert square \(X^{[2]}\) and the relative Fano scheme of lines \(F_1(\mathcal Q/\P^1)\) in the fibers of the associated pencil \(\mathcal Q\to\P^1\), whose centers are a \(\P^2\)-bundle over \(F_1(X)\) and a \(\P^1\)-bundle over \(F_1(X)\) \cite[Theorem 23]{Shah-flips} (see also \cite[Remark A.8]{CT-intersection-quadrics}).
The relations in the Grothendieck ring of varieties then yield the equality \([X^{[2]}]-[F_1(\mathcal Q/\P^1)] = [F_1(X)]\L^2\).
Although our proof of Theorems~\ref{thm:main-odd} and~\ref{thm:main-even} does not directly involve the class of \(F_1(\mathcal Q/\P^1)\) (instead, we use the relative Fano schemes of lines of some other related quadric fibrations), this flipping relation suggests a conceptual explanation for why the class of the relative Fano scheme of lines is related to the class of \(F_1(X)\). This also gives an alternative possible approach to computing \([F_1(X)]\L^2\); however, since \(\L\) is a zero divisor in the Grothendieck ring of varieties \cite{Borisov18}, this strategy does not give a proof of Theorem~\ref{thm:main-odd} or~\ref{thm:main-even}.

We instead prove Theorems~\ref{thm:main-odd} and~\ref{thm:main-even} by studying an explicit birational map, constructed in our previous work \cite{JS24}, from \(F_r(X)\) to a symmetric power of a hyperbolic reduction of the associated pencil:
\[F_r(X) \dashrightarrow \Sym^{r+1} \mathcal Q^{(r)}_\Lambda .\]
The motivic class of the target is known by work of Kuznetsov--Shinder \cite{KuznetsovShinder18} and multiplicativity of the Kapranov zeta function.
Our main geometric input for \(r=0,1\) is to describe the loci on which this birational map fails to be an isomorphism, and to compute the cut-and-paste contributions of these loci.
For higher values of \(r\), the same strategy should in theory work to obtain a formula for the class of \(F_r(X)\) (see Section~\ref{sec:artibrary-r}), but the computations involved may be more complicated.
For higher \(r\), in the \(N\) odd case, effectivity of the coefficients in \cite[Conjecture B]{BBFGHLPRAS} is proven by Pieter Belmans in the appendix to this paper.

There have been several previous works on the conjectures proposed by \cite{BBFGHLPRAS}.
For \(r=0\), the derived category statement is the previously mentioned results of \cite{BondalOrlov,Kuznetsov08}, and the corresponding motivic identity was previously known only after multiplication by \(\L\) \cite{BBFGHLPRAS}.
At the opposite end of the range, for maximal \(r\), the Fano scheme \(F_r(X)\) is either the Jacobian of \(C\) or a reduced finite scheme, and the derived category and motivic statements are understood in these cases.
For second maximal \(r\), if \(N\) is odd then using the isomorphism of \(F_r(X)\) with the moduli space of rank 2 stable vector bundles on \(C\) with fixed odd degree determinant, the motivic conjecture was proven up to a class \(T\) such that \(T(1+\L)=0\) in \cite{BGM2023decomposition}, and the derived category conjecture was proven by \cite{TevelevTorres,Tevelev23}.
(For \(g=3\), Theorem~\ref{thm:main-odd} proves that \(T=0\).)
Some cases of the conjectures were also previously stated in the literature (see \cite[Section 1]{BBFGHLPRAS}).

There have been several previous works on the motivic classes for Fano schemes on other classes of varieties. Namely, \cite{GalkinShinder} gave a relation between the classes of a cubic hypersurface and its Fano scheme of lines. In another direction, for certain hypersurfaces in positive characteristic, \cite{Cheng25} computed the classes of certain Fano schemes of linear spaces using an explicit stratification.

\subsection{Outline}
We begin in Section~\ref{sec:preliminaries} by recalling preliminary results on the Grothendieck ring of varieties, linear subspaces on quadrics, quadric fibrations, and hyperbolic reductions. In Section~\ref{sec:relative-Fano-scheme}, we prove Theorem~\ref{thm:relative-F_r} on the class of the relative Fano scheme and deduce consequences for its algebraic cycles. In Section~\ref{sec:hyperbolic-reductions-pencil}, we specialize to pencils of quadrics and further study hyperbolic reductions and certain distinguished loci in these hyperbolic reductions, and we introduce notation that will be heavily used in later sections. In Section~\ref{sec:r=0}, we prove Proposition~\ref{prop:r=0-case} on the class of \(X\).
Section~\ref{sec:computations} contains several computational results that will be used in the proofs of Theorems~\ref{thm:main-odd} and~\ref{thm:main-even}, including explicit descriptions of the exceptional loci of \(F_1(X)\dashrightarrow\Sym^2\mathcal Q^{(1)}\);
a reader who is willing to accept the results of Section~\ref{sec:computations} may safely proceed to the next section on a first reading.
In Section~\ref{sec:r=1}, we compute the classes of the exceptional loci of \(F_1(X)\dashrightarrow\Sym^2\mathcal Q^{(1)}\), and we combine these results in Section~\ref{sec:summing-together} to prove Theorems~\ref{thm:main-odd} and~\ref{thm:main-even} and deduce several corollaries. In Section~\ref{sec:artibrary-r}, we indicate a general strategy that gives a possible approach to proving \cite[Conjecture B]{BBFGHLPRAS} and its even analogue for arbitrary \(r\).
Finally, in Appendix~\ref{appendix}, Pieter Belmans proves effectivity of the coefficients in \cite[Conjecture B]{BBFGHLPRAS} for arbitrary \(r\).

\subsection*{Acknowledgements}
We thank James Hotchkiss, Kimoi Kemboi, Emanuele Macr\`i, and Saket Shah for helpful conversations.
We are grateful to Pieter Belmans for his comments and for writing the appendix.

\subsection*{Notation}
Throughout \(k\) is a field (frequently of characteristic \(\neq 2\)).
A variety over a scheme \(S\) is a separated \(S\)-scheme of finite presentation; in particular, we do not assume varieties are reduced or irreducible.
For a scheme-theoretic point \(x\in X\), the residue field is denoted by \(\kappa(x)\).
For a quasi-projective variety \(X\) over \(k\) and an integer \(r\geq 0\), \(\Sym^r X\) denotes the \(r\)-th symmetric power of \(X\) over \(k\).
If \(Y_1,\ldots,Y_l\subset\mathbb P^N\) are closed subvarieties, we denote their linear span by \(\langle Y_1,\ldots,Y_l\rangle\).
If $Y\subset  \P^N$ is a closed subvariety and $y\in Y$ is a $k$-point, we write $T_yY\subset \P^N$ for the projective tangent space to $Y$ at $y$.
More precisely, if $Y=\{f_1=\cdots =f_r=0\}$ and $\P^N=\P^N_{[x_0:\cdots:x_N]}$, then
\[
T_yY=\left\{\sum_{j=0}^N\frac{\partial f_1}{\partial x_j}(y) x_j= \cdots=\sum_{j=0}^N\frac{\partial f_r}{\partial x_j}(y) x_j=0 \right\}\subset \P^N.
\]
For a finite-dimensional vector space \(V\) over \(k\), 
\(\mathbb P(V)\coloneqq \Proj\Sym^\bullet (V^\vee)\) parametrizes the one-dimensional subspaces of \(V\).
For a vector bundle $\mathcal E$ over a scheme $S$, we denote $\P_S(\mathcal E)\coloneqq \underline{\Proj}_{S}\Sym^\bullet(\mathcal E^\vee)$.
By convention we take \(\mathbb P^{-m}=\emptyset\) for \(m\in\mathbb Z_{>0}\), and \(\Gr(a,b)=\emptyset\) for \(a>b\).

\subsection*{AI disclosure}
OpenAI ChatGPT (GPT-5.6 Sol and GPT-6 Astra) was used to aid in computations (namely, to find closed formulas for the recursive formulas in the proofs of Theorems~\ref{thm:main-even} and~\ref{thm:relative-F_r}, and for Lemma~\ref{lem:telescoping-sum-computation} which is used in the computations in the proof of Theorem~\ref{thm:relative-F_r}), to find Example~\ref{exmp:smooth-Y-example}, for literature searches, and for proofreading. The paper was written by its authors, who take full responsibility for its correctness.

\section{Preliminaries}\label{sec:preliminaries}

\subsection{The Grothendieck ring of varieties}\label{sec:prelim-K0}

\begin{defn}\label{defn:K0-Ktilde0}
Let $k$ be a field.
\defi{The Grothendieck ring $K_0(\mathrm{Var}/k)$ of varieties over $k$} is defined to be the quotient of the free abelian group on the set of isomorphism classes of varieties over $k$, by relations of the form
\[
[X]=[Y]+[X\setminus Y],
\]
where $Y$ is a closed subvariety of a variety $X$, and the product is defined by
\[
[X]\cdot [Z]= [X\times Z]
\]
for varieties $X,Z$ over $k$.
We define $\widetilde{K}_0(\mathrm{Var}/k)$ to be the quotient ring of $K_0(\mathrm{Var}/k)$ by the ideal generated by the relations $[X]-[W]$, where there is a radicial surjective morphism $X\to W$ of varieties over $k$.
\end{defn}

If \(k\) has characteristic zero, then the quotient map \(K_0(\mathrm{Var}/k) \to \widetilde{K}_0(\mathrm{Var}/k)\) is an isomorphism \cite[Proposition 7.25]{mustata-notes}.

By definition, $[\emptyset]=0\in K_0(\mathrm{Var}/k)$, and for every variety $X$ over $k$, $[X]=[X_{\text{red}}]\in K_0(\mathrm{Var}/k)$. 
In the definitions of $K_0(\mathrm{Var}/k)$ and $\widetilde{K}_0(\mathrm{Var}/k)$, one can instead consider the free group on the set of isomorphism classes of quasiprojective varieties over $k$ in order to get the same groups \cite[Proposition 7.27]{mustata-notes}.

The Grothendieck ring of varieties admits many realization homomorphisms, i.e., homomorphisms from \(K_0(\mathrm{Var}/k)\) to other rings. See, e.g., \cite[Section 2.2]{GalkinShinder} and \cite[Section 2.3]{BBFGHLPRAS} for some examples. We will use some of these realization homomorphisms to draw corollaries from the identities we prove in the Grothendieck ring of varieties.

We collect below useful lemmas on $K_0(\mathrm{Var}/k)$ and $\widetilde{K}_0(\mathrm{Var}/k)$.

\begin{defn}\label{defn:Z-bundle}
Over a field $k$, let $f\colon X\to Y$ be a morphism of varieties and $Z$ be a variety.
We say that $f$ is a \defi{$Z$-bundle} if for every scheme-theoretic point $y\in Y$, $X\times_Y \Spec(\kappa(y))$ is isomorphic to $ Z\times_{\Spec(k)}\Spec(\kappa(y))$ over $\Spec(\kappa(y))$.
\end{defn}

\begin{lem}\label{lem:fibration-general}
Over a field $k$, let $f\colon X\to Y$ be a $Z$-bundle.
Then $[X]=[Y][Z]\in K_0(\mathrm{Var}/k)$.
\end{lem}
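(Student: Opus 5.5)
We prove \([X]=[Y][Z]\) by Noetherian induction on \(Y\), using the fact that a fiberwise isomorphism at the generic point spreads out to an isomorphism over a dense open subset. Since \([Y]=[Y_{\mathrm{red}}]\) and \([X]=[X_{\mathrm{red}}]\), and the \(Z\)-bundle property is preserved under base change to locally closed subvarieties of \(Y\), we may stratify \(Y\). Concretely, if \(U\subset Y\) is open with complement \(W\), then \(f^{-1}(U)\to U\) and \(f^{-1}(W)\to W\) are again \(Z\)-bundles. Scissor relations give \([X]=[f^{-1}(U)]+[f^{-1}(W)]\) and \([Y][Z]=[U][Z]+[W][Z]\). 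So it suffices to find, for each nonempty \(Y\), a dense open \(U\) with \([f^{-1}(U)]=[U][Z]\); Noetherian induction on \(W\) then finishes the argument. We may also reduce to the case where \(Y\) is integral and affine, by passing to the reduced structure and restricting to an open affine inside one irreducible component minus the others.

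The key step works over integral affine \(Y\) with generic point \(\eta\) and function field \(K=\kappa(\eta)\). By hypothesis there is a \(K\)-isomorphism
\[\phi_\eta\colon X\times_Y\Spec K\xrightarrow{\sim} Z\times_k\Spec K.\]
Both sides are of finite presentation over \(Y\): \(X\times_Y\Spec K\) is the generic fiber of \(f\), and \(Z\times_k \Spec K\) is the generic fiber of \(Z\times_k Y\to Y\). Writing \(K=\varinjlim \O(V)\) over nonempty affine opens \(V\subset Y\), standard limit results (EGA IV 8.8.2, 8.10.5) apply. They show that \(\phi_\eta\) and its inverse descend to morphisms over some \(V\), and that after shrinking \(V\) these are mutually inverse. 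This gives an isomorphism \(f^{-1}(V)\cong Z\times_k V\), hence \([f^{-1}(V)]=[Z\times V]=[Z][V]\), which is what the induction needs.

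The main obstacle is the spreading-out step. Separatedness and finite presentation of the varieties involved are exactly what the limit theorems require, and these are guaranteed by the paper's conventions. We use the hypothesis only at generic points of the strata, not at all scheme-theoretic points, which is why the definition is phrased over \(\kappa(y)\) rather than over closed points. One must also check that passing to \(Y_{\mathrm{red}}\) does not alter the generic fibers. This holds because the residue field at \(\eta\) is unchanged, and \([X]=[X_{\mathrm{red}}]\) absorbs any non-reducedness of \(X\).
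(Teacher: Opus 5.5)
Your argument is correct and is essentially the paper's proof. The paper reduces to showing that $f$ is a piecewise trivial fibration with fiber $Z$, then sums over the strata \cite[Proposition 7.4]{mustata-notes}, and it obtains the piecewise triviality from \cite[Th\'eor\`eme 4.2.3]{Sebag04}; you instead carry out that Noetherian induction and spreading-out of the generic-fiber isomorphism (EGA IV 8.8.2) by hand. One slip: the open set $U$ you produce inside a single irreducible component is nonempty but not dense in $Y$, and nonempty is all the Noetherian induction needs.
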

\begin{proof}
By \cite[Proposition 7.4]{mustata-notes}, it is enough to show that $f\colon X\to Y$ is a piecewise trivial fibration with fiber $Z$, i.e., there exists a decomposition $Y=Y_1\sqcup \dots \sqcup Y_r$ with all $Y_i$ locally closed in $Y$ such that each $f^{-1}(Y_i)$ is isomorphic to $Y_i\times Z$ over $Y_i$.
This follows from \cite[Th\'eor\`eme 4.2.3]{Sebag04}.
(Note that there are some typos in \cite[Th\'eor\`eme 4.2.3]{Sebag04}: $\pi$ is $h$, and the condition $x\in B$ in the theorem statement should be replaced by the condition $x\in A$.)
\end{proof}

\begin{lem}\label{lem:bijection-same-class}
Over a field $k$, let $f\colon X\to Y$ be a morphism of varieties such that for every algebraically closed field extension $K/k$, the induced map $f_{K}\colon X(K)\to Y(K)$ is bijective. Then $[X]=[Y]\in \widetilde{K}_0(\mathrm{Var}/k)$.
\end{lem}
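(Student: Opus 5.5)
We argue by Noetherian induction on \(Y\), reducing to the case where \(f\) is a finite, surjective, radicial morphism over a stratum of \(Y\). The relation \([X]-[W]\) for radicial surjective \(X\to W\) then finishes each stratum. Both sides are additive over locally closed stratifications, so it suffices to find a dense open \(U\subset Y\) with \([f^{-1}(U)]=[U]\) in \(\widetilde K_0(\mathrm{Var}/k)\). The hypothesis is preserved under base change to locally closed subvarieties: for \(Y'\subset Y\) locally closed, \(f^{-1}(Y')(K)\to Y'(K)\) is still bijective. Hence applying the same argument to \(Y\setminus U\) and inducing on dimension and number of components gives the result. We may also replace \(X\) and \(Y\) by their reductions, since classes are unchanged.

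We may assume \(Y\) is irreducible with generic point \(\eta\). The fiber \(X_\eta\) is a variety over \(\kappa(\eta)\), and it has exactly one \(K\)-point over each \(K\)-point of \(\eta\) for every algebraically closed \(K\supset\kappa(\eta)\); here we use that \(K\)-points of \(\eta\) with fixed embedding are \(K\)-points of \(Y\). Hence \(X_\eta\) is nonempty with a single geometric point over each geometric point, so \(X_\eta\) is zero-dimensional and has one point \(\xi\). Moreover \(\kappa(\xi)/\kappa(\eta)\) is purely inseparable: its number of \(\kappa(\eta)\)-embeddings into an algebraic closure equals the separable degree, and these correspond to the \(\overline{\kappa(\eta)}\)-points of \(X_\eta\) over the given one, of which there is exactly one.

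By spreading out (generic finiteness), there is a dense open \(U\subset Y\) over which \(f^{-1}(U)\to U\) is finite. We also want it to be universally injective: on fields the extension is purely inseparable generically, and EGA IV's constructibility of the radicial locus (or directly the geometric-point bijectivity on each fiber, which is just the hypothesis) gives this over \(U\). In fact radiciality is precisely the condition that \(f(K)\) be injective for all fields \(K\), and for algebraically closed fields this is our hypothesis; EGA I, 3.5.5 shows it suffices to test algebraically closed fields. Surjectivity likewise follows from surjectivity on geometric points.

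So in fact no shrinking is needed: \(f\) itself is radicial and surjective, and \([X]=[Y]\) in \(\widetilde K_0(\mathrm{Var}/k)\) directly by definition. The only care needed is the criterion that universal injectivity can be checked on algebraically closed fields, which is the main (standard) point. We would cite EGA I (3.5.5, 3.5.8) for it, or reprove it from the fiber argument above: a field-valued point of \(X\) over \(\Spec K\) extends to \(\overline K\), and injectivity over \(\overline K\) implies injectivity over \(K\).
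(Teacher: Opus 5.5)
Your proof is correct and matches the paper's: the paper likewise shows that $f$ is radicial and surjective by the standard criterion that injectivity and surjectivity on $K$-points for algebraically closed $K$ suffice (it cites Musta\c{t}\u{a}'s notes, Remark A.22, where you cite EGA I), and then applies the definition of $\widetilde{K}_0(\mathrm{Var}/k)$. As you noticed yourself, the Noetherian induction, generic-fiber analysis and spreading-out in your first half are unnecessary and can be deleted.
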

\begin{proof}
By \cite[Remark A.22]{mustata-notes}, $f$ is radicial and surjective, and hence, by the definition of $\widetilde{K}_0(\mathrm{Var}/k)$, the assertion follows.
\end{proof}

For a quasi-projective variety $X$ over a perfect field $k$, the Kapranov zeta function of $X$ is defined as
\[
Z_{\text{mot}}(X,t)\coloneqq \sum_{n\geq 0}[\Sym^n X]t^n \in 1+ t\cdot \widetilde{K}_0(\mathrm{Var}/k)[[t]].
\]
%One can extend the definition of $Z_{\text{mot}}(X,t)$ to a variety $X$ over $k$ that is not necessarily quasiprojective.

\begin{lem}[{\cite[Proposition 7.28 and Proposition 7.32]{mustata-notes}}]\label{lem:K_0-sym-formulas}
Let $k$ be a perfect field.
\begin{enumerate}
\item\label{item:formula-sym} The map 
\[K_0(\mathrm{Var}/k)\to 1+t\cdot \widetilde{K}_0(\mathrm{Var}/k)[[t]],\qquad [X]\mapsto Z_{\text{mot}}(X,t)\]
is a group homomorphism, and it factors through $\widetilde{K}_0(\mathrm{Var}/k)$.
In particular, for every quasiprojective variety $X$, every closed subvariety $Y\subset X$ with complement $U\coloneq X\setminus Y$, and every $n\geq 0$, we have
$[\Sym^n X]=\sum_{i+j=n}[\Sym^i Y][\Sym^j U]\in \widetilde{K}_0(\mathrm{Var}/k)$.
\item\label{item:formula-sym-times-L}  (Totaro)  Let $X$ be a quasiprojective variety over $k$. 
Then $Z_{\text{mot}}(X\times \A^n_k,t)=Z_{\text{mot}}(X,\L^n t)$.
In particular, for every $n\geq 0$, we have $[\Sym^n(X\times \A^1_k)]=[\Sym^n X]\L^n\in \widetilde{K}_0(\mathrm{Var}/k)$.
\end{enumerate}
\end{lem}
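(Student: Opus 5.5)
The plan is to prove both parts by stratifying symmetric powers and comparing geometric points, using Lemma~\ref{lem:bijection-same-class} and Lemma~\ref{lem:fibration-general}. Throughout, \(k\) is perfect. This means symmetric powers of quasiprojective varieties exist and commute with algebraically closed base change on \(K\)-points: \((\Sym^n X)(K)\) is the set of effective \(0\)-cycles of degree \(n\) on \(X_K\).

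For \eqref{item:formula-sym}, let \(Y\subset X\) be closed with complement \(U\). Consider the natural morphism
\[\coprod_{i+j=n}\Sym^i Y\times\Sym^j U\to\Sym^n X,\]
given by adding cycles. On \(K\)-points it is a bijection, since a \(0\)-cycle on \(X_K\) splits uniquely into its part supported on \(Y_K\) and its part supported on \(U_K\). Lemma~\ref{lem:bijection-same-class} then gives \([\Sym^n X]=\sum_{i+j=n}[\Sym^iY][\Sym^jU]\) in \(\widetilde K_0\), which is exactly \(Z_{\text{mot}}(X,t)=Z_{\text{mot}}(Y,t)Z_{\text{mot}}(U,t)\).

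Next I would use the presentation of \(K_0\) by quasiprojective varieties (\cite[Proposition 7.27]{mustata-notes}). The cut-and-paste relation and this multiplicativity show that \([X]\mapsto Z_{\text{mot}}(X,t)\), extended to the free group, respects the defining relations. It therefore descends to a homomorphism from the additive group into the multiplicative group \(1+t\widetilde K_0[[t]]\). For factoring through \(\widetilde K_0\): if \(X\to W\) is radicial surjective, then the induced map \(\Sym^nX\to\Sym^nW\) is bijective on \(K\)-points, so Lemma~\ref{lem:bijection-same-class} gives \(Z_{\text{mot}}(X,t)=Z_{\text{mot}}(W,t)\). The relations \([X]-[W]\) therefore lie in the kernel.

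For \eqref{item:formula-sym-times-L} it suffices to show \([\Sym^n(X\times\A^1)]=[\Sym^nX]\L^n\); the statement for \(\A^n\) follows by iterating. Consider the projection \(p\colon\Sym^n(X\times\A^1)\to\Sym^nX\). I would stratify \(\Sym^nX\) by partition type \(\lambda=(m_1,\dots,m_s)\): the locally closed stratum \(S_\lambda\) consists of cycles \(\sum m_i x_i\) with the \(x_i\) distinct. Over a geometric point of \(S_\lambda\), the fiber of \(p\) is \(\prod_i\Sym^{m_i}\A^1\cong\A^n\), using elementary symmetric functions. The main obstacle is making this uniform over non-closed points of \(S_\lambda\), where the points \(x_i\) with equal multiplicity may be permuted by Galois. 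There the fiber is a twisted form of \(\A^n\), obtained as a Weil restriction of \(\Sym^{m}\A^1=\A^{m}\) along finite étale algebras. I would argue that this form is the affine space of a vector space over \(\kappa(y)\), because \(\Sym^m\A^1\cong\A^m\) with a linear Galois action on the coefficients; hence it is trivial by Hilbert 90. So \(p^{-1}(S_\lambda)\to S_\lambda\) is an \(\A^n\)-bundle in the sense of Definition~\ref{defn:Z-bundle}, possibly only after replacing it by a radicial-surjective-equivalent variety. If precise scheme structures cause trouble, I would pass to reductions and apply Lemma~\ref{lem:bijection-same-class}. Lemma~\ref{lem:fibration-general} then gives \([p^{-1}(S_\lambda)]=[S_\lambda]\L^n\), and summing over \(\lambda\) proves the claim.
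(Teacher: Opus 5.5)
Your outline is the standard argument behind the results the paper cites from \cite{mustata-notes}. Part (1) uses the addition map $\coprod_{i+j=n}\Sym^iY\times\Sym^jU\to\Sym^nX$. Part (2) is Totaro's argument: stratify $\Sym^nX$ by partition type and use $\Sym^m\A^1\cong\A^m$.

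Part (1) is fine. To factor through $\widetilde K_0(\mathrm{Var}/k)$ you must kill the whole ideal, not just its generators, but this is automatic: $([X]-[W])[Z]=[X\times Z]-[W\times Z]$, and $X\times Z\to W\times Z$ is again radicial surjective.

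In (2), however, the step you single out as the main obstacle is resolved incorrectly in positive characteristic. Over a non-closed point $y\in S_\lambda$, the support of the cycle need not be separable over $\kappa(y)$. So the fiber is not a Galois twist of $\A^n_{\kappa(y)}$, and $p^{-1}(S_\lambda)\to S_\lambda$ need not be an $\A^n$-bundle in the sense of Definition~\ref{defn:Z-bundle}, even after passing to reductions.

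Here is an example. Let $\mathrm{char}\,k=p$, $X=\A^1$ and $\lambda=(p)$. Then:
\begin{itemize}
\item $S_\lambda=\{T^p-c\}\cong\A^1_c$, and the map $X\to S_\lambda$ is $x\mapsto x^p$;
\item at the generic point of $S_\lambda$ the cycle is $p\cdot[c^{1/p}]$.
\end{itemize}
Write $(x_j,y_j)$ for coordinates on the $j$-th factor of $(\A^1\times\A^1)^p$. The invariant rational function $\sum_jx_jy_j/\sum_jy_j$ restricts to the common $x$-coordinate on $p^{-1}(S_\lambda)$. Hence the function field of $p^{-1}(S_\lambda)_{\mathrm{red}}$ contains $c^{1/p}$. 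The generic fiber is therefore not geometrically reduced over $k(c)$, so it is not $\A^p_{k(c)}$.

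The missing idea is to replace the base of the stratum, not only the total space. Suppose $\lambda$ has $a_i$ parts equal to $i$. Set:
\begin{itemize}
\item $C_\lambda\subset\prod_iX^{a_i}$, the open subset of tuples of pairwise distinct points;
\item $G_\lambda=\prod_iS_{a_i}$;
\item $\widetilde S_\lambda=C_\lambda/G_\lambda$;
\item $E_\lambda=(C_\lambda\times\prod_i(\A^i)^{a_i})/G_\lambda$, where $G_\lambda$ permutes the blocks linearly.
\end{itemize}
Since $G_\lambda$ acts freely on $C_\lambda$, the map $C_\lambda\to\widetilde S_\lambda$ is a finite \'etale $G_\lambda$-torsor. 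Hence $E_\lambda\to\widetilde S_\lambda$ is a rank $n$ vector bundle. Over its scheme-theoretic points your Weil restriction/Hilbert 90 argument is now correct, and Lemma~\ref{lem:fibration-general} gives $[E_\lambda]=[\widetilde S_\lambda]\L^n$.

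The natural maps $\widetilde S_\lambda\to S_\lambda$ and $E_\lambda\to p^{-1}(S_\lambda)$ are bijective on $K$-points for every algebraically closed $K$. So Lemma~\ref{lem:bijection-same-class} yields $[p^{-1}(S_\lambda)]=[S_\lambda]\L^n$ in $\widetilde K_0(\mathrm{Var}/k)$. In the example above, $\widetilde S_\lambda\to S_\lambda$ is exactly the Frobenius $x\mapsto x^p$. This inseparability is why the identity is only asserted in $\widetilde K_0$; in characteristic zero it does not arise.
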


We give explicit formulas for Grassmannians, smooth quadrics, $\Sym^2\P^n$, and $\Sym^2(\P^1\times \P^n)$ in terms of $\L$.

\begin{lem}\label{lem:formulas-Gr-quadric-Sym-Pn}
    Let \(k\) be a field, and let \(n\) be an integer.
    \begin{enumerate}
        \item\label{item:class-of-Gr} Let \(1\leq r\leq n\). The following equality holds in \(K_0(\mathrm{Var}/k)\):
        \[[\Gr(r,n)]= \binom{n}{r}_{\L} \coloneqq \prod_{i=1}^r \frac{(1-\L^{n-r+i})}{(1-\L^i)}.\]
        \item\label{item:class-of-quadric} Assume \(n\geq 1\). If \(k\) has characteristic \(\neq 2\) and \(Q^n\subset\P^{n+1}\) is a smooth quadric containing a \(\lfloor\frac{n}{2}\rfloor\)-dimensional linear subspace defined over \(k\), then in \(K_0(\mathrm{Var}/k)\):
        \[[Q^n] = \begin{cases}
        [\mathbb P^n]+\L^{n/2} & \text{if \(n\) is even}, \\
        [\mathbb P^n] & \text{if \(n\) is odd}.
        \end{cases}\]
        \item\label{item:class-of-Sym-Pn} Assume \(n\geq 0\). The following equalities hold in \(\widetilde{K}_0(\mathrm{Var}/k)\):
            \begin{equation*}
            \begin{split}
                [\Sym^2 \P^n] &= \frac{(1-\L^{n+2})(1-\L^{n+1})}{(1-\L)(1-\L^2)} , \\
                [\Sym^2(\P^1\times\P^n)] &= \frac{(1-\L^{n+1})(1+\L+2\L^2-2\L^{n+2}-\L^{n+3}-\L^{n+4})}{(1-\L)(1-\L^2)} .
            \end{split}
        \end{equation*}
    \end{enumerate}
\end{lem}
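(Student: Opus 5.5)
For part \eqref{item:class-of-Gr} I would use the Schubert cell decomposition of \(\Gr(r,n)\). Fix a complete flag in \(k^n\). Then \(\Gr(r,n)\) is a disjoint union of locally closed cells \(\A^{|\lambda|}\), one for each partition \(\lambda\) in an \(r\times(n-r)\) box. This gives \([\Gr(r,n)]=\sum_\lambda \L^{|\lambda|}\), and this sum is the Gaussian binomial \(\binom{n}{r}_{\L}\) by the standard generating function identity. Alternatively, I can induct on \(n\). Using the hyperplane \(k^{n-1}\subset k^n\), split \(\Gr(r,n)\) into the closed locus \(\Gr(r,n-1)\) and its complement. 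The complement is a \(\Gr(r-1,n-1)\)-bundle with affine-space fiber \(\A^{n-r}\), which is piecewise trivial by Lemma~\ref{lem:fibration-general}. This gives the recursion \(\binom{n}{r}_\L=\binom{n-1}{r}_\L+\L^{n-r}\binom{n-1}{r-1}_\L\), which the product formula also satisfies.

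For part \eqref{item:class-of-quadric} I would induct on \(n\) by projecting from a \(k\)-point. The quadric has a \(k\)-rational linear subspace, so it has a \(k\)-point \(p\). Projection from \(p\) identifies \(Q^n\setminus T_pQ^n\) with \(\A^n\). The tangent section \(T_pQ^n\cap Q^n\) is a cone with vertex \(p\) over a smooth quadric \(Q^{n-2}\subset\P^{n-1}\), and \(Q^{n-2}\) still contains a \(k\)-rational \(\lfloor\frac{n-2}{2}\rfloor\)-plane (project the given maximal isotropic space through \(p\)). So \([Q^n]=\L^n+1+\L[Q^{n-2}]\). The base cases are \(Q^0\), which is two points and has class \(2=[\P^0]+\L^0\), and \(Q^1\cong\P^1\), a conic with a \(k\)-point. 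The induction then closes. The one point needing care is arranging \(p\) and the rational subspace compatibly; I would take \(p\) inside the given linear subspace \(\Lambda\), so that \(\Lambda/p\) gives the required subspace of \(Q^{n-2}\).

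For part \eqref{item:class-of-Sym-Pn} I would use Lemma~\ref{lem:K_0-sym-formulas}. Write \(\P^n=\sqcup_{i=0}^n\A^i\). By part~\eqref{item:formula-sym} and Totaro's formula,
\[Z_{\mathrm{mot}}(\P^n,t)=\prod_{i=0}^n Z_{\mathrm{mot}}(\A^0,\L^i t)=\prod_{i=0}^n (1-\L^it)^{-1},\]
since \(\Sym^m(\mathrm{pt})=\mathrm{pt}\). The coefficient of \(t^2\) is \(\sum_{0\le i\le j\le n}\L^{i+j}\), which is the Gaussian binomial \(\binom{n+2}{2}_\L\). For \(\P^1\times\P^n\) I would do the same with the cell decomposition into cells \(\A^{a+b}\), \(a\in\{0,1\}\), \(0\le b\le n\). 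The class is then \(h_2\) evaluated on the multiset \(\{\L^{a+b}\}\), that is, \(\tfrac12(p_1^2+p_2)\) with
\[p_1=(1+\L)[\P^n],\qquad p_2=(1+\L^2)\frac{1-\L^{2n+2}}{1-\L^2}.\]
Since \(\L\)-denominators should be avoided, I would instead compute the complete symmetric sum directly as a polynomial: the sum over unordered pairs of cells. I would then check that it equals the stated quotient by multiplying out, which is a routine polynomial identity and the only tedious step.
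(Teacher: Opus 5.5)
Your proposal is correct, and for \eqref{item:class-of-Gr} it is the paper's argument. The paper runs the same recursion $[\Gr(r,n)]=[\Gr(r,n-1)]+\L^{n-r}[\Gr(r-1,n-1)]$, quoting it from Martin, whereas you derive it directly: the complement of $\Gr(r,n-1)$ is an $\A^{n-r}$-bundle over $\Gr(r-1,n-1)$. Schubert cells would work equally well.

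For \eqref{item:class-of-quadric} the paper only cites Kuznetsov--Shinder. Your projection from a point $p\in\Lambda$, giving $[Q^n]=\L^n+1+\L[Q^{n-2}]$, is the self-contained argument behind that citation; it is the case $S=\Spec k$, $r=0$ of Proposition~\ref{prop:KS-hyperbolic-reduction}\eqref{item:KS-hyperbolic-reduction-formula}. The hypothesis on $\Lambda$, rather than a mere $k$-point, is what lets the induction close. Taking $p\in\Lambda$ passes that hypothesis down to $Q^{n-2}$ without any appeal to Witt cancellation.

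For \eqref{item:class-of-Sym-Pn} the paper inducts on $n$, using the cut-and-paste relations $[\Sym^2\P^n]=\L^{2n}+[\P^{n-1}]\L^n+[\Sym^2\P^{n-1}]$ and the analogous one for $\P^1\times\P^n$. You use the same two inputs from Lemma~\ref{lem:K_0-sym-formulas}, but apply them to all cells at once. Then $Z_{\mathrm{mot}}=\prod(1-\L^{d}t)^{-1}$ over the cell dimensions $d$, and $[\Sym^2]$ is the complete homogeneous polynomial $h_2$ evaluated at the $\L^{d}$. This gives the closed forms without induction and makes $\binom{n+2}{2}_{\L}$ transparent, at the cost of one polynomial identity for $\P^1\times\P^n$.

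You handle that identity correctly. Verify it in $\Z[q]$, which is a domain, so you may work in $\Q(q)$ and use $h_2=\tfrac12(p_1^2+p_2)$ there; then specialize $q\mapsto\L$. Over the denominator $2(1-q)(1-q^2)$, the numerator is $(1-q^{n+1})$ times $(1+q)^3(1-q^{n+1})+(1+q^2)(1-q)(1+q^{n+1})=2(1+q+2q^2-2q^{n+2}-q^{n+3}-q^{n+4})$, as required.

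Like the paper's proof, your proof of \eqref{item:class-of-Sym-Pn} goes through Lemma~\ref{lem:K_0-sym-formulas}, which is stated only for perfect $k$.
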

In~\eqref{item:class-of-Gr}, the Gaussian binomial coefficient \(\binom{n}{r}_{\L}\) a priori appears to be a rational function, but in fact it is a polynomial in \(\L\) with nonnegative integer coefficients \cite[Section 1.7]{Stanley12}.

\begin{proof}
    For~\eqref{item:class-of-Gr}, induct on \(r\). If \(r=1\) then \([\Gr(1,n)]=[\P^{n-1}]=\sum_{i=0}^{n-1}\L^i\). Now assume \(r\geq 2\) and the formula holds for \(\Gr(r-1,n')\) for any \(r-1\leq n'\). To show the formula for \(\Gr(r,n)\), we induct on \(n\). The base case is \([\Gr(r,r)]=1\), and the induction step follows from the relation
    \([\Gr(r,n)] = [\Gr(r,n-1)] + \L^{n-r} [ \Gr(r-1,n-1) ]\)
    \cite[Proposition 2.1]{Martin16} (this result is stated over \(\mathbb C\) but the argument works over any field).
    
    \eqref{item:class-of-quadric} is \cite[Example 2.8]{KuznetsovShinder18}.
    For~\eqref{item:class-of-Sym-Pn}, induct on \(n\), using Lemma~\ref{lem:K_0-sym-formulas} for the relations
    \([\Sym^2 \P^n] = \L^{2n} + [\P^{n-1}]\L^n + [\Sym^2 \P^{n-1}]\) and \([\Sym^2(\P^1\times\P^n)] = [\P^2]\L^{2n} + [\P^1]^2[\P^{n-1}]\L^n + [\Sym^2(\P^1\times\P^{n-1})]\) for \(n\geq 1\).
\end{proof}

\subsection{Linear subspaces on quadrics and quadric fibrations}\label{sec:prelim-F_r}

Throughout this section, we work over a field \(k\) of characteristic \(\neq 2\). We recall some definitions and results about quadric fibrations and about the linear subspaces contained in quadrics and in smooth complete intersections of two quadrics.

Recall that if \(Q^n\subset\P^{n+1}\) is a smooth quadric of dimension \(n\), then \(Q^n_\kbar\) contains \(r\)-dimensional linear subspaces if and only if \(0\leq r\leq \lfloor\frac{n}{2}\rfloor\). We denote by \(F_r(Q^n)\) the Fano scheme of \(r\)-dimensional linear subspaces on \(Q^n\).
If \(r\neq\frac{n}{2}\), then \(F_r(Q^n)\) is smooth and geometrically irreducible, and if \(k=\kbar\) then we denote it by \(\OG(r+1,n+2)\). If \(n=2g\) is even and \(r=g\), then \(F_g(Q^{2g}_{\kbar})\) is disconnected with two irreducible components, each of which is isomorphic to \(F_{g-1}(Q^{2g-1}_{\kbar})\), and we let \(\OG(g+1,2g+2)\) denote one of these components (see, e.g., \cite[Theorems 22.13 and 22.14]{Harris92}).
Since \(\OG(r+1,n+2)\) is a projective homogeneous variety under \(SO(n+2)\), it is a rational variety (see \cite[\S 14.12]{Borel91}).

Let \(S\) be a variety over \(k\), and let \(\pi\colon\mathcal Q\to S\) be a fibration in \(n\)-dimensional quadrics; that is, \(\pi\) is flat, there is a rank \(n+2\) vector bundle \(\mathcal E\) on \(S\), and \(\mathcal Q\hookrightarrow\mathbb P_S(\mathcal E)\) embeds as a divisor that has relative degree 2 over \(S\).
Then \(\pi\) is defined by a quadratic form \(q\colon \Sym^2\mathcal E\to\mathcal L^\vee\) with values in a line bundle \(\mathcal L^\vee\), or, equivalently, a self-dual morphism \(q\colon\mathcal E\otimes\mathcal L\to\mathcal E^\vee\).
We say that \(\pi\) has \defi{simple degeneration} if the fibers of \(\pi\) have corank at most 1.
If \(S\) and the generic fiber of \(\pi\) are smooth, then the degeneracy locus of \(\pi\) is smooth if and only if \(\mathcal Q\) is smooth and \(\pi\) has simple degeneration \cite[Proposition 1.2.5]{ABB14}.

For \(0 \leq r \leq \lfloor\frac{n+1}{2}\rfloor\), let \(F_r(\mathcal Q/S)\) denote the relative Fano scheme of \(r\)-planes in the fibers of \(\pi\); there is a natural map \(F_r(\mathcal Q/S)\to S\). If \(\mathcal Q\) and \(S\) are smooth and \(\pi\) has simple degeneration, then \(F_r(\mathcal Q/S)\) is smooth by \cite[Proposition 2.1 and Remark 2.2]{Kuznetsov14}.
If \(n=2g\) is even, \(S\) is smooth, and \(\pi\) has simple degeneration along a smooth divisor, then the Stein factorization of \(F_g(\mathcal Q/S)\to S\) is a double cover \(\widetilde{S}\to S\) that is isomorphic to the discriminant double cover \(\underline{\Spec}_S(\mathcal O_S\oplus(\det(\mathcal E)\otimes\mathcal L^{g+1}))\to S\) \cite[Proposition B.5]{ABB14}.

Now let \(X\subset\P^N\) be a smooth complete intersection of two quadrics over \(k\). Then \(X\) is the base locus of a pencil \(\phi\colon \mathcal Q\to\P^1\) of quadric \((N-1)\)-folds, and \(\phi\) has simple degeneration along a smooth divisor by \cite[Proposition 2.1]{Reid-thesis}. If \(N=2g+1\) is odd, then the discriminant double cover is a genus \(g\) hyperelliptic curve \(C\to\mathbb P^1\) branched over the $2g+2$ points parametrizing the singular members of the pencil.
If \(N=2g\) is even, then the associated ``stacky curve" is the root stack \(\widehat{C}\to\P^1\) with \(\mathbb Z/2\mathbb Z\)-stabilizers at the \(2g+1\) points in \(\P^1\) parametrizing the singular fibers (see \cite[Example 2.2]{Kuznetsov08}).
We record the following result on the Fano schemes of linear subspaces contained in \(X\).

\begin{lem}[{\cite[Theorems 2.6, 3.8, and 4.8]{Reid-thesis}, \cite[Lemmas A.3 and A.4]{CT-intersection-quadrics}}]\label{lem:F_r(X)}
    Let \(X\subset\P^N\) be a smooth complete intersection of two quadrics. The following hold:
    \begin{enumerate}
        \item\label{item:lem-F_r(X)-dim} \(F_r(X)\neq\emptyset\) if and only if \(0\leq r\leq\lfloor\frac{N}{2}\rfloor-1\). If \(F_r(X)\) is nonempty, then it is smooth and projective of dimension \((r+1)(N-2r-2)\).
        \item \(F_r(X)\) is nonempty and geometrically connected if and only if \(0\leq r<\frac{N}{2}-1\).
        \item\label{item:lem-F_r(X)-maximal-even} If \(\dim F_r(X)=0\), then \(N=2g\), \(r=g-1\), and \(F_{g-1}(X)\) is a reduced finite scheme of length \(2^{2g}\).
        \item\label{item:lem-F_r(X)-maximal-odd} If \(N=2g+1\) and \(r=g-1\), then \(F_{g-1}(X)\) is a torsor under the Jacobian of \(C\). 
    \end{enumerate}
\end{lem}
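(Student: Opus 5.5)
This is a classical result, and the plan is to reduce to Reid's arguments \cite{Reid-thesis} (see also \cite[Appendix A]{CT-intersection-quadrics}), organizing them around the description of \(F_r(X)\) as a zero locus in a Grassmannian. Write \(X=Q_1\cap Q_2\). Let \(G=\Gr(r+1,N+1)\) with tautological subbundle \(\mathcal U\). Each quadric \(Q_i\) gives a section \(s_i\) of \(\Sym^2\mathcal U^\vee\), and \(F_r(X)\) is the zero scheme of \((s_1,s_2)\in H^0(G,\Sym^2\mathcal U^\vee\oplus\Sym^2\mathcal U^\vee)\). This immediately gives the expected dimension
\[
(r+1)(N-r)-(r+1)(r+2)=(r+1)(N-2r-2),
\]
and \(F_r(X)\) is projective since it is closed in \(G\).

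For (1), I would first prove that \(F_r(X)\) is smooth of the expected dimension wherever it is nonempty. Equivalently, for every \(r\)-plane \(\Lambda\subset X\), the differential
\[
H^0(N_{\Lambda/\P^N})=H^0(\O_\Lambda(1))^{\oplus(N-r)}\to H^0(\O_\Lambda(2))^{\oplus 2}
\]
must be surjective. By Reid's argument, failure of surjectivity produces a nonzero linear relation, which yields a member \(\lambda_1Q_1+\lambda_2Q_2\) of the pencil singular at a point of \(\Lambda\subset X\). This is impossible: the pencil has simple degeneration, and the vertices of the singular members do not lie on \(X\) because \(X\) is smooth. This surjectivity is the main obstacle, so I would follow Reid's linear-algebra argument closely, diagonalizing the pencil over \(\kbar\) as \(\sum x_i^2\), \(\sum\lambda_ix_i^2\) with distinct \(\lambda_i\).

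For nonemptiness when \(r\le\lfloor N/2\rfloor-1\), I would induct on \(N\). Take a singular member \(Q_0\) of the pencil, a cone with vertex \(v\notin X\). Projecting from \(v\) identifies \(X\) with a double cover of a smooth quadric of dimension \(N-2\), branched along a smooth complete intersection of two quadrics in \(\P^{N-1}\). Linear spaces on the quadric of dimension up to \(\lfloor (N-2)/2\rfloor\) lift, after intersecting with \(X\), to \(r\)-planes. Alternatively, nonemptiness follows from nonvanishing of the top Chern class \(c_{\mathrm{top}}(\Sym^2\mathcal U^\vee)^2\) when the expected dimension is \(\ge 0\). The converse direction, emptiness for \(r\ge\lfloor N/2\rfloor\), holds because an \(r\)-plane in \(X\) lies in every smooth member of the pencil, which forces \(r\le\lfloor (N-1)/2\rfloor\). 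The borderline case \(N\) odd, \(r=(N-1)/2\) is excluded because the expected dimension becomes negative while smoothness of the expected dimension would still hold.

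For (2), when the expected dimension is positive, the Koszul complex of the ample bundle \(\Sym^2\mathcal U^\vee\oplus\Sym^2\mathcal U^\vee\) resolves \(\O_{F_r(X)}\), and Bott vanishing on \(G\) gives \(H^0(\O_{F_r(X)})=k\), hence connectedness. The dimension is positive exactly when \(r<\frac N2-1\). When \(r=\frac N2-1\) the dimension is zero and \(F_r(X)\) has more than one point, by (3).

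For (3), dimension zero forces \(N=2r+2\), so \(N=2g\) and \(r=g-1\). The scheme is reduced by the smoothness above, and its length is the degree \(\int_G c_{\mathrm{top}}(\Sym^2\mathcal U^\vee)^2=2^{2g}\). I would check this degree either by a Schubert-calculus computation or by Reid's explicit count in diagonal coordinates.

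For (4), I would follow Reid and Donagi. Given \(\Lambda,\Lambda'\in F_{g-1}(X)\), one constructs a degree zero divisor class on \(C\) from the rulings of the quadrics of the pencil containing hyperplanes through \(\Lambda\) and \(\Lambda'\). This defines a free and transitive action of \(\Jac(C)\) on \(F_{g-1}(X)\), exactly as in \cite[Theorem 4.8]{Reid-thesis}.
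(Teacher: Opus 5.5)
Your proposal has two genuine gaps; the first is essential.

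\emph{Nonemptiness.} The lifting step in your projection argument is false. In diagonal coordinates take $v=e_0$ and $q=\sum_{i\ge1}x_i^2$. For an $r$-plane $\bar\Lambda$ on the base $\bar Q_0$ of the cone, $X\cap\langle v,\bar\Lambda\rangle=\{x_0^2=-q|_{\bar\Lambda}\}$ is the double cover of $\bar\Lambda$ branched along $\bar\Lambda\cap B$, where $B$ is the branch locus. It is a union of two $r$-planes only when $q|_{\bar\Lambda}$ has rank at most one, i.e.\ exactly when $\bar\Lambda$ is already the image of an $r$-plane of $X$. So the projection only reformulates the problem. The induction through $B$ does not produce such $\bar\Lambda$ either: an $(r-1)$-plane of $B$ extends only if it lies in an $r$-plane of $X$, and a dimension count shows that a general one does not once $N<3r+2$. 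That range includes all $r$ close to $\lfloor N/2\rfloor-1$.

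Your Chern-class alternative does not close the gap. By your own smoothness result, $c_{\mathrm{top}}(\Sym^2\mathcal U^\vee)^2$ is the class of $F_r(X)$ for every smooth $X$, so its nonvanishing is equivalent to what you want to prove, and you give no computation. A short fix is the classical explicit construction. Take $X=\{\sum x_i^2=\sum\lambda_ix_i^2=0\}$ and $f(t)=\prod_i(t-\lambda_i)$, and set $w_j=(\lambda_i^j/\sqrt{f'(\lambda_i)})_{0\le i\le N}$ for $0\le j<\lfloor N/2\rfloor$. These vectors are linearly independent (Vandermonde) and span a subspace isotropic for both forms, since $\sum_i\lambda_i^s/f'(\lambda_i)=0$ for $0\le s\le N-1$; sub-planes then give all smaller $r$. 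Alternatively, the Schubert computation $\int c_{\mathrm{top}}(\Sym^2\mathcal U^\vee)^2=2^{2g}\neq0$ on $\Gr(g,2g+1)$, which you postpone to (3), gives nonemptiness for $N=2g$, $r=g-1$. The smooth hyperplane section $X\cap\{x_N=0\}$ then handles odd $N$.

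\emph{Connectedness.} In (2) the only nontrivial input is asserted rather than proved. The bundle $\Sym^2\mathcal U^\vee$ is not ample for $r\ge1$: on a line of $G$, $\mathcal U^\vee\cong\O^{r}\oplus\O(1)$. This is harmless for the Koszul resolution, which only needs the section to be regular. But the vanishing $H^j(G,\wedge^j(\Sym^2\mathcal U\oplus\Sym^2\mathcal U))=0$ for $j\ge1$ is not a formal consequence of ``Bott vanishing''. The identical argument for a single smooth quadric $Q\subset\P^{2m+1}$ and $r=m$ would show that the positive-dimensional scheme $F_m(Q)$ is connected, which is false. You must actually carry out the plethysm and Borel--Weil--Bott check for two quadrics. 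Since Bott's theorem is unavailable in positive characteristic, you also need a lifting argument to cover characteristic $p\neq2$, where the paper uses the lemma: the diagonal form lifts to characteristic zero, the relative $F_r$ is smooth and proper over the base, and geometric connectedness specializes.

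By contrast, the smoothness step you single out as the main obstacle is fine. Write $\Lambda=\P(W)$. A nonzero functional vanishing on the image of the differential is a pair $(S_1,S_2)\in(\Sym^2W)^{\oplus2}$ with $S_1\beta_1+S_2\beta_2=0$, where $\beta_i(v)=B_i(v,-)|_W$. In coordinates with $B_1=I$, $B_2=D$, this reads $T_1=-T_2D$ for the symmetric matrices $T_i$ of $S_i$. Hence $T_2$ commutes with $D$ and is diagonal, and a nonzero entry would put a vertex $e_i$ of a singular member into $\Lambda\subset X$.
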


\subsection{Hyperbolic reductions of quadric fibrations}\label{sec:prelim-hyperbolic-red}

In this section, following \cite[Section 2]{KuznetsovShinder18}, we recall definitions and properties of quadric fibrations and their hyperbolic reductions. Throughout this section, we work over a field \(k\) of characteristic \(\neq 2\).

Let \(S\) be a variety over \(k\), and let \(\pi\colon\mathcal Q \subset \mathbb P_S(\mathcal E)\to S\) be a fibration in \(n\)-dimensional quadrics as defined in Section~\ref{sec:prelim-F_r}. Let \(r\geq 0\) be an integer.
An \defi{\(r\)-section} of \(\pi\) is a section of the morphism \(F_r(\mathcal Q/S)\to S\).
Equivalently, an \(r\)-section is defined by a rank \(r+1\) vector subbundle \(\mathcal F_{r+1}\hookrightarrow\mathcal E\) that is isotropic with respect to the quadratic form defining \(\mathcal Q\), meaning that \(\mathcal F_{r+1}\) is contained in the subsheaf \(\mathcal F_{r+1}^\perp\coloneqq\Ker(\mathcal E\xrightarrow{q}\mathcal E^\vee\otimes\mathcal L^\vee\twoheadrightarrow\mathcal F_{r+1}^\vee\otimes\mathcal L^\vee)\) of \(\mathcal E\).
An \(r\)-section \(s\) is \defi{nondegenerate} if for every geometric point \(t\in S\), the \(r\)-plane \(s(t)\) does not intersect the singular locus of \(\mathcal Q_t\); this is equivalent to the condition that the composition \(\mathcal E\xrightarrow{q} \mathcal E^\vee \otimes\mathcal L^\vee \twoheadrightarrow \mathcal F_{r+1}^\vee \otimes\mathcal L^\vee\) is surjective.
If \(s\) is not nondegenerate, we say that it is \defi{degenerate}.

\begin{defn}\label{defn:hyperbolic-red}
    Let \(s\) be a nondegenerate \(r\)-section of \(\pi\) corresponding to \(\mathcal F_{r+1}\hookrightarrow\mathcal E\).
    The induced quadratic form \(q^{(r)}_s\colon\Sym^2(\mathcal F_{r+1}^\perp/\mathcal F_{r+1})\to\mathcal L^{\vee}\) is the \defi{hyperbolic reduction} of \(q\colon\Sym^2 \mathcal E\to\mathcal L^\vee\) with respect to the isotropic subbundle \(\mathcal F_{r+1}\). The family \(\mathcal Q^{(r)}_s\subset\mathbb P_S(\mathcal F_{r+1}^\perp/\mathcal F_{r+1})\) of quadrics defined by \(q^{(r)}_s\) is the \defi{hyperbolic reduction} of \(\pi\) with respect to \(s\), and we denote the projection to \(S\) by \(\pi^{(r)}_s\colon\mathcal Q^{(r)}_s\to S\).
\end{defn}

If \(s\) is nondegenerate, \(r\leq\lfloor\frac{n}{2}\rfloor-1\), and the fibers of $\pi$ have corank at most $n-2r-1$, then \(\pi^{(r)}_s\colon\mathcal Q^{(r)}_s\to S\) is a flat family of \((n-2r-2)\)-dimensional quadrics (see~\eqref{item:KS-hyperbolic-red-degeneracy} below).

\begin{prop}[{\cite[Remark 2.3, Lemma 2.4, Proposition 2.5, Corollary 2.7, Lemma 2.10]{KuznetsovShinder18}}]\label{prop:KS-hyperbolic-reduction}
    Let \(\pi\colon\mathcal Q\to S\) be a flat family of \(n\)-dimensional quadrics as above. The following hold:
    \begin{enumerate}
        \item\label{item:KS-smooth-nondegenerate-section} If \(\mathcal Q\) is smooth, then every \(r\)-section of \(\pi\) is nondegenerate.
        \item\label{item:KS-hyperbolic-red-degeneracy} If \(s\) is a nondegenerate \(r\)-section of \(\pi\), then hyperbolic reduction along \(s\) preserves the coranks of fibers.
        If moreover \(r\leq\lfloor\frac{n}{2}\rfloor-1\) and the fibers of \(\pi\) have corank at most \(n-2r-1\) (e.g., this holds if \(\pi\) has simple degeneration), then \(\pi^{(r)}_s\colon\mathcal Q^{(r)}_s\to S\) is a flat family of \((n-2r-2)\)-dimensional quadrics.
        \item\label{item:KS-hyperbolic-red-blow-up-diagram} Let \(s\) be a nondegenerate \(r\)-section of \(\pi\) corresponding to \(\mathcal F_{r+1}\hookrightarrow \mathcal E\), and let \(\widetilde{\mathcal{Q}}\) be the blow-up of \(\mathcal Q\) in \(\mathbb P_S(\mathcal F_{r+1})\). Then we have the following diagram over \(S\)
        \[
        \begin{tikzcd}
        & \widetilde{\mathcal{Q}} \arrow[ld, "\bl_{\mathbb P_{S}(\mathcal F_{r+1})}"'] \arrow[rd, "h"] & \\
\mathcal{Q} \arrow[rr, "\pi_{\mathbb P_{S}(\mathcal F_{r+1})}|_{\mathcal Q}", dashed, swap]& & \mathbb P_{S}(\mathcal E/\mathcal F_{r+1}) & \mathcal Q^{(r)}_s \arrow[l,hook']
        \end{tikzcd}
        \]
        where \(h\) is a (Zariski locally trivial) \(\mathbb P^{r+1}\)-bundle over \(\mathcal Q^{(r)}_s\) and is a (Zariski locally trivial) \(\mathbb P^r\)-bundle over \(\mathbb P_{S}(\mathcal E/\mathcal F_{r+1})\setminus\mathcal Q^{(r)}_s\).
        \item\label{item:KS-hyperbolic-reduction-formula} If \(s\) is a nondegenerate \(r\)-section of \(\pi\), then the following equality holds in \(K_0(\mathrm{Var}/k)\): \[[\mathcal Q] = [S][\P^r](1+\L^{n-r})+[\mathcal Q^{(r)}_s]\L^{r+1}.\]
        Furthermore, the proof of \cite[Corollary 2.7]{KuznetsovShinder18} shows this equality holds in the Grothendieck ring \(K_0(\mathrm{Var}/S)\) of varieties over \(S\) \cite[Definition 1.2.2 and Proposition 2.2.1]{MotivicIntegrationBook}.
        \item\label{item:KS-hyperbolic-reduction-indep} If \(\pi\) has a nondegenerate \(r\)-section, then the class of \(\mathcal Q^{(r)}_s\) in \(K_0(\mathrm{Var}/k)\) is independent of the choice of a nondegenerate \(r\)-section \(s\).
    \end{enumerate}
\end{prop}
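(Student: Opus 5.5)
The statement collects results of \cite{KuznetsovShinder18}. I would reprove it from one local normal form and then do bookkeeping in the Grothendieck ring.

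\textbf{Local normal form.} Let \(s\) be a nondegenerate \(r\)-section given by \(\mathcal F=\mathcal F_{r+1}\subset\mathcal E\). Nondegeneracy says \(\mathcal E\to\mathcal F^\vee\otimes\mathcal L^\vee\) is surjective. Hence, Zariski locally on \(S\), one can choose a subbundle \(\mathcal F'\) that pairs perfectly with \(\mathcal F\) and is isotropic, and write
\[\mathcal E=\mathcal F\oplus\mathcal F'\oplus\mathcal W,\qquad \mathcal W\cong\mathcal F^\perp/\mathcal F,\]
with \(q\) equal to the hyperbolic form on \(\mathcal F\oplus\mathcal F'\) plus \(q^{(r)}_s\) on \(\mathcal W\).

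\textbf{Items (1) and (2).} For (1), suppose \(s(t_0)\) meets \(\Sing\mathcal Q_{t_0}\) at \(p=[x_0]\). Extend \(x_0\) to a local section \(v(t)\) of \(\mathcal F\). Differentiating \(q_t(v(t))\equiv0\) in \(t\) gives \(\partial_tq(x_0)+2b_{t_0}(x_0,v'(t_0))=0\). The second term vanishes because \(x_0\in\ker q_{t_0}\), so \(\partial_tq(x_0)=0\). Since the fibre derivatives also vanish at \(p\), \(\mathcal Q\) is singular at \(p\), a contradiction.

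For (2), the splitting shows \(\ker q_t=\ker q^{(r)}_{s,t}\), so coranks are preserved. If the corank is at most \(n-2r-1<\rank\mathcal W=n-2r\), then \(q^{(r)}_s\) is nonzero on every fibre. So \(\mathcal Q^{(r)}_s\) is a relative Cartier divisor in the projective bundle \(\mathbb P_S(\mathcal F^\perp/\mathcal F)\), and hence it is flat of relative dimension \(n-2r-2\).

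\textbf{Item (3).} Project from \(\mathbb P(\mathcal F)\). For a point \([u]\in\mathbb P(\mathcal E/\mathcal F)\), the span \(\langle\mathcal F,u\rangle\) is a \(\P^{r+1}\), and \(q\) restricts to it as \(2\lambda b(f,u)+\lambda^2q(u)\). There are three cases.
\begin{itemize}
\item If \(u\notin\mathcal F^\perp/\mathcal F\), the residual intersection is a hyperplane, so the fibre of \(h\) is \(\P^r\).
\item If \(u\in\mathcal F^\perp/\mathcal F\) and \(q^{(r)}(u)\neq0\), the intersection is \(\mathbb P(\mathcal F)\) doubled, and the fibre of \(h\) is the \(\P^r\) of normal directions inside the exceptional divisor.
\item If \(u\in\mathcal Q^{(r)}_s\), the whole \(\P^{r+1}\) lies in \(\mathcal Q\), and the fibre is \(\P^{r+1}\).
\end{itemize}
I would verify this by computing the blow-up in the coordinates of the splitting. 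The same computation gives Zariski local triviality.

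\textbf{Item (4).} The exceptional divisor is a \(\P^{n-r-1}\)-bundle over \(\mathbb P_S(\mathcal F)\). Computing \([\widetilde{\mathcal Q}]\) in two ways gives
\[[\mathcal Q]-[S][\P^r]+[S][\P^r][\P^{n-r-1}]=[\mathcal Q^{(r)}_s][\P^{r+1}]+\big([S][\P^{n-r}]-[\mathcal Q^{(r)}_s]\big)[\P^r].\]
Rearranging, and using \([\P^{r+1}]-[\P^r]=\L^{r+1}\) and \([\P^{n-r}]-[\P^{n-r-1}]=\L^{n-r}\), gives the stated formula. Every bundle involved lives over \(S\), so the identity also holds in \(K_0(\mathrm{Var}/S)\).

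\textbf{Item (5), the main obstacle.} One cannot cancel \(\L^{r+1}\) in (4), because \(\L\) is a zero divisor. Instead I would argue by noetherian induction on \(S\). Take two nondegenerate \(r\)-sections \(s,s'\) and a generic point \(\eta\) of \(S\) (or of a stratum). Over \(\kappa(\eta)\) we have \(H\oplus W\cong H'\oplus W'\) with \(H,H'\) hyperbolic of the same rank. Witt cancellation holds for possibly degenerate forms in characteristic \(\neq2\), since the radical splits off, so \(W\cong W'\). This isometry spreads out to an open \(U\subset S\), giving \(\mathcal Q^{(r)}_s|_U\cong\mathcal Q^{(r)}_{s'}|_U\). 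Both sections restrict to nondegenerate sections over \(S\setminus U\), so by induction the classes over the complement agree as well. Adding the two pieces gives the equality of classes in \(K_0(\mathrm{Var}/k)\).
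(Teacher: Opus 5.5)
Your proposal is correct. The paper does not prove this proposition itself: it cites Kuznetsov--Shinder, and your reconstruction follows the standard arguments behind those results. These are the local hyperbolic splitting, projection from $\mathbb P_S(\mathcal F_{r+1})$, computing $[\widetilde{\mathcal Q}]$ in two ways, and for (5), where $\L^{r+1}$ cannot be cancelled, Witt cancellation together with noetherian induction.

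The one unstated point is in (1): differentiating in $t$ tacitly uses that $S$ is smooth. This does hold, because $\pi$ is flat and surjective with smooth total space, so smoothness descends to $S$.
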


\begin{cor}
    If \(\mathcal Q\) and \(S\) are smooth over \(k\) and \(\pi\) has simple degeneration, then for any \(r\)-section \(s\), the hyperbolic reduction \(\mathcal Q^{(r)}_s\) is smooth over \(k\).
\end{cor}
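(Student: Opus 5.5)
Since \(\mathcal Q\) is smooth, Proposition~\ref{prop:KS-hyperbolic-reduction}\eqref{item:KS-smooth-nondegenerate-section} makes every \(r\)-section \(s\) nondegenerate, so the hyperbolic reduction \(\mathcal Q^{(r)}_s\) is defined. By Proposition~\ref{prop:KS-hyperbolic-reduction}\eqref{item:KS-hyperbolic-red-degeneracy}, hyperbolic reduction preserves the coranks of fibers; in particular \(\pi^{(r)}_s\) again has simple degeneration. Its degeneracy locus is the same closed subset of \(S\) as that of \(\pi\). The plan is to show that this degeneracy locus is smooth \emph{as a scheme}, and then to apply \cite[Proposition 1.2.5]{ABB14} to \(\pi^{(r)}_s\). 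That result says \(\mathcal Q^{(r)}_s\) is smooth provided \(S\) and the generic fiber of \(\pi^{(r)}_s\) are smooth and the degeneracy locus is smooth.

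The key step is to compare the scheme structures of the two degeneracy loci, each cut out by the determinant of the quadratic form viewed as a section of a line bundle. I will work locally on \(S\), where \(\mathcal F_{r+1}\subset\mathcal F_{r+1}^\perp\subset\mathcal E\) can be trivialized. Since \(s\) is nondegenerate, \(q\) induces a perfect pairing between \(\mathcal F_{r+1}\) and \(\mathcal E/\mathcal F_{r+1}^\perp\). So locally one can choose a complement \(\mathcal G\) to \(\mathcal F_{r+1}^\perp\), paired perfectly with \(\mathcal F_{r+1}\), and then split off the hyperbolic summand \(\mathcal F_{r+1}\oplus\mathcal G\). This gives an orthogonal decomposition
\[\mathcal E\cong(\mathcal F_{r+1}\oplus\mathcal G)\perp \mathcal W,\qquad \mathcal W\cong\mathcal F_{r+1}^\perp/\mathcal F_{r+1},\]
with \(q|_{\mathcal W}=q^{(r)}_s\). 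Because the hyperbolic part has unit determinant \(\pm1\), we get \(\det q=\pm\det q^{(r)}_s\) up to a unit. Hence the two degeneracy loci agree as subschemes of \(S\), and the degeneracy locus of \(\pi^{(r)}_s\) is smooth.

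It remains to handle the generic fiber and the hypotheses of \cite[Proposition 1.2.5]{ABB14}. The generic fiber of \(\pi\) is smooth, since the degeneracy locus is a proper divisor, or else we deal with the trivial case separately. By corank preservation, the generic fiber of \(\pi^{(r)}_s\) is then a smooth quadric of dimension \(n-2r-2\). When this dimension is \(\geq 0\), the family is flat by Proposition~\ref{prop:KS-hyperbolic-reduction}\eqref{item:KS-hyperbolic-red-degeneracy}, and the cited criterion applies. Edge cases where \(n-2r-2<0\), i.e.\ the reduction is empty or a length-\(2\) scheme, need separate handling: for dimension \(0\) with smooth discriminant, the reduction is the double cover branched along a smooth divisor, which is smooth; for negative dimension there is nothing to prove.

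The main obstacle is the local orthogonal splitting, which gives the scheme-theoretic equality of discriminants. Smoothness of the degeneracy locus is a scheme-theoretic condition, so equality of the underlying sets of degenerate points would not be enough. A fallback is to use the blow-up diagram of Proposition~\ref{prop:KS-hyperbolic-reduction}\eqref{item:KS-hyperbolic-red-blow-up-diagram}: \(\widetilde{\mathcal Q}\) is smooth, being a blow-up of the smooth \(\mathcal Q\) along the smooth center \(\mathbb P_S(\mathcal F_{r+1})\). Over \(\mathcal Q^{(r)}_s\), \(h\) restricts to a Zariski locally trivial \(\mathbb P^{r+1}\)-bundle \(h^{-1}(\mathcal Q^{(r)}_s)\to\mathcal Q^{(r)}_s\). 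One would then try to identify \(h^{-1}(\mathcal Q^{(r)}_s)\) with the smooth exceptional divisor of the blow-up. However, that identification needs its own justification.
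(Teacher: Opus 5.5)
Your main argument is the paper's: the degeneracy locus of \(\pi\) is smooth by \cite[Proposition 1.2.5]{ABB14}, you transfer this to \(\pi^{(r)}_s\), and the same criterion then gives smoothness of \(\mathcal Q^{(r)}_s\). Your local orthogonal splitting \(\mathcal E\cong(\mathcal F_{r+1}\oplus\mathcal G)\perp\mathcal W\) gives \(\det q=\det q^{(r)}_s\) up to a unit. This is correct, and it usefully makes explicit the scheme-theoretic equality of discriminants, which the paper leaves implicit in ``hyperbolic reduction preserves coranks''.

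The gap is in your edge cases. First, the bookkeeping is off: a length-\(2\) reduction has relative dimension \(0\), not negative. More seriously, relative dimension \(-1\) does not give an empty reduction. Take \(n=2g-1\) odd and \(r=g-1=\lfloor\frac{n}{2}\rfloor\), which is covered by the statement. Then \(\mathcal F_{r+1}^\perp/\mathcal F_{r+1}\) is a line bundle, so \(\mathbb P_S(\mathcal F_{r+1}^\perp/\mathcal F_{r+1})=S\), and \(\mathcal Q^{(g-1)}_s\subset S\) is the zero scheme of a rank-one form. This is the degeneracy locus of \(\pi\) (Lemma~\ref{lem:hyperbolic-reduction-properties-not-pencil}\eqref{item:hyperbolic-reduction-finite-set-maximal}), which is nonempty as soon as \(\pi\) has a singular fiber. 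So ``nothing to prove'' is false here. Nor does \cite[Proposition 1.2.5]{ABB14} apply directly, since \(\mathcal Q^{(g-1)}_s\to S\) is not a flat quadric fibration.

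The repair comes from your own key step. For a rank-one form, the zero scheme of the form is its degeneracy scheme. By your determinant comparison, that equals the degeneracy scheme of \(\pi\), so it is smooth. This is how the paper treats the case; the only empty reduction is \(n=2g\), \(r=g\).

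You should also drop the fallback. \(h^{-1}(\mathcal Q^{(r)}_s)\) has relative dimension \(n-r-1\) over \(S\), while the exceptional divisor of the blow-up of \(\mathcal Q\) along \(\mathbb P_S(\mathcal F_{r+1})\) has relative dimension \(n-1\). So the identification fails for every \(r\geq 1\). For \(r=0\) it also fails: the exceptional divisor maps onto all of \(\mathbb P_S(\mathcal F_1^\perp/\mathcal F_1)\), not onto \(\mathcal Q^{(0)}_s\).
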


\begin{proof}
    The degeneracy locus of \(\pi\) is smooth by \cite[Proposition 1.2.5]{ABB14}. If \(r\leq\lfloor\frac{n}{2}\rfloor-1\), then by Proposition~\ref{prop:KS-hyperbolic-reduction}\eqref{item:KS-hyperbolic-red-degeneracy}, \(\pi^{(r)}_s\colon\mathcal Q^{(r)}_s\to S\) has simple degeneration and smooth generic fiber. So \cite[Proposition 1.2.5]{ABB14} implies \(\mathcal Q^{(r)}_s\) is smooth.
    Now assume \(r=\lfloor\frac{n}{2}\rfloor\). If \(n=2g-1\) is odd then \(r=g-1\), and Lemma~\ref{lem:hyperbolic-reduction-properties-not-pencil}\eqref{item:hyperbolic-reduction-finite-set-maximal} below implies \(\mathcal Q^{(g-1)}_s\) is smooth. Finally, if \(n=2g\) is even, then \(r=g\) and the hyperbolic reduction is empty because \(\mathcal F_{r+1}^\perp/\mathcal F_{r+1}\) has rank \(0\) and hence its projectivization is empty.
\end{proof}

\begin{cor}\label{cor:formula-class-of-hyperbolic-reduction}
    Let \(1\leq r\) and \(0\leq i\leq r-1\) be integers such that \(i\leq\lfloor\frac{n}{2}\rfloor-1\) and the fibers of \(\pi\) have corank at most \(n-2i-1\), and let \(s_r\) (resp. \(s_i\)) be a nondegenerate \(r\)-section (resp. nondegenerate \(i\)-section) of \(\pi\). Then the following equality holds in \(K_0(\mathrm{Var}/k)\) and in \(K_0(\mathrm{Var}/S)\):
    \[[\mathcal Q^{(i)}_{s_i}] = [S][\P^{r-i-1}](1+\L^{n-r-i-1})+[\mathcal Q^{(r)}_{s_r}]\L^{r-i}.\]
\end{cor}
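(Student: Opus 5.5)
The plan is to reduce the comparison of \(\mathcal Q^{(i)}_{s_i}\) and \(\mathcal Q^{(r)}_{s_r}\) to a single application of Proposition~\ref{prop:KS-hyperbolic-reduction}\eqref{item:KS-hyperbolic-reduction-formula}. That proposition is applied not to \(\pi\) but to the quadric fibration \(\mathcal Q^{(i)}\), using the fact that hyperbolic reduction is transitive.

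I do not want to compare the two instances of formula~\eqref{item:KS-hyperbolic-reduction-formula} for \([\mathcal Q]\) directly. Subtracting them only yields an identity after multiplication by \(\L^{i+1}\), and \(\L\) is a zero divisor, so that route fails.

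\emph{Step 1: stratify \(S\).} Choose a decomposition \(S=S_1\sqcup\dots\sqcup S_m\) into locally closed pieces on which \(\mathcal F_{r+1}\) is trivial. Both sides of the claimed identity are additive over such a stratification, and \([S_j]\) pulls out. It therefore suffices to prove the identity for the restricted fibrations \(\mathcal Q|_{S_j}\to S_j\).

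The restrictions of \(s_r\) and \(s_i\) stay nondegenerate, since nondegeneracy is a condition on geometric points. Hyperbolic reduction also commutes with base change to \(S_j\). Hence the classes \([\mathcal Q^{(r)}_{s_r}|_{S_j}]\) and \([\mathcal Q^{(i)}_{s_i}|_{S_j}]\) sum to the global ones.

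On each \(S_j\), pick a trivial rank \(i+1\) subbundle \(\mathcal F'_{i+1}\subset\mathcal F_{r+1}\). It is isotropic, and it is nondegenerate because \(\mathcal E\to\mathcal F_{r+1}^\vee\otimes\mathcal L^\vee\to\mathcal F_{i+1}'^\vee\otimes\mathcal L^\vee\) is a composition of surjections. By Proposition~\ref{prop:KS-hyperbolic-reduction}\eqref{item:KS-hyperbolic-reduction-indep}, we may then replace \(s_i\) by the \(i\)-section \(s'\) defined by \(\mathcal F'_{i+1}\) without changing the class.

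\emph{Step 2: transitivity.} The hypotheses on \(i\) and on the corank give, via Proposition~\ref{prop:KS-hyperbolic-reduction}\eqref{item:KS-hyperbolic-red-degeneracy}, that \(\mathcal Q^{(i)}_{s'}\subset\P(\mathcal F'^\perp_{i+1}/\mathcal F'_{i+1})\) is a flat fibration in quadrics of dimension \(n-2i-2\).

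The subbundle \(\mathcal F_{r+1}/\mathcal F'_{i+1}\subset\mathcal F'^\perp_{i+1}/\mathcal F'_{i+1}\) has rank \(r-i\) and is isotropic. It therefore defines an \((r-i-1)\)-section of \(\mathcal Q^{(i)}_{s'}\). I then check two facts by linear algebra on the form \(\bar q\) induced on \(\mathcal F'^\perp_{i+1}/\mathcal F'_{i+1}\).

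First, the section is nondegenerate. Surjectivity of \(\mathcal E\to\mathcal F_{r+1}^\vee\otimes\mathcal L^\vee\) implies surjectivity of \(\mathcal F'^\perp_{i+1}\to(\mathcal F_{r+1}/\mathcal F'_{i+1})^\vee\otimes\mathcal L^\vee\). Indeed, the kernel of \(\mathcal F_{r+1}^\vee\to\mathcal F_{i+1}'^\vee\) is hit by \(\mathcal F'^\perp_{i+1}\); this can be checked fiberwise by a dimension count.

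Second, the orthogonal of \(\mathcal F_{r+1}/\mathcal F'_{i+1}\) with respect to \(\bar q\) is \(\mathcal F_{r+1}^\perp/\mathcal F'_{i+1}\). Consequently the iterated reduction is \(\mathcal F^\perp_{r+1}/\mathcal F_{r+1}\) with its induced form, i.e.\ exactly \(\mathcal Q^{(r)}_{s_r}|_{S_j}\).

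\emph{Step 3: apply the formula.} Applying Proposition~\ref{prop:KS-hyperbolic-reduction}\eqref{item:KS-hyperbolic-reduction-formula} to \(\mathcal Q^{(i)}_{s'}\) with \(n\) replaced by \(n-2i-2\) and \(r\) replaced by \(r-i-1\) gives
\[[\mathcal Q^{(i)}_{s'}]=[S_j][\P^{r-i-1}](1+\L^{(n-2i-2)-(r-i-1)})+[\mathcal Q^{(r)}_{s_r}|_{S_j}]\L^{r-i}.\]
Here \((n-2i-2)-(r-i-1)=n-r-i-1\), as required. Summing over \(j\) finishes the proof. The same argument works verbatim in \(K_0(\mathrm{Var}/S)\), since each step is compatible with classes over \(S\).

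The main obstacle is Step 1. The independence statement~\eqref{item:KS-hyperbolic-reduction-indep} is only asserted in \(K_0(\mathrm{Var}/k)\), and for the relative version I need it over each stratum in \(K_0(\mathrm{Var}/S_j)\). If it is not available there, I would instead argue directly. Two nondegenerate sections yield isomorphic reductions étale or Zariski locally; alternatively, one can run the Kuznetsov--Shinder blow-up argument of~\eqref{item:KS-hyperbolic-red-blow-up-diagram} relative to \(S\). Either route makes the relative independence plausible.
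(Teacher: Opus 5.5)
Your proposal is correct and is essentially the paper's own proof. Like the paper, you stratify \(S\) to trivialize \(\mathcal F_{r+1}\), pick a subbundle \(\mathcal F'_{i+1}\subset\mathcal F_{r+1}\), identify \(\mathcal Q^{(r)}_{s_r}\) with the hyperbolic reduction of \(\mathcal Q^{(i)}_{s'}\) along \(\mathcal F_{r+1}/\mathcal F'_{i+1}\), prove nondegeneracy because \(\mathcal F'^{\perp}_{i+1}\) is the preimage of \((\mathcal F_{r+1}/\mathcal F'_{i+1})^\vee\otimes\mathcal L^\vee\) under the surjection \(\mathcal E\to\mathcal F_{r+1}^\vee\otimes\mathcal L^\vee\), and conclude with Proposition~\ref{prop:KS-hyperbolic-reduction}\eqref{item:KS-hyperbolic-reduction-formula} and~\eqref{item:KS-hyperbolic-reduction-indep}.

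The relative-independence point you flag is also used without separate comment in the paper. Of your two suggested fixes, an \'etale-local isomorphism would not give equality in \(K_0(\mathrm{Var}/S)\). Instead, use Witt cancellation at the generic point of each stratum: the two reductions become isomorphic after splitting off \(i+1\) hyperbolic planes. Spreading this isomorphism out to an open set and using Noetherian induction gives a piecewise isomorphism over \(S\), which settles it.
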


\begin{proof}
    Let \(\mathcal F_{r+1}\subset\mathcal E\) be the subbundle corresponding to \(s_r\), and let \(S=S_1\sqcup\cdots\sqcup S_m\) be a finite stratification by locally closed subvarieties such that \(\mathcal F_{r+1}|_{S_j}\cong\O_{S_j}^{\oplus r+1}\). By \cite[Lemma 1.3.3]{MotivicIntegrationBook} (see also \cite[Proposition 7.1]{mustata-notes}), it suffices to prove the equality after restriction over each \(S_j\). Therefore, we may assume \(S=S_j\) and \(\mathcal F_{r+1}\cong\mathcal O_S^{\oplus r+1}\). Let \(\mathcal F'_{i+1} \subset \mathcal F_{r+1}\cong\mathcal O_S^{\oplus r+1}\) be a subbundle, and let \(s_i'\) be the corresponding \(i\)-section of \(\pi\colon\mathcal Q\to S\).
    Then \(\pi^{(i)}_{s_i'}\colon \mathcal Q^{(i)}_{s_i'}\to S\) is a flat family of quadric \((n-2i-2)\)-folds by Proposition~\ref{prop:KS-hyperbolic-reduction}\eqref{item:KS-hyperbolic-red-degeneracy}, and
    \(\mathcal Q^{(r)}_{s_r}\) is the hyperbolic reduction of \(\pi^{(i)}_{s_i'}\) with respect to the \((r-i-1)\)-section given by \(\mathcal F_{r+1}/\mathcal F_{i+1}'\subset\mathcal F_{i+1}'^\perp/\mathcal F_{i+1}'\). We claim that this is a nondegenerate \((r-i-1)\)-section of \(\pi^{(i)}_{s_i'}\). Indeed, by the assumption that \(s_r\) is nondegenerate, the composition \(\mathcal E\to\mathcal E^\vee\otimes\mathcal L^\vee\twoheadrightarrow\mathcal F_{r+1}^\vee\otimes\mathcal L^\vee\) is surjective.
    The preimage of \((\mathcal F_{r+1}/\mathcal F'_{i+1})^\vee\otimes\mathcal L^\vee = \Ker(\mathcal F_{r+1}^\vee \to (\mathcal F_{i+1}')^\vee)\otimes\mathcal L^\vee \subset \mathcal F_{r+1}^\vee\otimes\mathcal L^\vee\) under this map is \((\mathcal F_{i+1}')^\perp\). So the induced map
    \({\mathcal F'}_{i+1}^\perp/\mathcal F'_{i+1} \to ({\mathcal F'}_{i+1}^\perp/\mathcal F'_{i+1})^\vee\otimes\mathcal L^\vee \twoheadrightarrow (\mathcal F_{r+1}/\mathcal F'_{i+1})^\vee\otimes\mathcal L^\vee\)
    is a surjection, i.e., this \((r-i-1)\)-section is nondegenerate. Therefore
    \([\mathcal Q^{(i)}_{s_i}] = [\mathcal Q^{(i)}_{s_i'}] = [S][\P^{r-i-1}](1+\L^{n-2i-2-(r-i-1)})+[\mathcal Q^{(r)}_{s_r}]\L^{r-i}\)
    by Proposition~\ref{prop:KS-hyperbolic-reduction}\eqref{item:KS-hyperbolic-reduction-formula} and~\eqref{item:KS-hyperbolic-reduction-indep}.
\end{proof}

Finally, we recall the following geometric interpretations of hyperbolic reductions (cf. \cite[Lemma 3.1]{JS24} for the case when \(\pi\) is a pencil).
\begin{lem}\label{lem:hyperbolic-reduction-properties-not-pencil}
Let \(s\) be a nondegenerate \(r\)-section of \(\pi\colon\mathcal Q\to S\).
\begin{enumerate}
    \item\label{item:hyperbolic-reduction-not-pencil-embedding}
    For \(0\leq e\leq \lfloor\frac{n+1}{2}\rfloor-r-1\), the \(\P^{r+1}\)-bundle \(E^{(r)}_s \coloneqq h^{-1}(\mathcal Q^{(r)}_s) \to \mathcal Q^{(r)}_s\) in Proposition~\ref{prop:KS-hyperbolic-reduction}\eqref{item:KS-hyperbolic-red-blow-up-diagram} induces an embedding \(F_e(\mathcal Q^{(r)}_s/S)\hookrightarrow F_{r+e+1}(\mathcal Q/S)\), whose image is the locus \(F_{r+e+1}(\mathcal Q/S)_s\) of isotropic \((r+e+1)\)-planes of \(\pi\) containing the \(r\)-planes given by \(s\). 

    In particular, for \(e=0\), after identifying \(\mathcal Q^{(r)}_s\) with its image \(F_{r+1}(\mathcal Q/S)_s\), the morphism \(h^{-1}(\mathcal Q^{(r)}_s) \to \mathcal Q^{(r)}_s\) gives the universal family.
    \item\label{item:hyperbolic-reduction-even-maximal} \cite[Lemma 2.12]{KuznetsovShinder18} If \(n=2g\) is even, \(r=g-1\) for some \(g\geq 1\), and \(\pi\) has simple degeneration, then \(\mathcal Q^{(g-1)}_s \to S\) is \(k\)-isomorphic to the discriminant double cover \(\widetilde{S}\coloneqq\underline{\Spec}_S(\mathcal O_S\oplus(\det(\mathcal E)\otimes\mathcal L^{g+1}))\to S\).
    \item\label{item:hyperbolic-reduction-finite-set-maximal} If \(n=2g-1\) is odd, \(r=g-1\), and \(\pi\) has simple degeneration, then \(\mathcal Q^{(g-1)}_s \to S\) is \(k\)-isomorphic to the inclusion of the degeneracy locus of \(\pi\).
\end{enumerate}
\end{lem}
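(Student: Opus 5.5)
We work locally over \(S\), in the notation of Proposition~\ref{prop:KS-hyperbolic-reduction}. Let \(s\) correspond to \(\mathcal F_{r+1}\subset\mathcal E\), and set \(\mathcal G\coloneqq\mathcal F_{r+1}^\perp/\mathcal F_{r+1}\) with the induced form \(q^{(r)}_s\). All three statements are fiberwise assertions about linear algebra. We will first prove them pointwise over \(S\), then promote to morphisms of \(S\)-schemes, using that the constructions (perpendiculars, quotients, blow-ups) commute with base change because \(s\) is nondegenerate.

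\emph{Part~\eqref{item:hyperbolic-reduction-not-pencil-embedding}.} We define the map on points. An isotropic subbundle \(\mathcal H\subset\mathcal G\) of rank \(e+1\) has a preimage \(\widetilde{\mathcal H}\subset\mathcal F_{r+1}^\perp\) of rank \(r+e+2\) containing \(\mathcal F_{r+1}\).
\begin{itemize}
\item \(\widetilde{\mathcal H}\) is isotropic in \(\mathcal E\): for \(x,y\in\widetilde{\mathcal H}\), the value \(q(x,y)\) depends only on the classes in \(\mathcal G\), since \(\mathcal F_{r+1}\) is orthogonal to \(\mathcal F_{r+1}^\perp\).
\item Conversely, any isotropic \(\widetilde{\mathcal H}\supset\mathcal F_{r+1}\) lies in \(\mathcal F_{r+1}^\perp\), so it gives an isotropic \(\widetilde{\mathcal H}/\mathcal F_{r+1}\subset\mathcal G\).
\end{itemize}
This correspondence works functorially for \(T\)-points with \(T\to S\). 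Since \(\mathcal F_{r+1}^\perp\) is a subbundle of rank \(n+2-(r+1)\) (by nondegeneracy, it is the kernel of a surjection), the bijection is between \(T\)-points of \(F_e(\mathcal Q^{(r)}_s/S)\) and of the closed subscheme \(F_{r+e+1}(\mathcal Q/S)_s\). Hence it is a closed embedding with the stated image. The bound \(e\le\lfloor\frac{n+1}{2}\rfloor-r-1\) is only needed so that these Fano schemes make sense as stated.

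For \(e=0\), a point \(x\in\mathcal Q^{(r)}_s\) corresponds to the \((r+1)\)-plane \(\mathbb P(\widetilde{\ell_x})\). We then check that \(h^{-1}(x)\subset\widetilde{\mathcal Q}\) maps isomorphically onto this plane under the blow-down. This follows from the description of \(h\) as the linear projection from \(\mathbb P(\mathcal F_{r+1})\): the fiber of \(h\) over a point of \(\mathbb P(\mathcal E/\mathcal F_{r+1})\) is the strict transform of the span of that point with \(\mathbb P(\mathcal F_{r+1})\). When the point lies in \(\mathcal Q^{(r)}_s\), this span is contained in \(\mathcal Q\).

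\emph{Part~\eqref{item:hyperbolic-reduction-even-maximal}} is cited from Kuznetsov--Shinder, so nothing needs proving.

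\emph{Part~\eqref{item:hyperbolic-reduction-finite-set-maximal}.} Here \(\mathcal G\) has rank \(n+2-2g=1\), and \(q^{(g-1)}_s\) is a map \(\mathcal G^{\otimes2}\to\mathcal L^\vee\). Since \(\mathbb P_S(\mathcal G)=S\), the zero scheme \(\mathcal Q^{(g-1)}_s\subset S\) is the vanishing locus of this section of \(\mathcal G^{\vee2}\otimes\mathcal L^\vee\).

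We compare it with the degeneracy locus, the zero scheme of \(\det q\in H^0(\det\mathcal E^{\vee2}\otimes\mathcal L^{-(n+2)})\). Choose a local splitting \(\mathcal E\cong\mathcal F\oplus\mathcal F'\oplus\mathcal G\) in which \(q\) is hyperbolic on \(\mathcal F\oplus\mathcal F'\), which is possible locally since \(s\) is nondegenerate. In this splitting \(\det q=\pm(\text{unit})\cdot q^{(g-1)}_s\), so the two subschemes of \(S\) agree scheme-theoretically.

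The main obstacle is Part~\eqref{item:hyperbolic-reduction-finite-set-maximal}: producing the local hyperbolic splitting over a general base, and confirming the scheme-theoretic equality rather than just agreement of points. I would do the splitting by lifting a dual basis of \(\mathcal F^\vee\) through the surjection \(\mathcal E\to\mathcal F^\vee\otimes\mathcal L^\vee\) and then modifying it to be isotropic, using that \(2\) is invertible. Simple degeneration is not even needed for the scheme equality.
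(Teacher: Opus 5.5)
Your proof is correct, and it takes a somewhat different route from the paper's. In part (1) the correspondence is the same, but the paper argues fiberwise in coordinates, whereas you argue functorially. The paper fixes a point $t\in S$, writes $\mathcal Q_t=\{x_0l_0+\cdots+x_rl_r+q=0\}$ with $s(t)=\{x_{r+1}=\cdots=x_{n+1}=0\}$, so that $(\mathcal Q^{(r)}_s)_t=\{l_0=\cdots=l_r=q=0\}$, and matches $e$-planes there with $(r+e+1)$-planes through $s(t)$ by comparing ideals of linear forms. Your correspondence $\mathcal H\mapsto\widetilde{\mathcal H}$ is the coordinate-free version of this. Because you run it on $T$-points, using that nondegeneracy makes $\mathcal F_{r+1}^\perp$ a subbundle whose formation commutes with base change, you get an isomorphism of $F_e(\mathcal Q^{(r)}_s/S)$ onto the closed subscheme $F_{r+e+1}(\mathcal Q/S)_s$. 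That is slightly stronger than the fiberwise bijection the paper displays, and it is the form one wants when these identifications feed into identities in $K_0(\mathrm{Var}/S)$. In part (3) the routes genuinely differ. The paper combines three inputs: corank preservation under hyperbolic reduction, part (1), and the geometric fact that under simple degeneration a fiber contains a $g$-plane through $s(t)$ exactly when it is singular, and then exactly one. You bypass part (1) entirely. Since $\mathcal F^\perp/\mathcal F$ has rank one, $\mathcal Q^{(g-1)}_s\subset\P_S(\mathcal F^\perp/\mathcal F)\cong S$ is the zero scheme of $q^{(g-1)}_s$. The local orthogonal splitting $\mathcal E\cong(\mathcal F\oplus\mathcal F')\oplus(\mathcal F\oplus\mathcal F')^\perp$ then gives $\det q=\pm(\text{unit})\cdot q^{(g-1)}_s$. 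Your lift-and-correct construction of this splitting works: with $b$ the associated bilinear form, replace each lift $e'_j$ by $e'_j-\tfrac12\sum_k b(e'_j,e'_k)f_k$. Your route gives a scheme-theoretic equality with the zero scheme of $\det q$. It also shows that simple degeneration is superfluous in part (3); in fact it is automatic once a nondegenerate $(g-1)$-section exists, since coranks are preserved and the reduced bundle has rank one. The paper's argument is shorter given the cited corank result, and it reuses part (1).
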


\begin{proof}
    For~\eqref{item:hyperbolic-reduction-not-pencil-embedding}, fix \(t\in S\). Choose coordinates on \(\P_S(\mathcal E)_t\cong\P^{n+1}_{\kappa(t)}\) so that \(s(t)\) is given by \(\{x_{r+1}=\cdots=x_{n+1}=0\}\). Then \(\mathcal Q_t\) is defined by \(x_0l_0+x_1l_1+\cdots+x_rl_r + q\) where \(l_i\in \kappa(t)[x_{r+1},\ldots,x_{n+1}]\) are linear forms and \(q\in \kappa(t)[x_{r+1},\ldots,x_{n+1}]\) is quadratic. Then the fiber of \(\mathcal Q^{(r)}_s\) over \(t\) is defined by \(\{l_0=\cdots=l_r=q=0\} \subset \P^{n-r}_{\kappa(t)}\) with coordinates \([x_{r+1}:\cdots:x_{n+1}]\).
    Let \(n\) be an \(e\)-plane contained in the fiber of \(\mathcal Q^{(r)}_s\) over \(t\in S\). Then \(n=\{f_1=\cdots=f_{n-r-e}=0\} \subset \P^{n-r}_{\kappa(t)}\) for some linear forms \(f_i\) such that the containment \((l_0,\ldots,l_r,q) \subset (f_1,\ldots,f_{n-r-e})\) of ideals holds.
    Then \(\bl_{s(t)}(h^{-1}(n)) = \{f_1=\cdots=f_{n-r-e}=0\} \subset \P^{n+1}_{\kappa(t)}\) is an \((r+e+1)\)-plane in \(\P^{n+1}_{\kappa(t)}\) containing \(s(t)\), and it is contained in \(\mathcal Q_t\).
    Conversely, let \(L\) be an \((r+e+1)\)-plane in \(\mathcal Q_t\) containing \(s(t)\).
    Then \(L=\{f_1=\cdots=f_{n-r-e}=0\}\) for some linear forms \(f_i\), and the containment of ideals \((f_1,\ldots,f_{n-r-e})\subset(x_{r+1},\ldots,x_{n+1})\)  holds since \(s(t)\subset L\).
    Then \(x_0l_0+x_1l_1+\cdots+x_rl_r + q \in (f_1,\ldots,f_{n-r-e})\) implies \((l_0,\ldots,l_r,q) \subset (f_1,\ldots,f_{n-r-e})\), so the $e$-plane \(\pi_{s(t)}(L)=\{f_1=\cdots=f_{n-r-e}=0\} \subset \P^{n-r}_{\kappa(t)}\) is contained in \((\mathcal Q^{(r)}_s)_t\).
    
    For~\eqref{item:hyperbolic-reduction-finite-set-maximal}, since \(\pi\) is assumed to have simple degeneration, a fiber \(\mathcal Q_t\) of \(\pi\) contains \(g\)-planes if and only if \(\mathcal Q_t\) is singular of corank 1. Furthermore, if this holds, then for each \((g-1)\)-plane on \(\mathcal Q_t\) not containing the vertex, there is a unique \(g\)-plane on \(\mathcal Q_t\) containing this \((g-1)\)-plane. So \eqref{item:hyperbolic-reduction-finite-set-maximal} follows from \cite[Lemma 2.4]{KuznetsovShinder18}, part~\eqref{item:hyperbolic-reduction-not-pencil-embedding}, and Proposition~\ref{prop:KS-hyperbolic-reduction}\eqref{item:KS-hyperbolic-red-degeneracy}.
\end{proof}

Now we define hyperbolic reductions with respect to degenerate sections.

\begin{defn}\label{defn:hyperbolic-red-degen}
    Let \(s\) be a degenerate \(r\)-section of \(\pi\) corresponding to \(\mathcal F_{r+1}\hookrightarrow\mathcal E\). The \defi{hyperbolic reduction} of \(\pi\) with respect to \(s\) is the closed subscheme \(F_{r+1}(\mathcal Q/S)_s\subset F_{r+1}(\mathcal Q/S)\) parametrizing isotropic \((r+1)\)-planes of \(\pi\) containing the \(r\)-planes given by \(s\), with projection \(\pi^{(r)}_s\colon\mathcal Q^{(r)}_s\to S\). There is a natural embedding \(\mathcal Q^{(r)}_s \hookrightarrow \mathbb P_S(\mathcal E/\mathcal F_{r+1})\) over \(S\).
\end{defn}
Then for every \(t\in S\), the fiber of \(\pi^{(r)}_s\) over \(t\) satisfies \((\mathcal Q^{(r)}_s)_t\cong F_{r+1}(\mathcal Q_t)_{s(t)}\) as \(\kappa(t)\)-schemes.
Furthermore, by Lemma~\ref{lem:hyperbolic-reduction-properties-not-pencil}\eqref{item:hyperbolic-reduction-not-pencil-embedding}, Definition~\ref{defn:hyperbolic-red-degen} agrees with Definition~\ref{defn:hyperbolic-red} over the locus where \(s\) is nondegenerate.

If \(s\) is a degenerate section, then the fibers of \(\pi^{(r)}_s\colon\mathcal Q^{(r)}_s\to S\) may have different dimensions, even if \(r\leq\lfloor\frac{n}{2}\rfloor-1\), as the following example indicates.

\begin{exmp}\label{exmp:hyperbolic-red-corank1-quadric}
	Let \(Q\subset\mathbb P^{n+1}\) be a corank 1 quadric \(n\)-fold with vertex \(v\), let \(r\leq \lfloor\frac{n-1}{2}\rfloor\), and let \(\Lambda\subset Q\) be an \(r\)-plane that contains \(v\). Then the hyperbolic reduction of \(Q\) with respect to \(\Lambda\) is a smooth quadric \((n-2r-1)\)-fold. In particular, if \(r=0\) and \(\Lambda=v\), then \(Q^{(0)}_v\) is the base of the cone.
	
	To see this, let \(W\subset V\) be the vector spaces of dimension \(r+1\) and \(n+2\), respectively, so that \(\Lambda=\mathbb P(W)\) and \(\mathbb P^{n+1}=\mathbb P(V)\), and let \(q\) be the quadratic form on \(V\) defining \(Q\). By assumption that \(v\in \Lambda\), the subspace \(W\) contains the radical \(V^\perp\) of \(q\).
	On the quotient \(V/V^\perp\), the induced quadratic form defines a smooth quadric \((n-1)\)-fold \(\overline{Q}\subset\mathbb P(V/V^\perp)\cong\mathbb P^n\), and \(\overline{\Lambda}\coloneqq \mathbb P(W/V^\perp)\) is an \((r-1)\)-plane on \(\overline{Q}\). Since \(Q^{(r)}_\Lambda = F_{r+1}(Q)_\Lambda \cong F_r(\overline{Q})_{\overline{\Lambda}}\), we have that \(Q^{(r)}_\Lambda\) is a smooth quadric \((n-2r-1)\)-fold by Proposition~\ref{prop:KS-hyperbolic-reduction}\eqref{item:KS-hyperbolic-red-degeneracy}.
\end{exmp}

\section{The class of the relative Fano scheme of lines}\label{sec:relative-Fano-scheme}

In this section, we prove Theorem~\ref{thm:relative-F_r} on the class of the relative Fano scheme \(F_r(\mathcal Q/S)\) of a quadric fibration over a curve in the Grothendieck ring of varieties over \(k\). First, we record the following formula for the class of the orthogonal Grassmannian, for which we could not find an explicit reference in the literature.
Recall from Section~\ref{sec:prelim-F_r} that, by convention, we take \(\OG(m,2m) \cong \OG(m-1,2m-1)\) to be one of the two isomorphic components of the Fano scheme of \((m-1)\)-planes on a smooth quadric \(2(m-1)\)-fold.
\begin{lem}\label{lem:class-of-OG}
    Let \(k\) be an algebraically closed field of characteristic \(\neq 2\), and let \(1\leq r\leq \lfloor\frac{n}{2}\rfloor\). Then the class of the orthogonal Grassmannian \(\OG(r,n)\) in \(K_0(\mathrm{Var}/k)\) is equal to:
    \begin{equation*}
    \begin{split}
        [\OG(r,2m+1)] &= \binom{m}{r}_{\L} \prod_{i=m-r+1}^m (\L^i+1), \\
        [\OG(r,2m)] &= \begin{cases}
            \binom{m}{r}_{\L} \prod_{i=m-r}^{m-1} (\L^i+1) & \text{if }r<m, \\
            \prod_{i=1}^{r-1} (\L^i+1) & \text{if }r=m.
        \end{cases}
    \end{split}
\end{equation*}
\end{lem}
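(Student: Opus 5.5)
We argue by induction on \(r\), using the incidence correspondence between \(\OG(r,n)\) and the smooth quadric \(Q^{n-2}\subset\P^{n-1}=\P(V)\), where \(\dim V=n\) and \(V\) carries a nondegenerate quadratic form. Throughout we write \(\OG(r,n)\) for the variety of isotropic \(r\)-dimensional subspaces of \(V\), i.e.\ \(F_{r-1}(Q^{n-2})\). The one exception is \(r=n/2\), where we take a single component, following the paper's convention.

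For \(r=1\) we have \(\OG(1,n)=Q^{n-2}\), and Lemma~\ref{lem:formulas-Gr-quadric-Sym-Pn}\eqref{item:class-of-quadric} gives its class. Over \(k=\kbar\) every smooth quadric contains linear spaces of maximal dimension, so the lemma applies. One checks that the formulas agree:
\begin{itemize}
    \item for \(n=2m+1\), \(\binom{m}{1}_{\L}(\L^m+1)=[\P^{2m-1}]\);
    \item for \(n=2m\) with \(m>1\), \(\binom{m}{1}_{\L}(\L^{m-1}+1)=[\P^{2m-2}]+\L^{m-1}\);
    \item for \(n=2m\) with \(r=m=1\), the empty product \(1\) is the class of one of the two points of \(Q^0\).
\end{itemize}

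For the induction step, consider the flag variety
\[
Z=\{(p,W): p\in W,\ W\in\OG(r,n)\},
\]
with its two projections. The first projection \(Z\to\OG(r,n)\) is the projectivization of the tautological bundle, hence a Zariski locally trivial \(\P^{r-1}\)-bundle, so \([Z]=[\P^{r-1}][\OG(r,n)]\).

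The second projection \(Z\to Q^{n-2}\) has fiber over \(p\) equal to the space of isotropic \(r\)-planes containing \(p\). This fiber is \(\OG(r-1,p^\perp/p)\), and \(p^\perp/p\) is a nondegenerate quadratic space of dimension \(n-2\). This is exactly the hyperbolic reduction picture of Lemma~\ref{lem:hyperbolic-reduction-properties-not-pencil}\eqref{item:hyperbolic-reduction-not-pencil-embedding}.

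Since \(Q^{n-2}\) is homogeneous under \(SO(n)\), the map \(Z\to Q^{n-2}\) is a \(\OG(r-1,n-2)\)-bundle in the sense of Definition~\ref{defn:Z-bundle}. To see this, note that for any point \(y\) the fiber over \(\kappa(y)\) is the orthogonal Grassmannian of a split form: isotropic vectors exist over \(\kappa(y)\), so the form is split of maximal Witt index, and the fiber is isomorphic to the base change of \(\OG(r-1,n-2)\). Lemma~\ref{lem:fibration-general} then gives
\[
[\P^{r-1}][\OG(r,n)]=[Q^{n-2}][\OG(r-1,n-2)].
\]
Two cases need separate care:
\begin{itemize}
    \item When \(r=n/2\), take \(Z\) over one component. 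Its fiber is then one component of \(\OG(r-1,n-2)\), which matches the convention.
    \item When \(r-1=0\), set \(\OG(0,\cdot)=\mathrm{pt}\).
\end{itemize}

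The main obstacle is that \([\P^{r-1}]\) is a zero divisor candidate: \(K_0(\mathrm{Var}/k)\) is not a domain, so we cannot simply divide. We therefore avoid division. Instead, we compute \([\OG(r,n)]\) directly via a cell decomposition. The fixed-point (Bruhat) decomposition for the \(SO(n)\)-homogeneous variety \(\OG(r,n)\), relative to a Borel subgroup, stratifies it into affine spaces. Hence \([\OG(r,n)]=\sum_w\L^{\ell(w)}\), a polynomial in \(\L\) with integer coefficients.

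So \([\OG(r,n)]\) lies in the image of \(\Z[\L]\to K_0\), and the same holds for all other classes in the identity. The identity can then be checked as an identity of polynomials in \(\Z[\L]\): either the point counts over \(\F_q\) determine the Poincaré polynomial, or we verify the identity formally in \(\Z[t]\). In \(\Z[t]\), which is a domain, we may divide by \([\P^{r-1}]\) and run the induction. The claimed product formulas satisfy the recursion; this is a routine manipulation of Gaussian binomials, e.g.
\[
\binom{m}{r}_{\L}[\P^{r-1}]=\binom{m-1}{r-1}_{\L}[\P^{m-1}]
\]
combined with \([Q^{2m-1}]=[\P^{m-1}](\L^m+1)\).

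Since \(\Z[t]\to K_0\) is a ring map, the identity in \(\Z[t]\) gives the result in \(K_0(\mathrm{Var}/k)\).
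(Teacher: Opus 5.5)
Your route differs from the paper's and can be made to work, but one step is not justified as written. The incidence correspondence only gives $[\P^{r-1}]\,[\OG(r,n)]=[Q^{n-2}]\,[\OG(r-1,n-2)]$ in $K_0(\mathrm{Var}/k)$. The Bruhat decomposition tells you $[\OG(r,n)]=P_{r,n}(\L)$ with $P_{r,n}(t)=\sum_w t^{\ell(w)}$. That alone does not give $[\P^{r-1}](t)\,P_{r,n}(t)=[Q^{n-2}](t)\,P_{r-1,n-2}(t)$ in $\Z[t]$ unless the map $\Z[t]\to K_0(\mathrm{Var}/k)$, $t\mapsto\L$, is injective. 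Without that, ``verify the identity formally in $\Z[t]$ and divide there'' is circular.

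Injectivity does hold, e.g.\ via the Hodge--Deligne polynomial in characteristic $0$, or in general by spreading a relation out over a finitely generated base and counting points over finite fields. But it is an extra input you must state. Your finite-field option also works if you carry it out fully: for the split form over $\F_q$, the Bruhat cells give $|\OG(r,n)(\F_q)|=P_{r,n}(q)$, and the same incidence count over $\F_q$ gives the recursion in the domain $\Z$. So $P_{r,n}$ agrees with the product formula at infinitely many $q$.

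Simplest of all: once you use Bruhat cells, $P_{r,n}(t)$ is the quotient of the Poincar\'e polynomials of $W$ and $W_P$. For instance, for $\OG(r,2m+1)$ it is $\prod_{i=m-r+1}^{m}(1-t^{2i})/\prod_{i=1}^{r}(1-t^{i})=\binom{m}{r}_t\prod_{i=m-r+1}^m(1+t^i)$. This makes the incidence correspondence unnecessary.

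A minor slip: the existence of an isotropic vector over $\kappa(y)$ does not give maximal Witt index. The right reason is that the form is split over $k=\kbar$, hence over $\kappa(y)$, and Witt cancellation makes $y^\perp/y$ split.

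The paper avoids the division problem altogether. It fixes a general hyperplane $H$ and fibers $\OG(r,n)\setminus\OG(r,n-1)\to\OG(r-1,n-1)$ by $L\mapsto L\cap H$. The fibers are differences of split quadrics of dimensions $n-2r$ and $n-2r-1$ (hyperbolic reductions). This gives recursions such as $[\OG(r,n)]=[\OG(r,n-1)]+([Q^{n-2r}]-[Q^{n-2r-1}])[\OG(r-1,n-1)]$, with the cases $n=2r$ and $n=2r+1$ handled separately. Since the unknown class appears with coefficient $1$, the closed formulas are checked directly in $K_0(\mathrm{Var}/k)$ using only classes of quadrics, with no cellularity and no realization map. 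Your double fibration is geometrically simpler, but its recursion is not monic, and you pay for that with heavier external inputs.
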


Recall that the Gaussian binomial coefficient \(\binom{m}{r}_{\L} \coloneqq \prod_{i=1}^r \frac{(1-\L^{m-r+i})}{(1-\L^i)}\) is a polynomial in \(\L\) with nonnegative integer coefficients \cite[Section 1.7]{Stanley12}.

\begin{proof}[Proof of Lemma~\ref{lem:class-of-OG}]
We first show the following recursive formulas:
if $n\neq 2r, 2r+1$,
\[
[\OG(r,n)] - [\OG(r,n-1)]= 
\left\{
\begin{array}{ll}
(\L^{n-2r}+\L^{(n-2r)/2})[\OG(r-1,n-1)] & \text{if $n$ is even},\\
(\L^{n-2r}-\L^{(n-2r-1)/2})[\OG(r-1,n-1)] & \text{if $n$ is odd}.
\end{array}
\right.
\]
If $n=2r$ (resp. $n=2r+1$), then
\[
[\OG(r,2r)]=[\OG(r-1,2r-1)],\quad [\OG(r,2r+1)]-2[\OG(r,2r)]=(\L-1)[\OG(r-1,2r)].
\]

For this, recall that if \(n\neq 2r\) then $\OG(r,n)=F_{r-1}(Q)$, where $Q=Q^{n-2}$ is any smooth quadric hypersurface in $\P^{n-1}$ (and if \(n=2r\) then \(\OG(r,2r)\) is one of the connected components).
Let $H$ be a general hyperplane in $\P^{n-1}$ and $Q_H\coloneqq Q\cap H$.
In what follows, regard $\OG(r,n-1)=F_{r-1}(Q_H)$ (taking one of the two connected components if \(n=2r+1\)) and $\OG(r-1,n-1)=F_{r-2}(Q_H)$.

If $n\neq 2r, 2r+1$, consider the morphism
\[
\OG(r,n) \setminus \OG(r,n-1) \rightarrow \OG(r-1,n-1),\quad [L] \mapsto [L\cap H];
\]
the fibration is $SO(n-1)$-equivariant, where the action on $\OG(r-1,n-1)$ is transitive.
Furthermore, for any field extension \(K/k\), the group \(SO(n-1)(K)\) acts transitively on \(\OG(r-1,n-1)(K)\), since the quadric \((Q_H)_K\) is split over \(K\).
Over each scheme-theoretic point $\ell \in \OG(r-1, n-1)$, the fiber equals $(Q_{\kappa(\ell)})^{(r-2)}_\ell\setminus ((Q_H)_{\kappa(\ell)})_\ell^{(r-2)}$,
where $(Q_{\kappa(\ell)})^{(r-2)}_\ell, ((Q_H)_{\kappa(\ell)})^{(r-2)}_\ell$ are respectively the hyperbolic reductions of $Q_{\kappa(\ell)}, (Q_H)_{\kappa(\ell)}$ with respect to the $(r-2)$-plane $\ell$.
Hence, by Lemma~\ref{lem:fibration-general} and Proposition~\ref{prop:KS-hyperbolic-reduction}\eqref{item:KS-hyperbolic-reduction-indep},
\[
[\OG(r,n)]-[\OG(r,n-1)]=([Q^{(r-2)}] - [(Q_H)^{(r-2)}])[\OG(r-1,n-1)].
\]
Since $Q^{(r-2)},(Q_H)^{(r-2)}$ are smooth quadric hypersurfaces of dimension $n-2r, n-2r-1$ respectively,
the rest is straightforward by Lemma~\ref{lem:formulas-Gr-quadric-Sym-Pn}\eqref{item:class-of-quadric}.

If $n=2r$, then $\OG(r,2r-1)=\emptyset$ and the above morphism gives an isomorphism $\OG(r,2r)\xrightarrow{\sim}\OG(r-1,2r-1)$, hence $[\OG(r,2r)]=[\OG(r-1,2r-1)]$.
If $n=2r+1$, consider the morphism
\[
\OG(r,2r+1)\setminus (\OG(r,2r)\sqcup \OG(r,2r)')\rightarrow \OG(r-1,2r),\quad[L]\mapsto [L\cap H],
\]
where \(\OG(r,2r)\) and \(\OG(r,2r)'\) are the two connected components of \(F_{r-1}(Q_H)\);
by a similar argument as above, over each scheme-theoretic point $\ell\in \OG(r-1,2r)$, the fiber equals a smooth split conic minus $2$ $\kappa(\ell)$-points.
By Lemma~\ref{lem:fibration-general}, we get
\[
[\OG(r,2r+1)] - 2[\OG(r,2r)]=([\P^1]-2)[\OG(r-1,2r)]=(\L-1)[\OG(r-1,2r)].
\]
This finishes the proof of the recursive formulas.

The formulas for $\OG(r,n)$ may then be deduced by verifying that they satisfy the above recursions. Indeed, we have \([\OG(1,2)]=1\); for \(n=2m+1\geq 3\) odd, \([\OG(1,2m+1)]=[\P^{2m-1}]=(\sum_{i=0}^{m-1} \L^i)(1+\L^m)=\binom{m}{1}_{\L} (\L^m+1)\); and for \(n=2m\geq 4\) even, \([\OG(1,2m)]=[\P^{2m-2}]+\L^{m-1}=\L^{m-1}+\sum_{i=0}^{2m-2}\L^i=\binom{m}{1}_{\L} (\L^{m-1}+1)\) by Lemma~\ref{lem:formulas-Gr-quadric-Sym-Pn}\eqref{item:class-of-quadric}.
Now assume \(r\geq 2\). Then for any \(m > r\), we have
\begin{equation*}
    \begin{split}
        % [\OG(r,2m)]-[\OG(r,2(m-1)+1)] &= \\
        \binom{m}{r}_{\L} \prod_{i=m-r}^{m-1} (\L^i+1) - \binom{m-1}{r}_{\L} \prod_{i=m-r}^{m-1} (\L^i+1) &= \L^{m-r} \binom{m-1}{r-1}_{\L} \prod_{i=m-r}^{m-1} (\L^i+1) \\
        &=(\L^{2m-2r}+\L^{(2m-2r)/2})\binom{m-1}{r-1}_{\L} \prod_{i=m-r+1}^{m-1} (\L^i+1)
    \end{split}
\end{equation*}
using the identity \(\binom{m}{r}_{\L} - \binom{m-1}{r}_{\L} = \L^{m-r} \binom{m-1}{r-1}_{\L}\) \cite[(1.67)]{Stanley12}, and
\begin{equation*}
    \begin{split}
        % [\OG(r,2m+1)]-[\OG(r,2m)] &= \\
        & \binom{m}{r}_{\L} \prod_{i=m-r+1}^m (\L^i+1) - \binom{m}{r}_{\L} \prod_{i=m-r}^{m-1} (\L^i+1) = \binom{m}{r}_{\L} \L^{m-r}(\L^r - 1)\prod_{i=m-r+1}^{m-1} (\L^i+1) \\
        & \qquad = (\L^{m-r+1}-1)\L^{m-r} \binom{m}{r-1}_{\L}\prod_{i=m-r+1}^{m-1} (\L^i+1)
        = (\L^{2m+1-2r}-\L^{(2m-2r)/2})\binom{m}{r-1}_{\L} \prod_{i=m-r+1}^{m-1} (\L^i+1) \\
    \end{split}
\end{equation*}
where we have used that \(\binom{m}{r}_{\L}=\frac{\L^{m-r+1}-1}{\L^r-1} \binom{m}{r-1}_{\L}\). This shows that the formulas for \([\OG(r,2m+1)]\) and \([\OG(r,2m)]\) with \(r<m\) satisfy the recursion.
Finally, for \(r\geq 2\) and \(m=r\), we have
\begin{equation*}
    \begin{split}
        [\OG(r-1,2r-1)] &= [\OG(r-1,2(r-1)+1)] = \binom{r-1}{r-1}_{\L} \prod_{i=(r-1)-(r-1)+1}^{r-1} (\L^i+1) = \prod_{i=1}^{r-1} (\L^i+1),
    \end{split}
\end{equation*}
showing that the formula for \([\OG(r,2r)]\) is correct, and
\begin{equation*}
    \begin{split}
        % [\OG(r,2r+1)] - 2 \prod_{i=1}^{r-1} (\L^i+1) &= \\
        \binom{r}{r}_{\L} \prod_{i=r-r+1}^r (\L^i+1) - 2 [\OG(r,2r)] &= \prod_{i=1}^r (\L^i+1) - 2 \prod_{i=1}^{r-1} (\L^i+1) \\
        &= (\L^r-1)\prod_{i=1}^{r-1} (\L^i+1)
        = (\L-1)[\OG(r-1,2r)],
    \end{split}
\end{equation*}
showing that the formula for \([\OG(r,2r+1)]\) is correct.
\end{proof}

Next, we prove Theorem~\ref{thm:relative-F_r}.
We will first give a simple proof of Theorem~\ref{thm:relative-F_r} when the relative dimension \(n\) is odd. Then, we will give a proof that is more complicated and works for any \(n\). The second proof uses induction and involves many similar geometric ideas to those used in the proof of Theorems~\ref{thm:main-odd} and~\ref{thm:main-even}.

\begin{proof}[Proof of Theorem~\ref{thm:relative-F_r} if \(n=2g-1\) is odd and \(1\leq r\leq g-1\)]
    Let \(\{t_1,\ldots,t_\delta\}\subset S\) be the degeneracy locus of \(\pi\).
    Since \(r\leq g-1\) and the fibers of \(\pi\) are quadric \((2g-1)\)-folds, the morphism \(F_r(\mathcal Q/S)\to S\) is surjective. Furthermore, \(F_{g-1}(\mathcal Q/S)\to S\) has a section by \cite{GHS03,deJongStarr03} because \(S\) is a curve and \(\OG(g,2g+1)\) is rational.
    So the generic fiber of $\pi\colon \mathcal{Q}\to S$ is a split smooth quadric $n$-fold and the restriction of \(F_r(\mathcal Q/S)\to S\) to \(S\setminus \{t_1,\ldots,t_\delta\}\) is an \(\OG(r+1,2g+1)\)-bundle, and \([F_r(\mathcal Q/S)|_{S\setminus \{t_1,\ldots,t_\delta\}}]=[S][\OG(r+1,2g+1)]-\delta [\OG(r+1,2g+1)]\).
    To compute the contribution over the degeneracy locus, let \(1\leq i\leq\delta\), and let \(v_i\in\mathcal Q_{t_i}\) be the singular point. The corank 1 quadric \(\mathcal Q_{t_i}\) is the cone with vertex \(v_i\) over a smooth quadric \((2g-2)\)-fold \(Q^{2g-2}\); let \(p\colon\mathcal Q_{t_i}\setminus\{v_i\}\to Q^{2g-2}\) be the projection to the base of the cone. Then \(F_r(\mathcal Q_{t_i})=\{m\in F_r(Q_{t_i}) \mid v_i\in m\} \sqcup \{m\in F_r(Q_{t_i}) \mid v_i\notin m\}\). The first set is \(F_{r-1}(Q^{2g-2})\cong\OG(r,2g)\), and the second set is an \(\mathbb A^{r+1}\)-bundle over \(F_r(Q^{2g-2})\), whose fiber over \(\overline{m}\in F_r(Q^{2g-2})\) is the set of hyperplanes in \(\langle v_i,\overline{m}\rangle\cong\P^{r+1}\) not containing \(v_i\).
    For \(r\leq g-2\) we have \(F_r(Q^{2g-2})=\OG(r+1,2g)\), whereas \(F_{g-1}(Q^{2g-2})\) is a disjoint union of two copies of \(\OG(g,2g)\), so the class of \(F_r(\mathcal Q/S)\) in \(K_0(\mathrm{Var}/k)\) is equal to
    \[
        \begin{split}
        \begin{cases}
            [S][\OG(r+1,2g+1)]+\delta(-[\OG(r+1,2g+1)] + [\OG(r,2g)] + [\OG(r+1,2g)]\L^{r+1}) & \text{if }r\leq g-2, \\
            [S][\OG(g,2g+1)]+\delta(-[\OG(g,2g+1)] + [\OG(g-1,2g)] + 2[\OG(g,2g)]\L^g) & \text{if }r=g-1.
        \end{cases}
        \end{split}
    \]
    In each case, the coefficient of \(\delta\) is equal to \([\OG(r,2g)]\L^{g-r}\) by Lemma~\ref{lem:class-of-OG}. Indeed, if \(1\leq r\leq g-2\), then
    \begin{equation*}
        \begin{split}
            & -[\OG(r+1,2g+1)] + [\OG(r,2g)] + [\OG(r+1,2g)]\L^{r+1} \\
            & \qquad\qquad = \left( -\binom{g}{r+1}_\L (\L^g+1) + \binom{g}{r}_\L + \binom{g}{r+1}_\L (\L^{g-r-1}+1)\L^{r+1} \right) \prod_{i=g-r}^{g-1}(\L^i+1) \\
            & \qquad\qquad = \binom{g}{r}_\L\left( -\frac{1-\L^{g-r}}{1-\L^{r+1}} (\L^g+1) + 1 +  \frac{1-\L^{g-r}}{1-\L^{r+1}} (\L^{g-r-1}+1)\L^{r+1} \right) \prod_{i=g-r}^{g-1}(\L^i+1) \\
            & \qquad\qquad = \binom{g}{r}_\L \frac{\L^{g-r} - \L^{g+1}}{1-\L^{r+1}} \prod_{i=g-r}^{g-1}(\L^i+1)
            % = \binom{g}{r}_\L \L^{g-r} \prod_{i=g-r}^{g-1}(\L^i+1)
            = [\OG(r,2g)] \L^{g-r} .
        \end{split}
    \end{equation*}
    If \(r=g-1\), then
    \begin{equation*}
        \begin{split}
            & -[\OG(g,2g+1)] + [\OG(g-1,2g)] + 2[\OG(g,2g)]\L^g \\
            & \qquad\qquad = -\prod_{i=1}^g(\L^i+1) + \binom{g}{g-1}_\L \prod_{i=1}^{g-1}(\L^i+1) + 2 \L^g\prod_{i=1}^{g-1}(\L^i+1) \\
            & \qquad\qquad = \left(-\L^g-1 + \sum_{i=0}^{g-1}\L^i + 2 \L^g\right)\prod_{i=1}^{g-1}(\L^i+1)
            % = \binom{g}{g-1}_\L \L \prod_{i=1}^{g-1}(\L^i+1)
            = [\OG(g-1,2g)]\L.
        \end{split}
    \end{equation*}
\end{proof}

Before proving Theorem~\ref{thm:relative-F_r} for any \(n\) and \(r\), we will first prove a lemma that will be used in the computation of the induction step. This lemma is a computation that follows from the expressions for the orthogonal Grassmannian in Lemma~\ref{lem:class-of-OG}.
\begin{lem}\label{lem:telescoping-sum-computation}
    Let \(1\leq r\leq g-1\) be integers. Let \(i\) be an integer such that \(0\leq i\leq g-r-1\), and set \(h=g-i\). In the polynomial ring \(\mathbb Z[q]\), let \([\OG(r',n')]\) be the polynomial such that \(q\mapsto\L\) maps \([\OG(r',n')]\) to the polynomial expression for the class of the orthogonal Grassmannian \(\OG(r',n')\) in \(K_0(\mathrm{Var}/k)\) given in Lemma~\ref{lem:class-of-OG}. Define the following polynomials in \(\mathbb Z[q]\):
    \begin{align*}
        A &\coloneqq \textstyle q^{r+1}\left(1-\sum_{j=0}^r q^j \right) = -q^{r+2}\sum_{j=0}^{r-1}q^j, & P_h &\coloneqq [\OG(r+1,2h+2)], \\
        B_h^{\text{even}} &\coloneqq \textstyle \left(\sum_{j=0}^{h-r-1} q^j\right)(q^r+q^{h+1})+1-q^r, & \widetilde{P}_h &\coloneqq [\OG(r,2h+1)] q^{h-r}, \\
        B_h^{\text{odd}} &\coloneqq \textstyle \left(\sum_{j=0}^{h-r-1} q^j\right)(q^r+q^h)+1-q^r, & Q_h &\coloneqq [\OG(r+1,2h+1)], \\
        & & \widetilde{Q}_h &\coloneqq [\OG(r,2h)] q^{h-r}.
    \end{align*}
    Then the following equalities of polynomials in \(\mathbb Z[q]\) hold:
    \begin{equation*}
        \begin{split}
            [\OG(r,2h)]q^h + [\OG(r-1,2h-1)]q^{h-r}B_h^{\text{even}} &= \widetilde{P}_h- \widetilde{P}_{h-1}A, \\
            [\OG(r,2h)](B_h^{\text{even}}+2 q^h) &= \begin{cases}
                P_h - P_{h-1} A & \text{if }h \geq r+2, \\
                P_{r+1}-2 P_r A & \text{if }h=r+1,
            \end{cases} \\
            [\OG(r,2h-1)] B_h^{\text{odd}} &= Q_h - Q_{h-1} A, \\
            [\OG(r,2h-1)] q^h + [\OG(r-1,2h-2)](q^{2h-r}+q^{h-r} B_h^{\text{odd}}+q^{2h-r-1}) &= \begin{cases}
                \widetilde{Q}_h-\widetilde{Q}_{h-1}A & \text{if }h \geq r+2, \\
                \widetilde{Q}_{r+1}-2\widetilde{Q}_rA & \text{if }h = r+1.
            \end{cases}
        \end{split}
    \end{equation*}
\end{lem}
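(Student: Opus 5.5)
This is a pure polynomial identity in \(\mathbb Z[q]\), so the plan is to substitute the closed forms from Lemma~\ref{lem:class-of-OG} and reduce each equation to a rational-function identity. Every term in a given identity shares a common factor. That factor is a Gaussian binomial times a product \(\prod(q^i+1)\) with the same index range. After dividing it out, what is left is an identity between rational functions in \(q\), with denominators among \(1-q^r\), \(1-q^{r+1}\), \(1-q^h\), and so on. We clear the denominators and check equality of polynomials, using the same manipulations as in the proof of Lemma~\ref{lem:class-of-OG}:
\[
\binom{m}{r}_q=\frac{1-q^m}{1-q^{m-r}}\binom{m-1}{r}_q,\qquad
\binom{m}{r}_q=\frac{1-q^{m-r+1}}{1-q^r}\binom{m}{r-1}_q .
\]
It is also convenient to write \(A=q^{r+2}(q^r-1)/(1-q)\) and \(B_h^{\text{even}}=\frac{(1-q^{h-r})(q^r+q^{h+1})}{1-q}+1-q^r\), and similarly for \(B_h^{\text{odd}}\).

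We treat the four identities in order.

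\begin{enumerate}
\item For the third identity, both sides carry \(\binom{h-1}{r}_q\binom{h}{r+1}_q\)-type factors. Normalize by \([\OG(r,2h-1)]=\binom{h-1}{r}_q\prod_{i=h-r}^{h-1}(q^i+1)\). Then \(Q_h\) and \(Q_{h-1}\) are this quantity multiplied by explicit rational factors: a ratio of Gaussian binomials times \((q^h+1)/(q^{h-r}+1)\), and so on. The identity becomes a polynomial equation of small, fixed degree pattern in \(q^h\) and \(q^r\).
\item The first identity is handled the same way, with \(\widetilde P_h=[\OG(r,2h+1)]q^{h-r}\) expressed in terms of \([\OG(r,2h)]\) and \([\OG(r-1,2h-1)]\).
\item The second and fourth identities are handled likewise for generic \(h\geq r+2\).
\end{enumerate}

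The boundary cases \(h=r+1\) need separate attention. There \(P_r=[\OG(r+1,2r+2)]\) and \(\widetilde Q_r=[\OG(r,2r)]\) use the second branch of Lemma~\ref{lem:class-of-OG}, which explains the factor \(2\). We use \([\OG(m,2m)]=\prod_{i=1}^{m-1}(q^i+1)\) and check directly. The factor of \(2\) matches the fact that the \(r=m\) formula is half of what the generic \(r<m\) formula would give, since the latter includes the factor \((q^0+1)=2\). With this observation, the boundary cases follow from the generic computation.

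A cleaner route may be available. Viewing \(h\) as a variable \(x=q^h\), each identity after normalization is a polynomial identity in \(x\) of low degree with coefficients in \(\mathbb Q(q)\). It could then be verified by checking a few values, or by comparing coefficients of \(1,x,x^2\). Alternatively, all of these are finite identities that a computer algebra system verifies symbolically.

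The main obstacle is bookkeeping: getting the correct index shifts in the products \(\prod(q^i+1)\) for each of \(P_{h-1}\), \(\widetilde P_{h-1}\), and the others, and handling the boundary cases. The algebra itself is routine, though long.
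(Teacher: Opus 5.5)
Your plan is correct and is essentially the paper's argument: the paper also divides out a common factor, reducing each identity to a short polynomial identity in \(q\), \(q^h\), \(q^r\). Concretely, it multiplies each difference by \(1-q^r\) or \(1-q^{r+1}\) and uses the Gaussian-binomial ratio relations to write every term as a rational multiple of \([\OG(r-1,2h-1)]\), \([\OG(r,2h)]\), \([\OG(r,2h-1)]\) or \([\OG(r-1,2h-2)]\). The one difference is at \(h=r+1\): the paper checks those boundary cases directly, whereas your remark that the generic closed form at \(h-1=r\) equals \(2P_r\) (resp.\ \(2\widetilde{Q}_r\)), because of the factor \(q^0+1=2\), lets the generic computation cover them; this is legitimate because the normalized identities hold formally in \(q^h\).
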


\begin{proof}
    First, note that we have the following identities:
    \begin{align}
        (1-q^r)(1-q^h-B_h^{\text{even}}) &=\frac{A}{q}(1-q^{2h-2r}), \label{eq:telescoping-lemma-id1} \\
        (1-q^{r+1})(B_h^{\text{even}}+2q^h) &= (1-q^{h+1})(1+q^h)-A(1-q^{h-r})(1+q^{h-r-1}), \label{eq:telescoping-lemma-id2} \\
        (1-q^{r+1})B_h^{\text{odd}} &= 1-q^{2h}-A(1-q^{2h-2r-2}), \label{eq:telescoping-lemma-id3} \\
        (1-q^r)(1-q^h-B_h^{\text{odd}}) &= \frac{A}{q}(1-q^{h-r})(1+q^{h-r-1}). \label{eq:telescoping-lemma-id4}
    \end{align}
    These can be verified by direct computation, using the identity \((1-q)\sum_{j=0}^l q^j = 1-q^{l+1}\). For~\eqref{eq:telescoping-lemma-id3}, one may check the equality after multiplication by \(1-q\), since \(\mathbb Z[q]\) is an integral domain.

\begin{detail}
    Indeed, for~\eqref{eq:telescoping-lemma-id1}, the left-hand side is equal to
    \begin{equation*}
        \begin{split}
            & \textstyle (1-q^r)\left(1-q^h-\left(\sum_{j=0}^{h-r-1} q^j\right)(q^r+q^{h+1})-1+q^r\right) = \textstyle (1-q^r)\left(q^r-q^h-\left(\sum_{j=0}^{h-r-1} q^j\right)(q^r+q^{h+1})\right) \\
            & \qquad\qquad = \textstyle -(1-q^r)q^{r+1}(1+q^{h-r}) \sum_{j=0}^{h-r-1} q^j \\
            & \qquad\qquad = \textstyle -(1-q)\left(\sum_{j=0}^{r-1} q^j\right)q^{r+1}(1+q^{h-r}) \sum_{j=0}^{h-r-1} q^j \\
            & \qquad\qquad = \textstyle -\left(\sum_{j=0}^{r-1} q^j\right)q^{r+1}(1+q^{h-r}) (1-q^{h-r}) \\
            & \qquad\qquad = \textstyle -q^{r+1}(1-q^{2h-2r})\sum_{j=0}^{r-1} q^j = \textstyle \frac{A}{q} (1-q^{2h-2r}).
        \end{split}
    \end{equation*}
    Similarly, for~\eqref{eq:telescoping-lemma-id4}, the left-hand side is equal to
    \begin{equation*}
        \begin{split}
            & \textstyle (1-q^r)\left(q^r-q^h-\left(\sum_{j=0}^{h-r-1} q^j\right)(q^r+q^h)\right) = \textstyle -(1-q^r)(q^{r+1}+q^h)\sum_{j=0}^{h-r-1} q^j \\
            & \qquad\qquad = \textstyle -(1-q^{h-r})q^{r+1}(1+q^{h-r-1}) \sum_{j=0}^{r-1} q^j = \frac{A}{q}(1-q^{h-r})(1+q^{h-r-1}).
        \end{split}
    \end{equation*}
    For~\eqref{eq:telescoping-lemma-id2}, first note that \(B_h^{\text{even}} = 1-q^h+q^{r+1}\sum_{j=0}^{2h-2r-1} q^j\). So the difference \((1-q^{h+1})(1+q^h)-(1-q^{r+1})(B_h^{\text{even}}+2q^h)\) is equal to
    \begin{equation*}
        \begin{split}
            & \textstyle (1-q^{h+1})(1+q^h) - (1-q^{r+1})\left(1+q^h+q^{r+1}\sum_{j=0}^{2h-2r-1} q^j\right)
            \\ & \textstyle \qquad\qquad = q^{r+1}(1-q^{h-r})(1+q^h) - q^{r+1}(1-q^{r+1})\sum_{j=0}^{2h-2r-1} q^j \\
            & \textstyle \qquad\qquad = q^{r+1}(1-q^{h-r})(1+q^h) - q^{r+1}(1-q^{r+1})(1+q^{h-r}) \sum_{j=0}^{h-r-1} q^j \\
            & \textstyle \qquad\qquad = q^{r+1}(1-q^{h-r})(1+q^h) - q^{r+1}(1-q^{h-r})(1+q^{h-r}) \sum_{j=0}^r q^j \\
            & \textstyle \qquad\qquad = q^{r+1}(1-q^{h-r})\left(1+q^h - (1+q^{h-r}) \sum_{j=0}^r q^j\right) \\
            & \textstyle \qquad\qquad = q^{r+1}(1-q^{h-r})(1+q^{h-r-1})\left(1-\sum_{j=0}^r q^j\right) = A (1-q^{h-r})(1+q^{h-r-1}).
        \end{split}
    \end{equation*}
    This shows~\eqref{eq:telescoping-lemma-id2}.
    For~\eqref{eq:telescoping-lemma-id3}, if \(h=r+1\) then \(B_{r+1}^{\text{odd}}=q^{r+1}+1\) and the desired equality holds.
    If \(h\geq r+2\), the computation is similar to that of~\eqref{eq:telescoping-lemma-id2}. Indeed, first observe that \(B_h^{\text{odd}}=1+q^{r+1}\sum_{j=0}^{2h-2r-2} q^j\). So the difference \(1-q^{2h}-(1-q^{r+1})B_h^{\text{odd}}\) is equal to
    \begin{equation*}
        \begin{split}
            & \textstyle 1-q^{2h}-(1-q^{r+1})\left(1+q^{r+1}\sum_{j=0}^{2h-2r-2} q^j\right) = q^{r+1}\left(1-q^{2h-r-1}-(1-q^{r+1})\sum_{j=0}^{2h-2r-2}q^j\right)
        \end{split}
    \end{equation*}
    Multiplying the right-hand side of the above expression by \(1-q\), we obtain
    \begin{equation*}
        \begin{split}
            & \textstyle q^{r+1} \left((1-q)(1-q^{2h-r-1})-(1-q^{r+1})(1-q^{2h-2r-1}) \right)
            = q^{r+1}(-q+q^{r+1}+q^{2h-2r-1}-q^{2h-r-1}) \\
            & \textstyle \qquad\qquad = -q(1-q^r)q^{r+1}(1-q^{2h-2r-2}) \\
            & \textstyle \qquad\qquad = (1-q)\left(1-\sum_{j=0}^r q^j\right)q^{r+1}(1-q^{2h-2r-2})
            = (1-q) A(1-q^{2h-2r-2}).
        \end{split}
    \end{equation*}
    This shows that \((1-q)(1-q^{2h}-(1-q^{r+1})B_h^{\text{odd}})=(1-q)A(1-q^{2h-2r-2})\), and dividing both sides by \(1-q\) (which is a valid operation since \(\mathbb Z[q]\) is an integral domain) we obtain the equality~\eqref{eq:telescoping-lemma-id3}. This shows the equalities \eqref{eq:telescoping-lemma-id1}--\eqref{eq:telescoping-lemma-id4}.
\end{detail}

    Now we show the six equalities in the lemma statement.
    First we consider \(\widetilde{P}_h-\widetilde{P}_{h-1}A\).
    Define \(\widetilde{D}_h^{\text{even}}=\widetilde{P}_h-\widetilde{P}_{h-1}A - [\OG(r,2h)]q^h - [\OG(r-1,2h-1)]q^{h-r}B_h^{\text{even}} \) to be the right-hand side minus the left-hand side in the desired equality. We will show that \((1-q^r)\widetilde{D}_h^{\text{even}}=0\); since \(\mathbb Z[q]\) is an integral domain, this will imply that \(\widetilde{D}_h^{\text{even}}=0\) as required. To show that \((1-q^r)\widetilde{D}_h^{\text{even}}=0\), first note that Lemma~\ref{lem:class-of-OG} and the definition of the Gaussian binomial coefficients imply that \((1-q^r)[\OG(r,2h+1)]=(1-q^{2h})[\OG(r-1,2h-1)]\), \((1-q^r)[\OG(r,2h)]=(1-q^h)(1+q^{h-r})[\OG(r-1,2h-1)]\), and \((1-q^r)[\OG(r,2h-1)]=(1-q^{2h-2r})[\OG(r-1,2h-1)]\). So \((1-q^r) \widetilde{D}_h^{\text{even}}\) is equal to
    \begin{equation*}
        \begin{split}
            & (1-q^r) \left( [\OG(r,2h+1)]q^{h-r}-[\OG(r,2h-1)]q^{h-r-1}A - [\OG(r,2h)]q^h - [\OG(r-1,2h-1)]q^{h-r}B_h^{\text{even}} \right) \\
            & \qquad\qquad = [\OG(r-1,2h-1)](1-q^{2h})q^{h-r}-[\OG(r-1,2h-1)](1-q^{2h-2r})q^{h-r-1}A  \\
            & \qquad\qquad\phantom{=} - [\OG(r-1,2h-1)](1-q^h)(1+q^{h-r})q^h - [\OG(r-1,2h-1)](1-q^r)q^{h-r}B_h^{\text{even}} \\
            & \textstyle \qquad\qquad = [\OG(r-1,2h-1)]q^{h-r}
            \left( 1-q^{2h} - \frac{A}{q}(1-q^{2h-2r}) - (1-q^h)(1+q^{h-r})q^r - (1-q^r)B_h^{\text{even}} \right) \\
            & \textstyle \qquad\qquad = [\OG(r-1,2h-1)]q^{h-r}
            \left((1-q^r)(1-q^h-B_h^{\text{even}}) - \frac{A}{q}(1-q^{2h-2r}) \right) =0
        \end{split}
    \end{equation*}
    by~\eqref{eq:telescoping-lemma-id1}.

    For \(P_h-P_{h-1}A\) and \(h\geq r+2\), define \(D_h^{\text{even}}= P_h-P_{h-1}A-[\OG(r,2h)](B_h^{\text{even}}+2q^h)\) to be the difference of the two sides of the desired equality. We will show that \((1-q^{r+1})D_h^{\text{even}}=0\). For this, first note that \((1-q^{r+1})P_h=(1-q^{h+1})(1+q^h)[\OG(r,2h)]\) and \((1-q^{r+1})P_{h-1}=(1-q^{h-r})(1+q^{h-r-1})[\OG(r,2h)]\) by Lemma~\ref{lem:class-of-OG}. So, by~\eqref{eq:telescoping-lemma-id2}, \((1-q^{r+1}) D_h^{\text{even}}\) is equal to
    \begin{equation*}
        \begin{split}
            [\OG(r,2h)] \left((1-q^{h+1})(1+q^h)-A(1-q^{h-r})(1+q^{h-r-1})-(1-q^{r+1})(B_h^{\text{even}}+2q^h) \right) = 0 .
        \end{split}
    \end{equation*}

    For \(P_h-2 P_{h-1} A\) in the case \(h=r+1\), we will show that the desired equality holds after dividing both sides by a common factor. We compute that \begin{equation*}
        \begin{split}
            & \textstyle (q^{r+1}+1)\left(\sum_{j=0}^{r+1} q^j \right) - 2 A - \left(\sum_{j=0}^r q^j\right) (1+q^{r+2}+2q^{r+1}) = -2A-2q^{r+2} \sum_{j=0}^{r-1} q^j = 0
        \end{split}
    \end{equation*}
    by definition of \(A\), so \(\left(\sum_{j=0}^r q^j\right) (1+q^{r+2}+2q^{r+1}) = (q^{r+1}+1)\left(\sum_{j=0}^{r+1} q^j \right) - 2 A\). Multiplying both sides of this equality by \(\prod_{j=1}^r(q^j+1)\) yields the desired equality by Lemma~\ref{lem:class-of-OG} and using that \(B_{r+1}^{\text{even}}=q^{r+2}+1\).

    To address \(Q_h-Q_{h-1}A\), let \(D_h^{\text{odd}}=Q_h-Q_{h-1}A - [\OG(r,2h-1)] B_h^{\text{odd}}\) be the difference of the right-hand and left-hand sides in the desired equality. We will show that \((1-q^{r+1})D_h^{\text{odd}}=0\). For this, first note that Lemma~\ref{lem:class-of-OG} implies \((1-q^{r+1})Q_h = (1-q^{2h}) [\OG(r,2h-1)]\), \((1-q^{r+1})Q_{h-1} = (1-q^{2h-2r-2})[\OG(r,2h-1)]\) for \(h\geq r+2\), and \((1-q^{r+1})Q_{h-1} = 0 = (1-q^{2h-2r-2})[\OG(r,2h-1)]\) for \(h=r+1\). So, by~\eqref{eq:telescoping-lemma-id3},
    \begin{equation*}
        \begin{split}
            (1-q^{r+1})D_h^{\text{odd}} &= [\OG(r,2h-1)] \left( 1-q^{2h}- A(1-q^{2h-2r-2}) - (1-q^{r+1}) B_h^{\text{odd}}\right) = 0.
        \end{split}
    \end{equation*}

    For \(\widetilde{Q}_h-\widetilde{Q}_{h-1}A\) and \(h\geq r+2\), let \(D_h^{\text{odd}}=\widetilde{Q}_h-\widetilde{Q}_{h-1}A - [\OG(r,2h-1)] q^h - [\OG(r-1,2h-2)](q^{2h-r}+q^{h-r} B_h^{\text{odd}}+q^{2h-r-1})\) be the difference of the right-hand and left-hand sides of the desired equality; we will show that \((1-q^r) D_h^{\text{odd}}=0\).
    For this, first note that \((1-q^r)[\OG(r,2h)]=(1-q^h)(1+q^{h-1})[\OG(r-1,2h-2)]\), \((1-q^r)[\OG(r,2h-2)]=(1-q^{h-r})(1+q^{h-r-1})[\OG(r-1,2h-2)]\), and \((1-q^r)[\OG(r,2h-1)]=(1-q^{h-r})(1+q^{h-1})[\OG(r-1,2h-2)]\) by Lemma~\ref{lem:class-of-OG}. So \((1-q^r) D_h^{\text{odd}}\) is equal to
    \begin{equation*}
        \begin{split}
            & \textstyle [\OG(r-1,2h-2)]q^{h-r} \Bigl((1-q^h)(1+q^{h-1})-\frac{A}{q}(1-q^{h-r})(1+q^{h-r-1}) - (1-q^{h-r})(1+q^{h-1})q^r  \\ & \phantom{=}\qquad - (1-q^r)(q^h+ B_h^{\text{odd}}+q^{h-1}) \Bigr) \\
            % &\qquad = \textstyle [\OG(r-1,2h-2)]q^{h-r} \left( -\frac{A}{q}(1-q^{h-r})(1+q^{h-r-1}) + (1-q^r)(1+q^{h-1}) - (1-q^r)(q^h+ B_h^{\text{odd}}+q^{h-1}) \right) \\
            &\qquad = \textstyle [\OG(r-1,2h-2)]q^{h-r} \left( -\frac{A}{q}(1-q^{h-r})(1+q^{h-r-1}) + (1-q^r)(1-q^h- B_h^{\text{odd}}) \right) = 0 \\
        \end{split}
    \end{equation*}
    by~\eqref{eq:telescoping-lemma-id4}.

    Finally, we address \(\widetilde{Q}_h-2\widetilde{Q}_{h-1} A\) in the case \(h=r+1\). As in the argument for \(P_{r+1}-2P_r A\), we will show that the desired equality holds after dividing both sides by a common factor.
    We compute that
    \begin{equation*}
        \begin{split}
            & \textstyle q(q^r+1)\left(\sum_{j=0}^r q^j\right)-2A - q^{r+1}(q^r+1) - (q+q^{r+1}+2q^{r+2})\sum_{j=0}^{r-1} q^j
            = -2A -2q^{r+2} \sum_{j=0}^{r-1} q^j = 0.
        \end{split}
    \end{equation*}
    This shows that \(q^{r+1}(q^r+1) + (q+q^{r+1}+2q^{r+2})\sum_{j=0}^{r-1} q^j=q(q^r+1)\left(\sum_{j=0}^r q^j\right)-2A\). Multiplying both sides of this equality by \(\prod_{j=1}^{r-1}(q^j+1)\) and using Lemma~\ref{lem:class-of-OG} and the equality \(B_{r+1}^{\text{odd}}=q^{r+1}+1\) yields the desired equality.
    This completes the proof of the lemma.
\end{proof}

Before proving Theorem~\ref{thm:relative-F_r}, we recall some results on the Grothendieck rings of varieties over other bases. Let \(S\) be a variety over \(k\) with structure morphism \(f\colon S\to\Spec k\). The Grothendieck ring \(K_0(\mathrm{Var}/S)\) of varieties over \(S\) is the quotient of the free abelian group on the set of isomorphism classes of varieties over \(S\) by cut-and-paste relations, with multiplication defined by \([X]\cdot_S[Y]=[X\times_S Y]\) for varieties \(X,Y\) over \(S\) \cite[Definition 1.2.2 and Proposition 2.2.1]{MotivicIntegrationBook}. Viewing an \(S\)-variety as a \(k\)-variety defines a morphism of abelian groups \(f_!\colon K_0(\mathrm{Var}/S) \to K_0(\mathrm{Var}/k)\) \cite[(1.2.7)]{MotivicIntegrationBook}, and pullback defines a ring homomorphism \(f^*\colon K_0(\mathrm{Var}/k) \to K_0(\mathrm{Var}/S)\) that sends \([Z]\mapsto[Z\times_{\Spec k} S]\) for a variety \(Z\) over \(k\) \cite[(2.2.4)]{MotivicIntegrationBook}. In this setting, there is a projection formula \(f_!(f^* x \cdot_S y) = x \cdot f_! y\) for classes \(x\in K_0(\mathrm{Var}/k)\) and \(y\in K_0(\mathrm{Var}/S)\) \cite[(2.2.4.1)]{MotivicIntegrationBook}.

Now we are ready to prove Theorem~\ref{thm:relative-F_r} by induction on \(r\). In the beginning of the proof we will work in \(K_0(\mathrm{Var}/k)\) (and here brackets will denote classes in \(K_0(\mathrm{Var}/k)\)). The induction step will take place in \(K_0(\mathrm{Var}/S)\).

\begin{proof}[Proof of Theorem~\ref{thm:relative-F_r}]
    Throughout the proof, we denote \(g\coloneqq\lfloor\frac{n+1}{2}\rfloor\).
    Smooth quadrics, \(\OG(r,n+2)\), and \(\OG(g,n+2)\) are rational, so since the base \(S\) is a curve, the quadric fibration \(\pi\) has a section, an \((r-1)\)-section and a \((g-1)\)-section by \cite{GHS03,deJongStarr03}; furthermore, these are necessarily nondegenerate by Proposition~\ref{prop:KS-hyperbolic-reduction}\eqref{item:KS-smooth-nondegenerate-section}.
    For an integer \(-1\leq l\leq g-1\), we denote by \(\pi^{(l)}\colon\mathcal Q^{(l)}\to S\) the hyperbolic reduction of \(\pi\) with respect to some nondegenerate \(l\)-section (recall that the class of \(\mathcal Q^{(l)}\) in the Grothendieck ring is well defined by Proposition~\ref{prop:KS-hyperbolic-reduction}\eqref{item:KS-hyperbolic-reduction-indep}).

    First we show the required formulas for \(r=g\), i.e., for the Fano schemes of maximal isotropic subspaces.
    If \(n=2g\) is even, then the Stein factorization \(F_g(\mathcal Q/S)\to\widetilde{S}\to S\) factors as an \'etale locally trivial \(\OG(g+1,2g+2)\)-bundle over \(\widetilde{S}\), which is Zariski locally trivial because it has a section {by \cite{GHS03,deJongStarr03},} and hence the generic fiber of $\mathcal{Q}_{\widetilde{S}}\to \widetilde{S}$ is split. Therefore \([F_g(\mathcal Q/S)]=[\widetilde{S}][\OG(g+1,2g+2)]\).
    If \(n=2g-1\) is odd, then the image of \(F_g(\mathcal Q/S)\to S\) consists of the \(\delta\) points of the degeneracy locus of \(\pi\), and the fiber over each point is the Fano scheme of \((g-1)\)-planes on a smooth quadric \((2g-2)\)-fold and hence a disjoint union of two copies of \(\OG(g,2g)\) (since every \(g\)-plane in the corank 1 quadric \((2g-1)\)-fold \(\mathcal Q_{t_i}\) contains the singular point of \(\mathcal Q_{t_i}\)). So \([F_g(\mathcal Q/S)]=2\delta[\OG(g,2g)]\).
    
    For the remainder of the proof, we assume \(r\leq g-1\). To prove the result, we induct on \(r\).
    
    For \(r=0\), there are two base cases, depending on the parity of \(n\). By Proposition~\ref{prop:KS-hyperbolic-reduction}\eqref{item:KS-hyperbolic-reduction-formula}, the fact that \(\pi\) has a nondegenerate \((g-1)\)-section, and Lemma~\ref{lem:hyperbolic-reduction-properties-not-pencil}\eqref{item:hyperbolic-reduction-even-maximal},\eqref{item:hyperbolic-reduction-finite-set-maximal},
    \begin{equation*}
        \begin{split}
            [\mathcal Q]
            &= \begin{cases}
                [S][\P^{g-1}](1+\L^{g+1})+[\widetilde{S}]\L^g & \text{if \(n=2g\) is even,}\\
                [S][\P^{g-1}](1+\L^g)+\delta\L^g & \text{if \(n=2g-1\) is odd.}
            \end{cases}
        \end{split}
    \end{equation*}

    Now assume \(r\geq 1\). If \(n\in\{1,2\}\) then \(r=g\), so we may assume \(n\geq 3\).
    We will now work in \(K_0(\mathrm{Var}/S)\). Let \(f\colon S\to\Spec k\) be the structure morphism. By abuse of notation, for an \(S\)-variety, we will now use brackets to denote its class in \(K_0(\mathrm{Var}/S)\), and we will use \(\L\) (resp. \([\P^j]\)) to denote the class of \(f^*\L\) (resp. \(f^*[\P^j]\)) in \(K_0(\mathrm{Var}/S)\).
    We will show that in \(K_0(\mathrm{Var}/S)\), the class of \(F_r(\mathcal Q/S)\) is equal to
    \[ \begin{cases}
        [S][\OG(r+1,2g+2)] + ([\widetilde S]-2[S])[\OG(r,2g+1)]\L^{g-r} & \text{if \(n=2g\) is even, \(r\leq g-1\)}, \\
        [\widetilde{S}][\OG(g+1,2g+2)] & \text{if \(n=2g\) is even, }r=g, \\ 
        [S][\OG(r+1,2g+1)]+\delta[\OG(r,2g)] \L^{g-r} & \text{if \(n=2g-1\) is odd, \(r\leq g-1\)}, \\
        2\delta [\OG(g,2g)] & \text{if \(n=2g-1\) is odd, }r=g.
    \end{cases} \]
    In the above, by abuse of notation, \([S][\OG(r',n')]\) denotes the class of the \(S\)-variety \([S\times \OG(r',n')]\) (and similarly for \([\widetilde{S}] [\OG(g+1,2g+2)])\), and \(\delta[\OG(r,2g)]\) denotes the class of the product of \([\OG(r,2g)]\) and the degeneracy locus of \(\pi\), which is an \(S\)-variety.
    (We will use this notation for convenience in the computation of the induction step.)
    The projection formula for \(f^*\colon K_0(\mathrm{Var}/k) \to K_0(\mathrm{Var}/S)\) and \(f_!\colon K_0(\mathrm{Var}/S) \to K_0(\mathrm{Var}/k)\) will imply the desired result in \(K_0(\mathrm{Var}/k)\).
    
    Let \(s\) be a section of \(\pi\), and let \(\pi^{(0)}_s\colon\mathcal Q^{(0)}_s\to S\) be the hyperbolic reduction of \(\pi\) with respect to \(s\); the assumption that \(n\geq 3\) implies that \(\pi^{(0)}_s\) is a quadric fibration. Recall that \(F_{r-1}(\mathcal Q^{(0)}_s/S)\cong F_r(\mathcal Q/S)_s\) by Lemma~\ref{lem:hyperbolic-reduction-properties-not-pencil}\eqref{item:hyperbolic-reduction-not-pencil-embedding}; using this identification, let \(U\to F_{r-1}(\mathcal Q^{(0)}_s/S)\) be the \(\P^r\)-bundle whose fiber over \((m',t)\) is the set of hyperplanes in \(m'\cong\P^r\). Now define the correspondence
    \[\Gamma \coloneqq \{(m,m',y,t) \in F_r(\mathcal Q/S)\times_S U \mid y \subset m\}\]
    where \((m,t)\in F_r(\mathcal Q/S)\) and \((m',y,t)\in U\), so that \(m\) is an \(r\)-plane in \(\mathcal Q_t\) and \(m'\) is an \(r\)-plane in \(\mathcal Q_t\) containing the point \(s(t)\) and the \((r-1)\)-plane \(y\). We will use the first projection \(p_1\colon\Gamma\to F_r(\mathcal Q/S)\) to compute the class of \(F_r(\mathcal Q/S)\).

    First, we compute the class of \(\Gamma\) using the second projection \(\Gamma\to U\). For this, we identify \(\Gamma\) with a hyperbolic reduction, as follows. Let \(\pi_U\colon U\times_S\mathcal Q\to U\) be the quadric \(n\)-fold fibration obtained from \(\pi\) by base change, and define the \((r-1)\)-section \(\sigma\) of \(\pi_U\) by \((m',y,t) \mapsto (m',y,y,t)\). This \((r-1)\)-section may be degenerate; it is nondegenerate away from
    \[\Delta\coloneqq\{ (m',y,t_i) \in U \mid v_i \in y \subset m' \subset\mathcal Q_{t_i}, 1\leq i\leq\delta \}\]
    where \(\{t_1,\ldots,t_\delta\}\) is the degeneracy locus of \(\pi\) and \(v_i\in\mathcal Q_{t_i}\) is the singular point.
    Let \((\pi_U)^{(r-1)}_\sigma\colon (U\times_S\mathcal Q)^{(r-1)}_\sigma \to U\) be the corresponding hyperbolic reduction; the fiber of \((\pi_U)^{(r-1)}_\sigma\) over \((m',y,t)\) is the quadric \((\mathcal Q_t)^{(r-1)}_y\), i.e., the hyperbolic reduction of the quadric \(\mathcal Q_t\) with respect to the \((r-1)\)-plane \(y\). Then
    \[(U\times_S\mathcal Q)^{(r-1)}_\sigma\cong\Gamma\]
    by definition of the hyperbolic reduction (Definition~\ref{defn:hyperbolic-red-degen}).

    To compute the class of \((U\times_S\mathcal Q)^{(r-1)}_\sigma\), we compare it to the class of \(U\times_S\mathcal Q^{(r-1)}\), where \(\mathcal Q^{(r-1)}\to S\) is the hyperbolic reduction of \(\pi\) with respect to any nondegenerate \((r-1)\)-section. We have \([(U\times_S\mathcal Q)^{(r-1)}_\sigma|_{U\setminus\Delta}] = [U\times_S\mathcal Q^{(r-1)}|_{U\setminus\Delta}]\) by Proposition~\ref{prop:KS-hyperbolic-reduction}\eqref{item:KS-hyperbolic-reduction-indep}. Next we compute the class of \(\Delta\) and of the fibers of these hyperbolic reductions over \(\Delta\).
    For the class of \(\Delta\), note that it is a \(\P^{r-1}\)-bundle over
    \begin{equation}\label{eqn:U-bad-locus}
        \{(m',t_i) \in F_{r-1}(\mathcal Q^{(0)}_s/S) \mid v_i \in m' \subset \mathcal Q_{t_i}, 1\leq i\leq \delta\} = \bigsqcup_{i=1}^\delta \{m'\in F_r(\mathcal Q_{t_i}) \mid s(t_i), v_i \in m'\}.
    \end{equation}
    We claim that~\eqref{eqn:U-bad-locus} is a disjoint union of \(\delta\) copies of \(\OG(r-1,n-1)\). Indeed, for each \(1\leq i\leq\delta\), we have \(\{m'\in F_r(\mathcal Q_{t_i}) \mid s(t_i), v_i \in m'\} \cong \{\overline{m} \in F_{r-1}((\mathcal Q_{t_i})^{(0)}_{v_i}) \mid \overline{s(t_i)} \in \overline{m}\}\) by the cone description of \(\mathcal Q_{t_i}\), and this is isomorphic to the Fano scheme of \((r-2)\)-planes in the further hyperbolic reduction of \((\mathcal Q_{t_i})^{(0)}_{v_i}\) with respect to the point \(\overline{s(t_i)}\); this second hyperbolic reduction is a smooth quadric of dimension \(n-3\).
    This shows that \(\Delta\) is a \(\P^{r-1}\)-bundle over a disjoint union of \(\delta\) copies of \(\OG(r-1,n-1)\) and hence \([\Delta]=\delta[\OG(r-1,n-1)][\P^{r-1}]\).

    Next, we compare the fibers of the two different hyperbolic reductions of \(U\times_S\mathcal Q\) over \(\Delta\). For this, let \((m',y,t_i)\in\Delta\), and let \([Q^l]\) denote the class of a smooth quadric \(l\)-fold.
    \begin{enumerate}
        \item The fiber of \((\pi_{U})_{\sigma}^{(r-1)}\colon (U\times_S\mathcal Q)^{(r-1)}_\sigma\to U\) over \((m',y,t_i)\) is the hyperbolic reduction of the corank 1 quadric \(n\)-fold \(\mathcal Q_{t_i}\) with respect to an \((r-1)\)-plane containing the vertex. This is a smooth quadric of dimension \(n-2r+1\) in \(\P^{n-2r+2}\) by Example~\ref{exmp:hyperbolic-red-corank1-quadric} (view it as the hyperbolic reduction of the smooth quadric \((n-1)\)-fold \((\mathcal Q_{t_i})^{(0)}_{v_i}\) with respect to an \((r-2)\)-plane), so it has class \([Q^{n-2r+1}]\).
        More generally, for every scheme-theoretic point $x\in \Delta$ lying over $t_i$, the fiber of $(\pi_U)_\sigma^{(r-1)}$ over $x$ is a split smooth quadric of dimension $n-2r+1$ in $\P^{n-2r+2}_{\kappa(x)}$
        because it is a hyperbolic reduction of the split smooth quadric $(\mathcal{Q}_{t_i})_{v_i}^{(0)}\times_{\Spec(k)}\Spec\kappa(x)$ with respect to an $(r-2)$-plane and itself is split by the Witt cancellation theorem.
        Thus, by Lemma~\ref{lem:fibration-general}, we have that $[((\pi_U)^{(r-1)}_\sigma)^{-1}(\Delta)] = [\Delta][Q^{n-2r+1}]$ (the equality holds in \(K_0(\mathrm{Var}/S)\) because the morphisms involved are \(S\)-morphisms).
        \item The fiber of \((\pi^{(r-1)})_U\colon U\times_S(\mathcal Q^{(r-1)})\to U\) over \((m',y,t_i)\) is the hyperbolic reduction of the corank $1$ quadric \(n\)-fold \(\mathcal Q_{t_i}\) with respect to a nondegenerate \((r-1)\)-plane, so it is a quadric \((n-2r)\)-fold of corank $1$ in \(\P^{n-2r+1}\) and has class \([Q^{n-2r-1}]\L+1\).
        Furthermore, since \((\pi^{(r-1)})_U\) is the base change of \(\mathcal Q^{(r-1)}\to S\) by \(U\), and the image of \(\Delta\) in \(S\) consists of \(\delta\) distinct points, we have that \([(\pi^{(r-1)})_U^{-1}(\Delta)] = [\Delta]([Q^{n-2r-1}]\L+1)\).
    \end{enumerate}
    Since \(U\to S\) factors through the \(\P^r\)-bundle \(U\to F_{r-1}(\mathcal Q^{(0)}_s/S)\), we have that \(U\times_S(\mathcal Q^{(r-1)})\) is a \(\P^r\)-bundle over \(F_{r-1}(\mathcal Q^{(0)}_s/S)\times_S(\mathcal Q^{(r-1)})\). So
    \begin{equation}\label{eqn:class-of-Gamma}
        \begin{split}
            [\Gamma] &= [F_{r-1}(\mathcal Q^{(0)}_s/S)\times_S(\mathcal Q^{(r-1)})][\P^r] + ([Q^{n-2r+1}] - [Q^{n-2r-1}]\L - 1)[\Delta] \\
            &= [F_{r-1}(\mathcal Q^{(0)}_s/S)\times_S(\mathcal Q^{(r-1)})][\P^r] + \delta[\OG(r-1,n-1)][\P^{r-1}]\L^{n-2r+1} \quad \text{by Lemma~\ref{lem:formulas-Gr-quadric-Sym-Pn}\eqref{item:class-of-quadric}}.
        \end{split}
    \end{equation}
    
    Next, we relate the class of \(F_r(\mathcal Q/S)\) to that of \(\Gamma\) using the first projection \(p_1\colon\Gamma\to F_r(\mathcal Q/S)\).
    This projection factors as
    \[\Gamma \xrightarrow{\overline{p}} \Gamma_2 \coloneqq \{(m,m',t) \in F_r(\mathcal Q/S)\times_S F_{r-1}(\mathcal Q^{(0)}_s/S) \mid m \cap m' \text{ contains an \((r-1)\)-plane}\} \xrightarrow{\overline{q}} F_r(\mathcal Q/S).\]

    Let \(T\to S\) be the \(\P^n\)-bundle whose fiber over \(t\) is the tangent space \(T_{s(t)\mathcal Q_t}\subset \P^{n+1}\) to \(\mathcal Q_t\) at the smooth point \(s(t)\). After identifying \(F_{r-1}(\mathcal Q^{(0)}_s/S)\cong F_r(\mathcal Q/S)_s\), we have an inclusion \(F_{r-1}(\mathcal Q^{(0)}_s/S)\subset F_r(\mathcal Q\cap T/S)\). We consider the stratification \(\{F_r(\mathcal Q/S)\setminus F_r(\mathcal Q\cap T/S), F_r(\mathcal Q\cap T/S) \setminus F_{r-1}(\mathcal Q^{(0)}_s/S), F_{r-1}(\mathcal Q^{(0)}_s/S)\}\) of \(F_r(\mathcal Q/S)\).
    \begin{enumerate}
        \item Over \(F_r(\mathcal Q/S)\setminus F_r(\mathcal Q\cap T/S)\), the first projection \(p_1\) is an isomorphism. Indeed, for \((m,t)\in F_r(\mathcal Q/S)\setminus F_r(\mathcal Q\cap T/S)\), the \(r\)-plane \(m\) intersects \(T_{s(t)}\mathcal Q_t\) in an \((r-1)\)-plane that does not contain \(s(t)\), so \((m,t) \mapsto (m,\langle s(t), m \cap T_{s(t)}\mathcal Q_t\rangle, m \cap T_{s(t)}\mathcal Q_t, t)\) defines the inverse of \(p_1\) over this locus.
        \item Over \(F_r(\mathcal Q\cap T/S) \setminus F_{r-1}(\mathcal Q^{(0)}_s/S)\), the map \(\overline{p}\) is an isomorphism, and \(\overline{q}\) is a Zariski locally trivial \(\P^r\)-bundle whose fiber over \((m,t)\) is the set of hyperplanes in the \((r+1)\)-plane \(\langle s(t), m \rangle\) passing through \(s(t)\). (The \((r+1)\)-plane \(\langle s(t),m\rangle\) is contained in \(\mathcal Q_t\cap T_{s(t)}\mathcal Q_t\) because this intersection is the cone with vertex \(s(t)\) over a quadric \((n-2)\)-fold containing \(m\).) So the class of
        \(p_1^{-1}(F_r(\mathcal Q\cap T/S) \setminus F_{r-1}(\mathcal Q^{(0)}_s/S))\) is \([F_r(\mathcal Q\cap T/S)][\P^r] - [F_{r-1}(\mathcal Q^{(0)}_s/S)][\P^r]\).
        The first projection is a \(\P^r\)-bundle.
        \item For the preimage of \(F_{r-1}(\mathcal Q^{(0)}_s/S)\), we consider the cases \(s(t)\notin y\) and \(s(t)\in y\) separately.
        First, if \((m,m',y,t) \in p_1^{-1}(F_{r-1}(\mathcal Q^{(0)}_s/S))\) satisfies \(s(t)\notin y\), then we must have \(m=m'\). So the locus where \(s(t)\notin y\) is an \(\mathbb A^r\)-bundle over \(F_{r-1}(\mathcal Q^{(0)}_s/S)\) whose fiber over \((m,t)\) is the set of hyperplanes in \(m\) not containing the point \(s(t)\), and hence its class is \([F_{r-1}(\mathcal Q^{(0)}_s/S)]\L^r\).

        For the locus where \(s(t)\in y\), let \(I\to F_{r-1}(\mathcal Q^{(0)}_s/S)\) be the \(\P^{r-1}\)-bundle whose fiber over \((m,t)\) is the set of hyperplanes in \(m\cong\P^r\) containing \(s(t)\). Let \(\pi_I\colon I\times_S\mathcal Q\to I\) be the quadric \(n\)-fold fibration obtained by base change, and define the \((r-1)\)-section \(\overline{\sigma}\) by \((m,y,t) \mapsto (m,y,y,t)\). Then the corresponding hyperbolic reduction \((I\times_S\mathcal Q)^{(r-1)}_{\overline{\sigma}}\) satisfies
        \[(I\times_S\mathcal Q)^{(r-1)}_{\overline{\sigma}} \xrightarrow{\sim} \{(m,y,m',t) \in I \times F_r(\mathcal Q/S) \mid y \subset m'\}\]
        where \((m,y,t)\in I\) and \(m'\) is an \(r\)-plane in \(\mathcal Q_t\). Note that since \(s(t)\in y\), we have \(s(t)\in m'\), so this is isomorphic to the locus of \(p_1^{-1}(F_{r-1}(\mathcal Q^{(0)}_s/S))\) where $s(t)\in y$. The \((r-1)\)-section \(\overline{\sigma}\) is nondegenerate over \(I\setminus\overline{\Delta}\) where
        \[\overline{\Delta}\coloneqq \{(m,y,t_i) \in I \mid s(t_i),v_i \in y \subset \mathcal Q_{t_i} , 1\leq i\leq\delta\}.\]
        This is a \(\P^{r-2}\)-bundle over \(\bigsqcup_{i=1}^\delta \{(m,t_i) \mid s(t_i),v_i \in m \subset \mathcal Q_{t_i}\}\), which (as computed above) is a disjoint union of \(\delta\) copies of \(\OG(r-1,n-1)\). So \(\overline{\Delta}\) has class \(\delta[\OG(r-1,n-1)][\P^{r-2}]\).

        To compute the class of \((I\times_S\mathcal Q)^{(r-1)}_{\overline{\sigma}}\), we compare it to that of \(I\times_S (\mathcal Q^{(r-1)})\). The latter is the base change of \(\mathcal Q^{(r-1)}\to S\) by \(I\) so in particular it is a \(\P^{r-1}\)-bundle over \(F_{r-1}(\mathcal Q^{(0)}_s/S)\times_S(\mathcal Q^{(r-1)})\). The two hyperbolic reductions \((I\times_S\mathcal Q)^{(r-1)}_{\overline{\sigma}}\to I\) and \(I\times_S (\mathcal Q^{(r-1)}) \to I\) have the same class over \(I\setminus\overline{\Delta}\) by Proposition~\ref{prop:KS-hyperbolic-reduction}\eqref{item:KS-hyperbolic-reduction-indep}. To compare the fibers over \(\overline{\Delta}\), let \((m,y,t_i)\in\overline{\Delta}\).
        \begin{enumerate}
            \item The fiber of \((I\times_S\mathcal Q)^{(r-1)}_{\overline{\sigma}}\to I\) over \((m,y,t_i)\) is a smooth quadric \((n-2r+1)\)-fold by Example~\ref{exmp:hyperbolic-red-corank1-quadric}.
            More generally, for every scheme-theoretic point $x\in\overline{\Delta}$, the fiber of \((I\times_S\mathcal Q)^{(r-1)}_{\overline{\sigma}}\to I\) over $x$ is a split smooth quadric $(n-2r+1)$-fold over \(\kappa(x)\).
            Thus, by Lemma~\ref{lem:fibration-general}, the preimage of $\overline{\Delta}$ has class \([\overline{\Delta}][Q^{n-2r+1}]\).
            \item The fiber of \(I\times_S (\mathcal Q^{(r-1)}) \to I\) over \((m,y,t_i)\) is a quadric \((n-2r)\)-fold of corank 1. Since this hyperbolic reduction is obtained by base change from \(\mathcal Q^{(r-1)}\to S\), the preimage of \(\overline{\Delta}\) has class \([\overline{\Delta}]([Q^{n-2r-1}]\L+1)\).
        \end{enumerate}
        As computed above, the difference of the classes of the two hyperbolic reductions over \(\overline{\Delta}\) has class \([\overline{\Delta}]\L^{n-2r+1}\), so
        \begin{equation*}
            \begin{split}
                & [p_1^{-1}(F_{r-1}(\mathcal Q^{(0)}_s/S))] = [F_{r-1}(\mathcal Q^{(0)}_s/S)\times_S(\mathcal Q^{(r-1)})][\P^{r-1}] + [\overline{\Delta}]\L^{n-2r+1} + [F_{r-1}(\mathcal Q^{(0)}_s/S)]\L^r \\
                & \qquad\qquad\qquad = [F_{r-1}(\mathcal Q^{(0)}_s/S)\times_S(\mathcal Q^{(r-1)})][\P^{r-1}] + \delta[\OG(r-1,n-1)][\P^{r-2}]\L^{n-2r+1} + [F_{r-1}(\mathcal Q^{(0)}_s/S)]\L^r .
            \end{split}
        \end{equation*}
    \end{enumerate}
    Putting these together, we obtain
    \begin{equation}\begin{split}\label{eqn:rel-F_r-in-terms-of-corresp}
        [F_r(\mathcal Q/S)]
        % &= [\Gamma] + [F_r(\mathcal Q\cap T/S)] - [F_r(\mathcal Q\cap T/S)][\P^r] + [F_{r-1}(\mathcal Q^{(0)}_s/S)][\P^r] \\ & \qquad -[F_{r-1}(\mathcal Q^{(0)}_s/S)\times_S(\mathcal Q^{(r-1)})][\P^{r-1}] - \delta[\OG(r-1,n-1)][\P^{r-2}]\L^{n-2r+1} - [F_{r-1}(\mathcal Q^{(0)}_s/S)]\L^r \\
        &= [\Gamma] + [F_r(\mathcal Q\cap T/S)](1-[\P^r]) + [F_{r-1}(\mathcal Q^{(0)}_s/S)][\P^{r-1}] \\ & \qquad -[F_{r-1}(\mathcal Q^{(0)}_s/S)\times_S(\mathcal Q^{(r-1)})][\P^{r-1}] - \delta[\OG(r-1,n-1)][\P^{r-2}]\L^{n-2r+1} .
    \end{split}\end{equation}
    For the class of \(F_r(\mathcal Q\cap T/S)\), consider the correspondence
    \[\{(m,P,t) \in F_r(\mathcal Q\cap T/S) \times_S F_{r+1}(\mathcal Q/S)_s \mid m \subset P \subset \mathcal Q_t\}.\]
    The first projection is an isomorphism over \(F_r(\mathcal Q\cap T/S)\setminus F_{r-1}(\mathcal Q^{(0)}_s/S)\), with inverse \((m,t) \mapsto (m,\langle s(t), m\rangle, t)\). The preimage of \(F_r(\mathcal Q\cap T/S)\setminus F_{r-1}(\mathcal Q^{(0)}_s/S)\) under the first projection is the locus of \((m,P,t)\) with \(s(t)\notin m\), which is a Zariski locally trivial \((\P^{r+1}\setminus\P^r)\)-bundle over \(F_{r+1}(\mathcal Q/S)_s\) via the second projection. Therefore, recalling that \(F_r(\mathcal Q^{(0)}_s/S) \cong F_{r+1}(\mathcal Q/S)_s\) by Lemma~\ref{lem:hyperbolic-reduction-properties-not-pencil}\eqref{item:hyperbolic-reduction-not-pencil-embedding}, we have
    \begin{equation}\label{eqn:rel-F_r-QcapH}
        [F_r(\mathcal Q\cap T/S)] = [F_{r-1}(\mathcal Q^{(0)}_s/S)] + [F_r(\mathcal Q^{(0)}_s/S)] \L^{r+1} .
    \end{equation}

    Combining~\eqref{eqn:class-of-Gamma}, \eqref{eqn:rel-F_r-in-terms-of-corresp}, and~\eqref{eqn:rel-F_r-QcapH}, we obtain the equality
    \begin{equation*}
        \begin{split}
            [F_r(\mathcal Q/S)] &= [F_{r-1}(\mathcal Q^{(0)}_s/S)\times_S(\mathcal Q^{(r-1)})][\P^r] + \delta[\OG(r-1,n-1)][\P^{r-1}]\L^{n-2r+1} \\ & \qquad + [F_{r-1}(\mathcal Q^{(0)}_s/S)](1-[\P^r]) + [F_r(\mathcal Q^{(0)}_s/S)] \L^{r+1}(1-[\P^r]) + [F_{r-1}(\mathcal Q^{(0)}_s/S)][\P^{r-1}] \\ & \qquad -[F_{r-1}(\mathcal Q^{(0)}_s/S)\times_S(\mathcal Q^{(r-1)})][\P^{r-1}] - \delta[\OG(r-1,n-1)][\P^{r-2}]\L^{n-2r+1} \\
            % &= [F_{r-1}(\mathcal Q^{(0)}_s/S)\times_S(\mathcal Q^{(r-1)})][\P^r] + \delta[\OG(r-1,n-1)]([\P^{r-1}] - [\P^{r-2}])\L^{n-2r+1} \\ & \qquad + [F_{r-1}(\mathcal Q^{(0)}_s/S)](1-[\P^r]+[\P^{r-1}]) + [F_r(\mathcal Q^{(0)}_s/S)] \L^{r+1}(1-[\P^r]) \\ & \qquad -[F_{r-1}(\mathcal Q^{(0)}_s/S)\times_S(\mathcal Q^{(r-1)})][\P^{r-1}] \\
            % &= [F_{r-1}(\mathcal Q^{(0)}_s/S)\times_S(\mathcal Q^{(r-1)})][\P^r] + \delta[\OG(r-1,n-1)]\L^{n-r} \\ & \qquad + [F_{r-1}(\mathcal Q^{(0)}_s/S)](1-[\P^r]+[\P^{r-1}]) + [F_r(\mathcal Q^{(0)}_s/S)] \L^{r+1}(1-[\P^r]) \\ & \qquad -[F_{r-1}(\mathcal Q^{(0)}_s/S)\times_S(\mathcal Q^{(r-1)})][\P^{r-1}] \\
            &= [F_{r-1}(\mathcal Q^{(0)}_s/S)\times_S(\mathcal Q^{(r-1)})]\L^r + \delta[\OG(r-1,n-1)]\L^{n-r} \\ & \qquad + [F_{r-1}(\mathcal Q^{(0)}_s/S)](1-\L^r) + [F_r(\mathcal Q^{(0)}_s/S)] \L^{r+1}(1-[\P^r])
        \end{split}
    \end{equation*}
    in \(K_0(\mathrm{Var}/S)\).
    
    Using the equality \([\mathcal Q^{(r-1)}] = [S][\P^{g-r-1}](1+\L^{n-g-r+1})+[\mathcal Q^{(g-1)}] \L^{g-r}\) from Corollary~\ref{cor:formula-class-of-hyperbolic-reduction}, we obtain
    \begin{equation}\label{eqn:rel-F_r-expression-to-plug-in-recursion}
        \begin{split}
            [F_r(\mathcal Q/S)]
            % &= [F_{r-1}(\mathcal Q^{(0)}_s/S)\times_S(\mathcal Q^{(r-1)})]\L^r + \delta[\OG(r-1,n-1)]\L^{n-r} \\ & \qquad + [F_{r-1}(\mathcal Q^{(0)}_s/S)](1-\L^r) + [F_r(\mathcal Q^{(0)}_s/S)] \L^{r+1}(1-[\P^r]) \\
            % &= [F_{r-1}(\mathcal Q^{(0)}_s/S)\times_S S][\P^{g-r-1}](1+\L^{n-g-r+1})\L^r+[F_{r-1}(\mathcal Q^{(0)}_s/S)\times_S(\mathcal Q^{(g-1)})]\L^g + \delta[\OG(r-1,n-1)]\L^{n-r} \\ & \qquad + [F_{r-1}(\mathcal Q^{(0)}_s/S)](1-\L^r) + [F_r(\mathcal Q^{(0)}_s/S)] \L^{r+1}(1-[\P^r]) \\
            &= [F_{r-1}(\mathcal Q^{(0)}_s/S)\times_S(\mathcal Q^{(g-1)})]\L^g + \delta[\OG(r-1,n-1)]\L^{n-r} \\ & \qquad + [F_{r-1}(\mathcal Q^{(0)}_s/S)]([\P^{g-r-1}](\L^r+\L^{n-g+1}) + 1-\L^r) + [F_r(\mathcal Q^{(0)}_s/S)] \L^{r+1}(1-[\P^r])
        \end{split}
    \end{equation}

    For \(0\leq i\leq g-r-2\), the hyperbolic reduction \(\pi^{(i)}\colon \mathcal Q^{(i)}\to S\) has relative dimension \(n-2i-2\) and \(\lfloor\frac{n-2i-2+1}{2}\rfloor = g-i-1\). Since \((\mathcal Q^{(i)})^{(g-i-2)}\to S\) is the hyperbolic reduction of \(\mathcal Q\to S\) with respect to a \((g-1)\)-section, we have
    \begin{equation*}
        \begin{split}
            [F_r(\mathcal Q^{(i)}/S)]
            % &= [F_{r-1}(\mathcal Q^{(i+1)}/S)\times_S((\mathcal Q^{(i)})^{(g-i-2)})]\L^{g-i-1} + \delta[\OG(r-1,n-2i-3)]\L^{n-r-2i-2} \\ & \qquad + [F_{r-1}(\mathcal Q^{(i+1)}/S)]([\P^{g-r-i-2}](\L^r+\L^{n-g-i}) + 1-\L^r) + [F_r(\mathcal Q^{(i+1)}/S)] \L^{r+1}(1-[\P^r]) \\
            &= [F_{r-1}(\mathcal Q^{(i+1)}/S)\times_S(\mathcal Q^{(g-1)})]\L^{g-i-1} + \delta[\OG(r-1,n-2i-3)]\L^{n-r-2i-2} \\ & \qquad + [F_{r-1}(\mathcal Q^{(i+1)}/S)]([\P^{g-r-i-2}](\L^r+\L^{n-g-i}) + 1-\L^r) + [F_r(\mathcal Q^{(i+1)}/S)] \L^{r+1}(1-[\P^r]) .
        \end{split}
    \end{equation*}
    Plugging this into~\eqref{eqn:rel-F_r-expression-to-plug-in-recursion}, we get
    \begin{equation}\label{eqn:F_r-recursive-formula}
    \begin{split}
    [F_r(\mathcal{Q}/S)]&=
    \sum_{i=0}^{g-r-1}[F_{r-1}(\mathcal{Q}^{(i)}/S)\times_{S}\mathcal{Q}^{(g-1)}]\L^{g-i}(\L^{r+1}(1-[\P^r]))^i\\
    &\quad+\sum_{i=0}^{g-r-1}[F_{r-1}(\mathcal{Q}^{(i)}/S)]([\P^{g-r-i-1}](\L^r+\L^{n-g-i+1})+1-\L^r)(\L^{r+1}(1-[\P^r]))^i \\    
    &\quad + \delta\sum_{i=0}^{g-r-1}[\OG(r-1,n-2i-1)]\L^{n-r-2i}(\L^{r+1}(1-[\P^{r}]))^i\\
    &\quad +[F_r(\mathcal{Q}^{(g-r-1)}/S)](\L^{r+1}(1-[\P^r]))^{g-r}.
    \end{split}
    \end{equation}
    Since the hyperbolic reduction \(\pi^{(g-r-1)}\colon \mathcal Q^{(g-r-1)}\to S\) has relative dimension \(n-2g+2r\), we have \(\lfloor\frac{n-2g+2r+1}{2}\rfloor=r\), so the Fano scheme \(F_r(\mathcal{Q}^{(g-r-1)}/S)\) of maximal isotropic subspaces has class
    \[[F_r(\mathcal{Q}^{(g-r-1)}/S)] = \begin{cases}
        [\widetilde{S}][\OG(r+1,2r+2)] & \text{if \(n=2g\) is even}, \\ 
        2\delta [\OG(r,2r)] & \text{if \(n=2g-1\) is odd}.
    \end{cases}\]

    The induction hypothesis on \(r-1\) gives formulas for the classes of \(F_{r-1}(\mathcal{Q}^{(i)}/S)\). Now we will complete the induction. We consider the cases of \(n\) even and \(n\) odd separately.

    \underline{If \(n=2g\) is even:} Then \(\mathcal Q^{(g-1)}\cong\widetilde{S}\) by Lemma~\ref{lem:hyperbolic-reduction-properties-not-pencil}\eqref{item:hyperbolic-reduction-even-maximal}, and the induction hypothesis implies
    \begin{equation}\begin{split}\label{eqn:IH-even-case}
        [F_{r-1}(\mathcal Q^{(i)}/S)] &= [S][\OG(r,2g-2i)] + ([\widetilde S]-2[S])[\OG(r-1,2g-2i-1)] \L^{g-r-i}\\
        % &= [S]([\OG(r,2g-2i)]-2[\OG(r-1,2g-2i-1)]\L^{g-r-i}) + [\widetilde S][\OG(r-1,2g-2i-1)]\L^{g-r-i}
    \end{split}\end{equation}
    because \(\mathcal Q^{(i)}\to S\) is a quadric fibration of relative dimension \(2g-2i-2\).
    Since \([\widetilde{S}\times_S\widetilde{S}]=2[\widetilde{S}]-\delta\), we have
    \begin{equation*}
        \begin{split}
            [F_{r-1}(\mathcal{Q}^{(i)}/S)\times_{S}\widetilde{S}]\L^{g-i}(\L^{r+1}(1-[\P^r]))^i &= [\widetilde{S}][\OG(r,2g-2i)]\L^{g-i}(\L^{r+1}(1-[\P^r]))^i \\ & \quad -\delta [\OG(r-1,2g-2i-1)]\L^{2g-r-2i}(\L^{r+1}(1-[\P^r]))^i
        \end{split}
    \end{equation*}
    for each \(0\leq i\leq g-r-1\). Substituting this, the equality \([F_r(\mathcal{Q}^{(g-r-1)}/S)]=[\widetilde{S}][\OG(r+1,2r+2)]\), and~\eqref{eqn:IH-even-case} into~\eqref{eqn:F_r-recursive-formula}, we obtain
    {\footnotesize
    \begin{equation*}
    \begin{split}
    & [F_r(\mathcal{Q}/S)]=
    [\widetilde{S}]\sum_{i=0}^{g-r-1}[\OG(r,2g-2i)]\L^{g-i}(\L^{r+1}(1-[\P^r]))^i\\
    &\qquad+[S]\sum_{i=0}^{g-r-1}([\OG(r,2g-2i)]-2[\OG(r-1,2g-2i-1)]\L^{g-r-i})([\P^{g-r-i-1}](\L^r+\L^{g-i+1})+1-\L^r)(\L^{r+1}(1-[\P^r]))^i \\    
    &\qquad+[\widetilde{S}]\sum_{i=0}^{g-r-1}[\OG(r-1,2g-2i-1)]\L^{g-r-i}([\P^{g-r-i-1}](\L^r+\L^{g-i+1})+1-\L^r)(\L^{r+1}(1-[\P^r]))^i \\
    &\qquad + [\widetilde{S}][\OG(r+1,2r+2)](\L^{r+1}(1-[\P^r]))^{g-r} .
    \end{split}
    \end{equation*}}

    Now we simplify the coefficients of \([\widetilde{S}]\) and \([S]\) in the above expression. Let \(h=g-i\), and let \(A\), \(B_h^{\text{even}}\), \(\widetilde{P}_h\), and \(P_h\) be the polynomials in \(\mathbb Z[\L]\subset K_0(\mathrm{Var}/S)\) obtained by setting \(q\mapsto\L\) for the corresponding polynomials in Lemma~\ref{lem:telescoping-sum-computation}. The coefficient of \([\widetilde{S}]\) in the above expression is
    \begin{equation*}
        \begin{split}
            & \left(\sum_{h=r+1}^g \left([\OG(r,2h)]\L^h + [\OG(r-1,2h-1)]\L^{h-r}B_h^{\text{even}}\right) A^{g-h} \right) + [\OG(r+1,2r+2)]A^{g-r} \\
            &\qquad\qquad= \left(\sum_{h=r+1}^g (\widetilde{P}_h- \widetilde{P}_{h-1}A) A^{g-h} \right) + [\OG(r+1,2r+2)]A^{g-r} \\
            &\qquad\qquad = \widetilde{P}_g- \widetilde{P}_r A^{g-r} + [\OG(r+1,2r+2)] A^{g-r} \\
            &\qquad\qquad = [\OG(r,2g+1)]\L^{g-r} - [\OG(r,2r+1)] A^{g-r} + [\OG(r+1,2r+2)] A^{g-r} \\
            &\qquad\qquad = [\OG(r,2g+1)]\L^{g-r} ,
        \end{split}
    \end{equation*}
    where the second equality holds by Lemma~\ref{lem:telescoping-sum-computation}, and the final equality holds because \(\OG(r,2r+1)\cong \OG(r+1,2r+2)\).
    
    To address the coefficient of \([S]\), we will prove that the coefficient of \([S]\) plus twice the coefficient of \([\widetilde{S}]\) is equal to \([\OG(r+1,2g+2)]\). The sum of the coefficient of \([S]\) and two times the coefficient of \([\widetilde{S}]\) is
    \begin{equation*}
        \begin{split}
            & \left(\sum_{h=r+1}^{g}[\OG(r,2h)] (B_h^{\text{even}} + 2\L^h) A^{g-h} \right) + 2 P_r A^{g-r} \\
            & \qquad\qquad = P_{r+1} A^{g-r-1} -2 P_r A^{g-r} + \left(\sum_{h=r+2}^{g} (P_h - P_{h-1} A ) A^{g-h} \right) + 2 P_r A^{g-r} \\
            & \qquad\qquad = P_g-2P_r A^{g-r} + 2 P_r A^{g-r} \\
            & \qquad\qquad = [\OG(r+1,2g+2)],
        \end{split}
    \end{equation*}
    where the second equality holds by Lemma~\ref{lem:telescoping-sum-computation}.
    Since the coefficient of \([\widetilde{S}]\) is \([\OG(r,2g+1)]\L^{g-r}\),
    this shows the coefficient of \([S]\) is equal to \([\OG(r+1,2g+2)]-2[\OG(r,2g+1)]\L^{g-r}\), and hence that \([F_r(\mathcal Q/S)] = [S][\OG(r+1,2g+2)] + ([\widetilde S]-2[S]) [\OG(r,2g+1)]\L^{g-r}\), as required.

    \underline{If \(n=2g-1\) is odd:} Then \(\mathcal Q^{(g-1)}\) is the degeneracy locus of \(\pi\) by Lemma~\ref{lem:hyperbolic-reduction-properties-not-pencil}\eqref{item:hyperbolic-reduction-finite-set-maximal}, so it consists of \(\delta\) points. The induction hypothesis implies
    \begin{equation}\label{eqn:IH-odd-case}
        [F_{r-1}(\mathcal Q^{(i)}/S)] = [S][\OG(r,2g-2i-1)] + \delta [\OG(r-1,2g-2i-2)]\L^{g-r-i}
    \end{equation}
    because \(\mathcal Q^{(i)}\to S\) is a quadric fibration of relative dimension \(2g-2i-3\). Since \(\mathcal Q^{(g-1)}\times_S\mathcal Q^{(g-1)}\cong\mathcal Q^{(g-1)}\) has class \(\delta\), for each \(0\leq i\leq g-r-1\) we have
    \begin{equation*}
        \begin{split}
            & [F_{r-1}(\mathcal{Q}^{(i)}/S)\times_{S}\mathcal{Q}^{(g-1)}]\L^{g-i}(\L^{r+1}(1-[\P^r]))^i =
            [\mathcal{Q}^{(g-1)}][\OG(r,2g-2i-1)]\L^{g-i}(\L^{r+1}(1-[\P^r]))^i \\
            & \qquad\qquad \phantom{=} + [\mathcal{Q}^{(g-1)}\times_{S}\mathcal{Q}^{(g-1)}][\OG(r-1,2g-2i-2)]\L^{g-r-i}\L^{g-i}(\L^{r+1}(1-[\P^r]))^i \\
            & \qquad\qquad = \delta [\OG(r,2g-2i-1)]\L^{g-i}(\L^{r+1}(1-[\P^r]))^i \\
            & \qquad\qquad \phantom{=} + \delta [\OG(r-1,2g-2i-2)]\L^{2g-r-2i}(\L^{r+1}(1-[\P^r]))^i .
        \end{split}
    \end{equation*}
    Substituting this, the equality \([F_r(\mathcal{Q}^{(g-r-1)}/S)]=2\delta [\OG(r,2r)]\), and~\eqref{eqn:IH-odd-case} into~\eqref{eqn:F_r-recursive-formula}, we obtain
    \begin{equation*}
    \begin{split}
    [F_r(\mathcal{Q}/S)]&=
    \delta \sum_{i=0}^{g-r-1} [\OG(r,2g-2i-1)]\L^{g-i}(\L^{r+1}(1-[\P^r]))^i \\
    &\quad +\delta \sum_{i=0}^{g-r-1} [\OG(r-1,2g-2i-2)]\L^{2g-r-2i}(\L^{r+1}(1-[\P^r]))^i \\
    &\quad + [S]\sum_{i=0}^{g-r-1}[\OG(r,2g-2i-1)]([\P^{g-r-i-1}](\L^r+\L^{g-i})+1-\L^r)(\L^{r+1}(1-[\P^r]))^i \\
    &\quad + \delta\sum_{i=0}^{g-r-1} [\OG(r-1,2g-2i-2)]\L^{g-r-i}([\P^{g-r-i-1}](\L^r+\L^{g-i})+1-\L^r)(\L^{r+1}(1-[\P^r]))^i \\
    &\quad + \delta\sum_{i=0}^{g-r-1}[\OG(r-1,2g-2i-2)]\L^{2g-r-2i-1}(\L^{r+1}(1-[\P^{r}]))^i\\
    &\quad + 2\delta [\OG(r,2r)](\L^{r+1}(1-[\P^r]))^{g-r}.
    \end{split}
    \end{equation*}
    To simplify the coefficients of \([S]\) and \(\delta\) in the above expression, let \(h=g-i\), and let \(A\), \(B_h^{odd}\), \(Q_h\), and \(\widetilde{Q}_h\) be the polynomials in \(\mathbb Z[\L] \subset K_0(\mathrm{Var}/S)\) obtained by setting \(q\mapsto\L\) for the corresponding polynomials in Lemma~\ref{lem:telescoping-sum-computation}.
    The coefficient of \([S]\) is
    \begin{equation*}
        \begin{split}
            & \sum_{h=r+1}^g[\OG(r,2h-1)] B_h^{\text{odd}} A^{g-h}
            = \sum_{h=r+1}^g (Q_h-Q_{h-1}A) A^{g-h} \\
            &\qquad\qquad = [\OG(r+1,2g+1)]-[\OG(r+1,2r+1)] A^{g-r} =  [\OG(r+1,2g+1)]
        \end{split}
    \end{equation*}
    where the first equality holds by Lemma~\ref{lem:telescoping-sum-computation} and the third holds because \(\OG(r+1,2r+1)=\emptyset\).
    
    Next, the coefficient of \(\delta\) is
    \begin{equation*}
        \begin{split}
            & \left(\sum_{h=r+1}^g \left( [\OG(r,2h-1)]\L^h + [\OG(r-1,2h-2)](\L^{2h-r} + \L^{h-r} B_h^{\text{odd}} + \L^{2h-r-1}) \right) A^{g-h}\right) + 2 \widetilde{Q}_r A^{g-r} \\
            & \qquad\qquad = \widetilde{Q}_{r+1}A^{g-r-1} - 2\widetilde{Q}_r A^{g-r} + \left(\sum_{h=r+2}^g (\widetilde{Q}_h - \widetilde{Q}_{h-1} A) A^{g-h}\right) + 2 \widetilde{Q}_r A^{g-r} \\
            & \qquad\qquad = \widetilde{Q}_g - 2\widetilde{Q}_r A^{g-r} + 2 \widetilde{Q}_r A^{g-r}
            = [\OG(r,2g)]\L^{g-r},
        \end{split}
    \end{equation*}
    where the first equality holds by Lemma~\ref{lem:telescoping-sum-computation}.
    This proves that \([F_r(\mathcal Q/S)] = [S][\OG(r+1,2g+1)] + \delta [\OG(r,2g)]\L^{g-r}\), as required.
    
\end{proof}

The following equivalent formulation of Theorem~\ref{thm:relative-F_r} will be more convenient for the following Corollaries~\ref{cor:motive-F_r} and~\ref{cor:rel-F_r-K_0-cat}:
\begin{thm}\label{thm:relative-F_r-expanded-coeffs}
    Over an algebraically closed field \(k\) of characteristic \(\neq 2\), let \(S\) be a smooth connected curve, let \(n\geq 1\), and let \(\pi\colon\mathcal Q\to S\) be a fibration of \(n\)-dimensional quadrics over \(k\). Assume that \(\pi\) has simple degeneration and that the degeneracy locus of \(\pi\) is a smooth divisor.
    Let \(\delta\) be the number of singular fibers of \(\pi\), and let \(0\leq r \leq \lfloor\frac{n+1}{2}\rfloor\).

    If \(n=2g\) is even, let \(\widetilde{S}\to S\) be the discriminant double cover of \(\pi\). Then the following equalities hold in \(K_0(\mathrm{Var}/k)\):
    \[ [F_r(\mathcal Q/S)] = \begin{cases}
        [S]\binom{g}{r+1}_{\L} \prod_{i=g-r+1}^{g+1} (\L^i+1) + [\widetilde S]\binom{g}{r}_{\L} \L^{g-r} \prod_{i=g-r+1}^g (\L^i+1) & \text{if \(r\leq g-1\)}, \\
        [\widetilde{S}]\prod_{i=1}^g(\L^i+1) & \text{if }r=g.
    \end{cases} \]

    If \(n=2g-1\) is odd, then the following equalities hold in \(K_0(\mathrm{Var}/k)\):
    \[ [F_r(\mathcal Q/S)] = \begin{cases}
        [S]\binom{g}{r+1}_{\L} \prod_{i=g-r}^g (\L^i+1)+\delta\binom{g}{r}_{\L} \prod_{i=g-r}^{g-1} (\L^i+1)\L^{g-r} & \text{if \(r\leq g-1\)}, \\
        2\delta \prod_{i=1}^{g-1}(\L^i+1) & \text{if }r=g.
    \end{cases} \]
\end{thm}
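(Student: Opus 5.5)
Theorem~\ref{thm:relative-F_r-expanded-coeffs} follows from Theorem~\ref{thm:relative-F_r} by substituting the explicit classes of orthogonal Grassmannians from Lemma~\ref{lem:class-of-OG} and regrouping. No new geometry is needed. I would treat the four cases of Theorem~\ref{thm:relative-F_r} separately and use only Gaussian binomial identities.

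\emph{The two cases with \(r=g\).} For \(n=2g\) even, Lemma~\ref{lem:class-of-OG} with \(r=m=g+1\) gives \([\OG(g+1,2g+2)]=\prod_{i=1}^{g}(\L^i+1)\), which is the claimed formula. For \(n=2g-1\) odd, the same lemma with \(r=m=g\) gives \([\OG(g,2g)]=\prod_{i=1}^{g-1}(\L^i+1)\), which yields \(2\delta\prod_{i=1}^{g-1}(\L^i+1)\).

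\emph{Odd case with \(r\le g-1\).} Lemma~\ref{lem:class-of-OG} with \(m=g\) gives
\[[\OG(r+1,2g+1)]=\binom{g}{r+1}_{\L}\prod_{i=g-r}^{g}(\L^i+1).\]
Since \(r<g\), it also gives
\[[\OG(r,2g)]=\binom{g}{r}_{\L}\prod_{i=g-r}^{g-1}(\L^i+1).\]
Substituting both into Theorem~\ref{thm:relative-F_r} gives the stated formula directly. If \(r=0\), then \([\OG(0,2g)]=1\), and one checks that the empty-product conventions agree.

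\emph{Even case with \(r\le g-1\).} This is the only case that needs some care. Start from
\[[S]\bigl([\OG(r+1,2g+2)]-2[\OG(r,2g+1)]\L^{g-r}\bigr)+[\widetilde S][\OG(r,2g+1)]\L^{g-r}.\]
The coefficient of \([\widetilde S]\) is \(\binom{g}{r}_{\L}\L^{g-r}\prod_{i=g-r+1}^{g}(\L^i+1)\) by Lemma~\ref{lem:class-of-OG} with \(m=g\). For the coefficient of \([S]\), Lemma~\ref{lem:class-of-OG} with \(m=g+1\) and \(r+1<g+1\) gives
\[[\OG(r+1,2g+2)]=\binom{g+1}{r+1}_{\L}\prod_{i=g-r}^{g}(\L^i+1).\]
Apply the Pascal-type identity \(\binom{g+1}{r+1}_{\L}=\binom{g}{r+1}_{\L}+\L^{g-r}\binom{g}{r}_{\L}\), i.e.\ \cite[(1.67)]{Stanley12}, which was already used in the proof of Lemma~\ref{lem:class-of-OG}. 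It remains to show
\[\binom{g}{r+1}_{\L}\prod_{i=g-r}^{g}(\L^i+1)+\L^{g-r}\binom{g}{r}_{\L}\prod_{i=g-r}^{g}(\L^i+1)-2\L^{g-r}\binom{g}{r}_{\L}\prod_{i=g-r+1}^{g}(\L^i+1)=\binom{g}{r+1}_{\L}\prod_{i=g-r+1}^{g+1}(\L^i+1).\]
Factor out \(\prod_{i=g-r+1}^{g}(\L^i+1)\). The middle two terms then combine to \(\L^{g-r}\binom{g}{r}_{\L}(\L^{g-r}-1)\). Using \((1-\L^{r+1})\binom{g}{r+1}_{\L}=(1-\L^{g-r})\binom{g}{r}_{\L}\), this becomes \(\binom{g}{r+1}_{\L}\L^{g-r}(\L^{r+1}-1)\). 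Adding \(\binom{g}{r+1}_{\L}(\L^{g-r}+1)\) gives \(\binom{g}{r+1}_{\L}(\L^{g+1}+1)\), as required.

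I expect this last identity to be the only real obstacle, and only a bookkeeping one. It requires dividing by \(1-\L^{r+1}\), which is legitimate because the identity can be verified in the integral domain \(\mathbb Z[q]\) and then specialized along \(q\mapsto\L\), as in Lemma~\ref{lem:telescoping-sum-computation}. One must also check the edge case \(r=0\) separately, where \(\binom{g}{0}_{\L}=1\).
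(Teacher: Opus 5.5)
Your proposal is correct and is essentially the paper's proof: the paper also deduces Theorem~\ref{thm:relative-F_r-expanded-coeffs} from Theorem~\ref{thm:relative-F_r} by substituting Lemma~\ref{lem:class-of-OG}, and it only carries out an explicit computation in the case $n=2g$, $r\le g-1$. There the paper rewrites $\binom{g+1}{r+1}_{\L}$ as $\frac{1-\L^{g+1}}{1-\L^{r+1}}\binom{g}{r}_{\L}$ instead of using the $\L$-Pascal identity, which is a cosmetic difference; your version never actually divides, since $(1-\L^{r+1})\binom{g}{r+1}_{\L}=(1-\L^{g-r})\binom{g}{r}_{\L}$ is already a polynomial identity.
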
  
\begin{proof}
Theorem~\ref{thm:relative-F_r-expanded-coeffs} is equivalent to Theorem~\ref{thm:relative-F_r} by Lemma~\ref{lem:class-of-OG} and the computation
\begin{equation*}
    \begin{split}
        & [\OG(r+1,2g+2)]-2[\OG(r,2g+1)]\L^{g-r} = \binom{g+1}{r+1}_{\L} \prod_{i=g-r}^g (\L^i+1) - 2\binom{g}{r}_{\L} \prod_{i=g-r+1}^g (\L^i+1)\L^{g-r} \\
        &\qquad= \binom{g}{r}_{\L} \left( \frac{1-\L^{g+1}}{1-\L^{r+1}} (\L^{g-r}+1) - 2 \L^{g-r} \right) \prod_{i=g-r+1}^g (\L^i+1) \\
        % &= \binom{g}{r}_{\L} \frac{-\L^{2g-r+1}-\L^{g-r}+1+\L^{g+1}}{1-\L^{r+1}} \prod_{i=g-r+1}^g (\L^i+1) \\
        &\qquad= \binom{g}{r}_{\L} \frac{(1+\L^{g+1})(1-\L^{g-r})}{1-\L^{r+1}} \prod_{i=g-r+1}^g (\L^i+1) 
        % &= \binom{g}{r}_{\L} \frac{1-\L^{g-r}}{1-\L^{r+1}} \prod_{i=g-r+1}^{g+1} (\L^i+1) \\
        = \binom{g}{r+1}_{\L} \prod_{i=g-r+1}^{g+1} (\L^i+1)
    \end{split}
\end{equation*}
in the \(n=2g\) and \(r\leq g-1\) case.
\end{proof}

Recall that a \defi{rational Chow motive} (or a \defi{motive}) over $\C$ is a triple $(X,p,n)$, where $X$ is a smooth complex projective variety of pure dimension $d_X$, $p\in \CH^{d_X}(X\times X)_\Q$ satisfies $p\circ p=p$, and $n\in \Z$.
The motive $h(X)$ of $X$ is defined by $h(X)=(X,\Delta_X,0)$. We denote $\Q=(\Spec \mathbb C,\Delta_{\Spec \mathbb C},0)=h(\Spec \mathbb C)$ and $\Q(-1)=(\Spec \mathbb C, \Delta_{\Spec \mathbb C},-1)$; the latter is known as the Lefschetz motive.
A morphism from a motive $(X,p,n)$ to another motive $(Y,q,m)$ is an element of $q\circ \CH^{d_X+m-n}(X\times Y)_\Q\circ p$,
and in the category of motives, we have $h(\P^1)=\Q\oplus \Q(-1)$.
The category of motives is equipped with the tensor product $(X,p,n)\otimes(Y,q,m)=(X\times Y,p\times q, n+m)$,
and we denote $\Q(-i)=\Q(-1)^{\otimes i}$.
A motive is Tate if it is isomorphic to the direct sum \(\bigoplus_{j=1}^m \Q(-i_j)\) for some integers \(i_j\).
Finally, a motive is \defi{of abelian type} if it belongs to the thick and rigid subcategory of motives generated by the motives of abelian varieties.

\begin{cor}\label{cor:motive-F_r}
Over \(\C\), let \(S\) be a smooth projective connected curve, let \(n\geq 1\), and let \(\pi\colon\mathcal Q\to S\) be a fibration of \(n\)-dimensional quadrics. 
Assume that \(\pi\) has simple degeneration and that the degeneracy locus of \(\pi\) is a smooth divisor.
Let \(0\leq r \leq \lfloor\frac{n+1}{2}\rfloor\) and let $\delta$ be the number of singular fibers of $\pi$. If $n=2g$ is even, let $\widetilde{S}\to S$ be the discriminant double cover.
\begin{enumerate}
\item\label{item:motive-abelian-type} The rational Chow motive $h(F_r(\mathcal{Q}/S))$ of $F_r(\mathcal{Q}/S)$ is of abelian type. More precisely, we have the following decomposition of $h(F_r(\mathcal Q/S))$:
{\footnotesize
\[
  h(F_r(\mathcal{Q}/S)) \cong
  \begin{cases}
    \!\begin{aligned}%[b]
       h(S)\otimes & \textstyle h(\Gr(r+1,g))\otimes \left(\bigotimes_{i=g-r+1}^{g+1}(\Q(-i)\oplus \Q)\right) \\
       \oplus & \textstyle h(\widetilde{S})\otimes h(\Gr(r,g))(-g+r)\otimes \left(\bigotimes_{i=g-r+1}^g(\Q(-i)\oplus\Q)\right)
    \end{aligned}           & \text{if }n=2g, r\leq g-1, \\
    h(\widetilde{S})\otimes \left(\bigotimes_{i=1}^g(\Q(-i)\oplus \Q)\right) & \text{if }n=2g, r=g, \\
    \!\begin{aligned}%[b]
       h(S)\otimes & \textstyle h(\Gr(r+1,g))\otimes \left(\bigotimes_{i=g-r}^g (\Q(-i)\oplus \Q)\right) \\
       \oplus & \textstyle \left(h(\Gr(r,g))(-g+r)\otimes \left(\bigotimes_{i=g-r}^{g-1}(\Q(-i)\oplus \Q)\right)\right)^{\oplus \delta}
    \end{aligned}           & \text{if }n=2g-1,r\leq g-1, \\
    \left(\bigotimes_{i=1}^{g-1}(\Q(-i)\oplus \Q)\right)^{\oplus 2\delta} & \text{if }n=2g-1,r=g.
  \end{cases}
\]}
\item\label{item:motive-conjectures} $F_r(\mathcal{Q}/S)$ satisfies the Kimura finite-dimensionality conjecture, the Murre conjectures, the standard conjecture, and the Hodge conjecture (see \cite{vial13} for the statements).
\end{enumerate}
\end{cor}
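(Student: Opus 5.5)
The plan is to pass from the identity of Theorem~\ref{thm:relative-F_r-expanded-coeffs} in \(K_0(\mathrm{Var}/\C)\) to an isomorphism of Chow motives. The key tool is Kimura finite-dimensionality, which lets an isomorphism of numerical motives lift to one of rational Chow motives.

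First, I will check that \(F_r(\mathcal Q/S)\) is smooth and projective and has a finite-dimensional motive generated by \(h(S)\), \(h(\widetilde S)\) and Tate motives. Smoothness holds by \cite[Proposition 2.1 and Remark 2.2]{Kuznetsov14}. For the motive, consider the projection \(F_r(\mathcal Q/S)\to S\). Every fiber is either an orthogonal Grassmannian or the Fano scheme of a corank~1 quadric. As in the odd-\(n\) proof of Theorem~\ref{thm:relative-F_r}, the latter is stratified into \(\OG\)'s and affine bundles over \(\OG\)'s, so each fiber has a cellular decomposition; in particular its Chow groups are finite-dimensional and spanned by classes of closed cells. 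I will then apply Vial's results on fibrations over a curve whose fibers have this property (\cite{vial13} and the references there). These give that \(h(F_r(\mathcal Q/S))\) is a direct summand of a finite sum of Tate twists of motives of curves: \(h(S)\), and, for \(n\) even, \(h(\widetilde S)\), which enters through the Stein factorization at \(r=g\) and through the family of maximal isotropic spaces in general. Motives in the subcategory generated by curves are of abelian type and Kimura finite-dimensional. I expect this step to be the main obstacle: one must check the precise hypotheses of Vial's theorem, in particular that the relevant curve pieces are exactly \(h^1(S)\) and \(h^1(\widetilde S)\). If needed, I would instead argue directly via the relative correspondences \(\Gamma\) used in the proof of Theorem~\ref{thm:relative-F_r}.

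Second, I will apply the motivic realization \(K_0(\mathrm{Var}/\C)\to K_0(\mathrm{Chow}_\Q)\) (Gillet--Soulé, Bittner), sending a smooth projective \(X\) to \([h(X)]\) and \(\L\) to \([\Q(-1)]\). Applying it to Theorem~\ref{thm:relative-F_r-expanded-coeffs}, and using \([\Gr(a,g)]=\binom{g}{a}_{\L}\) together with \(h(\Gr(a,g))\cong\bigoplus\Q(-j)\) (cellularity), shows that \([h(F_r(\mathcal Q/S))]\) equals the class of the claimed right-hand side \(M\). Here \(\delta\) copies of \(\Q\) account for the \(\delta\)-terms in the odd case.

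Third, I will push both sides to the category of numerical motives, which is abelian semisimple by Jannsen. In that category \(K_0\) is free on isomorphism classes of simple objects, so equality of classes yields an isomorphism \(\overline{h(F_r(\mathcal Q/S))}\cong\overline M\). Both motives are finite-dimensional: the left side by the first step, and \(M\) because it is built from curves and Tate motives. By Kimura, the kernel of the functor from rational to numerical motives is a nilpotent ideal on finite-dimensional motives, so the isomorphism lifts to \(h(F_r(\mathcal Q/S))\cong M\). This proves~\eqref{item:motive-abelian-type}.

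For~\eqref{item:motive-conjectures}, the motive is now a direct sum of Tate twists of \(h(S)\), \(h(\widetilde S)\) and points. The Kimura finite-dimensionality, Murre, standard and Hodge conjectures are known for such motives and are stable under these operations, by the results collected in \cite{vial13}. This gives the claims for \(F_r(\mathcal Q/S)\).
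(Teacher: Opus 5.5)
Your proposal is correct and follows the paper's proof. Cellularity of the reduced fibers, combined with Vial's fibration results \cite[Theorems 6.12 and 7.1]{vial13}, gives Kimura finite-dimensionality; Theorem~\ref{thm:relative-F_r-expanded-coeffs}, passed through Gillet--Soul\'e to numerical motives and split using Jannsen's semisimplicity, gives the decomposition there; and finite-dimensionality lifts that decomposition to Chow motives. The differences are only organizational: the paper proves~\eqref{item:motive-conjectures} directly from Vial's niveau~$\le 1$ result, treating $n=2g-1$, $r=g$ separately because $F_g(\mathcal Q/S)$ then lies over the $\delta$ singular points, rather than deducing it from the decomposition, and the obstacle you anticipate (identifying the curve pieces as $h^1(S)$ and $h^1(\widetilde S)$) never arises, since finite-dimensionality only requires \emph{some} curves.
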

\begin{proof}
For \eqref{item:motive-conjectures}, let $p\colon F_r(\mathcal Q/S)\to S$ be the projection.
The domain \(F_r(\mathcal Q/S)\) and the codomain \(S\) are smooth and projective (Section~\ref{sec:prelim-F_r}). 
We claim that for every closed point $s$, the reduced subscheme of the fiber $p^{-1}(s)$ is cellular.
Indeed, if $\pi^{-1}(s)$ is smooth, then $p^{-1}(s) \cong F_r(Q^n)$, and $F_r(Q^n)$ (which is the disjoint union of two copies of $\OG(r+1,n+2)$ if $2r+2=n+2$ and $\OG(r+1,n+2)$ otherwise) is cellular by \cite[\S 14.12]{Borel91}, so is $p^{-1}(s)$.
If $\pi^{-1}(s)$ is singular, then it is a cone with vertex \(v\) over a smooth quadric in $\P^n$ by assumption; in this case, \(p^{-1}(s)\) may be non-reduced (see, e.g., \cite[Introduction]{Kuznetsov14}).
Then $p^{-1}(s)_{\text{red}}$ has a closed stratum \(\{\rplane\in p^{-1}(s)_{\text{red}} \mid v\in \rplane\} \cong F_{r-1}(Q^{n-1})\) whose open complement is
an \(\mathbb A^{r+1}\)-bundle over \(F_r(Q^{n-1})\) that parametrizes the \(r\)-planes on \(\pi^{-1}(s)\) that do not contain \(v\). (If \(r=0\) then the first stratum is just the vertex \(v\).)
Since $F_{r-1}(Q^{n-1})$ (or a point in the \(r=0\) case) and \(F_r(Q^{n-1})\)
are both cellular, $p^{-1}(s)_{\text{red}}$ is cellular, as desired.
By the claim, \cite[Example 1.9.1]{fulton1998intersection} shows that for every closed point $s\in S$, $\CH_*(p^{-1}(s)_{\text{red}})=\CH_*(p^{-1}(s))$ is finitely generated.

Consequently, 
if \(r\neq\frac{n+1}{2}\), then by \cite[Theorem 6.12]{vial13}, $\CH_*(F_r(\mathcal{Q}/S))$ is of niveau $\leq 1$, and hence by \cite[Theorem 7.1]{vial13}, $F_r(\mathcal{Q}/S)$ satisfies the Kimura finite-dimensionality conjecture, the Murre conjectures, the standard conjecture, and the Hodge conjecture.
If \(n=2g-1\) is odd and \(r=g\), then \(F_g(\mathcal Q/S)\) is supported over the singular fibers of \(\pi\), and each nonempty fiber is the disjoint union of two copies of \(\OG(g,2g)\), so $\CH_*(F_g(\mathcal{Q}/S))$ is finitely generated, hence $F_g(\mathcal Q/S)$ again satisfies these conjectures.

For \eqref{item:motive-abelian-type}, Theorem \ref{thm:relative-F_r-expanded-coeffs} yields a decomposition of the class of $F_r(\mathcal{Q}/S)$ in the Grothendieck ring of rational numerical motives \cite[Theorem 4 and Theorem 2(v)]{GilletSoule96},
and since the category of rational numerical motives is semisimple due to Jannsen \cite{Jannsen92}, it lifts to the following decomposition of the rational numerical motive $h_{num}(F_r(\mathcal{Q}/S))$ of $F_r(\mathcal{Q}/S)$:
{\footnotesize\[
  h_{num}(F_r(\mathcal{Q}/S)) \cong
  \begin{cases}
    \!\begin{aligned}%[b]
       h_{num}(S)\otimes & \textstyle h_{num}(\Gr(r+1,g))\otimes \left(\bigotimes_{i=g-r+1}^{g+1}(\Q(-i)\oplus \Q)\right) \\
       \oplus & \textstyle h_{num}(\widetilde{S})\otimes h_{num}(\Gr(r,g))(-g+r)\otimes \left(\bigotimes_{i=g-r+1}^g(\Q(-i)\oplus\Q)\right)
    \end{aligned}           & \text{if }n=2g, r\leq g-1, \\
    h_{num}(\widetilde{S})\otimes \left(\bigotimes_{i=1}^g(\Q(-i)\oplus \Q)\right) & \text{if }n=2g, r=g, \\
    \!\begin{aligned}%[b]
       h_{num}(S)\otimes & \textstyle h_{num}(\Gr(r+1,g))\otimes \left(\bigotimes_{i=g-r}^g (\Q(-i)\oplus \Q)\right) \\
       \oplus & \textstyle \left(h_{num}(\Gr(r,g))(-g+r)\otimes \left(\bigotimes_{i=g-r}^{g-1}(\Q(-i)\oplus \Q)\right)\right)^{\oplus \delta}
    \end{aligned}           & \text{if }n=2g-1,r\leq g-1, \\
    \left(\bigotimes_{i=1}^{g-1}(\Q(-i)\oplus \Q)\right)^{\oplus 2\delta} & \text{if }n=2g-1,r=g.
  \end{cases}
\]}\ignorespaces 
Moreover, for any \(r\) and \(n\), \(h(F_r(\mathcal{Q}/S))\) and the motives in the corollary statement are Kimura finite-dimensional.
This follows from \eqref{item:motive-conjectures} and the fact that the motive of a curve is finite-dimensional \cite[Corollary 4.4]{kimura05} and that Grassmannians are cellular so their motives are Tate by \cite[Corollary 66.4]{EKM-quadratic-forms}.
Since the functor from motives to numerical motives restricts to a full and conservative functor on Kimura finite-dimensional motives \cite{andre2005}, 
 we can lift the above decomposition of $h_{num}(F_r(\mathcal{Q}/S))$ to the desired decomposition of $h(F_r(\mathcal{Q}/S))$, which in turn shows that $h(F_r(\mathcal{Q}/S))$ is of abelian type.
\end{proof}

\begin{cor}\label{cor:rel-F_r-K_0-cat}
    In the setting of Theorem~\ref{thm:relative-F_r}, assume that \(k\) is algebraically closed of characteristic zero and \(S\) is projective. Then the class of the derived category of \(F_r(\mathcal Q/S)\) in the Grothendieck ring of categories \(K_0(\mathrm{Cat}/k)\) (see \cite[Definition 2.7 and Example 2.8]{BBFGHLPRAS}) is equal to
    \[ [D^b(F_r(\mathcal Q/S))] = \begin{cases}
        2^{r+1} \binom{g}{r+1} [D^b(S)] + 2^r \binom{g}{r}[D^b(\widetilde{S})] & \text{if \(n=2g\) is even, \(r\leq g-1\)}, \\
        2^g [D^b(\widetilde{S})] & \text{if \(n=2g\) is even, }r=g, \\ 
        2^{r+1} \binom{g}{r+1} [D^b(S)]+ 2^r \binom{g}{r}\delta \cdot 1 & \text{if \(n=2g-1\) is odd, \(r\leq g-1\)}, \\
        2^g \delta \cdot 1 & \text{if \(n=2g-1\) is odd, }r=g,
    \end{cases} \]
    where \(1 = [D^b(\Spec k)]\).
    
    In particular, if \(r=1\), $n\geq 2$, and \(k=\mathbb C\), the image of \(\mathcal R_{even}\) (resp. \(\mathcal R_{odd}\)) from \cite[Theorem 1.2]{Shah-rel-Fr} in \(K_0(\mathrm{Cat}/\mathbb C)\) is \([D^b(\widetilde{S})]\) (resp. \(2\delta\cdot 1\)); this verifies the images of \cite[Conjecture 8.5]{Shah-rel-Fr} and \cite[Expectation after Proposition 8.2]{Shah-rel-Fr} in \(K_0(\mathrm{Cat}/\mathbb C)\) (in the case of quadric fibrations over curves with simple degeneration).
\end{cor}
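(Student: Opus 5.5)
We plan to apply the realization homomorphism \(K_0(\mathrm{Var}/k)\to K_0(\mathrm{Cat}/k)\) to Theorem~\ref{thm:relative-F_r-expanded-coeffs}. In characteristic zero, with the convention of \cite[Definition 2.7 and Example 2.8]{BBFGHLPRAS}, this map sends the class of a smooth projective variety \(Y\) to \([D^b(Y)]\) and sends \(\L\) to \(1=[D^b(\Spec k)]\). It is a ring homomorphism, whose existence follows from Bittner's presentation of \(K_0(\mathrm{Var}/k)\) by smooth projective varieties modulo blow-up relations, combined with Orlov's blow-up formula. We will first check that the four classes appearing in Theorem~\ref{thm:relative-F_r-expanded-coeffs}, namely \([S]\), \([\widetilde S]\), \(\delta\) and polynomials in \(\L\), are all classes of smooth projective varieties. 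Here \(S\) is projective by assumption. The double cover \(\widetilde S\) is finite over \(S\), and it is smooth because it is branched along the reduced smooth degeneracy divisor. The class \(\delta\) is that of a reduced finite set of points. Finally, \(F_r(\mathcal Q/S)\) itself is smooth and projective by Section~\ref{sec:prelim-F_r}. Under these conditions the image of \([F_r(\mathcal Q/S)]\) is \([D^b(F_r(\mathcal Q/S))]\), and the image of the right-hand side is obtained by substituting \(\L=1\) into each coefficient.

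Next we would evaluate the coefficients at \(\L=1\). Each factor \(\L^i+1\) becomes \(2\), and each power of \(\L\) becomes \(1\). The Gaussian binomial \(\binom{g}{r}_{\L}\) becomes the ordinary binomial \(\binom{g}{r}\), since it is a polynomial that specializes at \(q=1\) to the classical coefficient \cite[Section 1.7]{Stanley12}. Counting factors then gives the following:
\begin{itemize}
\item for \(n=2g\) and \(r\leq g-1\): \(\prod_{i=g-r+1}^{g+1}\) has \(r+1\) factors and \(\prod_{i=g-r+1}^{g}\) has \(r\) factors, giving \(2^{r+1}\binom{g}{r+1}\) and \(2^r\binom{g}{r}\);
\item for \(n=2g\) and \(r=g\): the product has \(g\) factors, giving \(2^g\);
\item for \(n=2g-1\) and \(r\leq g-1\): \(\prod_{i=g-r}^{g}\) has \(r+1\) factors and \(\prod_{i=g-r}^{g-1}\) has \(r\) factors;
\item for \(n=2g-1\) and \(r=g\): we get \(2\cdot 2^{g-1}=2^g\).
\end{itemize}
These are exactly the stated formulas, with \(\delta\) points contributing \(\delta\cdot 1\).

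For the final assertion we specialize to \(r=1\). Shah's theorem \cite[Theorem 1.2]{Shah-rel-Fr} gives a semiorthogonal decomposition of \(D^b(F_1(\mathcal Q/S))\). Its components are a number of copies of \(D^b(S)\) (twists of Clifford or exceptional pieces pulled back from \(S\)) together with the residual category \(\mathcal R_{even}\) or \(\mathcal R_{odd}\). Taking classes in \(K_0(\mathrm{Cat})\) turns this into a sum. We would then read off from \cite{Shah-rel-Fr} the number of \(D^b(S)\) components, and we expect it to be \(4\binom{g}{2}\) plus the number of Clifford-type components. The Clifford components over \(S\) have class \([D^b(S)]\) or \([D^b(\widetilde S)]\), depending on the parity of \(n\), because the even Clifford algebra is Morita equivalent to \(\widetilde S\) or to a root stack.

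The main obstacle is this matching step. It requires extracting from Shah's paper the exact list of components, including how many Clifford components appear and what their classes in \(K_0(\mathrm{Cat})\) are. For odd \(n\) one must also identify the class of the Clifford piece on a root stack, which equals \([D^b(S)]\) plus \(\delta\) points. With this list in hand, we would subtract its classes from our formula: for even \(n\) this should leave \(2\binom{g}{1}[D^b(\widetilde S)]\) minus the Clifford contributions, and for odd \(n\) it should leave \(2\delta\cdot 1\). The first thing we would try is to verify the general-fiber count against \(\chi\) of \(\OG(2,n+2)\), which pins down the number of \(D^b(S)\) copies, so that only the residual class remains.
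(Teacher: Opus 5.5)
Your treatment of the displayed formula is correct and is the paper's argument. You apply the Bondal--Larsen--Lunts homomorphism \(K_0(\mathrm{Var}/k)\to K_0(\mathrm{Cat}/k)\), which sends \(\L\mapsto 1\), to Theorem~\ref{thm:relative-F_r-expanded-coeffs}, and then specialize the Gaussian binomials at \(1\).

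The gap is in the ``in particular'' part. You leave it as an unexecuted plan, and your fallback would not close it. The residual class can only be found by subtracting the explicitly known classes of the other components, so you need the precise content of \cite[Theorem 1.2 and Proposition 7.1]{Shah-rel-Fr}:
\begin{itemize}
\item For \(n=2g\): \([D^b(F_1(\mathcal Q/S))]=[\mathcal R_{even}]+g[\mathcal A]+(g-1)[\mathcal B]\), with \([\mathcal A]=(g-1)[D^b(S)]+[D^b(S,\mathcal{C}l_0)]\) and \([\mathcal B]=g[D^b(S)]+[D^b(S,\mathcal{C}l_0)]\).
\item For \(n=2g-1\): \([D^b(F_1(\mathcal Q/S))]=[\mathcal R_{odd}]+(2g-2)[\mathcal B]\), with \([\mathcal B]=(g-1)[D^b(S)]+[D^b(S,\mathcal{C}l_0)]\).
\end{itemize}
You also need \(D^b(S,\mathcal{C}l_0)\simeq D^b(\widetilde S)\), resp. \(D^b(\widehat S)\) \cite[Corollaries 3.14 and 3.16]{Kuznetsov08}, and \([D^b(\widehat S)]=[D^b(S)]+\delta\cdot 1\) \cite{IshiiUeda}. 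Subtracting from \(2g(g-1)[D^b(S)]+2g[D^b(\widetilde S)]\), resp. \(2g(g-1)[D^b(S)]+2g\delta\cdot 1\), then gives the claim.

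Your expected count is off. In the even case there are exactly \(4\binom{g}{2}\) copies of \(D^b(S)\) and \(2g-1\) Clifford pieces of class \([D^b(\widetilde S)]\). In the odd case there are \(2(g-1)^2\) copies of \(D^b(S)\) and \(2g-2\) Clifford pieces, each of class \([D^b(S)]+\delta\cdot 1\), not \([D^b(S)]\) as you first assert.

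Your substitute, comparing with \(\chi(\OG(2,n+2))\) over a general point, cannot determine these multiplicities. It is a single numerical constraint that your formula already satisfies, e.g. \(2g(g-1)+2\cdot 2g=2g(g+1)=\chi(\OG(2,2g+2))\). It also cannot separate residual from non-residual contributions, since over a general point \([D^b(\widetilde S)]\) and \(2[D^b(S)]\) look identical. So your argument leaves \([\mathcal R_{even}]\) and \([\mathcal R_{odd}]\) undetermined.
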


\begin{proof}
    By \cite{BondalLarsenLunts}, \([Y] \mapsto [D^b(Y)]\) for smooth projective varieties \(Y\) over \(k\) defines a ring homomorphism \(K_0(\mathrm{Var}/k) \to K_0(\mathrm{Cat}/k)\), and the image of \(\L\) under this homomorphism is \(1\). Since \(S\), \(\widetilde{S}\), and \(F_r(\mathcal Q/S)\) are smooth and projective, the above identities hold by Theorem~\ref{thm:relative-F_r-expanded-coeffs} and the fact that the Gaussian binomial coefficient \(\binom{n'}{r'}_{\L}\) evaluated at \(\L=1\) is equal to \(\binom{n'}{r'}\).
    % \begin{equation*}
    %     \begin{split}
    %         \mu_{D^b}\left( \binom{g}{r+1}_{\L} \prod_{i=g-r+1}^{g+1} (\L^i+1) \right) &= \binom{g}{r+1} 2^{r+1} \text{ for }r\leq g-1 \\
    %         \mu_{D^b}\left( \binom{g}{r}_{\L} \L^{g-r} \prod_{i=g-r+1}^g (\L^i+1) \right) &= \binom{g}{r}2^r \text{ for }r\leq g-1 \\
    %         \mu_{D^b}\left( \prod_{i=1}^g(\L^i+1) \right) &= 2^g \\
    %     \end{split}
    % \end{equation*}
    % \begin{equation*}
    %     \begin{split}
    %         \mu_{D^b}\left( \binom{g}{r+1}_{\L} \prod_{i=g-r}^g (\L^i+1) \right) &= \binom{g}{r+1} 2^{r+1} \text{ for }r\leq g-1 \\
    %         \mu_{D^b}\left( \binom{g}{r}_{\L} \prod_{i=g-r}^{g-1} (\L^i+1)\L^{g-r} \right) &= \binom{g}{r}2^r \text{ for }r\leq g-1 \\
    %         \mu_{D^b}\left( \prod_{i=1}^{g-1}(\L^i+1) \right) &= 2^{g-1} \\
    %     \end{split}
    % \end{equation*}
    
    If \(r=1\), then the above formulas specialize to
    \begin{equation}\label{eqn:rel-F_1-K_0-cat}
        [D^b(F_1(\mathcal Q/S))] = \begin{cases}
        2g(g-1) [D^b(S)] + 2g [D^b(\widetilde{S})] & \text{if \(n=2g\) is even, \(r\leq g-1\)}, \\
        2^g [D^b(\widetilde{S})] & \text{if \(n=2g\) is even, }r=g, \\ 
        2g(g-1) [D^b(S)]+ 2g \delta\cdot 1 & \text{if \(n=2g-1\) is odd, \(r\leq g-1\)}, \\
        2^g \delta\cdot 1 & \text{if \(n=2g-1\) is odd, }r=g.
    \end{cases}\end{equation}
    
    If \(n\geq 2\) is even and \(k=\mathbb C\), then the equality
    \([D^b(F_1(\mathcal Q/S))] = [\mathcal R_{even}] + g [\mathcal A] + (g-1) [\mathcal B]\)
    holds in \(K_0(\mathrm{Cat}/\mathbb C)\) by \cite[Theorem 1.2 and Proposition 7.1]{Shah-rel-Fr}, where \(\mathcal A\) and \(\mathcal B\) are subcategories of \(D^b(F_1(\mathcal Q/S))\) such that \([\mathcal A]=(g-1)[D^b(S)]+[D^b(S,\mathcal{C}l_0)]\) and \([\mathcal B]=g[D^b(S)]+[D^b(S,\mathcal{C}l_0)]\), and \(\mathcal R_{even}\) is a residual category. Here \(\mathcal{C}l_0\) is the sheaf of even parts for the Clifford algebra on \(\pi\) \cite[(12)]{Kuznetsov08}, and \(D^b(S,\mathcal{C}l_0)\) is the derived category of coherent sheaves of \(\mathcal{C}l_0\)-modules on \(S\) \cite[Section 2.1]{Kuznetsov08}. Since \(D^b(S,\mathcal{C}l_0)\cong D^b(\widetilde{S})\) by \cite[Corollary 3.14]{Kuznetsov08}, we have
    \[[D^b(F_1(\mathcal Q/S))] = [\mathcal R_{even}] + 2g(g-1)[D^b(S)] + (2g-1) [D^b(\widetilde{S})]\]
    so by~\eqref{eqn:rel-F_1-K_0-cat}, \([\mathcal R_{even}] = [D^b(\widetilde{S})]\) in \(K_0(\mathrm{Cat}/\mathbb C)\).

    If \(n\geq 3\) is odd and \(k=\mathbb C\), then the equality \([D^b(F_1(\mathcal Q/S))] = [\mathcal R_{odd}] + (2g-2) [\mathcal B]\) holds in \(K_0(\mathrm{Cat}/\mathbb C)\) by \cite[Theorem 1.2 and Proposition 7.1]{Shah-rel-Fr}, where \([\mathcal B]=(g-1)[D^b(S)]+[D^b(S,\mathcal{C}l_0)]\) and \(\mathcal R_{odd}\) is a residual category.
    Let \(\widehat{S}\to S\) be the root stack with \(\mathbb Z/2\mathbb Z\)-stabilizers at the \(\delta\) points in \(S\) parametrizing the singular fibers (see \cite[Example 2.2]{Kuznetsov08}). Then \(D^b(S,\mathcal{C}l_0)\cong D^b(\widehat{S})\) by \cite[Corollary 3.16]{Kuznetsov08}, and the class of \(D^b(\widehat{S})\) in \(K_0(\mathrm{Cat}/\mathbb C)\) is equal to \([D^b(S)] + [D^b(\delta \text{ points})] = [D^b(S)]+\delta\cdot 1\) by \cite[Theorem 1.6]{IshiiUeda} (see also \cite[Proof of Proposition 8.2]{Shah-rel-Fr}), so
    \[[D^b(F_1(\mathcal Q/S))] = [\mathcal R_{odd}] + 2g(g-1)[D^b(S)]+(2g-2)\delta\cdot 1 .\]
    Therefore \([\mathcal R_{odd}]=2\delta\cdot 1\) in \(K_0(\mathrm{Cat}/\mathbb C)\).
\end{proof}

In the case of the relative Fano scheme of lines, Theorem~\ref{thm:relative-F_r} reduces to the following formulas:

\begin{cor}\label{cor:relative-F_1}
    Over an algebraically closed field \(k\) of characteristic \(\neq 2\), let \(S\) be a smooth connected curve, let \(n\geq 1\), and let \(\pi\colon\mathcal Q\to S\) be a fibration of \(n\)-dimensional quadrics over \(k\). Assume \(\pi\) has simple degeneration and that the degeneracy locus of \(\pi\) is a smooth divisor.
    Let \(\delta\) be the number of singular fibers of \(\pi\). If \(n\) is even, let \(\widetilde{S}\) be the curve obtained as the Stein factorization of \(F_{n/2}(\mathcal Q/S)\to S\). Then the following equalities hold in \(K_0(\mathrm{Var}/k)\):
    \[ [F_1(\mathcal Q/S)] = \begin{cases}
        [\widetilde{S}] \left(\sum_{i=0}^{2g-1} \L^i\right) \L^{g-1} + [S]\left(\sum_{i=0}^{g-2} \L^i \right)\left(\sum_{i=0}^{g-1} \L^{2i} \right)(\L^{g+1}+1) & \text{if \(n=2g\) is even,} \\
        \delta \left(\L^{g-1} + \sum_{i=0}^{2g-2} \L^i \right)\L^{g-1} + [S](\L+1)\left(\sum_{i=0}^{g-2}\L^{2i}\right)\left(\sum_{i=0}^{g-1}\L^{2i}\right) & \text{if \(n=2g-1\) is odd.}
    \end{cases}\]
    (If \(g=1\) then the sums \(\sum_{i=0}^{g-2} \L^i\) and \(\sum_{i=0}^{g-2} \L^{2i}\) are understood to be 0.)
    In particular, if \(S=\P^1\) then
    \[[F_1(\mathcal Q/\P^1)] = \begin{cases}
        [\widetilde{S}][\P^{2g-1}]\L^{g-1} + [\P^{2g-1}]([\P^{2g-1}] - \L^g - \L^{g-1}) & \text{if \(n=2g\) is even,} \\
        \delta([\P^{2g-2}]+\L^{g-1})\L^{g-1} + [\P^{2g-2}]^2-\L^{2g-2} & \text{if \(n=2g-1\) is odd.}
    \end{cases}\]
\end{cor}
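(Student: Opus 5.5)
This corollary is a direct specialization of Theorem~\ref{thm:relative-F_r-expanded-coeffs} to \(r=1\), so the plan is to substitute and simplify. For \(n=2g\) even with \(g\geq 2\) (so \(1\leq g-1\)), the expanded formula gives the coefficient of \([\widetilde S]\) as \(\binom{g}{1}_{\L}\L^{g-1}(\L^g+1)\). Since \(\binom{g}{1}_{\L}=\sum_{i=0}^{g-1}\L^i\) and \((\sum_{i=0}^{g-1}\L^i)(\L^g+1)=\sum_{i=0}^{2g-1}\L^i\), this matches the claimed coefficient. The coefficient of \([S]\) is \(\binom{g}{2}_{\L}(\L^g+1)(\L^{g+1}+1)\). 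Writing \(\binom{g}{2}_{\L}=\frac{(1-\L^{g})(1-\L^{g-1})}{(1-\L)(1-\L^2)}\), I will pair \((1-\L^{g-1})/(1-\L)=\sum_{i=0}^{g-2}\L^i\) and \((1-\L^g)(1+\L^g)/(1-\L^2)=(1-\L^{2g})/(1-\L^2)=\sum_{i=0}^{g-1}\L^{2i}\). All these manipulations are polynomial identities in \(\mathbb Z[q]\), an integral domain, so dividing is legitimate before specializing \(q\mapsto\L\).

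For \(g=1\) with \(n=2\) and \(r=1=g\), the \(r=g\) case gives \([\widetilde S](\L+1)\). This agrees with the formula under the stated convention, since the \([S]\) term vanishes and \(\sum_{i=0}^{1}\L^i\cdot\L^0=\L+1\).

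For \(n=2g-1\) odd with \(g\geq 2\), the \(\delta\) coefficient is \(\binom{g}{1}_{\L}(\L^{g-1}+1)\L^{g-1}\), and \((\sum_{i=0}^{g-1}\L^i)(\L^{g-1}+1)=\L^{g-1}+\sum_{i=0}^{2g-2}\L^i\), as claimed. The \([S]\) coefficient is \(\binom{g}{2}_{\L}(\L^{g-1}+1)(\L^g+1)\). Here I will pair \((1-\L^g)(1+\L^g)/(1-\L^2)=\sum_{i=0}^{g-1}\L^{2i}\) and \((1-\L^{g-1})(1+\L^{g-1})/(1-\L)=(1+\L)\sum_{i=0}^{g-2}\L^{2i}\). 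For \(g=1\) (so \(n=1\), \(r=1=g\)), the formula gives \(2\delta\) with an empty product, which matches \(\delta(1+1)\) with the \([S]\) term zero.

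For \(S=\P^1\), I will substitute \([S]=[\P^1]=1+\L\). In the even case, the goal is \((1+\L)(\sum_{i=0}^{g-2}\L^i)(\sum_{i=0}^{g-1}\L^{2i})(\L^{g+1}+1)=[\P^{2g-1}]([\P^{2g-1}]-\L^g-\L^{g-1})\). Using \((1+\L)\sum_{i=0}^{g-1}\L^{2i}=[\P^{2g-1}]\), this reduces to \((\sum_{i=0}^{g-2}\L^i)(\L^{g+1}+1)=[\P^{2g-1}]-\L^g-\L^{g-1}\). That identity is checked by expanding \(\sum_{i=0}^{g-2}\L^i+\sum_{i=g+1}^{2g-1}\L^i\). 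In the odd case, \((1+\L)^2\sum_{i=0}^{g-2}\L^{2i}\sum_{i=0}^{g-1}\L^{2i}=[\P^{2g-3}][\P^{2g-1}]\), and I will check that this equals \([\P^{2g-2}]^2-\L^{2g-2}\) by multiplying both sides by \((1-\L)^2\). There is no real obstacle here: the only care needed is the edge cases \(g=1\) and keeping the index ranges of the products in Theorem~\ref{thm:relative-F_r-expanded-coeffs} straight.
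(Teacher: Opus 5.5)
Your proposal is correct and is essentially the paper's own proof. The paper specializes Theorem~\ref{thm:relative-F_r} to $r=1$ and expands the orthogonal Grassmannian classes via Lemma~\ref{lem:class-of-OG}, which is exactly what you do by starting from the expanded form Theorem~\ref{thm:relative-F_r-expanded-coeffs}; it then substitutes $[\P^1]=1+\L$ for the second formula. The one point you leave implicit is that the Stein factorization curve $\widetilde S$ in the statement coincides with the discriminant double cover used in the theorem, which the paper records in Section~\ref{sec:prelim-F_r}.
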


\begin{proof}
    The result follows from Theorem~\ref{thm:relative-F_r} and simplifying the expressions for \([\OG(2,n+2)]\) using Lemma~\ref{lem:class-of-OG}. In the case \(S=\P^1\), the expressions in the corollary statement follow by simplifying the second term.
\end{proof}

\section{Geometry of hyperbolic reductions of a pencil}\label{sec:hyperbolic-reductions-pencil}

In this section, we specialize the results of Section~\ref{sec:prelim-hyperbolic-red} to the case of pencils and recall some results from \cite{JS24}.
Throughout this section, let \(\phi\colon\mathcal Q\to\P^1\) be a pencil of quadrics in \(\P^N\) over a field \(k\) of characteristic \(\neq 2\), and assume the base locus \(X\) is smooth.
If \(N=2g+1\) is odd, then the discriminant double cover (Lemma~\ref{lem:hyperbolic-reduction-properties-not-pencil}\eqref{item:hyperbolic-reduction-even-maximal}) is a genus \(g\) curve \(C\to\P^1\).

Over an algebraic closure of \(k\), \(X\) contains \(r\)-planes if and only if \(0\leq r\leq \lfloor\frac{N}{2}\rfloor - 1\) (Lemma~\ref{lem:F_r(X)}).
Let \(\rplane\) be an \(r\)-plane on \(X\) defined over \(k\).
After a suitable choice of coordinates on \(\P^N\), we may assume \(\rplane=\{x_{r+1}=\cdots=x_N=0\}\). Then
\begin{equation}\label{eqn:X}
    X = \left\{ \begin{pmatrix} l_{00} & \dots &l_{0r} & q_0\\ l_{10} & \dots & l_{1r} & q_1 \end{pmatrix} \begin{pmatrix} x_0\\ \vdots\\ x_r\\ 1 \end{pmatrix} =0 \right\} \subset \P^N
\end{equation}
for some linear forms \(l_{ij}\in k[x_{r+1},\ldots,x_N]\) and quadratic forms \(q_i\in k[x_{r+1},\ldots,x_N]\).
Define
\begin{equation}\label{eqns:P(r)-and-Q(r)}
    \begin{split}
        \mathcal P^{(r)}_{\rplane} &\coloneqq \left\{ \begin{pmatrix} s & t \end{pmatrix} \begin{pmatrix} l_{00} & \dots &l_{0r}\\ l_{10} & \dots & l_{1r} \end{pmatrix} =0 \right\} \subset \P^1_{[s:t]}\times \P^{N-r-1}_{[x_{r+1}:\cdots:x_N]}, \\
        \mathcal Q^{(r)}_{\rplane} &\coloneqq \left\{ \begin{pmatrix} s & t \end{pmatrix} \begin{pmatrix} l_{00} & \dots &l_{0r} & q_0\\ l_{10} & \dots & l_{1r} & q_1 \end{pmatrix} =0 \right\} \subset \P^1_{[s:t]}\times \P^{N-r-1}_{[x_{r+1}:\cdots:x_N]} .
    \end{split}
\end{equation}
Then the first projection \(\phi^{(r)}_{\rplane}\colon \mathcal Q^{(r)}_\Lambda\to\P^1\) is the hyperbolic reduction of \(\phi\) with respect to the nondegenerate \(r\)-section defined by \(\rplane\), and \(\mathcal P^{(r)}_{\rplane}\to\P^1\) is the \(\P^{N-2r-2}\)-bundle \(\mathbb P_S(\mathcal F_{r+1}^\perp/\mathcal F_{r+1})\) from Definition~\ref{defn:hyperbolic-red}.
Define
\begin{equation}\label{eqns:Z-and-Y}
    \begin{split}
        \mathcal P_{Y_{\rplane}} & \coloneqq \mathcal P^{(r)}_{\rplane} \cap \{l_{ij}=0 \mid 0\leq i\leq 1, 0\leq j\leq r\} = \{l_{ij}=0 \mid 0\leq i\leq 1, 0\leq j\leq r\} \subset \P^1_{[s:t]}\times \P^{N-r-1}_{[x_{r+1}:\cdots:x_N]} , \\
        \mathcal Q_{Y_{\rplane}} & \coloneqq \mathcal Q^{(r)}_{\rplane} \cap \{l_{ij}=0 \mid 0\leq i\leq 1, 0\leq j\leq r\} , \\
        Y_{\rplane} & \coloneqq \{l_{ij}=q_i=0 \mid 0\leq i\leq 1, 0\leq j\leq r\} \subset \P^{N-r-1}_{[x_{r+1}:\cdots:x_N]};
    \end{split}
\end{equation}
then \(\P^1\times Y_{\rplane}=\mathcal Q^{(r)}_{\rplane}\cap\{l_{ij}=q_i=0 \mid 0\leq i\leq 1, 0\leq j\leq r\}\).
(This notation will be explained in Lemma~\ref{lem:general-l-hyperbolic-red}.)

We have the following diagrams, where the right-hand diagram is from Proposition~\ref{prop:KS-hyperbolic-reduction}\eqref{item:KS-hyperbolic-red-blow-up-diagram} and \(h=\tilde{h}|_{\widetilde{\mathcal Q}}\):
\begin{equation}\label{eqn:hyperbolic-red-blow-up-diagram}
\begin{tikzcd}[column sep=tiny]
&\P^1\times \P_{\P^{N-r-1}}(\O^{\oplus r+1}\oplus \O(-1))\arrow[ld, "\bl_{\P^1\times \rplane}"'] \arrow[rd, "\tilde{h}"] & \\
\P^1\times \P^{N} \arrow[rr, "\pi_{\P^1\times \rplane}", dashed]& & \P^1\times \P^{N-r-1} ,
\end{tikzcd}
\qquad
\begin{tikzcd}[column sep=scriptsize]
& \widetilde{\mathcal{Q}} \arrow[ld, "\bl_{\P^1\times \rplane}"'] \arrow[rd, "h"] & \\
\mathcal{Q} \arrow[rr, "\pi_{\P^1\times\rplane}|_{\mathcal{Q}}", dashed]& & \P^1\times \P^{N-r-1}           .
\end{tikzcd}
\end{equation}

Write \(g=\lfloor\frac{N}{2}\rfloor\). Then the hyperbolic reduction of \(\phi\) with respect to any \((g-1)\)-plane on \(X\) is \(C\) if \(N=2g+1\) is odd, and is a reduced finite scheme of length \(2g+1\) if \(N=2g\) is even \cite[Lemma 3.1(6)]{JS24}. So in the case of pencils, the formula in Corollary~\ref{cor:formula-class-of-hyperbolic-reduction} specializes to:
\begin{cor}\label{cor:class-of-Qr-formula}
    Let \(k\) be an algebraically closed field of characteristic \(\neq 2\), let \(0\leq r\leq\lfloor\frac{N}{2}\rfloor - 2\), and let \(\rplane\) be an \(r\)-plane on \(X\). Then in \(K_0(\mathrm{Var}/k)\),
    \begin{equation*}
        \begin{split}
            [\mathcal Q^{(r)}_{\rplane}] %&= [\P^1][\P^{g-r-2}](1+\L^{N-g-r-1})+[\mathcal Q^{(g-1)}_s]\L^{g-r-1} \\
            &= \begin{cases}
                [\P^1][\P^{g-r-2}](1+\L^{g-r})+[C]\L^{g-r-1} & \text{if }N=2g+1, \\
                [\P^1][\P^{g-r-2}](1+\L^{g-r-1})+(2g+1)\L^{g-r-1} & \text{if }N=2g.
            \end{cases}
        \end{split}
    \end{equation*}
\end{cor}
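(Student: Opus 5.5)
The plan is to apply Corollary~\ref{cor:formula-class-of-hyperbolic-reduction} to the pencil \(\phi\colon\mathcal Q\to\P^1\). Here \(S=\P^1\) and the relative dimension is \(n=N-1\). We take \(i=r\) and replace the ``\(r\)'' of that corollary by \(g-1\), where \(g=\lfloor\frac N2\rfloor\). The corollary then reads
\[[\mathcal Q^{(r)}_{\rplane}] = [\P^1][\P^{g-r-2}](1+\L^{N-g-r-1})+[\mathcal Q^{(g-1)}]\L^{g-r-1}.\]
Substituting the known description of the maximal hyperbolic reduction \(\mathcal Q^{(g-1)}\) then gives the formula.

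First I check the hypotheses. The corollary needs \(1\le g-1\) and \(0\le r\le g-2\); the latter is our assumption, and it forces \(g\ge 2\). It also needs \(r\le\lfloor\frac{n}{2}\rfloor-1\), where \(\lfloor\frac{N-1}{2}\rfloor-1\ge g-2\) in both parities. Finally, the fibers must have corank at most \(n-2r-1\ge 1\). This holds because \(\phi\) has simple degeneration, by \cite[Proposition 2.1]{Reid-thesis} as recalled in Section~\ref{sec:prelim-F_r}.

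Next I need nondegenerate sections. The \(r\)-plane \(\rplane\subset X\) gives a constant \(r\)-section \(\P^1\times\rplane\) of \(\phi\), and it is nondegenerate: \(\rplane\subset X\) misses the vertices of the singular members, since \(X\) is smooth. For the \((g-1)\)-section, \(k\) is algebraically closed, so by Lemma~\ref{lem:F_r(X)} there is a \((g-1)\)-plane \(\rplane'\subset X\), and it likewise gives a nondegenerate constant section. By Proposition~\ref{prop:KS-hyperbolic-reduction}\eqref{item:KS-hyperbolic-reduction-indep} the classes do not depend on these choices. One could also take \(\rplane'\supset\rplane\), but this is not needed.

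Now I identify \(\mathcal Q^{(g-1)}_{\rplane'}\) using the fact quoted just before the statement \cite[Lemma 3.1(6)]{JS24}: it is \(C\) if \(N=2g+1\), and a reduced finite scheme of length \(2g+1\) if \(N=2g\). This matches Lemma~\ref{lem:hyperbolic-reduction-properties-not-pencil}\eqref{item:hyperbolic-reduction-even-maximal},\eqref{item:hyperbolic-reduction-finite-set-maximal}, namely the discriminant double cover, respectively the \(2g+1\) degeneracy points.

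Finally I compute the exponent \(N-g-r-1\). For \(N=2g+1\) it is \(g-r\), and for \(N=2g\) it is \(g-r-1\), as claimed. There is no real obstacle. The only care needed is the index bookkeeping when matching the corollary's \((r,i)\) to \((g-1,r)\), together with the nondegeneracy check for the constant sections.
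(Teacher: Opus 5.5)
Your proposal is correct and is essentially the paper's own argument. The paper obtains the statement by specializing Corollary~\ref{cor:formula-class-of-hyperbolic-reduction} to the pencil \(\phi\), using a \((g-1)\)-section coming from a \((g-1)\)-plane on \(X\), and identifying \(\mathcal Q^{(g-1)}\) with \(C\) (respectively with \(2g+1\) reduced points) via \cite[Lemma 3.1(6)]{JS24}. Your hypothesis checks, the nondegeneracy argument, and the exponent bookkeeping \(N-g-r-1\in\{g-r,\,g-r-1\}\) are all fine.
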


In the remainder of this section, we further describe the geometry of the hyperbolic reductions of \(\phi\colon\mathcal Q\to\P^1\).
Recall from Lemma~\ref{lem:hyperbolic-reduction-properties-not-pencil}\eqref{item:hyperbolic-reduction-not-pencil-embedding} that \(\mathcal Q^{(r)}_{\rplane}\) embeds into \(F_{r+1}(\mathcal Q/\P^1)\) with image the locus of isotropic \((r+1)\)-planes of \(\phi\) containing \(\rplane\). The loci \(\mathcal Q_{Y_{\rplane}}\) and \(\P^1\times Y_{\rplane}\) have the following additional geometric descriptions:
\begin{lem}\label{lem:r-planes-in-hyperbolic-reduction-loci}
    Let \(\rplane\) be an \(r\)-plane on \(X\).
\begin{enumerate}
	\item\label{item:image-of-Z-in-rel-Fr} \cite[Lemma 3.1(4)]{JS24} The embedding in Lemma~\ref{lem:hyperbolic-reduction-properties-not-pencil}\eqref{item:hyperbolic-reduction-not-pencil-embedding} induces an isomorphism \[\mathcal Q^{(r)}_{\rplane}\setminus\mathcal Q_{Y_{\rplane}} \xrightarrow{\sim} \{ M \in F_r(X) \mid \dim(\rplane\cap M)=r-1 \text{ and }\langle\rplane,M\rangle\not\subset X\}.\]
    \item\label{item:image-of-E-in-rel-Fr} Under the embedding in Lemma~\ref{lem:hyperbolic-reduction-properties-not-pencil}\eqref{item:hyperbolic-reduction-not-pencil-embedding}, the image of \(\P^1\times Y_{\rplane}\) is the locus of relative \((r+1)\)-planes contained in \(X\). That is, \[Y_{\rplane}\cong F_{r+1}(X)_{\rplane}\] where \(F_{r+1}(X)_{\rplane}\) is the Fano scheme of \((r+1)\)-planes on \(X\) containing \(\rplane\).
    \item\label{item:relative-m-planes-in-Qr} Let \(0 \leq e \leq \lfloor\frac{N}{2}\rfloor - r - 1\). The \(\P^{r+1}\)-bundle \(h^{-1}(\mathcal Q^{(r)}_{\rplane}) \to \mathcal Q^{(r)}_{\rplane}\) in diagram~\eqref{eqn:hyperbolic-red-blow-up-diagram} induces an embedding \(F_{e}(\mathcal Q^{(r)}_{\rplane}/\P^1)\hookrightarrow F_{r+e+1}(\mathcal Q/\P^1)\) over \(\P^1\) whose image is the locus \(F_{r+e+1}(\mathcal Q/\P^1)_{\rplane}\) of isotropic \((r+e+1)\)-planes of \(\phi\) containing \(\rplane\). Furthermore, this induces an isomorphism \(F_e(Y_{\rplane})\cong F_{r+e+1}(X)_{\rplane}\).
\end{enumerate}
\end{lem}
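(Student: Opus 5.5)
Part (1) is quoted from \cite[Lemma 3.1(4)]{JS24}, so I would only prove (2) and (3), working in the coordinates of~\eqref{eqn:X}. Everything rests on Lemma~\ref{lem:hyperbolic-reduction-properties-not-pencil}\eqref{item:hyperbolic-reduction-not-pencil-embedding}, applied to \(S=\P^1\) and the nondegenerate \(r\)-section given by \(\P^1\times\rplane\). That lemma already embeds \(F_e(\mathcal Q^{(r)}_{\rplane}/\P^1)\) into \(F_{r+e+1}(\mathcal Q/\P^1)\) with image the relative \((r+e+1)\)-planes containing \(\rplane\). This is the first claim of (3), restated for pencils. The range \(e\leq\lfloor\frac N2\rfloor-r-1\) agrees with \(\lfloor\frac{n+1}{2}\rfloor-r-1\) for \(n=N-1\).

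For the second claim of (3), fix \([s:t]\in\P^1\) and an \(e\)-plane \(\overline M=\{f_1=\cdots=f_{N-r-e-1}=0\}\) in the fiber. By the proof of Lemma~\ref{lem:hyperbolic-reduction-properties-not-pencil}, \(\overline M\) corresponds to the plane \(M=\{f_1=\cdots=f_{N-r-e-1}=0\}\subset\P^N\) containing \(\rplane\), and \(M\subset\mathcal Q_{[s:t]}\) exactly when
\[
(sl_{00}+tl_{10},\ldots,sl_{0r}+tl_{1r},\,sq_0+tq_1)\subset(f_1,\ldots,f_{N-r-e-1}).
\]
The plane \(M\) lies in \(X\) exactly when it lies in both quadrics, i.e. when \(x_0l_{i0}+\cdots+x_rl_{ir}+q_i\in(f)\) for \(i=0,1\). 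Since the \(f\) involve only \(x_{r+1},\ldots,x_N\), comparing coefficients of \(x_j\) shows this is equivalent to \(l_{ij},q_i\in(f)\) for all \(i,j\), that is, to \(\overline M\subset Y_{\rplane}\). This condition does not depend on \([s:t]\). Hence the locus of \(e\)-planes in fibers that map into \(F_{r+e+1}(X)_{\rplane}\) is \(\P^1\times F_e(Y_{\rplane})\), and each plane of \(X\) containing \(\rplane\) arises once for every \([s:t]\). Taking the projection to the second factor gives \(F_e(Y_{\rplane})\cong F_{r+e+1}(X)_{\rplane}\). Part (2) is the case \(e=0\): \(\P^1\times Y_{\rplane}\subset\mathcal Q^{(r)}_{\rplane}\) maps onto the relative \((r+1)\)-planes contained in \(X\), so \(Y_{\rplane}\cong F_{r+1}(X)_{\rplane}\).

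The one real obstacle is upgrading this pointwise description to an isomorphism of schemes, since the Fano schemes may be nonreduced. To handle it, I would run the same ideal-containment argument for \(T\)-valued points with an arbitrary base \(T\). There \(M\) is a rank \((r+e+2)\) subbundle of \(\O_T^{N+1}\) containing \(\O_T^{r+1}\), and it is cut out by local linear forms \(f_i\) in the variables \(x_{r+1},\ldots,x_N\), since \(\rplane=\{x_{r+1}=\cdots=x_N=0\}\subset M\). The coefficient comparison in the \(x_0,\ldots,x_r\) variables holds verbatim over any ring. This identifies both functors, \(F_e(Y_{\rplane})\) and \(F_{r+e+1}(X)_{\rplane}\), with the same subfunctor of the Grassmannian of \(e\)-planes in \(\P^{N-r-1}\). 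That gives the isomorphism of schemes and, by restriction, the identification of \(\P^1\times Y_{\rplane}\) with the relative \((r+1)\)-planes contained in \(X\) scheme-theoretically.
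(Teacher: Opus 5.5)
Your proposal is correct and follows the paper's proof. Part (2) is the case $e=0$ of (3). The first claim of (3) comes from Lemma~\ref{lem:hyperbolic-reduction-properties-not-pencil}\eqref{item:hyperbolic-reduction-not-pencil-embedding}, and the second comes from the ideal containment $l_{ij},q_i\in(f_1,\ldots,f_{N-r-e-1})$, which the paper phrases as an isomorphism of $\P^1\times F_e(Y_{\rplane})$ with $\P^1\times F_{r+e+1}(X)_{\rplane}$ over $\P^1$. Your functor-of-points check over an arbitrary base $T$ is a good addition, since it makes explicit the scheme-theoretic isomorphism that the paper's pointwise ideal computation leaves implicit.
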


\begin{proof}
    Part~\eqref{item:image-of-E-in-rel-Fr} is a special case of~\eqref{item:relative-m-planes-in-Qr}.
    For~\eqref{item:relative-m-planes-in-Qr},
    the first statement is contained in Lemma~\ref{lem:hyperbolic-reduction-properties-not-pencil}\eqref{item:hyperbolic-reduction-not-pencil-embedding}.
    For the description of \(F_e(Y_{\rplane})\), let \(L=\{f_1=\cdots=f_{N-r-e-1}=0\} \subset \mathcal Q_{[s:t]}\) be an \((r+e+1)\)-plane containing \(\rplane\). Then, since \(\rplane\subset L\), we have that \(L\) is contained in \([s:t]\times X\) if and only if \((x_0l_{00}+\cdots+x_rl_{0r}+q_0, x_0l_{10}+\cdots+x_rl_{1r}+q_1) \subset (f_1,\ldots,f_{N-r-e-1})\) if and only if \(l_{0j},l_{1j},q_0,q_1\in (f_1,\ldots,f_{N-r-e-1})\) if and only if \(h(\widetilde{L_{[s:t]}}) \subset [s:t]\times Y_{\rplane}\), where \(\widetilde{L_{[s:t]}}\) denotes the strict transform of \([s:t]\times L\) under \(\bl_{\P^1\times\rplane}\).
    Let \(F_{r+e+1}(\P^1\times X/\P^1)_{\rplane} \subset F_{r+e+1}(\P^1\times X/\P^1)\) be the locus of relative \((r+e+1)\)-planes that contain \(\rplane\).
    This shows that \(F_e(\P^1\times Y_{\rplane}/\P^1) \cong \P^1 \times F_e(Y_{\rplane})\) and \(F_{r+e+1}(\P^1\times X/\P^1)_{\rplane}\cong \P^1 \times F_{r+e+1}(X)_{\rplane}\) are isomorphic over \(\P^1\), so \(F_e(Y_{\rplane})\cong F_{r+e+1}(X)_{\rplane}\).
\end{proof}

The following lemma explains the choice of notation for \(\mathcal Q_{Y_{\rplane}}\) and \(Y_{\rplane}\) in~\eqref{eqns:Z-and-Y}.

\begin{lem}\label{lem:general-l-hyperbolic-red}
    Let \(k\) be an algebraically closed field of characteristic \(\neq 2\), let \(0 \leq r \leq \lfloor\frac{N}{2}\rfloor -1\), and let \(\rplane\) be a general \(r\)-plane on \(X\). Then the following hold:
    \begin{enumerate}
        \item\label{item:general-l-Z-dimension} Assume \(N-3r-3 \geq 0\), and choose coordinates on \(\P^N\) so that \(\rplane=\{x_{r+1}=\cdots=x_N=0\}\) and \(X\) has the form~\eqref{eqn:X}. Then the locus \(\{l_{ij} = 0 \mid 0\leq i\leq 1, 0\leq j\leq r\} \subset \mathbb P^{N-r-1}_{[x_{r+1}:\cdots:x_N]}\) is isomorphic to \(\mathbb P^{N-3r-3}\).
        \item\label{item:general-l-Y-smooth-ci} (cf. \cite[Proposition 2.14(f)]{CT-intersection-quadrics}) \(Y_{\rplane}\) is a smooth complete intersection of two quadrics in \(\mathbb P^{N-3r-3}\) with associated pencil \(\mathcal Q_{Y_{\rplane}}\to\P^1_{[s:t]}\) (unless \(N-3r-5<0\), in which case \(Y_{\rplane}\) is empty).
    \end{enumerate}
    In particular, if \(\rplane\) is general and \(N-3r-3 > 0\), then \(\mathcal Q_{Y_{\rplane}}\to\P^1\) is a pencil of quadrics in \(\mathbb P^{N-3r-3}\) with smooth base locus. If \(\rplane\) is general and \(N-3r-3=0\), then \(\mathcal Q_{Y_{\rplane}}\) is a (reduced) point.
\end{lem}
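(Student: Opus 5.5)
We argue by a dimension count on an incidence correspondence over \(F_r(X)\), using that the conditions cutting out the locus in part~\eqref{item:general-l-Z-dimension} and the variety \(Y_{\rplane}\) in part~\eqref{item:general-l-Y-smooth-ci} are expected-dimensional for a general \(\rplane\).

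For~\eqref{item:general-l-Z-dimension}, the locus \(\{l_{ij}=0\}\) is cut out by \(2(r+1)\) linear forms in the \(N-r\) variables \(x_{r+1},\ldots,x_N\). It is therefore a linear space \(\P^{N-r-1-\rho}\), where \(\rho\) is the rank of the \(2(r+1)\times(N-r)\) matrix of coefficients of the \(l_{ij}\). We must show \(\rho=2r+2\) for general \(\rplane\).

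Geometrically, for \(x\in\rplane\), the forms \(\sum_j x_j l_{ij}\) are the linear parts of the two quadrics along \(\rplane\). Their vanishing on a point \(y\) off \(\rplane\) expresses that the tangent spaces \(T_xX\), as \(x\) varies over \(\rplane\), all contain \(y\). So the common zero locus of the \(l_{ij}\), joined with \(\rplane\), is \(\bigcap_{x\in\rplane}T_xX\). Since \(T_xX\) varies with \(x\), the claim is that the \(2(r+1)\) linear forms \(\partial Q_i/\partial x_j\) restricted to \(\rplane\) are independent. Equivalently, the Gauss-type map \(\rplane\to\Gr(N-2,N+1)\), \(x\mapsto T_xX\), has image spanning as much as possible. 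Via the normal bundle, this amounts to \(H^0(N_{\rplane/X}(-1))=0\) and the surjectivity \(H^0(\O_\rplane(1))^{\oplus(N-r)}\to H^0(\O_\rplane(2))^{\oplus 2}\) restricted appropriately.

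We plan to prove the rank statement by an explicit construction of one \(\rplane\) and \(X\) with full rank, combined with openness of the full-rank condition on the smooth, irreducible-when-connected incidence \(\{(\rplane,X)\}\). We use that \(PGL\) acts and that the \((\rplane,X)\) pairs form an irreducible family over the space of pencils (fibered over the Grassmannian of \(r\)-planes, where each fiber is a linear space of pencils containing \(\rplane\)). A generic choice of \(l_{ij}\) and \(q_i\) gives full rank when \(N-r\geq 2r+2\) and a smooth base locus, so a general pair satisfies both conditions. Then for a fixed smooth \(X\), a general \(\rplane\in F_r(X)\) satisfies the condition: the bad locus is a proper closed subset of the total family that dominates the space of pencils only with positive-codimensional fibers. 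This step uses the irreducibility of the total incidence.

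For~\eqref{item:general-l-Y-smooth-ci}, in the coordinates of~\eqref{item:general-l-Z-dimension}, \(Y_{\rplane}=\{q_0=q_1=0\}\subset\P^{N-3r-3}\), and the pencil \(s q_0+tq_1\) restricted to this linear space is exactly \(\mathcal Q_{Y_{\rplane}}\). By the same incidence argument, with \(q_i\) restricted to the generic linear subspace being generic quadrics, \(Y_{\rplane}\) is a smooth complete intersection for general \((\rplane,X)\), hence for general \(\rplane\) on fixed \(X\). It is empty when \(N-3r-5<0\), by dimension and smoothness: two general quadrics in \(\P^m\) with \(m\leq1\) have no common zero. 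Alternatively, and more intrinsically, we may identify \(Y_{\rplane}\cong F_{r+1}(X)_{\rplane}\) by Lemma~\ref{lem:r-planes-in-hyperbolic-reduction-loci}\eqref{item:image-of-E-in-rel-Fr}, and deduce smoothness from the smoothness of \(F_{r+1}(X)\) (Lemma~\ref{lem:F_r(X)}) and generic smoothness of the projection of the flag variety \(\{\rplane\subset M\}\) to \(F_r(X)\). This second route is characteristic-sensitive, so we prefer the incidence argument. The last assertions then follow directly.

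The main obstacle is the transfer from a general pair \((\rplane,X)\) to a general \(\rplane\) on a \emph{fixed} smooth \(X\). The plan is to use the \(PGL_{N+1}\)-equivariance of the incidence together with monodromy/irreducibility of \(F_r(X)\) (connected for \(r<\frac N2-1\), and transitive Galois action on components in the maximal case), as in \cite[Proposition 2.14]{CT-intersection-quadrics}.
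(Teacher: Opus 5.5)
There is a genuine gap, and it is exactly at the step you flag as the main obstacle. Your incidence argument over the irreducible family of pairs \((\Lambda,X)\) shows only that the full-rank and smoothness conditions hold for a general \emph{pair}. That gives a general \(r\)-plane on a \emph{general} smooth \(X\). The lemma is about every smooth \(X\), and it is later applied to an arbitrary smooth \(X\) with a general line \(\ell\).

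Neither remedy you propose closes this gap:
\begin{itemize}
\item \(PGL_{N+1}\)-equivariance does not help, because smooth complete intersections of two quadrics have positive-dimensional moduli. Up to projective equivalence, \(X\) is determined by the \(N+1\) degenerate members of the pencil in \(\P^1\), modulo \(PGL_2\). So the \(PGL_{N+1}\)-orbits have positive codimension, and nothing lets you pass from ``general \(X\)'' to ``every \(X\)''. Your argument does not exclude special \(X\) on which every \(r\)-plane is bad.
\item Irreducibility (or monodromy) of \(F_r(X)\) only upgrades the existence of \emph{one} good \(\Lambda\) on the given \(X\) to a general one. It cannot produce that first \(\Lambda\).
\end{itemize}
The same objection applies to your part (2). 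There, ``the \(q_i\) restricted to the linear space are generic quadrics'' holds for a generic pair but is not available for a fixed \(X\).

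The missing ingredient is an existence statement valid on each fixed \(X\). The paper gets it from Reid's normal form: every smooth \(X\) can be written as \(\sum y_i^2=\sum\lambda_i y_i^2=0\) with distinct \(\lambda_i\). Example~\ref{exmp:smooth-Y-example} then writes down, for all such \(\lambda_i\) at once, an \(r\)-plane built from coordinate triples with \(a_j^2=\lambda_{3j+1}-\lambda_{3j+2}\), etc. For this plane, \(\{l_{ij}=0\}\cong\P^{N-3r-3}\), and \(Y_\Lambda\) is the diagonal intersection \(\sum_{l\geq 3r+3}x_l^2=\sum_{l\geq 3r+3}\lambda_l x_l^2=0\).

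Everything else is done with incidence varieties over the fixed \(X\):
\begin{itemize}
\item For (1), upper semicontinuity of fiber dimension for \(\{(\Lambda,p)\mid p\in\bigcap_{x\in\Lambda}T_xX\}\to F_r(X)\).
\item For (2), the identification \(Y_\Lambda\cong F_{r+1}(X)_\Lambda\), together with the count \(\dim\{\Lambda\subset M\}-\dim F_r(X)=N-3r-5\), which gives emptiness when this number is negative.
\item Also for (2), openness of the locus of smooth fibers. This locus is nonempty by generic smoothness in characteristic \(0\), and by the example in characteristic \(p\).
\end{itemize}
In particular, the generic-smoothness route you set aside as characteristic-sensitive is exactly the one the paper uses in characteristic zero. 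It is intrinsic to the fixed \(X\), so it never meets your transfer problem, and the explicit example covers positive characteristic. The boundary cases are treated separately there: \(r=\lfloor N/2\rfloor-1\), where \(X\) has no \((r+1)\)-planes, and \(N=2g\), \(r=g-2\), where \(F_{g-1}(X)\) is finite.
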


\begin{proof}
    For~\eqref{item:general-l-Y-smooth-ci}, for an \(r\)-plane \(\rplane'\) on \(X\) we identify \(Y_{\rplane'}\cong F_{r+1}(X)_{\rplane'}\) using Lemma~\ref{lem:r-planes-in-hyperbolic-reduction-loci}\eqref{item:image-of-E-in-rel-Fr}. If \(r=\lfloor\frac{N}{2}\rfloor-1\) then \(X\) contains no \((r+1)\)-planes by Lemma~\ref{lem:F_r(X)} so \(Y_{\rplane}=\emptyset\).
    So we may assume \(r\leq\lfloor\frac{N}{2}\rfloor-2\). First assume \(r<\frac{N}{2}-2\). Define \(I=\{(\rplane,L) \mid \rplane \subset L\} \subset F_r(X)\times F_{r+1}(X)\). By Lemma~\ref{lem:F_r(X)}, \(F_{r+1}(X)\) (resp. \(F_r(X)\)) is smooth and geometrically connected of dimension \((r+2)(N-2r-4)\) (resp. \((r+1)(N-2r-2)\)), so \(I\) is smooth of dimension \((r+2)(N-2r-4)+r+1\). So, if \(N-3r-5 < 0\), then \(\dim I < \dim F_r(X)\) so \(p_1\) is not dominant and its general fiber is empty. If \(N-3r-5\geq 0\), then a general fiber of the first projection \(p_1\colon I \to F_r(X)\) has dimension \(N - 3 r - 5\).
    For the smoothness of a general fiber in this case, the locus over which the fibers of \(p_1\) are smooth is open. This locus is nonempty by generic smoothness in characteristic zero, and by Example~\ref{exmp:smooth-Y-example} below in characteristic \(p>2\).

    Finally, assume \(N=2g\) is even and \(r=g-2 \geq 0\), and let \(I\subset F_{g-2}(X)\times F_{g-1}(X)\) be the incidence variety defined above. Then \(F_{g-1}(X)\) is \(2^{2g}\) distinct points by \cite[Theorem 3.8]{Reid-thesis} and \(F_{g-2}(X)\) is smooth and geometrically connected of dimension \(2g-2\). Then \(I\) is the disjoint union of \(2^{2g}\) copies of \(\P^{g-1}\), and each component \(\P^{g-1}\) has image of dimension at most \(g-1 < \dim F_{g-2}(X)\), so \(p_1\colon I\to F_{g-2}(X)\) is not dominant, and hence \(Y_\Lambda=\emptyset\) for general \(\Lambda\in F_{g-2}(X)\).
    
    For~\eqref{item:general-l-Z-dimension}, for $x\in X$, denote by $T_{x}X\subset \P^N$ the tangent space to $X$ at $x$.
    For each \(\rplane\in F_r(X)\), after choosing coordinates so that \(\rplane=\{x_{r+1}=\cdots=x_N=0\}\) and \(X\) has the form~\eqref{eqn:X}, the locus \(\{l_{ij}=0\} \subset \P^N\) is equal to \(\bigcap_{x\in \Lambda} T_{x}X\), and the locus \(\{l_{ij}=0\} \subset \P^{N-r-1}\) is equal to \(\bigcap_{x\in \Lambda}T_{x}X/\Lambda\). Note that if \(\Lambda\neq \bigcap_{x\in \Lambda} T_{x}X\), then \(\dim(\bigcap_{x\in \Lambda} T_{x}X)-\dim(\bigcap_{x\in \Lambda}T_{x}X/\Lambda)=r+1\). Define $I'\coloneqq \{(\Lambda,p)\mid p\in \bigcap_{x\in \Lambda}T_{x}X\}\subset F_r(X)\times \P^N$.
    Fibers of the first projection $p_1'\colon I'\to F_r(X)$ are defined by $2r+2$ linear equations in $\P^N$, hence their dimensions are at least $N-2r-2$.
    In addition, Example~\ref{exmp:smooth-Y-example} shows that if $N-3r-3\geq 0$, then there exists a fiber of $p_1'$ of dimension $N-2r-2$.
    The upper semicontinuity of the dimension of fibers of $p_1'$ then shows that if $N-3r-3\geq 0$, then a general fiber of $p_1'$ is $\P^{N-2r-2}$.
    The assertion follows.
\end{proof}
\begin{defn}\label{defn:C_Y-curve}
    If \(N=2g+1\) is odd, \(0\leq r \leq \frac{2g-5}{3}\) is odd, \(\rplane\) is a general \(r\)-plane on \(X\), then \(2g-3r-2\) is odd and the smooth complete intersection of two quadrics \(Y_{\rplane} \subset \P^{2g-3r-2}\) (Lemma~\ref{lem:general-l-hyperbolic-red}\eqref{item:general-l-Y-smooth-ci}) has dimension at least 1. Let \(C_{\rplane}\) denote the associated genus \(g-\frac{3}{2}(r+1)\) curve.
\end{defn}

\begin{lem}\label{lem:general-r-plane-contained-in-r+1-plane}
    Assume $k$ is an algebraically closed field of characteristic \(\neq 2\).
    \begin{enumerate}
        \item\label{odd:even-general-r-plane-contained-in-r+1-plane} Assume \(N=2g+1\) is odd and \(0\leq r\leq g-2\).
        A general \(r\)-plane on \(X\) is contained in an \((r+1)\)-plane on \(X\) \(\iff\)
        every \(r\)-plane on \(X\) is contained in an \((r+1)\)-plane on \(X\) \(\iff\)
        \(g\geq\frac{3}{2}r+2\).
        \item\label{it:even-general-r-plane-contained-in-r+1-plane} Assume \(N=2g\) is even and \(0 \leq r \leq g-3\). A general \(r\)-plane on \(X\) is contained in an \((r+1)\)-plane on \(X\) \(\iff\)
        every \(r\)-plane on \(X\) is contained in an \((r+1)\)-plane on \(X\) \(\iff\)
        \(g\geq\frac{3}{2}r+\frac{5}{2}\).
    \end{enumerate}
\end{lem}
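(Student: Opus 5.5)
We reduce everything to the structure of \(Y_{\rplane}\cong F_{r+1}(X)_{\rplane}\) from Lemma~\ref{lem:r-planes-in-hyperbolic-reduction-loci}\eqref{item:image-of-E-in-rel-Fr}: an \(r\)-plane \(\rplane\) lies in an \((r+1)\)-plane on \(X\) if and only if \(Y_{\rplane}\neq\emptyset\). In coordinates as in~\eqref{eqn:X}, \(Y_{\rplane}\subset\P^{N-r-1}\) is cut out by the \(2r+2\) linear forms \(l_{ij}\) and the two quadrics \(q_0,q_1\). The set of \(\rplane\in F_r(X)\) with \(Y_{\rplane}\neq\emptyset\) is the image of the proper projection \(p_1\colon I\to F_r(X)\) from the incidence variety \(I\subset F_r(X)\times F_{r+1}(X)\), hence closed. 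So ``general'' and ``every'' agree exactly when \(p_1\) is surjective. Since \(F_r(X)\) is irreducible in the relevant range (Lemma~\ref{lem:F_r(X)}: \(r<\frac N2-1\) holds for \(r\leq g-2\) when \(N=2g+1\) and for \(r\leq g-3\) when \(N=2g\)), this is the same as \(p_1\) being dominant. This proves the first equivalence in each part.

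For the numerical criterion, we will show that for every \(\rplane\), \(Y_{\rplane}\) is nonempty once a dimension count allows it. The locus \(\{l_{ij}=0\}\subset\P^{N-r-1}\) always has dimension \(\geq N-r-1-(2r+2)=N-3r-3\). Intersecting with two quadrics then gives a nonempty set whenever \(N-3r-3\geq 2\), i.e.\ \(N\geq 3r+5\). For \(N=2g+1\) this reads \(g\geq\frac32 r+2\), and for \(N=2g\) it reads \(g\geq\frac32 r+\frac52\). This is exactly the stated bound, and it gives ``\(\Leftarrow\)'' for every \(\rplane\) directly, over \(k=\kbar\), by the projective dimension theorem.

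Conversely, if \(N-3r-5<0\), then Lemma~\ref{lem:general-l-hyperbolic-red}\eqref{item:general-l-Y-smooth-ci} (or the dimension count in its proof: \(\dim I=(r+2)(N-2r-4)+r+1<\dim F_r(X)\)) shows that \(Y_{\rplane}=\emptyset\) for general \(\rplane\). Hence a general \(r\)-plane is not contained in an \((r+1)\)-plane. In the even case with \(r=g-2\) excluded by hypothesis, this argument still works, since \(r\leq g-3\) ensures \(F_{r+1}(X)\) is connected of the stated dimension.

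The main subtlety is the boundary integrality: \(N\geq 3r+5\) must be translated correctly for each parity. For \(N=2g+1\), \(2g-3r-4\geq 0\) is equivalent to \(g\geq\frac32 r+2\). For \(N=2g\), \(2g\geq 3r+5\) is equivalent to \(g\geq\frac32r+\frac52\). We also have to check that the dimension formula for \(I\) uses \(F_{r+1}(X)\) nonempty, i.e.\ \(r+1\leq\lfloor N/2\rfloor-1\), which the hypotheses \(r\leq g-2\) and \(r\leq g-3\) guarantee.
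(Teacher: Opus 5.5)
Your proof is correct. It differs from the paper's in the key step for \(N\geq 3r+5\Rightarrow\) every \(r\)-plane lies in an \((r+1)\)-plane on \(X\).

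\textbf{The paper's route.} It works entirely through the incidence correspondence \(I\subset F_r(X)\times F_{r+1}(X)\). By Lemma~\ref{lem:general-l-hyperbolic-red}\eqref{item:general-l-Y-smooth-ci}, the fiber of \(p_1\) over a general \(r\)-plane is a smooth complete intersection of two quadrics when \(N-3r-5\geq 0\), and empty otherwise. Hence \(p_1\) is dominant exactly when \(N\geq 3r+5\). Closedness of the image (with irreducibility) then upgrades dominance to surjectivity.

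\textbf{Your route.} You show \(Y_{\rplane}=\{l_{ij}=q_0=q_1=0\}\neq\emptyset\) for \emph{every} \(\rplane\) directly, by the projective dimension theorem on a linear space of dimension \(\geq N-3r-3\geq 2\). This avoids the general-fiber analysis of Lemma~\ref{lem:general-l-hyperbolic-red}, which relies on generic smoothness in characteristic zero and Example~\ref{exmp:smooth-Y-example} in positive characteristic. It also gives surjectivity without passing through dominance.

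For the converse you use the same count as the paper, \(\dim F_r(X)-\dim I=3r+5-N\).

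With your direct argument, the cycle every \(\Rightarrow\) general \(\Rightarrow\) \(N\geq 3r+5\) \(\Rightarrow\) every already closes. So your first paragraph on closedness of \(p_1(I)\) is correct but unnecessary.

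The paper's route is natural in context, since it reuses a lemma needed elsewhere. Yours is more self-contained and more elementary.
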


\begin{proof}
    Fix \(0\leq r < \frac{N}{2}-2\), and let \(I = \{(\ell,\Lambda) \mid \ell \subset \Lambda\} \subset F_r(X)\times F_{r+1}(X)\). Then \(F_r(X)\) and \(F_{r+1}(X)\) are integral by Lemma~\ref{lem:F_r(X)}. Since the fibers of the second projection \(I\to F_{r+1}(X)\) are each isomorphic to \(\mathbb P^{r+1}\), \(I\) is irreducible. For a general \(r\)-plane \(\ell\) on \(X\), the fiber of the first projection \(I\to F_r(X)\) over \(\ell\) is isomorphic to a smooth complete intersection of two quadrics if \(N-3r-5\geq 0\) and is empty otherwise by Lemma~\ref{lem:general-l-hyperbolic-red}\eqref{item:general-l-Y-smooth-ci}. That is, the first projection is dominant if and only if \(N-3r-5\geq 0\), and by irreducibility of \(I\) and \(F_r(X)\) this is equivalent to surjectivity of the first projection.
\end{proof}

\begin{lem}\label{lem:general-l-vertices-avoid-Z}
    Assume \(k\) is an algebraically closed field of characteristic \(\neq 2\) and \(N\geq 2r+3\). If \(\rplane\) is a general \(r\)-plane on \(X\), then the set of vertices of the \(N+1\) singular fibers of \(\phi^{(r)}_{\rplane}\) is disjoint from \(\mathcal Q_{Y_{\rplane}}\).
\end{lem}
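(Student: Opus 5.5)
The plan is to reduce the statement to a condition on \(\rplane\) relative to finitely many hyperplanes, and then show that a general \(r\)-plane avoids each of these conditions.

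\emph{Step 1: identify the vertices.} Let \(t_1,\dots,t_{N+1}\in\P^1\) be the points where \(\mathcal Q_{t_i}\) is singular, and let \(v_i\) be the vertex of \(\mathcal Q_{t_i}\). By Proposition~\ref{prop:KS-hyperbolic-reduction}\eqref{item:KS-hyperbolic-red-degeneracy}, the singular fibers of \(\phi^{(r)}_{\rplane}\) lie over exactly the same points \(t_i\), and each has corank \(1\). Since \(X\) is smooth, \(v_i\notin X\), so in particular \(v_i\notin\rplane\). Moreover \(v_i\) lies in the radical of \(q_{t_i}\), so \(v_i\in\rplane^{\perp_{t_i}}\). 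Hence the radical of the reduced form on \(\rplane^{\perp}/\rplane\) at \(t_i\) is spanned by the image of \(v_i\). Concretely, the vertex of \((\phi^{(r)}_{\rplane})^{-1}(t_i)\) is \((t_i,\bar v_i)\), where \(\bar v_i\) is the image of \(v_i\) under the projection from \(\rplane\) to \(\P^{N-r-1}\).

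\emph{Step 2: translate membership in \(\mathcal Q_{Y_{\rplane}}\).} As in the proof of Lemma~\ref{lem:general-l-hyperbolic-red}\eqref{item:general-l-Z-dimension}, the locus \(\{l_{ij}=0\}\subset\P^{N-r-1}\) is \(\bigcap_{x\in\rplane}T_xX/\rplane\). Therefore the vertex lies in \(\mathcal Q_{Y_{\rplane}}\) if and only if \(v_i\in\bigcap_{x\in\rplane}T_xX\). We already know \(v_i\in T_x\mathcal Q_{t_i}\) for all \(x\in\rplane\), since \(v_i\) is orthogonal to everything with respect to \(q_{t_i}\). So the condition reduces to \(v_i\) being orthogonal to \(\rplane\) with respect to some other member \(q_t\), \(t\neq t_i\). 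Because \(v_i\) is in the kernel of \(q_{t_i}\), this condition does not depend on the choice of \(t\neq t_i\).

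Equivalently, the vertex lies in \(\mathcal Q_{Y_{\rplane}}\) exactly when \(\rplane\subset H_i\), where \(H_i\) is the polar hyperplane of \(v_i\) with respect to \(q_t\). This \(H_i\) is a genuine hyperplane because \(q_t\) is nondegenerate.

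\emph{Step 3: a general \(r\)-plane lies in no \(H_i\).} The locus \(\{\rplane\in F_r(X)\mid \rplane\subset H_i\}\) is closed. Since \(N\geq 2r+3\), the scheme \(F_r(X)\) is irreducible by Lemma~\ref{lem:F_r(X)}, so it suffices to exhibit, for each \(i\), one \(r\)-plane on \(X\) not contained in \(H_i\). The finitely many resulting proper closed subsets then have a complement that is dense open.

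To find such an \(r\)-plane, I take a general point \(x\in X\); since \(X\not\subset H_i\), we may assume \(x\notin H_i\). It then suffices to show that \(x\) lies on some \(r\)-plane of \(X\). For \(r=0\) this is trivial. For \(r\geq1\), Lemma~\ref{lem:r-planes-in-hyperbolic-reduction-loci}\eqref{item:relative-m-planes-in-Qr} (with the \(0\)-plane \(x\) and \(e=r-1\)) identifies the \(r\)-planes through \(x\) with \(F_{r-1}(Y_x)\). By Lemma~\ref{lem:general-l-hyperbolic-red}\eqref{item:general-l-Y-smooth-ci}, \(Y_x\) is a smooth complete intersection of two quadrics in \(\P^{N-3}\). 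By Lemma~\ref{lem:F_r(X)}\eqref{item:lem-F_r(X)-dim}, \(F_{r-1}(Y_x)\neq\emptyset\) as soon as \(r-1\leq\lfloor\frac{N-3}{2}\rfloor-1\), and this holds because \(N\geq 2r+3\).

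The main obstacle is Step 1, namely checking carefully that the vertex of the reduced quadric is exactly \(\bar v_i\). This is a linear-algebra computation in the coordinates of~\eqref{eqn:X}, using that \(v_i\notin\rplane\).
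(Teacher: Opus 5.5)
Your proposal is correct. Steps 1 and 2 are essentially the paper's argument. The paper also shows, via the block matrix $M_t$, that the vertex of $(\mathcal Q^{(r)}_{\rplane})_t$ is the projection from $\rplane$ of the vertex $p_t$ of $\mathcal Q_t$. It then shows this vertex lies in $\mathcal Q_{Y_{\rplane}}$ exactly when $\rplane\subset H_t$, the polar hyperplane of $p_t$ with respect to any other member of the pencil. Your observation that $v_i\notin X$, hence $v_i\notin\rplane$ and its image is nonzero, makes explicit a point the paper leaves implicit.

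You genuinely differ in Step 3.

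\begin{itemize}
\item \emph{The paper's route.} It puts $X$ in Reid's diagonal form $\sum x_j^2=\sum\lambda_jx_j^2=0$, so the vertices are coordinate points and $H_i=\{x_i=0\}$. Then $X\cap H_i$ is again a smooth complete intersection of two quadrics in $\P^{N-1}$. Lemma~\ref{lem:F_r(X)} gives $\dim F_r(X\cap H_i)=(r+1)(N-2r-3)<(r+1)(N-2r-2)=\dim F_r(X)$.
\item \emph{Your route.} You use irreducibility of $F_r(X)$ to reduce to exhibiting one $r$-plane not contained in $H_i$. You find it through a general point $x\in X\setminus H_i$, using $F_r(X)_x\cong F_{r-1}(Y_x)$ and Reid's nonemptiness criterion for the smooth $Y_x\subset\P^{N-3}$.
\end{itemize}

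What each buys:
\begin{itemize}
\item The paper's route is shorter once the normal form is available, and it yields an explicit codimension bound.
\item Yours avoids the normal form altogether and proves more. It shows a general $r$-plane lies in no prescribed hyperplane, even one for which $X\cap H$ is singular, where the paper's dimension count would not apply. The cost is relying on the generic smoothness in Lemma~\ref{lem:general-l-hyperbolic-red}\eqref{item:general-l-Y-smooth-ci}, which in characteristic $p$ rests on Example~\ref{exmp:smooth-Y-example}.
\end{itemize}

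Two small points to tidy up:
\begin{itemize}
\item You assert $X\not\subset H_i$ without justification. It holds because the homogeneous ideal of a complete intersection of two quadrics contains no linear forms.
\item When you say $H_i$ is a genuine hyperplane, take $t$ with $q_t$ nondegenerate. Your $t$-independence remark then gives the same nonzero form, up to scalar, for every $t\neq t_i$.
\end{itemize}
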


\begin{proof}
    Let \(\rplane\) be any \(r\)-plane on \(X\). We first show that if \(t\in\P^1\) is a point such that \(\mathcal Q_t\) is singular, then the singular point of \(\mathcal Q^{(r)}_t\) is contained in \(\mathcal Q_{Y_{\rplane}}\) if and only if \(\rplane\) is contained in a certain hyperplane.
    
    For this, first we show that the singular points of singular fibers of \(\phi^{(r)}_{\rplane}\colon \mathcal Q^{(r)}_{\rplane}\to\P^1\) are the projections under \(\pi_{\P^1\times \rplane}\) of the singular points of singular fibers of \(\phi\colon \mathcal Q\to\P^1\).
    Let \(W_{\rplane}\subset V\) be a vector subspace so that \(\rplane=\P(W_{\rplane}) \subset \P(V)=\P^N\), 
    and let \(U_{\rplane}\) be such that \(V=W_{\rplane}\oplus U_{\rplane}\).
    After a coordinate change to assume \(\rplane=\{x_{r+1}=\cdots=x_N=0\}\), the fiber of the pencil \(\mathcal Q\to\P^1\) over \(t\in \P^1\) is defined by
    \[M_t = \begin{pmatrix} 0_{(r+1)\times (r+1)} & L_t \\ L_t^T & A_t\end{pmatrix}\]
    where \(L_t\) is an \((r+1)\times(N-r)\) matrix and \(A_t\) is an \((N-r)\times(N-r)\) matrix.
    (If \(t=[s:t]\), then the rows of \(L_t\) are given by the coefficients of \(\frac{1}{2}(sl_{0j}+tl_{1j})\), and the matrix \(A_t\) corresponds to \(sq_0+tq_1\), where \(l_{ij},q_i\) are as defined in~\eqref{eqn:X}.)
    Let \(B_t\) be the bilinear form corresponding to the matrix \(M_t\).
    Then \(\Ker(L_t)\cong W_{\rplane}^{\perp_t}/W_{\rplane}\) (where the orthogonal complement is with respect to the quadratic form defined by \(M_t\)), and \(\mathcal Q^{(r)}_{\rplane,t} = \P(\{u \in \Ker(L_t) \mid u^T(A_t)u=0\})\).
    For \(t\in\P^1\), by Proposition~\ref{prop:KS-hyperbolic-reduction}\eqref{item:KS-hyperbolic-red-degeneracy} the fiber \(\mathcal Q_t\) of \(\phi\) over \(t\) is singular if and only if the fiber \(\mathcal Q^{(r)}_{\rplane,t}\) of \(\phi^{(r)}_{\rplane}\) over \(t\) is singular.
    Let \(t\in\P^1\) be a point such that \(\mathcal Q_t\) is singular, let \(p_t=\P(\mathrm{span}\{v_t\})\) be the singular point of \(\mathcal Q_t\), and write \(v_t=w_t+u_t\) where \(w_t\in W_{\rplane}\) and \(u_t\in U_{\rplane}\). Then, for any \(v\in V\),
    \[0=v^T M_t v_t = v^T M_t (w_t+u_t) = v^T\begin{pmatrix} L_t u_t \\ L_t^T w_t + A_t u_t\end{pmatrix},\]
    so \(L_t u_t = 0_{(r+1)\times 1}\) and \(L_t^T w_t + A_t u_t = 0_{(N-r) \times 1}\).
    This implies that for any \(u \in \Ker(L_t)\), we have \(u^T A_t u_t = 0\), so \(\P(\mathrm{span}\{u_t\})=\pi_{\P^1\times \rplane}(p_t)\) is the singular point of \(\mathcal Q^{(r)}_{\rplane,t}\).

    If \(p_t=\P(\mathrm{span}\{v_t\})\) is the singular point of \(\mathcal Q_t\), then the linear form defined by \(B_t(v_t,-)\colon V \to k\) is zero. Let \(t'\neq t\), and let \(f_{t'}\) be the linear form defined by \(B_{t'}(v_t,-)\). Then the hyperplane \(H_t\coloneqq \{f_{t'}=0\} \subset\P(V)\) is independent of the choice of \(t'\neq t\), and
    we claim that \(\pi_{\P^1\times\rplane}(p_t) \in \mathcal Q_{Y_{\rplane}}\) if and only if \(\rplane\subset H_t\). Indeed, by~\eqref{eqns:Z-and-Y}, \(\pi_{\P^1\times\rplane}(p_t) \in \mathcal Q_{Y_{\rplane}}\) if and only if \(L_{[0:1]} u_t = L_{[1:0]} u_t = 0\) if and only if \(B_{[0:1]}(u_t, W_{\rplane})=B_{[1:0]}(u_t,W_{\rplane})=0\) if and only if \(B_{[0:1]}(v_t, W_{\rplane})=B_{[1:0]}(v_t,W_{\rplane})=0\) (since \(\rplane\subset X\)), and this last condition is equivalent to \(\rplane \subset H_t\).
    
    Now we show that if \(\rplane\) is a general \(r\)-plane on \(X\), then \(\rplane\not\subset H_t\) for any \(t\) in the degeneracy locus of \(\phi\). By \cite[Proposition 2.1]{Reid-thesis}, we may choose coordinates on \(\P^N\) so that \(X\) is defined by \(\sum_{i=0}^N x_i^2\) and \(\sum_{i=0}^N \lambda_i x_i^2\) where the \(\lambda_i\) are distinct. Then the singular points of the singular fibers of \(\phi\colon\mathcal Q\to\P^1\) are given by \(([-\lambda_i:1],[0:\cdots:0:1_i:0:\cdots:0])\), so for each \(0\leq i\leq N\), we have \(H_{[-\lambda_i:1]}=\{x_i=0\}\).
    By a dimension count, a general \(r\)-plane on \(X\) is not contained in \(H_{[-\lambda_i:1]}\) for any \(i\), so this proves the result.
    (Indeed, \(\dim F_r(X)=(r+1)(N-2r-2)\) by Lemma~\ref{lem:F_r(X)}. On the other hand, for each \(0 \leq i \leq N\), the intersection \(X\cap H_{[-\lambda_i:1]}\) is a smooth complete intersection of two quadrics in \(H_{[-\lambda_i:1]} \cong \P^{N-1}\) by \cite[Proposition 2.1]{Reid-thesis}, so \(\dim F_r(X\cap H_{[-\lambda_i:1]}) = (r+1)(N-2r-3)\) by Lemma~\ref{lem:F_r(X)}.)
\end{proof}

The following explicit example of an \(r\)-plane was used in the proof of Lemma~\ref{lem:general-l-hyperbolic-red}.

\begin{exmp}[{cf. \cite[Proof of Lemma 4.9]{Reid-thesis} for \(r=0\)}]\label{exmp:smooth-Y-example}
    Let \(X\subset\P^N\) be a smooth complete intersection of two quadrics over an algebraically closed field \(k\) of characteristic \(\neq 2\), and let \(r\) be such that 
    \(N-3r-3\geq 0\).
    By \cite[Proposition 2.1]{Reid-thesis} one may choose coordinates \([y_0:\cdots:y_N]\) on \(\P^N\) such that \(X\) is defined by \(\sum_{i=0}^N y_i^2 = \sum_{i=0}^N \lambda_i y_i^2=0\) where \(\lambda_i\in k^\times\) are pairwise distinct.
    For each \(0\leq j\leq r\), choose \(a_j,b_j,c_j\in k\) such that \[a_j^2 = \lambda_{3j+1}-\lambda_{3j+2}, \quad b_j^2 = \lambda_{3j+2}-\lambda_{3j}, \quad c_j^2 = \lambda_{3j}-\lambda_{3j+1}.\]
    Then \(a_j,b_j,c_j\neq 0\) and \(a_j^2+b_j^2+c_j^2=\lambda_{3j}a_j^2+\lambda_{3j+1}b_j^2+\lambda_{3j+2}c_j^2 = 0\).
    Define the \(r\)-plane \[\rplane = \{[a_0z_0: b_0z_0: c_0z_0: a_1z_1: b_1 z_1: c_1 z_1: \cdots : a_rz_r: b_rz_r: c_rz_r: 0: \cdots : 0 ] \mid [z_0:\cdots:z_r]\in\P^r\}.\]
    Note that \(\rplane \subset X\cap\{y_{3r+3}=\cdots=y_N=0\}\).

    After the coordinate change
    \[y_{3j}=a_j x_j + x_{r+2j+1}, \quad y_{3j+1} = b_j x_j + x_{r+2j+2}, \quad y_{3j+2} = c_j x_j \quad \text{ for }0 \leq j\leq r\]
    and \(y_i=x_i\) for \(3r+3\leq i\leq N\), we have \(\rplane = \{x_{r+1}=\cdots=x_N=0\}\) and \(X\) has the form~\eqref{eqn:X} where
    \begin{equation*}
        \begin{split}
            l_{0j} &= 2 a_j x_{r+2j+1} + 2 b_j x_{r+2j+2}, \\
            l_{1j} &= 2\lambda_{3j} a_j x_{r+2j+1} + 2\lambda_{3j+1} b_j x_{r+2j+2}, \\
            q_0 &= \sum_{j=0}^r (x_{r+2j+1}^2+x_{r+2j+2}^2) + \sum_{l=3r+3}^N x_l^2, \\
            q_1 &= \sum_{j=0}^r (\lambda_{3j}x_{r+2j+1}^2+\lambda_{3j+1}x_{r+2j+2}^2) + \sum_{l=3r+3}^N \lambda_l x_l^2 .
        \end{split}
    \end{equation*}
    For each \(0\leq j\leq r\), we have \(\{l_{0j}=l_{1j}=0\} = \{x_{r+2j+1}=x_{r+2j+2}=0\}\) since the matrix
    \[\begin{pmatrix}
        a_j & b_j \\ \lambda_{3j}a_j & \lambda_{3j+1} b_j \end{pmatrix}\]
    has determinant \(a_j b_j (\lambda_{3j+1}-\lambda_{3j}) \neq 0\).
    In particular,
    \(\{l_{0j}=l_{1j}=0 \mid 0\leq j\leq r\}\subset\mathbb P^{N-r-1}\) is isomorphic to \(\P^{N-3r-3}\).
    So
    \begin{equation*}\begin{split}
        \mathcal Q_{Y_{\rplane}} &= \left\{ s \sum_{l=3r+3}^N x_l^2 + t \sum_{l=3r+3}^N \lambda_l x_l^2 =0 \right\} \subset \P^1_{[s:t]}\times \P^{N-3r-3}_{[x_{3r+3}:\cdots:x_N]} , \\
        Y_{\rplane} &= \left\{ \sum_{l=3r+3}^N x_l^2 = \sum_{l=3r+3}^N \lambda_l x_l^2 =0 \right\} \subset \P^{N-3r-3}_{[x_{3r+3}:\cdots:x_N]}.
    \end{split}\end{equation*}
    In addition, if \(N-3r-5\geq 0\), then \(Y_{\rplane}\) is a smooth complete intersection of two quadrics in \(\P^{N-3r-3}\) by \cite[Proposition 2.1]{Reid-thesis}, and otherwise  \(Y_{\rplane}=\emptyset\).
\end{exmp}

Finally, we record the following lemma on possibly singular intersections of two quadrics.
\begin{lem}\label{lem:singular-intersection}
Let $Y\subset \P^M$ be a subvariety defined by two quadratic equations over an algebraically closed field of characteristic $\neq 2$, let $y\in Y$, and let $T_{y}Y\subset \P^M$ be the tangent space to $Y$ at $y$.
\begin{enumerate}
\item\label{Y-possibilities} The variety $Y$ is either a complete intersection of two quadrics, the union of a hyperplane and an $(M-2)$-dimensional linear subspace (possibly embedded in the hyperplane), a quadric, or $\P^M$.
\item\label{Y-tangentspace-dimension} We have $M-2\leq \dim T_{y}Y\leq M$, and $\dim T_{y}Y=M$ holds if and only if $Y$ is a cone with vertex $y$.
\item\label{Y-tangentspace-intersection} The intersection $Y\cap T_{y}Y$ is a cone, and the base of the cone may be identified with the set of lines on $Y$ passing through $y$.
\end{enumerate}
\end{lem}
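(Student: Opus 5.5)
The plan is to work directly with the two quadratic forms. Write $Y=\{F=G=0\}\subset\P^M$, and let $B_F,B_G$ be the associated symmetric bilinear forms. For $y=[v]\in Y$, the tangent space is $T_yY=\{B_F(v,-)=B_G(v,-)=0\}$.

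For \eqref{Y-possibilities}, I would argue by the dimension of the span of $F$ and $G$, and then by common factors.
\begin{itemize}
\item If $F=G=0$, then $Y=\P^M$.
\item If the span of $F$ and $G$ is one-dimensional, then $Y$ is a quadric.
\item Otherwise $F$ and $G$ are linearly independent. If they have no common factor, then $\{F=G=0\}$ has codimension $2$, so $Y$ is a complete intersection.
\item If they do share a common factor, it must be a linear form $h$. Then $F=h\ell_1$ and $G=h\ell_2$ with $\ell_1,\ell_2$ independent linear forms, and
\[
Y=\{h=0\}\cup\{\ell_1=\ell_2=0\}.
\]
This is a hyperplane union an $(M-2)$-plane. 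The $(M-2)$-plane may be embedded in the hyperplane scheme-theoretically, when $h\in(\ell_1,\ell_2)$.
\end{itemize}

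For \eqref{Y-tangentspace-dimension}, $T_yY$ is cut out by the two linear forms $B_F(v,-)$ and $B_G(v,-)$, so $\dim T_yY\geq M-2$. Equality $\dim T_yY=M$ holds exactly when both forms vanish, i.e. $v$ lies in the radical of every quadric in the pencil. Since $F(v)=G(v)=0$, this says that for all $w$ and all $\lambda$,
\[
F(v+\lambda w)=F(w)\lambda^2, \qquad G(v+\lambda w)=G(w)\lambda^2.
\]
This is precisely the statement that $Y$ is a cone with vertex $y$. Conversely, if $Y$ is a cone with vertex $y$, then the whole ideal $(F,G)$ in degree $2$ is invariant under translation by $v$. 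Expanding $F(w+\lambda v)$ and $G(w+\lambda v)$ in $\lambda$ then forces the linear terms $B_F(v,w)$ and $B_G(v,w)$ to vanish, since $\lambda$ is free. One point needs care: "cone" should be read scheme-theoretically, i.e. the degree-2 part of the ideal is spanned by $F$ and $G$. When $F$ and $G$ are dependent, this still works with the single generator.

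For \eqref{Y-tangentspace-intersection}, take $w$ with $B_F(v,w)=B_G(v,w)=0$. Then
\[
F(w+\lambda v)=F(w), \qquad G(w+\lambda v)=G(w),
\]
so $Y\cap T_yY$ is invariant under the lines through $y$, i.e. it is a cone with vertex $y$. Its base is the image in $\P(T_yY/\langle v\rangle)$, namely $\{[w]: F(w)=G(w)=0\}$. A line $\langle v,w\rangle$ lies in $Y$ if and only if $F$ and $G$ vanish on it. Expanding in $\lambda,\mu$, the coefficients are
\[
F(w),\ B_F(v,w),\ G(w),\ B_G(v,w),
\]
so this is exactly the condition $w\in T_yY$ and $[w]$ lies in the base. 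This identifies the base with the lines on $Y$ through $y$, compatibly as schemes, because these coefficient equations define the Fano scheme of lines through $y$.

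I expect the main obstacle to be the converse direction of \eqref{Y-tangentspace-dimension}, specifically handling the degenerate cases of \eqref{Y-possibilities} carefully and fixing a precise notion of "cone". The remaining steps are direct computations.
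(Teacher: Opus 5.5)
Your proposal is correct and follows essentially the paper's route. For (1) you do a case analysis on the common factor of $F,G$. For (2) you observe that $T_yY$ is cut out by the two linear forms $B_F(v,-),B_G(v,-)$, which are the paper's $l_0,l_1$ in the coordinates $x_0l_i+q_i$, and that both vanish exactly in the cone case. For (3) you use the resulting invariance of $Y\cap T_yY$ along $v$.

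Your version is slightly more explicit in two places: you argue that the common factor must be linear when $F,G$ are independent, and you verify (3) by direct expansion rather than by applying (2) to $Y\cap T_yY\subset T_yY$. No new idea is needed.
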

\begin{proof}
Write $Y=Q_0\cap Q_1$ for some quadrics $Q_0,Q_1\subset \P^M$.

For~\eqref{Y-possibilities}, if the first case does not happen, then the quadratic forms defining \(Y\) have a common factor. If at least one of \(Q_0\) or \(Q_1\) is not equal to \(\P^M\), then this factor is either linear or quadratic, corresponding to the second and third cases, respectively. The case $Q_0=Q_1=\P^M$ gives the fourth case.

For~\eqref{Y-tangentspace-dimension}, since \(T_{y}Y=T_{y}Q_0\cap T_{y}Q_1\subset \P^M\) and each \(T_{y}Q_i\) is either a hyperplane or \(\P^M\), we have $M-2\leq \dim T_{y}Y\leq M$. 
We have $\dim T_{y}Y=M$ if and only if $\dim T_{y}Q_0=\dim T_{y}Q_1=M$,  and the latter is equivalent to the statement that $Q_0$ and $Q_1$ are cones with vertex $y$, which is equivalent to $Y$ being a cone with vertex $y$. (Indeed, after a coordinate change we may assume \(y=[1:0:\cdots:0]\), so that \(Q_i\) is defined by an equation of the form \(x_0 l_i(x_1,\ldots,x_M)+q_i(x_1,\ldots,x_M)\). Then \(Q_i\) is a cone with vertex \(y\) if and only if \(l_i=0\), and \(Y\) is a cone with vertex \(y\) if and only if \(l_0=l_1=0\).)

For~\eqref{Y-tangentspace-intersection}, $Y\cap T_{y}Y\subset T_{y}Y$ is an intersection of two quadrics with $T_{y}(Y\cap T_{y}Y)=T_{y}Y$, hence by~\eqref{Y-tangentspace-dimension}, it is a cone with vertex $y$.
Let \(H\subset T_{y}Y\) be a hyperplane not containing \(y\); then \(Y\cap T_{y}Y\) is the cone with vertex \(y\) over \(Y\cap T_{y}Y\cap H\).
Since any line \(\ell\) of $Y$ passing through $y$ lies on $T_{y}Y$, it lies on $Y\cap T_{y}Y$, and furthermore it is equal to \(\langle y,\ell\cap H\rangle\) for the uniquely determined point \(\ell\cap H \in Y\cap T_{y}Y\cap H\).
This proves the assertion.
\end{proof}

\section{The class of \texorpdfstring{\(X\)}{X}}\label{sec:r=0}

In this section, we compute the class of \(X\) in the Grothendieck ring of varieties.
When \(N=2g+1\) is odd, this was proven up to multiplication by \(\L\) in \cite[Lemma 3.14]{BBFGHLPRAS}, and when \(N=5\) the result was known by a classical description of the blow-up of \(X\) along a line (see \cite[Remark 3.15]{BBFGHLPRAS}).
Our argument for general \(N\) uses the \(r=0\) case of Theorem~\ref{thm:js-bir-description-F_r(X)}, i.e., a birational map \(X\dashrightarrow\mathcal Q^{(0)}\), which was first observed in \cite{CTSSD}.

\begin{prop}\label{prop:r=0-case}
Over an algebraically closed field \(k\) of characteristic $\neq 2$, let \(N\geq 2\), and let \(\mathcal Q\to\P^1\) be a pencil of quadrics in \(\P^N\) whose base locus \(X\) is smooth of dimension \(N-2\). Let \(\mathcal Q^{(0)}\to\P^1\) be the hyperbolic reduction of \(\mathcal Q\to\P^1\) with respect to any point of \(X\).
If \(N=2g+1\) is odd, let \(C\) be the genus \(g\) curve associated to \(X\).
We have the following equalities in $K_0(\mathrm{Var}/k)$ (where by convention we take \(\P^{-1}=\emptyset\)):
\begin{equation*}\begin{split}
    [X] &= [\mathcal Q^{(0)}]-[\P^{N-3}]+1 \\
    &= \begin{cases}
          [C]\L^{g-1} + [\P^{2g-1}] - \L^{g-1}[\P^1] & \text{if $N=2g+1$ is odd}, \\
          [\P^{2g-2}] +(2g+1)\L^{g-1} & \text{if $N=2g$ is even}.
    \end{cases}
\end{split}\end{equation*}
\end{prop}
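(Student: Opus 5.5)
We will prove the first equality \([X]=[\mathcal Q^{(0)}]-[\P^{N-3}]+1\) geometrically, and then deduce the explicit formulas from Corollary~\ref{cor:class-of-Qr-formula} with \(r=0\). Fix a point \(x\in X\) and choose coordinates as in~\eqref{eqn:X} with \(r=0\), so \(\rplane=x=[1:0:\cdots:0]\) and \(X=\{x_0l_{00}+q_0=x_0l_{10}+q_1=0\}\). We will use the birational map \(X\dashrightarrow\mathcal Q^{(0)}_x\) sending \(y\neq x\) to the line \(\langle x,y\rangle\), viewed as a point of \(\mathcal Q^{(0)}_x\subset F_1(\mathcal Q/\P^1)\). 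Concretely, \(y\mapsto([s:t],\overline y)\), where \(\overline y\) is the projection from \(x\) and \([s:t]\) is the unique member of the pencil containing the line \(\langle x,y\rangle\).

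\textbf{Step 1: stratify \(X\).} Write \(X=\{x\}\sqcup(X\cap T_xX\setminus\{x\})\sqcup(X\setminus T_xX)\). Since \(X\) is smooth of dimension \(N-2\), \(T_xX=\{l_{00}=l_{10}=0\}\) is a \(\P^{N-2}\). A point \(y\in X\setminus T_xX\) has \((l_{00}(\overline y),l_{10}(\overline y))\neq(0,0)\), so it determines a unique \([s:t]\) with \(sl_{00}+tl_{10}=0\) at \(\overline y\). By Lemma~\ref{lem:r-planes-in-hyperbolic-reduction-loci}\eqref{item:image-of-Z-in-rel-Fr} with \(r=0\), this map is an isomorphism of \(X\setminus T_xX\) onto \(\mathcal Q^{(0)}_x\setminus\mathcal Q_{Y_x}\): a line through \(x\) meeting \(X\) in exactly one further point \(y\) and not contained in \(X\). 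The inverse recovers \(x_0\) linearly from \(sl_{00}+tl_{10}\neq 0\). So \([X\setminus T_xX]=[\mathcal Q^{(0)}_x]-[\mathcal Q_{Y_x}]\).

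\textbf{Step 2: the tangent locus.} By Lemma~\ref{lem:singular-intersection}\eqref{Y-tangentspace-intersection}, \(X\cap T_xX\) is the cone with vertex \(x\) over \(Y_x=\{l_{00}=l_{10}=q_0=q_1=0\}\subset\P^{N-3}\). Hence \([X\cap T_xX]=1+[Y_x]\L\). It remains to compute \([\mathcal Q_{Y_x}]\). This is the family over \(\P^1\) of quadrics \(\{sq_0+tq_1=0\}\) inside \(\P^{N-3}=\{l_{00}=l_{10}=0\}\). Projecting to \(\P^{N-3}\), the fiber over a point of \(Y_x\) is \(\P^1\), and over a point of \(\P^{N-3}\setminus Y_x\) it is a single point. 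Therefore
\[[\mathcal Q_{Y_x}]=[\P^{N-3}]-[Y_x]+[Y_x](\L+1)=[\P^{N-3}]+[Y_x]\L.\]
Summing the strata gives
\[[X]=1+[Y_x]\L+[\mathcal Q^{(0)}_x]-[\P^{N-3}]-[Y_x]\L=[\mathcal Q^{(0)}]-[\P^{N-3}]+1,\]
and the terms \([Y_x]\L\) cancel. Proposition~\ref{prop:KS-hyperbolic-reduction}\eqref{item:KS-hyperbolic-reduction-indep} makes this independent of \(x\).

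\textbf{Step 3: explicit formula and the obstacle.} Substitute Corollary~\ref{cor:class-of-Qr-formula} with \(r=0\) (valid for \(g\geq 2\)). In the odd case \(N=2g+1\),
\[[\P^1][\P^{g-2}](1+\L^g)+[C]\L^{g-1}-[\P^{2g-2}]+1=[C]\L^{g-1}+[\P^{2g-1}]-\L^{g-1}[\P^1],\]
which follows from \([\P^1][\P^{g-2}](1+\L^g)=[\P^{2g-1}]+[\P^{2g-2}]-\L^{g-1}-\L^{2g-1}\). The even case is the analogous check. The small cases \(N\leq 3\), where Corollary~\ref{cor:class-of-Qr-formula} may not apply, will be handled directly: for \(N=2,3\), \(X\) is 4 points or a genus 1 curve with \([C]=[X]\).

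The main point requiring care is the scheme-theoretic claim in Step 1, namely that the map is a genuine isomorphism onto \(\mathcal Q^{(0)}_x\setminus\mathcal Q_{Y_x}\) rather than only a bijection. I would handle it by writing the inverse explicitly in coordinates. Failing that, it suffices to work in \(K_0\) via the radicial-bijection Lemma~\ref{lem:bijection-same-class}, although that yields the result only in \(\widetilde K_0\). In characteristic zero this costs nothing, since \(\widetilde K_0=K_0\).
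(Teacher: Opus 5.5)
Your argument is essentially the paper's: the same stratification $X=\{x\}\sqcup(X\cap T_xX\setminus\{x\})\sqcup(X\setminus T_xX)$, the identification $X\setminus T_xX\cong\mathcal Q^{(0)}_x\setminus\mathcal Q_{Y_x}$, and the cancellation of the $[Y_x]\L$ terms. For the identification the paper cites \cite{CTSSD}. Your citation of Lemma~\ref{lem:r-planes-in-hyperbolic-reduction-loci}\eqref{item:image-of-Z-in-rel-Fr} already gives an isomorphism, so you do not need the fallback to $\widetilde{K}_0$.

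The only real difference is how you compute $[\mathcal Q_{Y_x}]$. The paper takes $x$ general, so that $\mathcal Q_{Y_x}$ is a pencil with smooth base locus. Your projection to $\P^{N-3}$ is the graph of $[-q_1:q_0]$ over $\P^{N-3}\setminus Y_x$ and is $\P^1\times Y_x$ over $Y_x$. This works for every $x\in X$ and is slightly cleaner.

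There are two slips to fix.

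First, in Step~3 the displayed equation for $[X]$ is true, but the auxiliary identity you give to justify it is false. The correct identity is
\[[\P^1][\P^{g-2}](1+\L^g)=[\P^{2g-1}]+[\P^{2g-2}]-1-\L^{g-1}-\L^{g}.\]
Already for $g=2$ your right-hand side is $2+\L+2\L^2\neq(1+\L)(1+\L^2)$. Adding $[C]\L^{g-1}-[\P^{2g-2}]+1$ to both sides of the corrected identity yields $[C]\L^{g-1}+[\P^{2g-1}]-\L^{g-1}[\P^1]$.

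Second, on $\mathcal Q^{(0)}_x$ one has $sl_{00}+tl_{10}=0$ identically, so the inverse cannot recover $x_0$ from that form. The inverse is $x_0=-q_i(\overline y)/l_{i0}(\overline y)$ for whichever $i$ has $l_{i0}(\overline y)\neq 0$. The two choices agree on overlaps, since both $(l_{00},l_{10})$ and $(q_0,q_1)$ are proportional to $(t,-s)$.

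With these corrections the proof is complete.
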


\begin{proof}
    First we address the cases \(N=2,3\). If \(N=2\), then \(g=1\), \(X\) is 4 distinct points, and \(\mathcal Q^{(0)}\) is 3 distinct points by \cite[Lemma 3.1(6)]{JS24}, so the desired equalities hold. If \(N=3\), then \(g=1\), \(X\cong C\) by \cite[Proposition 4.2]{Reid-thesis}, and \(C\cong\mathcal Q^{(0)}\) by \cite[Lemma 3.1(6)]{JS24}, so the desired equalities again hold.
    So we may assume \(N\geq 4\). Let \(p\in X\) be a general closed point. After a coordinate change, we may assume \(p=[1:0:\cdots:0]\), so that \(X = \{ x_0 l_0 + q_0 = x_0 l_1 + q_1 = 0 \}\) for some forms \(l_0, l_1, q_0, q_1 \in k[x_1,\ldots,x_N]\) with \(\deg(l_i)=1\) and \(\deg(q_i)=2\). Let \(\mathcal Q^{(0)}_p\to\P^1\) be the hyperbolic reduction of the pencil \(\mathcal Q\to\P^1\) with respect to \(p\). By \cite[Theorem 3.2]{CTSSD}, the projection away from \(p\) induces an isomorphism
    \[X\setminus\{l_0=l_1=0\} \xrightarrow{\sim} \mathcal Q^{(0)}_p \setminus \mathcal Q_{Y_p},\]
    where \(\mathcal Q_{Y_p}\) and \(Y_p\) are as defined in~\eqref{eqns:Z-and-Y}, and \(Y_p\cong F_1(X)_p\) by Lemma~\ref{lem:r-planes-in-hyperbolic-reduction-loci}\eqref{item:image-of-E-in-rel-Fr}.
    By Lemma~\ref{lem:singular-intersection}\eqref{Y-tangentspace-intersection}, the intersection \(X\cap\{l_0=l_1=0\}\) is 
    the cone over \(Y_p\) with vertex \(p\), and its class in \(K_0(\mathrm{Var}/k)\) is equal to \([Y_p]\L+1\). Since \(p\) is general, \(\mathcal Q_{Y_p}\) is the total space of a pencil of quadrics in \(\P^{N-3}\) with smooth base locus \(Y_p\) by Lemma~\ref{lem:general-l-hyperbolic-red}\eqref{item:general-l-Y-smooth-ci}, so \([\mathcal Q_{Y_p}]=[Y_p]\L+[\P^{N-3}]\). Therefore
    \[ [X] = [\mathcal Q^{(0)}_p] - [Y_p]\L - [\P^{N-3}] + [Y_p]\L+1 = [\mathcal Q^{(0)}_p] - [\P^{N-3}] + 1,\]
    and the class of \(\mathcal Q^{(0)}_p\) is independent of the choice of \(p\) by Proposition~\ref{prop:KS-hyperbolic-reduction}\eqref{item:KS-hyperbolic-reduction-indep}.
    Finally, the expressions in terms of \([C]\) and \(\L\) follow from Corollary~\ref{cor:class-of-Qr-formula} and simplifying the expression.
\end{proof}

\section{Computational lemmas for \texorpdfstring{\(F_1(X)\)}{F1X}}\label{sec:computations}

In this section, we collect several computational lemmas that will be used in Section~\ref{sec:r=1}. A reader willing to accept these results when they are invoked later can safely proceed to Section~\ref{sec:r=1} on a first reading.

\subsection{Exceptional loci of \texorpdfstring{\(F_1(X)\dashrightarrow\Sym^2\mathcal Q^{(1)}\)}{F1X to Sym2(Q1)}}\label{sec:computation-exceptional-cases}

Throughout this subsection, we work over an algebraically closed field \(\kbar\) of characteristic \(\neq 2\).

Let \(\mathcal Q\to\P^1\) be a pencil of quadrics in \(\P^N\) with smooth base locus \(X\).
First we state several lemmas that give explicit descriptions for the exceptional set of the birational map \(F_1(X)\dashrightarrow\Sym^2(\mathcal Q^{(1)}_\ell\setminus\mathcal Q_{Y_\ell})\) of \cite[Theorem 1.3]{JS24} and its inverse. First, we need the following definition, which will be stated in greater generality in Section~\ref{sec:artibrary-r} (Definition~\ref{defn:reid-type}).

\begin{defn}\label{defn:reid-type-r=1}
    A complete intersection \(X'\subset\mathbb P^3_{\kbar}\) of two quadrics is said to be \defi{singular of Reid type} if \(X'= l_1 + l_2 + l_3 + l_4\) is a union of lines such that for each \(l_i\), there are exactly two indices \(j,j'\) such that \(l_j\) and \(l_{j'}\) each meet \(l_i\) at a point, and furthermore \(X'\) has exactly 4 singular points, which are given by these intersections.
\end{defn}

\begin{lem}\label{lem:line-disjoint-from-l-cases}
    Assume \(N\geq 4\), and fix a line \(\ell\) on \(X\). Let \(m\) be a line on \(X\) such that \(\ell\cap m=\emptyset\). Then one may choose coordinates on \(\langle\ell,m\rangle\cong\mathbb P^3\) so that \(\mathcal Q\cap (\P^1\times \langle\ell,m\rangle)\) is spanned by the quadratic forms corresponding to
	\begin{equation}\label{eqn:pencil-intersect-P3-disjoint-lines} M_i = \begin{pmatrix} 0 & 0 & a_{i02} & a_{i03} \\ 0 & 0 & a_{i12} & a_{i13} \\ a_{i02} & a_{i12} & 0 & 0 \\ a_{i03} & a_{i13} & 0 & 0 \end{pmatrix} \quad \text{ for }i\in\{0,1\},
	\end{equation}
	and we have the following possibilities for the intersection \(X\cap\langle\ell,m\rangle\):
		\begin{center}
	\begin{tabular}{ |c|c|c|c| } 
	 \hline
	 & \(X\cap\langle\ell,m\rangle\) & \(\det(sM_0+tM_1)\) & Fibers of \(\mathcal Q\cap(\P^1\times\langle\ell,m\rangle) \to \mathbb P^1\) \\ 
	 \hline
	 \eqref{item:case-F_1-computation-P3} & \(\langle\ell,m\rangle\cong\mathbb P^3\) & 0 & \(\langle\ell,m\rangle \cong\mathbb P^3\) \\ 
	 \hline
	 \eqref{item:case-F_1-computation-quadric-surface} & \(\substack{P_1+P_2 \text{ union of two 2-planes} \\
	 \text{(necessarily \(P_1\cap P_2\neq\ell,m\))}}\)
	 	& 0 & one fiber \(\mathbb P^3\), others the same quadric \\ 
		\hline
	 \eqref{item:case-F_1-computation-quadric-surface} & smooth quadric surface & (degree 1)\(^4\) & one fiber \(\mathbb P^3\), others the same quadric \\ 
	 \hline
	 \eqref{item:case-F_1-computation-line-and-plane} & \(\substack{\text{\(\ell + P\) where \(\ell\cap P\) is a point, or} \\ \text{\(m + P\) where \(m\cap P\) is a point}}\) & 0 & \(\substack{\text{all rank 2 quadrics (one component of} \\ \text{each fiber is \(P\) and the other varies)}}\) \\ 
	 \hline
	 \eqref{item:case-F_1-computation-l-m-2m_1} & \(\substack{\text{\(\ell + m + 2 m'\) where} \\ \text{\(\ell\cap m'\), \(m\cap m'\) are each a point}}\) & (degree 1)\(^4\) & 1 singular fiber \(\langle\ell, m'\rangle +  \langle m, m'\rangle\) \\ 
	 \hline
	 \eqref{item:case-F_1-computation-Reid-type} & \(\substack{ \ell + m + l_1 + l_2 \\ \text{singular of Reid type}}\) & (degree 2 with distinct roots)\(^2\) & \(\substack{\text{2 singular fibers, each of rank 2} \\ \langle\ell, l_1\rangle + \langle m, l_2\rangle \text{ and } \langle\ell, l_2\rangle + \langle m, l_1\rangle}\) \\ 
	 \hline
	\end{tabular}
	\end{center}
\end{lem}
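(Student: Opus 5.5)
Choose coordinates on \(\langle\ell,m\rangle\cong\P^3\) so that \(\ell=\{x_2=x_3=0\}\) and \(m=\{x_0=x_1=0\}\). Every quadric of the pencil restricted to \(\langle\ell,m\rangle\) contains \(\ell\) and \(m\), since both lines lie in \(X\). A quadratic form vanishing on \(\ell\) has no \(x_0^2,x_0x_1,x_1^2\) terms, and one vanishing on \(m\) has no \(x_2^2,x_2x_3,x_3^2\) terms. So each restricted form is a bilinear pairing between the coordinate pairs \((x_0,x_1)\) and \((x_2,x_3)\), which is exactly the block shape~\eqref{eqn:pencil-intersect-P3-disjoint-lines}. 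Write \(A_i=(a_{ijk})\) for the \(2\times2\) off-diagonal block. Then
\[\det(sM_0+tM_1)=\det(sA_0+tA_1)^2,\]
which explains why the determinant column always shows squares. The restricted quadric \(sM_0+tM_1\) has rank \(2\operatorname{rank}(sA_0+tA_1)\in\{0,2,4\}\); it is a smooth quadric when that rank is \(2\), and a union of two planes when the rank is \(1\).

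The classification is then by the \(2\times2\) matrix pencil \(sA_0+tA_1\), up to the action \((A_0,A_1)\mapsto(PA_0R,PA_1R)\) from the coordinate changes preserving \(\ell\) and \(m\), together with changes of basis of the pencil. I would list the cases by Kronecker normal form.

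\begin{enumerate}
\item Both blocks vanish. The whole \(\P^3\) lies in \(X\); this is case~\eqref{item:case-F_1-computation-P3}.
\item The blocks are proportional and nonzero, so the span is one-dimensional. One fiber is \(\P^3\) and the others are one fixed quadric, giving case~\eqref{item:case-F_1-computation-quadric-surface}. The quadric is smooth if \(\operatorname{rank}A=2\), with determinant a fourth power of a linear form. It is two planes if \(\operatorname{rank}A=1\), with determinant \(0\); these planes meet in a line joining a point of \(\ell\) and a point of \(m\), so that line is neither \(\ell\) nor \(m\).
\item The span is two-dimensional but \(\det(sA_0+tA_1)\equiv0\). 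Every member has rank \(1\), so \(A_0,A_1\) share a kernel or share an image. The common plane \(P\) is the span of \(\ell\) or \(m\) with a point, giving case~\eqref{item:case-F_1-computation-line-and-plane}.
\item The determinant is a nonzero binary quadratic with a double root. The normal form is a Jordan block, and one computes \(X\cap\langle\ell,m\rangle=\ell+m+2m'\), which is case~\eqref{item:case-F_1-computation-l-m-2m_1}.
\item The determinant has distinct roots. Simultaneous diagonalization gives \(x_0x_2\) and \(x_1x_3\) up to scaling. The intersection is then the four lines \(\ell,m,\{x_0=x_3=0\},\{x_1=x_2=0\}\), forming a cycle of Reid type, and the two singular fibers are the stated pairs of planes. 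This is case~\eqref{item:case-F_1-computation-Reid-type}.
\end{enumerate}

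In each case the description of the intersection and of the singular fibers is read directly from the normal form. The main obstacle is case (3) and the nonreduced case (4). In (3) I would show that \(X\cap\langle\ell,m\rangle\) is \(\ell+P\) or \(m+P\), with the residual line meeting \(P\) only in a point. In (4) I would pin down the double-line structure on \(m'\), again by explicit computation in the normal form.

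Smoothness of \(X\) is not needed to obtain the list. It only constrains which cases occur, and the lemma asserts only the possibilities.
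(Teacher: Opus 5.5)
Your proposal is correct and follows essentially the same route as the paper: the block off-diagonal shape forced by \(\ell,m\subset X\), the identity \(\det(sM_0+tM_1)=\det(sA_0+tA_1)^2\), and a case split of the \(2\times 2\) pencil into proportional blocks, identically vanishing determinant, double root, or distinct roots. The only difference is that you invoke the Kronecker normal form under independent left/right changes \((A_0,A_1)\mapsto(PA_0R,PA_1R)\). The paper instead normalizes by hand, using conjugations \(S^{-1}A_iS\) and basis changes on \(\P^1\), and treats the double-root case as three explicit subcases, all giving \(\ell+m+2m'\).
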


Lemma~\ref{lem:line-disjoint-from-l-cases} immediately implies:
\begin{cor}\label{cor:lines-meeting-l-span-P3-disjoint}
    Assume \(N\geq 4\), and fix a line \(\ell\) on \(X\). Let \(m_0,m_1\) be lines on \(X\) such that \(m_0\cap m_1=\emptyset\) and \(\ell\cap m_i\) is a point for each \(i\in\{0,1\}\). Then one may choose coordinates on \(\langle\ell,m_0,m_1\rangle = \langle m_0,m_1\rangle\cong\mathbb P^3\) so that \(\mathcal Q\cap(\P^1\times\langle\ell,m_0,m_1\rangle)\) is spanned by quadratic forms corresponding to matrices of the form~\eqref{eqn:pencil-intersect-P3-disjoint-lines}, and we have the following possibilities for the intersection \(X\cap\langle\ell,m_0,m_1\rangle\):
	\begin{center}
	\begin{tabular}{ |c|c|c| } 
	 \hline
	 \(X\cap\langle\ell,m_0,m_1\rangle = X\cap\langle m_0,m_1\rangle\) & \(\det(sM_0+tM_1)\) & Fibers of \(\mathcal Q\cap(\P^1\times\langle\ell,m_0,m_1\rangle) \to \mathbb P^1\) \\ 
	 \hline
	 \(\langle\ell,m_0,m_1\rangle\cong\mathbb P^3\) & 0 & \(\langle\ell,m_0,m_1\rangle \cong\mathbb P^3\) \\ 
	 \hline
	 \(\substack{\text{\(P_1+P_2\) with \(m_i \subset P_i\)} \\ \text{and \(P_1\cap P_2\neq m_i\)}}\)
	 	& 0 & one fiber \(\mathbb P^3\), others the same quadric \\ 
		\hline
	 smooth quadric surface & (degree 1)\(^4\) & one fiber \(\mathbb P^3\), others the same quadric \\ 
	 \hline
	 \(\substack{\text{\(m_0 + P\) where \(m_0\cap P\) is a point, or} \\ \text{\(m_1 + P\) where \(m_1\cap P\) is a point}}\) & 0 & \(\substack{\text{all rank 2 quadrics (one component of} \\ \text{each fiber is \(P\) and the other varies)}}\) \\ 
	 \hline
	  \(m_0 + m_1 + 2 \ell \) & (degree 1)\(^4\) & 1 singular fiber \(\langle m_0, \ell \rangle +  \langle m_1, \ell\rangle\) \\ 
	  \hline
	  \(\substack{ m_0 + m_1 + \ell + l' \\ \text{singular of Reid type}}\) & (degree 2 with distinct roots)\(^2\) & \(\substack{\text{2 singular fibers, each of rank 2} \\ \langle m_0, \ell \rangle + \langle m_1, l' \rangle \text{ and } \langle m_0, l' \rangle + \langle m_1, \ell \rangle}\) \\ 
	 \hline
	\end{tabular}
	\end{center}
\end{cor}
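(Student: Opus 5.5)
The plan is to deduce Corollary~\ref{cor:lines-meeting-l-span-P3-disjoint} from Lemma~\ref{lem:line-disjoint-from-l-cases}, applied to the disjoint pair \(m_0,m_1\) with \(m_0\) in the role of the fixed line and \(m_1\) in the role of the disjoint line. Since \(\ell\) meets both \(m_0\) and \(m_1\), it lies in \(\langle m_0,m_1\rangle\cong\P^3\). Hence \(\langle\ell,m_0,m_1\rangle=\langle m_0,m_1\rangle\), and Lemma~\ref{lem:line-disjoint-from-l-cases} directly gives coordinates in which the restricted pencil is spanned by matrices of the form~\eqref{eqn:pencil-intersect-P3-disjoint-lines}. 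It also supplies the list of six possibilities for \(X\cap\langle m_0,m_1\rangle\), together with the determinant and fiber descriptions. What remains is to rewrite each row using the additional fact that the line \(\ell\subset X\cap\langle m_0,m_1\rangle\) meets both \(m_0\) and \(m_1\) in a point.

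I would go through the rows one at a time. The \(\P^3\) row, the smooth quadric row and the row \(m_i+P\) transfer verbatim. For a smooth quadric, \(\ell\) is simply a line of the other ruling; for \(m_i+P\), the line \(\ell\) lies in \(P\) or coincides with nothing forbidden. For the two-planes row \(P_1+P_2\), the lemma already gives \(P_1\cap P_2\neq m_0,m_1\). Each \(m_i\) lies in some plane, and the two cannot lie in the same plane because they are disjoint, so after relabeling \(m_i\subset P_i\).

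The row \(m_0+m_1+2m'\) needs one extra check: every line on \(X\cap\langle m_0,m_1\rangle\) is \(m_0\), \(m_1\) or \(m'\). Since \(\ell\neq m_i\), because \(\ell\) meets \(m_i\) in a single point, we get \(\ell=m'\), and the fiber description becomes \(\langle m_0,\ell\rangle+\langle m_1,\ell\rangle\). In the Reid-type row, \(X\cap\langle m_0,m_1\rangle=m_0+m_1+l_1+l_2\), and \(\ell\) is one of \(l_1,l_2\). Setting \(l'\) to be the other line and substituting into the lemma's fiber description gives the stated rank-2 fibers.

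The only real point to watch is these last identifications: one has to rule out \(\ell\) being some line not among the listed components. This holds because \(\ell\) is contained in the one-dimensional scheme \(X\cap\langle m_0,m_1\rangle\), whose components are exactly the listed lines.
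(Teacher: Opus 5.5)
Your proposal is correct and is essentially the paper's argument. The paper says the corollary follows immediately from Lemma~\ref{lem:line-disjoint-from-l-cases} applied to the disjoint pair $(m_0,m_1)$, and your row-by-row translation ($\ell=m'$ in the nonreduced row, $\ell\in\{l_1,l_2\}$ in the Reid-type row, since $\ell$ is a component of $X\cap\langle m_0,m_1\rangle$ different from $m_0,m_1$) is exactly the omitted bookkeeping; the one implicit step to state is that $\ell\subset\langle m_0,m_1\rangle$ because $\ell$ is spanned by the two distinct points $\ell\cap m_0$ and $\ell\cap m_1$.
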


\begin{lem}\label{lem:lines-meeting-l-span-P2}
    Assume \(N\geq 4\), and fix a line \(\ell\) on \(X\). If \(m_0\neq m_1\) are lines on \(X\) both different from \(\ell\), and \(\langle\ell,m_0,m_1\rangle\cong\mathbb P^2\), then \(\langle\ell,m_0,m_1\rangle\subset X\).
\end{lem}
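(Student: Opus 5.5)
Three distinct lines spanning a plane \(P\coloneqq\langle\ell,m_0,m_1\rangle\cong\P^2\) lie on \(X\cap P\). The plan is to show that this intersection, cut out by two conics in \(P\), cannot be a proper subscheme. We restrict the two quadrics \(Q_0,Q_1\) spanning the pencil to \(P\). Either restriction may vanish identically; if both do, then \(P\subset X\) and we are done. Otherwise, take a nonzero restriction \(Q_i|_P\). It is a conic containing the three distinct lines \(\ell,m_0,m_1\). A nonzero conic has degree \(2\), so it contains at most two lines. This is a contradiction, and therefore every member of the pencil restricts to zero on \(P\). Concretely, write \(P=\P(W)\) with \(\dim W=3\) and let \(f\) be a quadratic form on \(W\) vanishing on three distinct lines of \(\P(W)\). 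Then the three corresponding linear forms each divide \(f\). They are pairwise non-proportional, so their product, of degree \(3\), divides \(f\), which forces \(f=0\).

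Hence \(Q_0|_P=Q_1|_P=0\), i.e. \(P\subset Q_0\cap Q_1=X\). Two points need care. First, \(m_0\neq m_1\) and both differ from \(\ell\), so the three lines are pairwise distinct. Second, their span being a plane means they all lie in the same \(\P^2\). Both hold by hypothesis, so the three linear forms are pairwise non-proportional as required. We use only that \(X\) is the scheme-theoretic intersection of the two quadrics. The hypothesis \(N\geq 4\) is needed only so that the statement is consistent with \(X\) containing lines, via Lemma~\ref{lem:F_r(X)}. Once this observation is made there is no real obstacle; the only thing to watch is handling the case where the restriction of a quadric to \(P\) is identically zero, which the divisibility argument covers uniformly.

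A remark on consistency: by Lemma~\ref{lem:F_r(X)}, \(X\) contains a plane only if \(N\geq 6\). So for \(N=4,5\) the conclusion shows that this configuration of lines cannot occur. This is compatible with the statement as posed.
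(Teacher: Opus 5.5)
Your proof is correct and is essentially the paper's argument: each member of the pencil, restricted to the plane \(\langle\ell,m_0,m_1\rangle\), contains the degree-\(3\) curve \(\ell+m_0+m_1\), so by degree reasons it contains the whole plane, and hence the plane lies in \(X\). Your divisibility argument spells out the paper's ``degree reasons,'' and it covers both of the paper's cases (\(\ell\cap m_0\neq\ell\cap m_1\) or \(\ell\cap m_0=\ell\cap m_1\)) with one argument.
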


\begin{lem}\label{lem:lines-meeting-l-span-P3-common-point}
    Assume \(N\geq 4\), and fix a line \(\ell\) on \(X\). Let \(m_0,m_1\) be lines on \(X\) such that \(\ell\cap m_0 = \ell\cap m_1\) is a point and \(\langle\ell,m_0,m_1\rangle\cong\P^3\). Then there exists a choice of coordinates on \(\langle\ell,m_0,m_1\rangle\cong\mathbb P^3\) so that \(\mathcal Q\cap(\P^1\times\langle\ell,m_0,m_1\rangle)\) is spanned by the quadratic forms corresponding to
	\[ M_i = \begin{pmatrix} 0 & 0 & 0 & 0 \\ 0 & 0 & a_{i12} & a_{i13} \\ 0 & a_{i12} & 0 & a_{i23} \\ 0 & a_{i13} & a_{i23} & 0 \end{pmatrix} \quad \text{ for }i\in\{0,1\}, \]
	and, if \(C_i\) is the \(3\times 3\) symmetric matrix obtained by removing the first row and column of \(M_i\), we have the following possibilities for \(X\cap\langle\ell,m_0,m_1\rangle\):
	{
	\begin{center}
	\begin{tabular}{ |c|c|c|c| } 
	 \hline
	 & \(X\cap\langle\ell,m_0,m_1\rangle\) & \(\det(sC_0+tC_1)\) & Fibers of \(\mathcal Q\cap(\P^1\times\langle\ell,m_0,m_1\rangle) \to \mathbb P^1\) \\ 
	 \hline
	 \eqref{item:case-sym-Q1-meeting-computation-P3} & \(\langle\ell,m_0,m_1\rangle\cong\mathbb P^3\) & 0 & \(\langle\ell,m_0,m_1\rangle \cong\mathbb P^3\) \\ 
	 \hline
	 \eqref{item:case-sym-Q1-meeting-computation-quadric-surface-rank-2-or-3} & \(\substack{\text{\(P_1+P_2\) union of two 2-planes} \\ \text{at least one \(P_i\) contains exactly two of \(\{\ell,m_0,m_1\}\)} \\ \text{(\(P_1\cap P_2\) could possibly be one of \(\ell,m_0,m_1\))}}\)
	 	& 0 & \(\substack{\text{one fiber \(\mathbb P^3\),} \\ \text{others the same quadric}}\) \\ 
		\hline
	 \eqref{item:case-sym-Q1-meeting-computation-quadric-surface-rank-2-or-3} & rank 3 quadric surface & (linear)\(^3\) & \(\substack{\text{one fiber \(\mathbb P^3\),} \\ \text{others the same quadric}}\) \\ 
	 \hline
	 \eqref{item:case-sym-Q1-meeting-computation-general-member-rank-2-nonconstant} & \(\substack{\text{\(\ell + P\) or \(m_0 + P\) or \(m_1 + P\)} \\ \text{ with line \(\cap\) plane = \(\ell\cap m_0 \cap m_1\)}}\) & 0 & \(\substack{\text{all rank 2 quadrics (one component of} \\ \text{each fiber is \(P\) and the other varies)}}\) \\ 
	 \hline
	 \eqref{item:case-sym-Q1-meeting-computation-general-rank-3-multiplicity-2-root} & \(\begin{cases} \ell + m_0 + 2 m_1 \\ \ell + 2 m_0 + m_1 \\ 2\ell + m_0 + m_1 \end{cases}\) & \(\substack{\text{\(\lambda (s - a t)^2(s-b t)\)} \\ \text{ with }a\neq b, \lambda \neq 0}\) & \(\substack{\text{two fibers of rank 2,} \\ \text{others rank 3}}\) \\ 
	 \hline
	 \eqref{item:case-sym-Q1-meeting-computation-general-rank-3-distinct-roots} & \(\substack{ \text{\(\ell + m_0 + m_1 + m'\) where \(m'\) is another line} \\ \text{with \(\ell \cap m' = \ell \cap m_0 \cap m_1\)}}\) & \(\substack{\text{\(\lambda (s - a t)(s-b t)(s - c t)\)} \\ \text{ with }a, b,c \text{ distinct},\lambda\neq 0}\) & \(\substack{\text{three fibers of rank 2,} \\ \text{others rank 3}}\) \\ 
	 \hline
	\end{tabular}
	\end{center}}
In case~\eqref{item:case-sym-Q1-meeting-computation-general-rank-3-multiplicity-2-root}, we have the following description of the rank 2 fibers of \(\mathcal Q\cap(\P^1\times\langle\ell,m_0,m_1\rangle)\to\mathbb P^1\):
\begin{enumerate}[label=(\alph*)]
	\item\label{item:w-2ell+m_0+m_1-mult-2-root} Over the multiplicity 2 root of \(\det(sC_0+tC_1)\), the fiber is the rank 2 quadric surface
	\[\begin{cases}
	\langle\ell,m_1\rangle + \langle m_0, m_1\rangle & \text{ if }X\cap\langle\ell,m_0,m_1\rangle = \ell + m_0 + 2 m_1, \\
	\langle \ell,m_0\rangle + \langle m_0, m_1\rangle & \text{ if }X\cap\langle\ell,m_0,m_1\rangle = \ell + 2m_0 + m_1, \\
	\langle \ell,m_0\rangle + \langle\ell, m_1\rangle & \text{ if }X\cap\langle\ell,m_0,m_1\rangle = 2\ell + m_0 + m_1.
	\end{cases}\]
	\item Over the simple root of \(\det(sC_0+tC_1)\), the fiber is the rank 2 quadric surface
	\[\begin{cases}
	\langle\ell,m_0\rangle + P \text{ where }m_1\subset P \text{ and }\ell,m_0\not\subset P & \text{ if }X\cap\langle\ell,m_0,m_1\rangle = \ell + m_0 + 2 m_1, \\
	\langle\ell,m_1\rangle + P \text{ where }m_0 \subset P \text{ and }\ell, m_1\not\subset P & \text{ if }X\cap\langle\ell,m_0,m_1\rangle = \ell + 2m_0 + m_1, \\
	\langle m_0,m_1\rangle + P \text{ where }\ell\subset P \text{ and }m_0,m_1\not\subset P & \text{ if }X\cap\langle\ell,m_0,m_1\rangle = 2\ell + m_0 + m_1.
	\end{cases}\]
\end{enumerate}
In case~\eqref{item:case-sym-Q1-meeting-computation-general-rank-3-distinct-roots}, the three rank 2 fibers are given by
\[\langle\ell,m_0\rangle + \langle m_1,m'\rangle, \quad 
\langle\ell,m_1\rangle + \langle m_0,m'\rangle, \quad
\langle\ell,m'\rangle + \langle m_0,m_1\rangle. \]
\end{lem}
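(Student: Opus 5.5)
Since \(\ell\cap m_0=\ell\cap m_1\) is a point and \(\langle\ell,m_0,m_1\rangle\cong\P^3\), the three lines are concurrent and not coplanar. I will choose coordinates \([x_0:x_1:x_2:x_3]\) on \(\langle\ell,m_0,m_1\rangle\) so that the common point is \(p=[1:0:0:0]\), \(\ell=\langle p,e_1\rangle\), \(m_0=\langle p,e_2\rangle\), \(m_1=\langle p,e_3\rangle\).

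\textbf{Normal form.} Each quadric \(Q\) of the pencil, restricted to this \(\P^3\), contains the three lines. Hence its matrix has zero diagonal, since \(e_0,e_1,e_2,e_3\) are isotropic. It also has \(B(e_0,e_j)=0\) for \(j=1,2,3\), because each of these lines is isotropic. This gives the stated shape of \(M_i\), with the first row and column zero.

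\textbf{Reduction to conics.} Each restricted quadric is a cone with vertex \(p\) over the conic in \(\P^2_{[x_1:x_2:x_3]}\) given by \(C_i\). So \(X\cap\langle\ell,m_0,m_1\rangle\) is the cone over the base locus \(Z=\{c_0=c_1=0\}\subset\P^2\) of the conic pencil \(sC_0+tC_1\). Here \(c_i=2(a_{i12}x_1x_2+a_{i13}x_1x_3+a_{i23}x_2x_3)\), and every conic of the pencil passes through the three coordinate points \(e_1,e_2,e_3\), which correspond to \(\ell,m_0,m_1\).

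\textbf{Case analysis.} I will classify pencils of conics through three non-collinear points in \(\P^2\).
\begin{itemize}
\item If \(C_0,C_1\) are proportional or one is zero, then \(Z\) is a conic or all of \(\P^2\), and \(\det(sC_0+tC_1)\equiv 0\) or equals (linear)\(^3\).
\begin{itemize}
\item \(Z=\P^2\) gives case~\eqref{item:case-sym-Q1-meeting-computation-P3}.
\item \(Z\) a smooth conic gives a rank 3 quadric cone.
\item \(Z\) a line pair gives \(P_1+P_2\). Since a line of the pair contains at most two of the three points, the description in case~\eqref{item:case-sym-Q1-meeting-computation-quadric-surface-rank-2-or-3} follows.
\end{itemize}
\item If the pencil has a fixed line component, then \(Z\) is that line plus a point, which gives case~\eqref{item:case-sym-Q1-meeting-computation-general-member-rank-2-nonconstant}. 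The remaining factor moves, and the fixed point must be a coordinate point not on the line, so all members have rank 2.
\item Otherwise \(Z\) is a length-4 scheme containing \(e_1,e_2,e_3\), so \(Z=\{e_1,e_2,e_3,w\}\) with \(w\) a fourth point, possibly infinitely near.
\begin{itemize}
\item Since a pencil with a base locus of length 4 cannot have three collinear base points, \(w\) is not on any side of the triangle. When \(w\) is a distinct point, this yields case~\eqref{item:case-sym-Q1-meeting-computation-general-rank-3-distinct-roots}, with \(m'\) the line over \(w\).
\item When \(w\) is infinitely near some \(e_j\), we get case~\eqref{item:case-sym-Q1-meeting-computation-general-rank-3-multiplicity-2-root}, where the multiplicity-2 component corresponds to the doubled line.
\end{itemize}
\end{itemize}

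\textbf{Discriminant and rank 2 fibers.} The factorization of \(\det(sC_0+tC_1)\) and the singular fibers follow from the standard theory of conic pencils. In the four-distinct-point case there are three degenerate members, namely the three pairs of opposite sides of the quadrangle. Coning these from \(p\) gives the listed planes \(\langle\ell,m_0\rangle+\langle m_1,m'\rangle\) and so on.

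In the tangency case, the discriminant has a double root and a simple root. The double root corresponds to the line pair consisting of the tangent line and the line joining the other two points. For example, if \(w\) is infinitely near \(e_3\) (so \(X\cap\langle\ell,m_0,m_1\rangle=\ell+m_0+2m_1\)), the line pair is \(\overline{e_1e_2}\cup T\), where \(T\) is the tangent line at \(e_3\) and is not a side of the triangle. Coning gives \(\langle\ell,m_0\rangle+P\) with \(m_1\subset P\). The other degenerate member is \(\overline{e_1e_3}\cup\overline{e_2e_3}\), which cones to \(\langle\ell,m_1\rangle+\langle m_0,m_1\rangle\). I will then match which of these is the double root.

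\textbf{Main obstacle.} The main obstacle is this last bookkeeping: determining which degenerate member of the conic pencil occurs at the double root of the discriminant. I would settle it by an explicit computation in the normal form \(c_0=x_1x_2\), \(c_1=x_3(\alpha x_1+\beta x_2)\) or a similar parametrization. I also need to check that the doubled line's multiplicity in \(X\) matches the scheme structure of the tangency, which I will do by the same local computation.
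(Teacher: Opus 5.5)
Your proposal is correct, but it takes a different route from the paper's. The paper does not use the classification of pencils of conics. Instead it computes directly that \(\det(sC_0+tC_1)=2(a_{012}s+a_{112}t)(a_{013}s+a_{113}t)(a_{023}s+a_{123}t)\), and that the \(2\times 2\) minors include \(-(a_{0jk}s+a_{1jk}t)^2\), so no member has rank \(1\). It then splits by three things: whether \(C_0,C_1\) are proportional, their ranks, and the number of distinct roots of the determinant. In each subcase it writes explicit equations for \(X\cap\langle\ell,m_0,m_1\rangle\) and for the rank \(2\) fibers.

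You instead view \(X\cap\langle\ell,m_0,m_1\rangle\) as the cone over the base scheme \(Z\) of a pencil of conics through the vertices \(e_1,e_2,e_3\) of a triangle, and you sort the cases by \(Z\): all of \(\mathbb P^2\) or a fixed conic, a side plus the opposite vertex, four points in general position, or three points with a tangency. This explains conceptually why only these configurations occur. It also makes the four-point case transparent, since the three singular members are the pairs of opposite sides of the complete quadrangle. The cost is that you import standard facts about conic pencils and defer the double-root bookkeeping.

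That bookkeeping is immediate from the paper's factorization. The root of \(a_{0jk}s+a_{1jk}t\) is exactly the member containing the side \(\overline{e_je_k}\). So a double root is a member containing two sides, i.e.\ the line pair singular at a vertex. Your own normal form confirms this: with \(c_0=x_1x_2\), \(c_1=x_3(\alpha x_1+\beta x_2)\) and \(\alpha\beta\neq 0\), the determinant \(\det(sC_0+tC_1)\) is a nonzero multiple of \(st^2\). Hence the double root is the member \(x_1x_2=0\), which cones to \(\langle\ell,m_1\rangle+\langle m_0,m_1\rangle\), as in part (a).

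One small addition is needed. In the proportional case you should explicitly exclude a double line, since its support would be a line through the three non-collinear points \(e_1,e_2,e_3\). This is the geometric counterpart of the paper's computation with the minors.
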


\begin{proof}[Proof of Lemma~\ref{lem:line-disjoint-from-l-cases}]
    We may choose coordinates on \(\P^N\) so that \(\langle\ell,m\rangle=\{[x_0:x_1:x_2:x_3:0:\cdots:0]\}\), \(\ell=\{[x_0:x_1:0:0:0:\cdots:0]\}\), and \(m=\{[0:0:x_2:x_3:0:\cdots:0]\}\). Then the assumption that \(\ell,m\subset X\) implies that the intersection \(\mathcal Q_{[s:t]}\cap\langle\ell,m\rangle \subset \langle\ell,m\rangle\) is defined by
    \[s\begin{pmatrix} 0 & A_0 \\ A_0^T & 0 \end{pmatrix} + t\begin{pmatrix} 0 & A_1 \\ A_1^T & 0 \end{pmatrix} \text{ for some }2\times 2 \text{ matrices }A_i=\begin{pmatrix} a_{i02} & a_{i03} \\ a_{i12} & a_{i13}\end{pmatrix}.\]
    We proceed by considering several cases.

    First assume that \(A_0\) and \(A_1\) are scalar multiples of each other. By a coordinate change on \(\P^1\), we may assume that \(A_1\) is the zero matrix. Then the determinant of \(sM_0+tM_1\) is \(\det(sA_0+tA_1)^2=(\det A_0)^2s^4\).
    \begin{enumerate}
        \item\label{item:case-F_1-computation-P3} If \(A_0\) is the zero matrix, then \(X\) contains the 3-plane \(\langle\ell,m\rangle\).
        \item\label{item:case-F_1-computation-quadric-surface} If \(A_0\) is not the zero matrix, then \(X\cap\langle\ell,m\rangle\) is the quadric surface defined by \(a_{002}x_0x_2+a_{003}x_0x_3+a_{012}x_1x_2+a_{013}x_1x_3\). This quadric surface has rank equal to \(2\rank A_0\), so it is smooth if and only if the matrix \(A_0\) has rank 2, and it has rank 2 if and only if \(A_0\) has rank 1.
    \end{enumerate}

    Now assume that \(A_0\) and \(A_1\) are not scalar multiples of each other. Then \(\rank A_0, \rank A_1 \geq 1\).
    \begin{enumerate}[resume]
        \item\label{item:case-F_1-computation-line-and-plane}
        If the polynomial \(\det(sA_0+tA_1)\) is uniformly zero, then \(\rank A_0=\rank A_1 = 1\), and by a coordinate change preserving the lines \(\ell\) and \(m\) we may assume \(A_0=\mathrm{diag}(1,0)\).
        \footnote{Indeed, for any invertible \(2\times 2\) matrix \(S\), the matrix \(L_S = \begin{pmatrix} (S^T)^{-1} & 0 \\ 0 & S\end{pmatrix}\) satisfies
        \[L_S^T\begin{pmatrix} 0 & A_0 \\ A_0^T & 0 \end{pmatrix} L_S = \begin{pmatrix} 0 & S^{-1}A_0 S \\ (S^{-1} A_0 S)^T & 0 \end{pmatrix} ,\]
        and the lines \(\ell,m\) are invariant under coordinate change by \(L_S\).
        If \(A_0\) is diagonalizable, pick such an \(S\) such that \(S^{-1} A_0 S=\mathrm{diag}(\lambda,0)\) for some \(\lambda\neq 0\).
        If \(A_0\) is not diagonalizable, pick such an \(S\) such that \(S^{-1} A_0 S = \begin{pmatrix} 0 & 1 \\ 0 & 0 \end{pmatrix}\), and then perform the further coordinate change \(x_2\leftrightarrow x_3\).}
        Then
        \[\det(sA_0+tA_1)= a_{113} st + (\det A_1) t^2\]
        is uniformly zero, i.e., \(a_{113}=a_{103}a_{112}=0\).
        Furthermore, at least one of \(a_{103}\) or \(a_{112}\) is nonzero by the assumption that \(A_1\) is not a scalar multiple of \(A_0\).
        \begin{enumerate}
            \item If \(a_{113}=a_{103}=0\), then \(a_{112}\neq 0\) and the intersection \(X\cap\langle\ell,m\rangle\) is the union of \(m\) and the plane \(P=V(x_2)\). The fiber of \(\mathcal Q\cap(\P^1\times\langle\ell,m\rangle)\to\P^1\) over \([s:t]\in \P^1\) is the rank 2 quadric whose components are \(P\) and the 2-plane defined by \((s+a_{102}t)x_0 + a_{112}tx_1\).
            \item If \(a_{113}=a_{112}=0\), then \(a_{103}\neq 0\) and the intersection \(X\cap\langle\ell,m\rangle\) is the union of \(\ell\) and the plane \(P=V(x_0)\). The fiber of \(\mathcal Q\cap(\P^1\times\langle\ell,m\rangle)\to\P^1\) over \([s:t]\in \P^1\) is the rank 2 quadric whose components are \(P\) and the 2-plane defined by \((s+a_{102}t)x_2 + a_{103}t x_3\).
        \end{enumerate}
        \item If the polynomial \(\det(sA_0+tA_1)\) is not uniformly zero, then by a coordinate change on \(\P^1\) we may assume \(\rank A_0=2\). Then, by a coordinate change preserving \(\ell\) and \(m\), we may assume \(A_0\) is the identity matrix. (Indeed, by the argument in the previous case, after a coordinate change preserving \(\ell\) and \(m\) we may assume \(A_0\) is either the identity or \(\begin{pmatrix} 1 & 1 \\ 0 & 1 \end{pmatrix}\), and in the latter case we may apply the further coordinate change \(x_2+x_3\mapsto x_2\) and \(x_3\mapsto x_3\).)
        Then
        \(\det(sA_0+tA_1) = s^2+(a_{102}+a_{113})st + (\det A_1) t^2\),
        so by a coordinate change on \(\P^1\) we may assume \(\rank A_1 = 1\). 
        Then the polynomial
        \[\det(sA_0+tA_1) = s(s+(a_{102}+a_{113})t)\]
        has two distinct roots if and only if \(a_{102}+a_{113} \neq 0\).
        \begin{enumerate}
            \item\label{item:case-F_1-computation-l-m-2m_1} Assume \(a_{102}+a_{113} = 0\), i.e., \(\det(sA_0+tA_1)\) has a unique root. Then \(A_1 = \begin{pmatrix} a_{102} & a_{103} \\ a_{112} & -a_{102}\end{pmatrix}\) and \(-a_{102}^2 = a_{103}a_{112}\).
            \begin{enumerate}
                \item If \(a_{102}=a_{103}=0\), then the intersection \(X\cap\langle\ell,m\rangle\) is \(V(x_0x_2+x_1x_3,x_1x_2)=\ell+m+2m'\) where \(m'=V(x_1,x_2)\). The unique singular fiber of \(\mathcal Q\cap(\P^1\times\langle\ell,m\rangle)\to\P^1\) is the rank 2 quadric surface \(V(x_1x_2)=\langle\ell,m'\rangle + \langle m,m'\rangle\).
                \item If \(a_{102}=a_{112}=0\), then the intersection \(X\cap\langle\ell,m\rangle\) is \(V(x_0x_2+x_1x_3,x_0x_3)=\ell+m+2m'\) where \(m'=V(x_0,x_3)\). The unique singular fiber of \(\mathcal Q\cap(\P^1\times\langle\ell,m\rangle)\to\P^1\) is the rank 2 quadric surface \(V(x_0x_3)=\langle\ell,m'\rangle+\langle m,m'\rangle\).
                \item If \(a_{102}\neq 0\) then we may assume \(a_{102}=1\). By the coordinate change \(x_2\mapsto a_{112}x_2\) and \(x_0\mapsto\frac{1}{a_{112}} x_0\) we may assume \(a_{112}=1\), so that \(A_1=\begin{pmatrix} 1 & -1 \\ 1 & -1 \end{pmatrix}\). Then \(X\cap\langle\ell,m\rangle\) is equal to \(V(x_0x_2+x_1x_3,(x_0+x_1)(x_2-x_3))=\ell+m+2m'\) where \(m'=V(x_0+x_1,x_2-x_3)\). The unique singular fiber of \(\mathcal Q\cap(\P^1\times\langle\ell,m\rangle)\to\P^1\) is the rank 2 quadric surface \(V((x_0+x_1)(x_2-x_3))=\langle\ell,m'\rangle+\langle m,m'\rangle\).
            \end{enumerate}
            \item\label{item:case-F_1-computation-Reid-type} Assume \(a_{102}+a_{113} \neq 0\), i.e., \(\det(sA_0+tA_1)\) has two distinct roots. Then the rank 1 matrix \(A_1\) is diagonalizable, so by a further coordinate change preserving \(\ell,m\), and \(A_0\), we may assume \(A_1=\mathrm{diag}(1,0)\). Then the intersection \(X\cap\langle\ell,m\rangle\) is equal to \(V(x_0x_2+x_1x_3,x_0x_2) = \ell + m  + l_1 + l_2\) where \(l_1=V(x_1,x_2)\) and \(l_2=V(x_0,x_3)\). There are two singular fibers of \(\mathcal Q\cap(\P^1\times\langle\ell,m\rangle)\to\P^1\), which occur over \([0:1]\) and \([-1:1]\). Over \([0:1]\), the fiber is the rank 2 quadric surface \(V(x_0x_2)=\langle m,l_2\rangle + \langle \ell,l_1\rangle\), whose singular locus is not contained in \(X\). Over \([-1:1]\), the fiber is the rank 2 quadric surface \(V(x_1x_3)=\langle m,l_1\rangle + \langle\ell,l_2\rangle\), whose singular locus is not contained in \(X\).
        \end{enumerate}
    \end{enumerate}
\end{proof}

\begin{proof}[Proof of Lemma~\ref{lem:lines-meeting-l-span-P2}]
    This is a direct computation with two cases: \(\ell\cap m_0\neq \ell\cap m_1\) or \(\ell\cap m_0 = \ell\cap m_1\).
    In either case, for every $t\in \P^1$, the intersection $\mathcal{Q}_t\cap \langle \ell, m_0, m_1\rangle\subset \langle\ell,m_0,m_1\rangle = \P^2$ contains the degree $3$ curve $\ell+m_0+m_1$, hence by degree reasons, $\langle\ell, m_0,m_1\rangle \subset \mathcal{Q}_t$, which shows $\langle\ell,m_0,m_1\rangle\subset X$, as desired.
\end{proof}

\begin{proof}[Proof of Lemma~\ref{lem:lines-meeting-l-span-P3-common-point}]
    The assumptions imply \(m_0\cap m_1=\ell\cap m_0=\ell\cap m_1\). We may choose coordinates on \(\P^N\) so that
    \begin{alignat*}{2}
        \langle\ell,m_0,m_1\rangle &= \{[x_0:x_1:x_2:x_3:0:\cdots:0]\}, \qquad \ell\, &= \{[x_0:x_1:0:0:0:\cdots:0]\}, \\
        m_0 &= \{[x_0:0:x_2:0:0:\cdots:0]\}, \qquad m_1 &= \{[x_0:0:0:x_3:0:\cdots:0]\}.
    \end{alignat*}
    Then \(\ell\cap m_0=\ell\cap m_1 = [1:0:\cdots:0]\). The assumption that \(\ell,m_0,m_1\subset X\) implies that the intersection \(\mathcal Q_{[s:t]}\cap\langle\ell,m_0,m_1\rangle \subset \langle\ell,m_0,m_1\rangle\) is defined by
    \[s M_0 + t M_{1} \text{ where } M_i = \begin{pmatrix} 0 & 0 & 0 & 0 \\ 0 & 0 & a_{i12} & a_{i13} \\ 0 & a_{i12} & 0 & a_{i23} \\ 0 & a_{i13} & a_{i23} & 0 \end{pmatrix},\]
    so the fibers of \(\mathcal Q\cap(\P^1\times\langle\ell,m_0,m_1\rangle)\to\P^1\) are cones over the conics in \(\P^2_{[x_1:x_2:x_3]}\) defined by \[sC_0 + t C_1 \text{ where } C_i = \begin{pmatrix} 0 & a_{i12} & a_{i13} \\ a_{i12} & 0 & a_{i23} \\ a_{i13} & a_{i23} & 0 \end{pmatrix}.\]
    Note that \(\det(sC_0+tC_1)=2(a_{012}s+a_{112}t)(a_{013}s+a_{113}t)(a_{023}s+a_{123}t)\) and the $2\times 2$ minors of $sC_0+tC_1$ include $-(sa_{012}+ta_{112})^2$, $-(sa_{013}+ta_{113})^2$, and $-(sa_{023}+ta_{123})^2$. If a matrix \(sC_0+tC_1\) has rank at most one, then all three minors vanish and hence the matrix is zero. So each of the matrices \(sC_0+tC_1\) has rank 0, 2, or 3. We now consider several cases.

    First assume that \(C_0\) and \(C_1\) are scalar multiples of each other. By a coordinate change on \(\P^1\), we may assume that \(C_1\) is the zero matrix. Then \(\det(sC_0+tC_1)=(\det C_0) s^3\).
    \begin{enumerate}
        \item\label{item:case-sym-Q1-meeting-computation-P3} If \(C_0\) is the zero matrix, then \(\langle\ell,m_0,m_1\rangle\subset X\).
        \item\label{item:case-sym-Q1-meeting-computation-quadric-surface-rank-2-or-3} If \(\rank C_0=2\), then \(X\cap\langle\ell,m_0,m_1\rangle\) is a rank 2 quadric surface. If \(\rank C_0 = 3\), then \(X\cap\langle\ell,m_0,m_1\rangle\) is a rank 3 quadric surface.
    \end{enumerate}

    Now assume that \(C_0\) and \(C_1\) are not scalar multiples of each other. We may assume that \(2\leq\rank C_1\leq\rank C_0\leq 3\). First we consider the case when \(\rank C_0=2\).
    \begin{enumerate}[resume]
        \item\label{item:case-sym-Q1-meeting-computation-general-member-rank-2-nonconstant} If \(\rank C_0=\rank C_1=2\), then \(a_{012}a_{013}a_{023}=0\) and at least one of \(a_{012},a_{013},a_{023}\) is nonzero. After a coordinate change (possibly interchanging \(\ell,m_0,m_1\)) we may assume \(a_{012}=0\) and \(a_{023}=1\).
        Then
        \(\det(sC_0+tC_1)=2a_{112}t(a_{013}s+a_{113}t)(s+a_{123}t)\), and if \(a_{112}\neq 0\) and at least one of \(a_{013}\) or \(a_{113}\) is nonzero, then this polynomial is not the zero polynomial; in this case after a coordinate change on \(\P^1\) we may assume \(\rank C_0=3\). Therefore we may assume either \(a_{112}=0\) or \(a_{013}=a_{113}=0\).
        \begin{enumerate}
            \item If \(a_{112}=0\), then the fibers of \(\mathcal Q\cap(\P^1\times\langle\ell,m_0,m_1\rangle)\to\P^1\) are the rank 2 quadric surfaces with irreducible components defined by \(x_3\) and \((a_{013}s+a_{113}t)x_1+(s+a_{123}t)x_2\). The intersection \(X\cap\langle\ell,m_0,m_1\rangle\) is \(V(a_{013}x_1x_3+x_2x_3, (a_{113}-a_{013}a_{123})x_1x_3)=V(x_2x_3,x_1x_3)\) by the assumption that \(C_1\) is not a scalar multiple of \(C_0\). That is, \(X\cap\langle\ell,m_0,m_1\rangle = m_1 + P\) where \(P=V(x_3)\).
            \item If \(a_{013}=a_{113}=0\), then \(a_{112}\neq 0\) since \(C_1\) is not a scalar multiple of \(C_0\). Then we may assume \(a_{112}=1\). The fibers of \(\mathcal Q\cap(\P^1\times\langle\ell,m_0,m_1\rangle)\to\P^1\) are the rank 2 quadric surfaces with components defined by \(x_2\) and \(tx_1+(s+a_{123}t)x_3\), and the intersection \(X\cap\langle\ell,m_0,m_1\rangle\) is \(V(x_2x_3,x_1x_2)=m_0+P\) where \(P=V(x_2)\).
        \end{enumerate}
    \end{enumerate}
    Now assume \(\rank C_0=3\), so that \(a_{012}a_{013}a_{023}\neq 0\). By rescaling the equation and applying a coordinate change, we may assume \(a_{012}=a_{013}=1\). Then \(\det(sC_0+tC_1)=2 a_{023}(s+a_{112}t)(s+a_{113}t)(s+\tfrac{a_{123}}{a_{023}}t)\) has roots \([-a_{112}:1],[-a_{113}:1],[-a_{123}:a_{023}]\). The assumption that \(C_1\) is not a multiple of \(C_0\) implies that \(\det(sC_0+tC_1)\) has at least two distinct roots. Thus, we consider separately the cases when \(\det(sC_0+tC_1)\) has two distinct roots or three distinct roots.
    \begin{enumerate}[resume]
        \item\label{item:case-sym-Q1-meeting-computation-general-rank-3-multiplicity-2-root} Assume \(\det(s C_0 + t C_1)\) has two distinct roots.
        \begin{enumerate}
            \item If \(a_{112}=a_{113}\), then \(a_{123}-a_{023}a_{112}\neq 0\), so the intersection \(X\cap\langle\ell,m_0,m_1\rangle\) is \(V(x_1x_2+x_1x_3+a_{023}x_2x_3,(a_{123}-a_{023}a_{112})x_2x_3)=V(x_1(x_2+x_3),x_2x_3) = m_0+m_1+2\ell\). The two distinct roots of \(\det(sC_0+tC_1)\) are \([-a_{112}:1]\) with multiplicity two and \([-a_{123}:a_{023}]\) with multiplicity one. Over the multiplicity two root, the fiber of \(\mathcal Q\cap(\P^1\times\langle\ell,m_0,m_1\rangle)\to\P^1\) is \(V(x_2x_3)=\langle\ell,m_0\rangle+\langle\ell,m_1\rangle\), and the fiber over the multiplicity one root is \(V(x_1(x_2+x_3))=\langle m_0,m_1\rangle+V(x_2+x_3)\).
            \item If \(a_{112}=\tfrac{a_{123}}{a_{023}}\) or \(a_{113}=\tfrac{a_{123}}{a_{023}}\), then we may assume by a coordinate change (possibly exchanging \(m_0,m_1\)) that \(a_{112}=\tfrac{a_{123}}{a_{023}}\) and \(a_{112}\neq a_{113}\). Then the intersection \(X\cap\langle\ell,m_0,m_1\rangle\) is \(V(x_1x_2+x_1x_3+a_{023}x_2x_3,(a_{113}-a_{112})x_1x_3) = V(x_1x_2+a_{023}x_2x_3,x_1x_3) = \ell+2m_0+m_1\).
            The two distinct roots of \(\det(sC_0+tC_1)\) are \([-a_{112}:1]\) with multiplicity two and \([-a_{113}:1]\) with multiplicity one. Over the multiplicity two root, the fiber of \(\mathcal Q\cap(\P^1\times\langle\ell,m_0,m_1\rangle)\to\P^1\) is \(V(x_1x_3)=\langle\ell,m_0\rangle+\langle m_0,m_1\rangle\), and the fiber over the multiplicity one root is \(V(x_2(x_1+a_{023}x_3))=\langle\ell,m_1\rangle+V(x_1+a_{023}x_3)\).
        \end{enumerate}
        \item\label{item:case-sym-Q1-meeting-computation-general-rank-3-distinct-roots} Assume \(\det(s C_0 + t C_1)\) has three distinct roots, so that \(a_{112}\), \(a_{113}\), and \(\tfrac{a_{123}}{a_{023}}\) are all distinct. After a coordinate change on \(\P^1\), we may assume \(\rank C_1=2\), so that exactly one of \(a_{112},a_{113},a_{123}\) is zero.
        \begin{enumerate}
            \item Assume \(a_{112}=0\) or \(a_{113}=0\). Then we may assume by a coordinate change (possibly exchanging \(m_0,m_1\)) that \(a_{112}=0\). We may then further assume that \(a_{113}=1\) and \(a_{123}\neq a_{023}\). Then \(X\cap\langle\ell,m_0,m_1\rangle\) is equal to \(V(x_1x_2+x_1x_3+a_{023}x_2x_3, x_1x_3+a_{123}x_2x_3)=\ell+m_0+m_1+m'\) where \(m'=V(x_1+(a_{023}-a_{123})x_3, x_1+a_{123}x_2)\).
            
            The fibers of \(\mathcal Q\cap(\P^1\times\langle\ell,m_0,m_1\rangle)\to\P^1\) over the roots of \(\det(sC_0+tC_1)\) are given as follows.
            The fiber over \([0:1]\) is the rank 2 quadric surface \(V(x_3(x_1+a_{123}x_2))=\langle\ell,m_0\rangle + \langle m_1,m'\rangle\),
            the fiber over \([-1:1]\) is the rank 2 quadric surface \(V(x_2(x_1+(a_{023}-a_{123})x_3))=\langle\ell,m_1\rangle+\langle m_0,m'\rangle\), and
            the fiber over \([-a_{123}:a_{023}]\) is the rank 2 quadric surface \(V(x_1(a_{123}x_2+(a_{123}-a_{023}) x_3)) = \langle m_0,m_1\rangle + \langle \ell,m'\rangle\).
            \item Assume \(a_{123}=0\). Then we may assume \(a_{112}=1\), so that \(a_{113}\neq 1\). Then \(X\cap\langle\ell,m_0,m_1\rangle\) is equal to \(V(x_1x_2+x_1x_3+a_{023}x_2x_3, (a_{113}-1)x_1x_3 - a_{023}x_2x_3) = V(x_1(x_2+a_{113}x_3), x_3((a_{113}-1)x_1 - a_{023} x_2)) = \ell + m_0 + m_1 + m'\) where \(m'=V(x_2+a_{113}x_3, (a_{113}-1)x_1 - a_{023} x_2)\).

            The fibers of \(\mathcal Q\cap(\P^1\times\langle\ell,m_0,m_1\rangle)\to\P^1\) over the roots of \(\det(sC_0+tC_1)\) are given as follows.
            The fiber over \([-1:1]\) is the rank 2 quadric surface \(V(x_3((a_{113}-1)x_1-a_{023}x_2)) = \langle\ell,m_0\rangle + \langle m_1,m'\rangle \),
            the fiber over \([-a_{113}:1]\) is the rank 2 quadric surface \(V(x_2((-a_{113}+1)x_1-a_{023}a_{113}x_3))=\langle \ell,m_1\rangle + \langle m_0, m'\rangle\), and
            the fiber over \([0:1]\) is the rank 2 quadric surface \(V(x_1(x_2+a_{113}x_3))=\langle m_0,m_1\rangle + \langle\ell,m'\rangle\).
        \end{enumerate}
    \end{enumerate}
\end{proof}

This completes the proofs of Lemmas~\ref{lem:line-disjoint-from-l-cases}--\ref{lem:lines-meeting-l-span-P3-common-point}.

Finally, the following lemmas will be useful in Section~\ref{section:w-concurrent}.

\begin{lem}\label{lem:intersect-3plane-containing-2plane-meeting-in-2m'-cases}
    Assume \(N\geq 4\), and fix a line \(\ell\) on \(X\). Let \(P\subset\P^N\) be a 2-plane such that \(X\cap P = 2m'\) for some line \(m'\) such that \(\ell\cap m'\) is a point. Assume additionally that no member of the pencil corresponding to \(X\) contains the 3-plane \(\langle\ell,P\rangle\). Then we have the following possibilities for the intersection \(X\cap\langle\ell,P\rangle\):
    \begin{enumerate}
        \item \(X\cap\langle\ell,P\rangle = \ell + 2m' + n\) for some line \(n\neq \ell,m'\) such that \(\ell\cap m'\cap n\) is a point,
        \item \(X\cap\langle\ell,P\rangle=2\ell+2m'\),
        \item\label{item:intersect-3-plane-cases-l+3m'} \(X\cap\langle\ell,P\rangle=\ell+3m'\), or
        \item\label{item:intersect-3-plane-2m'-cases-nonred-plane} \(X\cap\langle\ell,P\rangle\) is supported on the 2-plane \(\langle\ell,m'\rangle\) and has nonreduced structure along \(m'\).
    \end{enumerate}
    In this situation, there is a unique \(t\in\P^1\) such that \(P\subset\mathcal Q_t\). Furthermore, for this \(t\), we have \(\langle\ell,m'\rangle\subset\mathcal Q_t\) if and only if \eqref{item:intersect-3-plane-cases-l+3m'} or \eqref{item:intersect-3-plane-2m'-cases-nonred-plane} occurs.

    In case~\eqref{item:intersect-3-plane-cases-l+3m'}, the 2-plane \(P\subset\mathcal Q_t\) can be recovered from the scheme-theoretic intersection \(X\cap\langle\ell,P\rangle\) as the unique 2-plane in the 3-plane \(\langle\ell,P\rangle\) containing \(m'\) whose intersection with \(X\cap\langle\ell,P\rangle\) is nonreduced along \(m'\).
\end{lem}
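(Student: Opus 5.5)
The plan is to follow the explicit coordinate method used in the proofs of Lemmas~\ref{lem:line-disjoint-from-l-cases} and~\ref{lem:lines-meeting-l-span-P3-common-point}. Since \(\ell\cap m'\) is a point, \(\ell\not\subset P\): otherwise \(\ell\subset X\cap P=2m'\) would force \(\ell=m'\). Hence \(\langle\ell,P\rangle\cong\P^3\). Choose coordinates on \(\langle\ell,P\rangle\) as follows:
\[P=V(x_1),\quad m'=V(x_1,x_3),\quad \ell=V(x_2,x_3),\quad \ell\cap m'=[1:0:0:0].\]
Write \(Q_{[s:t]}\) for the restricted quadric \(\mathcal Q_{[s:t]}\cap\langle\ell,P\rangle\). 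By hypothesis no member of the pencil contains \(\langle\ell,P\rangle\), so \(\{Q_{[s:t]}\}\) is a genuine pencil of quadric surfaces.

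First, the restriction of the pencil to \(P\) has base locus \(2m'\). This base locus is a complete intersection of two conics (or is contained in a conic), so one checks directly that some member of the restricted pencil vanishes identically on \(P\). Indeed, two conics whose intersection is the double line \(2m'\) must both equal \(2m'=V(x_3^2)\) as divisors, possibly along with the zero conic. Hence the restrictions to \(P\) span at most a one-dimensional space, and there is a unique \(t\) with \(P\subset Q_t\). Uniqueness holds because if two members contained \(P\), then \(P\subset X\), a contradiction. After a coordinate change on \(\P^1\), we may take \(Q_0\supset P\) and \(Q_1|_P=x_3^2\).

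Next, we use the conditions \(\ell\subset Q_i\) to write
\[Q_0=x_1\,l_0(x),\qquad Q_1=x_3^2+x_1\,l_1(x),\]
where \(l_0,l_1\) are linear forms. Vanishing on \(\ell=V(x_2,x_3)\) forces \(l_0,l_1\in(x_1,x_2,x_3)\), that is, neither involves \(x_0\). The intersection is then
\[X\cap\langle\ell,P\rangle=V\bigl(x_1l_0,\;x_3^2+x_1l_1\bigr)=V(x_1,x_3^2)\cup V\bigl(l_0,\;x_3^2+x_1l_1\bigr).\]
The first piece is \(2m'\). The second is a conic in the plane \(V(l_0)\), and this plane passes through \([1:0:0:0]\). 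The case split now runs over the position of the plane \(V(l_0)\):
\begin{enumerate}
\item If \(V(l_0)=P\), the intersection is contained in \(P\) and \(\langle\ell,P\rangle\subset Q_0\), which is excluded; one must check this carefully.
\item If \(V(l_0)\) contains neither \(\ell\) nor \(m'\), the residual conic is a pair of lines through the point \(\ell\cap m'\). One of them is \(\ell\), giving case (1) when the other line is a new line \(n\), and case (2) when the other line is again \(\ell\), counted with multiplicity.
\item If \(V(l_0)=\langle\ell,m'\rangle=V(x_3)\), the residual is \(V(x_3,x_1l_1)=\ell\cup V(x_3,l_1)\), which yields case (3) \(\ell+3m'\) or case (4), the nonreduced plane, according to whether \(l_1\) is proportional to \(x_1\) modulo \(x_3\).
\item If \(V(l_0)\supset m'\) but \(V(l_0)\ne\langle\ell,m'\rangle,P\), then \(\ell\subset X\) forces degeneracies that reduce to the cases above.
\end{enumerate}
The key observation is that \(\langle\ell,m'\rangle=V(x_3)\subset Q_0\) exactly when \(x_3\mid x_1l_0\), that is, when \(l_0\propto x_3\). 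This is precisely the situation of cases (3) and (4), which gives the stated "if and only if".

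Finally, for the recovery statement in case~(3), we will verify from the explicit equations that among the planes \(V(ax_1+bx_3)\) through \(m'\), the restriction of \(X\cap\langle\ell,P\rangle\) is nonreduced along \(m'\) only for \(V(x_1)=P\), and not for \(\langle\ell,m'\rangle\) or the other planes in the pencil. The main obstacle is the careful scheme-theoretic bookkeeping of multiplicities in the case analysis, especially separating (2), (3) and (4) and keeping track of embedded structure. I expect this to need several subcases in the style of the proof of Lemma~\ref{lem:lines-meeting-l-span-P3-common-point}, with a coordinate normalization of \(l_0\) and \(l_1\) in each subcase.
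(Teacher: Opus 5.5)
Your overall strategy is the paper's: the same coordinates ($P=V(x_1)$, $m'=V(x_1,x_3)$, $\ell=V(x_2,x_3)$), normalizing the pencil so that one member contains $P$ and the other restricts to $x_3^2$ on $P$, splitting according to the second plane $V(l_0)$ of the member containing $P$ (the paper's $P'$), and testing the planes $V(ax_1+bx_3)$ for the recovery claim. Your argument for the uniqueness of $t$ and the criterion $\langle\ell,m'\rangle\subset Q_0\iff l_0\propto x_3$ are also fine.

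However, your case split rests on a constraint you did not fully use. Since $Q_i$ vanishes on $\ell=V(x_2,x_3)$, the binary form $x_1\,l_i(x_0,x_1,0,0)$ must vanish identically. So $l_i$ involves neither $x_0$ \emph{nor} $x_1$, i.e.\ $l_0,l_1\in(x_2,x_3)$. Hence $V(l_0)$ always contains $\ell$, and $l_0\neq0$ is exactly the hypothesis $\langle\ell,P\rangle\not\subset Q_0$. This affects your cases as follows.
\begin{itemize}
\item Your first case ($V(l_0)=P$) is impossible, and your reason for discarding it is wrong. If $l_0\propto x_1$ then $Q_0=x_1^2$, which does not contain $\langle\ell,P\rangle$; it is excluded because it does not contain $\ell$.
\item The hypothesis of your second case ($V(l_0)\not\supset\ell$) never holds. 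It also contradicts your own conclusion there that $\ell$ is one of the residual lines.
\item Your fourth case is empty, since a plane containing both $\ell$ and $m'$ is $\langle\ell,m'\rangle$.
\end{itemize}
So, as written, your list omits the only case that actually produces outcomes (1) and (2) of the lemma, namely $\ell\subset V(l_0)$ and $m'\not\subset V(l_0)$.

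The correct split is two-way, exactly as in the paper: whether or not the second plane of $\mathcal Q_t\cap\langle\ell,P\rangle$ contains $m'$ (the paper's $a_{112}\neq0$ versus $a_{112}=0$).

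If $l_0\notin(x_3)$, scale so that $l_0=x_2+cx_3$ and write $l_1=ax_2+bx_3$. Substituting $x_2=-cx_3$ shows the residual conic in $V(l_0)$ is $x_3\bigl(x_3+(b-ac)x_1\bigr)$, i.e.\ $\ell+n$. Here $n$ passes through $\ell\cap m'$, and $n\neq m'$ because $m'\not\subset V(l_0)$. Moreover $n=\ell$ if and only if $b=ac$. This gives (1) and (2).

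If $l_0=x_3$, then $X\cap\langle\ell,P\rangle=V(x_1x_3,\,x_3^2+ax_1x_2)$, and the residual is $V(x_3,x_1l_1)=V(x_3,ax_1x_2)$.
\begin{itemize}
\item When $a\neq0$ the residual is $m'+\ell$, giving $\ell+3m'$. Note that $V(x_1,x_3)$ is $m'$, not $\ell$ as you wrote.
\item When $a=0$, i.e.\ $l_1\in(x_3)$, the residual is the whole plane $V(x_3)$. This gives the plane with embedded structure along $m'$. The correct criterion is $l_1\in(x_3)$, not ``$l_1$ proportional to $x_1$ modulo $x_3$''.
\end{itemize}
With these corrections your argument becomes the paper's proof.
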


\begin{proof}
    We may choose coordinates on \(\P^N\) so that \(\ell=\{[x_0:x_1:0:\cdots:0]\}\), \(m'=\{[x_0:0:x_2:0:\cdots:0]\}\), and \(P=\{[x_0:0:x_2:x_3:0:\cdots:0]\}\). The assumption that \(X\cap P=2m'\) implies that \(P\) is contained in a unique member of the pencil corresponding to \(X\); we may assume that this is the fiber over \([0:1]\). Then, the assumptions that \(\ell,m'\subset X\) and \(X\cap P=2m'\) imply that the intersection \(\mathcal Q_{[s:t]}\cap\langle\ell,P\rangle \subset \langle\ell,P\rangle\) is defined by \(sM_0 + t M_1\), where
    \[M_0 = \begin{pmatrix}
        0 & 0 & 0 & 0 \\
        0 & 0 & a_{012}/2 & a_{013}/2 \\
        0 & a_{012}/2 & 0 & 0 \\
        0 & a_{013}/2 & 0 & 1
    \end{pmatrix}, \qquad
    M_1 = \begin{pmatrix}
        0 & 0 & 0 & 0 \\
        0 & 0 & a_{112}/2 & a_{113}/2 \\
        0 & a_{112}/2 & 0 & 0 \\
        0 & a_{113}/2 & 0 & 0
    \end{pmatrix}.\]
    Since \(\langle\ell,P\rangle\not\subset\mathcal Q_{[s:t]}\) for any \([s:t]\), neither \(M_0\) nor \(M_1\) is the zero matrix.
    Denote \(P'=\{a_{112}x_2+a_{113}x_3=0\}\subset\langle\ell,P\rangle\); then \(\mathcal Q_{[0:1]}\cap \langle\ell,P\rangle\) is the rank 2 quadric surface \(P+P'\). Consider the line \(P\cap P'=\{x_1=a_{112}x_2+a_{113}x_3=0\}\).

    First we consider the cases when \(P\cap P'\neq m'\) (so in particular \(P'\neq\langle\ell,m'\rangle\)).
    In this case we have \(a_{112}\neq 0\), so we may assume \(a_{112}=1\).
    Since \(X\cap P\) does not contain the line \(P\cap P'\), this implies that \(P'\not\subset X\). Then the intersection \(\mathcal Q_{[1:0]}\cap P'\) is the singular conic
    \(V\left(((-a_{012}a_{113}+a_{013})x_1+x_3)x_3\right)\).
    \begin{enumerate}
        \item If \(-a_{012}a_{113}+a_{013}\neq 0\), then \(\mathcal Q_{[1:0]}\cap P'=\ell+n\) where \(n=V((-a_{012}a_{113}+a_{013})x_1+x_3)\) satisfies \(\ell\cap n=[1:0:\cdots:0] = \ell \cap m'\). In this case we have \(X\cap\langle\ell,P\rangle=V(x_1(a_{012}x_2+a_{013}x_3)+x_3^2, x_1(x_2+a_{113}x_3))=\ell+2m'+n\).
        \item If \(-a_{012}a_{113}+a_{013}=0\), then \(\mathcal Q_{[1:0]}\cap P'=2\ell\), and \(X\cap\langle\ell,P\rangle=V(x_3^2, x_1(x_2+a_{113}x_3))=2\ell+2 m'\).
    \end{enumerate}
    Now we consider the cases when \(P\cap P'=m'\). In this case we have \(a_{112}=0\), so \(P'=\{x_3=0\} = \langle\ell,m'\rangle\).
    \begin{enumerate}[resume]
        \item If \(P'\not\subset\mathcal Q_{[1:0]}\), then \(a_{012}\neq 0\) and \(\mathcal Q_{[1:0]}\cap P'\) is the conic \(\ell+m'\). After a coordinate change we may assume \(a_{012}=1\). Then \(X\cap\langle\ell,P\rangle=V(x_1x_2+a_{013}x_1x_3+x_3^2,x_1x_3)=V(x_1x_2+x_3^2,x_1x_3)=\ell+3m'\).
        \item If \(P'\subset\mathcal Q_{[1:0]}\), then \(a_{012}=0\) and \(X\cap\langle\ell,P\rangle=V(x_3(a_{013}x_1+x_3), x_1x_3)\) is supported on \(P'\) with nonreduced structure along \(m'=\{x_1=x_3=0\}\).
    \end{enumerate}

    Finally, assume we are in case~\eqref{item:intersect-3-plane-cases-l+3m'}, and let \(H\supset m'\) be a 2-plane contained in \(\langle\ell,P\rangle\). Then \(H=\{bx_1+cx_3=0\}\) for some \(b,c\in k\) not both zero. If \(c=0\), i.e., if \(H=P\), then \(X\cap\langle\ell,P\rangle\cap H=V(x_3^2,x_1)\) is nonreduced along \(m'\).
    If \(c\neq 0\) we may assume \(c=1\); then \(X\cap\langle\ell,P\rangle\cap H=V(x_1x_2+b^2x_1^2,bx_1^2,bx_1+x_3)\). If \(b\neq 0\) this is equal to \(V(x_1x_2,x_1^2,bx_1+x_3)\), and if \(b = 0\) this is equal to \(V(x_1x_2,x_3)\); in either case, it is reduced along \(m'\).
\end{proof}

\begin{lem}\label{lem:intersect-3m'+l-then-2plane}
    Assume \(N\geq 4\), and fix lines \(\ell,m'\subset X\) such that \(\ell\cap m'\) is a point. Let \(L\subset \P^N\) be a 3-plane, and assume that \(X\cap L = \ell+3m'\). Then there is a unique 2-plane \(P\subset L\) such that \(X\cap P = 2m'\).
\end{lem}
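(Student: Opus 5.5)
The plan is to prove existence of such a plane by a local multiplicity computation at a general point of \(m'\), and then to deduce uniqueness from the last assertion of Lemma~\ref{lem:intersect-3plane-containing-2plane-meeting-in-2m'-cases}.

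\emph{Setup.} Since \(X\cap L=\ell+3m'\) is a curve of degree \(4\) in \(L\cong\P^3\), the two quadrics of the pencil restrict to linearly independent quadrics on \(L\) with no common component. Hence no member \(\mathcal Q_t\) contains \(L\), and \(X\cap L\) is the complete intersection of the restricted pencil \(\{\mathcal Q_t\cap L\}\). Choose coordinates on \(L\) with \(\ell=\{x_2=x_3=0\}\) and \(m'=\{x_1=x_3=0\}\), as in the proof of Lemma~\ref{lem:intersect-3plane-containing-2plane-meeting-in-2m'-cases}. Let \(x\in m'\) be a general point, with \(x\notin\ell\), and take a transverse slice: the local ring of \(L\) at the generic point of \(m'\) is a regular local ring of dimension \(2\). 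In it, \(X\cap L\) is cut out by two germs \(f,g\) coming from two members of the pencil, and the hypothesis says the quotient has length exactly \(3\).

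\emph{Existence.} If the linear parts of \(f\) and \(g\) were independent, the length would be \(1\). So the linear parts span a space of dimension at most \(1\), and after a change of basis of the pencil there is a member \(\mathcal Q_{t_0}\) whose local equation \(f\) has zero linear part. Its linear part vanishes at the generic point of \(m'\), hence along all of \(m'\). Therefore the quadric surface \(\mathcal Q_{t_0}\cap L\) is singular along the line \(m'\), which forces it to be either \(P_1+P_2\) with \(P_1\cap P_2=m'\), or a double plane \(2P\) with \(P\supset m'\).

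Now compute the length as an intersection number. In the double-plane case the length is \(2\,i(g,p)\), which is even, so it cannot equal \(3\). In the other case the length is \(i(g,p_1)+i(g,p_2)\), where \(p_i\) is the local equation of \(P_i\). For this to equal \(3\), the germ \(g\) must be smooth, with one intersection number equal to \(2\) and the other equal to \(1\). Say \(i(g,p_1)=2\), and set \(P:=P_1\). Then \(P\subset\mathcal Q_{t_0}\), so \(X\cap P=\mathcal Q_{t}\cap P\) for any \(t\neq t_0\).

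If \(P\subset\mathcal Q_t\), then \(P\subset X\cap L\), which is impossible. So \(\mathcal Q_t\cap P\) is a conic. This conic contains \(m'\) and, by the computation \(i(g,p_1)=2\), it is nonreduced along \(m'\) at the generic point. A conic containing \(m'\) with multiplicity at least \(2\) equals \(2m'\), so \(X\cap P=2m'\).

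\emph{Uniqueness.} With \(P\) in hand, the hypotheses of Lemma~\ref{lem:intersect-3plane-containing-2plane-meeting-in-2m'-cases} hold: \(\langle\ell,P\rangle=L\), no member of the pencil contains \(L\), and \(X\cap L=\ell+3m'\) is case~\eqref{item:intersect-3-plane-cases-l+3m'}. Let \(P'\subset L\) be any \(2\)-plane with \(X\cap P'=2m'\). Then \(P'\supset m'\) and \(X\cap L\cap P'=X\cap P'\) is nonreduced along \(m'\). The last assertion of that lemma says there is exactly one \(2\)-plane in \(L\) containing \(m'\) whose intersection with \(X\cap L\) is nonreduced along \(m'\). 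Since both \(P\) and \(P'\) have this property, \(P'=P\).

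The main obstacle is making the local length argument rigorous: identifying the multiplicity \(3\) of \(m'\) in the scheme \(X\cap L\) with the length at the generic point of \(m'\), and turning intersection numbers of germs there into the global conclusion that \(\mathcal Q_t\cap P\) is nonreduced along all of \(m'\). If that becomes awkward, the fallback is to redo the explicit matrix normal form of Lemma~\ref{lem:intersect-3plane-containing-2plane-meeting-in-2m'-cases} directly in \(L\), using the quadrics vanishing on \(\ell\cup m'\), spanned by \(x_0x_3, x_1x_3, x_2x_3, x_3^2, x_1x_2\).
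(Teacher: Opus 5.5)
Your existence argument has a genuine gap at the step ``after a change of basis of the pencil there is a member \(\mathcal Q_{t_0}\) whose local equation \(f\) has zero linear part.'' The linear parts of \(f,g\) lie in \(\mathfrak m/\mathfrak m^2\), which is a two-dimensional vector space over the residue field \(k(m')\) of the generic point of \(m'\). So length \(>1\) only gives a relation \(\bar f=\lambda\bar g\) with \(\lambda\in k(m')\). Members of the pencil, however, are \(k\)-linear combinations, and these two notions of dependence genuinely differ.

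For example, take \(m'=\{y=z=0\}\subset L\) and the pencil spanned by \(x_0y+c_0(y,z)\) and \(x_2y+c_1(y,z)\). The linear parts are \(x_0\bar y\) and \(x_2\bar y\), which are \(k(m')\)-proportional. Yet every member \((\alpha x_0+\beta x_2)y+\cdots\) is singular at only one point of \(m'\): all members are tangent along \(m'\) to the fixed plane \(\{y=0\}\), and the singular point moves along \(m'\). In this configuration no member is singular along \(m'\), so your analysis of \(\mathcal Q_{t_0}\cap L\) as \(P_1+P_2\) or \(2P\) has nothing to apply to. This is the real obstacle, not the one you flagged; identifying the multiplicity of \(m'\) with the length at its generic point is routine.

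The step can be repaired, but it needs a real argument that uses the multiplicity \(3\) at this point. First, view the linear parts as sections of \(N^\vee_{m'/L}(2)\cong\O(1)^{\oplus 2}\). Then \(k(m')\)-dependence without \(k\)-dependence forces both sections into a common sub-line-bundle \(\O(1)\cdot v\), which is exactly the shape above. Second, set \(x_2=1\). In the DVR \(R/(q_1)\) you get \(y\equiv-\gamma_1z^2\) modulo \(z^3\), where \(\gamma_i\) is the \(z^2\)-coefficient of \(c_i\). Hence \(q_0-x_0q_1\) has valuation exactly \(2\), unless \(\gamma_0=\gamma_1=0\), in which case \(X\cap L\) contains the plane \(\{y=0\}\). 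So that shape gives multiplicity \(2\), not \(3\).

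The paper sidesteps all local analysis by using \(\ell\) to locate the special member. Every member meets \(\langle\ell,m'\rangle\) in a conic containing \(\ell+m'\), and \(\langle\ell,m'\rangle\not\subset X\), so exactly one member \(\mathcal Q_{t'}\) contains \(\langle\ell,m'\rangle\). Then \(\mathcal Q_{t'}\cap L\) is either \(2\langle\ell,m'\rangle\), which is excluded because \(X\cap L\) is reduced along \(\ell\), or \(P+\langle\ell,m'\rangle\). Additivity of cycles, \(\ell+3m'=(\mathcal Q_t\cap L)\cdot P+(\ell+m')\), then gives \(X\cap P=2m'\) directly.

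Once a member singular along \(m'\) is available, the rest of your existence argument is correct: the parity exclusion of double planes, the forced \(2+1\) split with \(g\) smooth, and the conic containing \(m'\) doubly. Your uniqueness argument is the same as the paper's. You should add the one-line check that \(\ell\not\subset P\) (otherwise \(X\cap P\supset\ell\cup m'\)), so that \(\langle\ell,P\rangle=L\) and Lemma~\ref{lem:intersect-3plane-containing-2plane-meeting-in-2m'-cases} applies.
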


\begin{proof}
    Consider the 2-plane \(P'\coloneqq \langle\ell,m'\rangle\), which is not contained in \(X\), and consider its intersection \(\mathcal Q_t\cap P'\) with the fibers of the pencil corresponding to \(X\). For every \(t\in\P^1\), this contains the reducible conic \(\ell+m'\), so there is a unique \(t'\in \P^1\) such that \(P'\subset\mathcal Q_{t'}\), and for every $t\neq t'$ we have $\mathcal{Q}_t\cap P'=\ell+m'$; we may assume \(t'=[0:1]\).
    Since \(L\not\subset\mathcal Q_{[0:1]}\) (as \(X\cap L\) is not a quadric surface), this implies \(\mathcal Q_{[0:1]}\cap L\) is a quadric surface containing \(P'\), so it is either the rank 1 quadric surface \(2P'\) or a rank 2 quadric surface \(P+P'\) for some \(P\neq P'\).
    The first possibility does not occur, because if \(\mathcal Q_{[0:1]}\cap L = 2P'\) then \(X\cap L\) cannot be reduced along \(\ell\). We claim that \(P\) gives the required 2-plane in the lemma statement.
    Indeed, consider the Cartier divisors \(\mathcal Q_{[1:0]}\cap L\) and \(\mathcal Q_{[0:1]}\cap L=P+P'\) on \(L\). Then \(\ell + 3m' = X\cap L = (\mathcal Q_{[1:0]}\cap L) \cap (\mathcal Q_{[0:1]}\cap L) = (\mathcal Q_{[1:0]}\cap L) \cap P + (\mathcal Q_{[1:0]}\cap L) \cap P' = (\mathcal Q_{[1:0]}\cap L) \cap P + \ell + m'\) as cycles, so \((\mathcal Q_{[1:0]}\cap L) \cap P = 2 m'\) as cycles. Since \(\mathcal Q_{[1:0]} \cap P = X \cap P\) is a conic, this implies \(X\cap P = 2m'\).

    Then the uniqueness statement for \(P\) follows from Lemma~\ref{lem:intersect-3plane-containing-2plane-meeting-in-2m'-cases}.
\end{proof}

\subsection{Distinguished loci in \texorpdfstring{\(\mathcal Q^{(1)}\)}{Q1}, \texorpdfstring{\(\mathcal Q^{(2)}\)}{Q2}, and \texorpdfstring{\(\mathcal P^{(2)}\)}{P2}}

Next, we give further explicit descriptions of the loci from Lemma~\ref{lem:r-planes-in-hyperbolic-reduction-loci} in the cases when \(r=1,2\). Recall that \(\mathcal P_{Y_\Lambda}\), \(\mathcal Q_{Y_\Lambda}\), and \(Y_\Lambda\) were defined in~\eqref{eqns:Z-and-Y}.
\begin{lem}\label{lem:lines-in-hyperbolic-reduction-loci}
Let \(\ell\) be a line on \(X\), and let \(\bl_{\P^1\times\ell}\) and \(h\) be the morphisms from diagram~\eqref{eqn:hyperbolic-red-blow-up-diagram}.
    \begin{enumerate}
        \item\label{item:Q^1-points-interpretation} Under the embedding in Lemma~\ref{lem:hyperbolic-reduction-properties-not-pencil}\eqref{item:hyperbolic-reduction-not-pencil-embedding} and Lemma~\ref{lem:r-planes-in-hyperbolic-reduction-loci},
        \begin{equation*}
            \begin{split}
                \mathcal Q^{(1)}_{\ell} \setminus \mathcal Q_{Y_{\ell}} &\xrightarrow{\sim} \{(t,P) \mid P \subset \mathcal Q_t \text{ and }X \cap P = \ell + m \text{ for some line }m\neq \ell\}, \\
                \mathcal Q_{Y_{\ell}} \setminus (\mathbb P^1\times Y_{\ell}) & \xrightarrow{\sim} \{(t,P) \mid P \subset \mathcal Q_t \text{ and }X \cap P = 2\ell\}, \\
                \mathbb P^1\times Y_{\ell} & \to \{(t,P) \mid \ell \subset P \subset X \subset \mathcal Q_t\}.
            \end{split}
        \end{equation*} \label{item:image-of-Q^1-in-rel-F2}
        \item\label{item:rel-lines-in-Q^1-in-rel-F3} Sending \(l\in F_1(\mathcal Q^{(1)}_{\ell}/\mathbb P^1)\) to \(\bl_{\mathbb P^1\times\ell}(h^{-1}(l))\) defines an isomorphism over \(\P^1\)
        \[F_1(\mathcal Q^{(1)}_{\ell}/\P^1) \cong F_3(\mathcal Q/\P^1)_\ell \]
        with the relative Fano scheme of 3-planes in the fibers of \(\mathcal Q\to\mathbb P^1\) containing \(\ell\). This restricts to the following isomorphisms over \(\P^1\)
        \begin{equation*}
            \begin{split}
                F_1(\mathcal Q_{Y_{\ell}}/\mathbb P^1)\setminus F_1((\mathbb P^1\times Y_{\ell})/\P^1) & \xrightarrow{\sim} \left\{ (t,L) \,\middle\vert\, \substack{ L \subset \mathcal Q_t \text{ and }X\cap L \text{ is a quadric surface} \\ \text{of rank \(\leq 2\) whose singular locus contains }\ell } \right\}, \\
                F_1((\P^1\times Y_{\ell})/\P^1) & \xrightarrow{\sim} \P^1\times F_3(X)_{\ell}, \\
                F_1(\mathcal{Q}^{(1)}_\ell/\P^1)_{\mathbb P^1\times Y_{\ell}}^{\text{int}} \setminus F_1((\mathbb P^1\times Y_{\ell})/\P^1) &\xrightarrow{\sim} \left\{ (t,L) \,\middle\vert\, \substack{ L \subset \mathcal Q_t \text{ and }X\cap L \text{ is a quadric surface} \\ \text{of rank \(\leq 2\) and contains }\ell } \right\}, \\
                F_1(\mathcal{Q}_{Y_\ell}/\P^1)_{\P^1\times Y_\ell}^{\text{tan}} \setminus F_1((\mathbb P^1\times Y_{\ell})/\P^1) &\xrightarrow{\sim} \left\{ (t,L) \,\middle\vert\, \substack{ L \subset \mathcal Q_t \text{ and }X\cap L = 2P \text{ is a} \\ \text{rank \(1\) quadric surface and }\ell\subset P } \right\},
            \end{split}
        \end{equation*}
        where \(F_1(\mathcal{Q}^{(1)}_\ell/\P^1)_{\mathbb P^1\times Y_{\ell}}^{\text{int}}\) denotes the locus of relative lines meeting \(\P^1\times Y_{\ell}\), and
        \[F_1(\mathcal{Q}_{Y_\ell}/\P^1)_{\P^1\times Y_\ell}^{\text{tan}} \coloneqq \{(t, n) \in F_1(\mathcal{Q}_{Y_\ell}/\P^1) \mid n \text{ is tangent to }Y_\ell\}.\]
    \end{enumerate}
\end{lem}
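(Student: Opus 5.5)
The plan is to reduce everything to the explicit coordinate description of the hyperbolic reduction from~\eqref{eqn:X}--\eqref{eqns:Z-and-Y} with \(r=1\), \(\rplane=\ell=\{x_2=\cdots=x_N=0\}\), together with the embedding of Lemma~\ref{lem:hyperbolic-reduction-properties-not-pencil}\eqref{item:hyperbolic-reduction-not-pencil-embedding} and Lemma~\ref{lem:r-planes-in-hyperbolic-reduction-loci}\eqref{item:relative-m-planes-in-Qr}. A point \((t,u)\in\mathcal Q^{(1)}_\ell\), with \(t=[s:t]\) and \(u\in\P^{N-2}\), corresponds to the 2-plane \(P=\langle\ell,\tilde u\rangle\subset\mathcal Q_t\), where \(\tilde u\) is any lift of \(u\). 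Its intersection with \(X\) is \(P\cap\mathcal Q_{t'}\) for any \(t'\neq t\), which is a conic in \(P\) containing \(\ell\) (or all of \(P\)). Writing \(P\) with coordinates \([x_0:x_1:y]\), the equation of \(\mathcal Q_{t'}\) restricted to \(P\) is \(y\,(x_0 l_{t',0}(u)+x_1 l_{t',1}(u)+y\,q_{t'}(u))\), where \(l_{t',j}\coloneqq s'l_{0j}+t'l_{1j}\) and \(q_{t'}\coloneqq s'q_0+t'q_1\).

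For part~\eqref{item:Q^1-points-interpretation}, I split according to whether the linear factor \(x_0l_{t',0}(u)+x_1l_{t',1}(u)+y\,q_{t'}(u)\) vanishes identically, equals a multiple of \(y\), or is a genuinely different linear form. Because \(u\in\mathcal Q^{(1)}_\ell\), the values \(sl_{0j}(u)+tl_{1j}(u)\) and \(sq_0(u)+tq_1(u)\) vanish. Hence \(l_{t',j}(u)=0\) for all \(j\) exactly when \(l_{ij}(u)=0\) for all \(i,j\), i.e.\ \(u\in\mathcal Q_{Y_\ell}\). Similarly, \(q_{t'}(u)=0\) in addition exactly when \(u\in\P^1\times Y_\ell\). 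The three cases then give, respectively, \(X\cap P=\ell+m\) with \(m\neq\ell\), \(X\cap P=2\ell\), and \(P\subset X\). The first two are isomorphisms onto their images since the \(P\) determine \(u\). The third map is the projection \(\P^1\times Y_\ell\to\) pairs \((t,P)\) (every \(t\) works because \(P\subset X\)), consistent with Lemma~\ref{lem:r-planes-in-hyperbolic-reduction-loci}\eqref{item:image-of-E-in-rel-Fr}. This part is essentially Lemma~\ref{lem:r-planes-in-hyperbolic-reduction-loci}\eqref{item:image-of-Z-in-rel-Fr} made explicit, and is routine.

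For part~\eqref{item:rel-lines-in-Q^1-in-rel-F3}, the isomorphism \(F_1(\mathcal Q^{(1)}_\ell/\P^1)\cong F_3(\mathcal Q/\P^1)_\ell\) is the case \(r=1\), \(e=1\) of Lemma~\ref{lem:r-planes-in-hyperbolic-reduction-loci}\eqref{item:relative-m-planes-in-Qr}. For a relative line \(n\subset(\mathcal Q^{(1)}_\ell)_t\) with corresponding 3-plane \(L\supset\ell\), I compute \(X\cap L=\mathcal Q_{t'}\cap L\), which is a quadric surface in \(L\) (or all of \(L\)). On \(L\) with coordinates \([x_0:x_1:y_0:y_1]\), where \([y_0:y_1]\) parametrizes \(n\), its equation is \(x_0\lambda_0(y)+x_1\lambda_1(y)+q_{t'}(y)\). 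Here \(\lambda_j=l_{t',j}|_n\) are linear forms on \(n\) and \(q_{t'}|_n\) is quadratic. The rank of this quadric and its singular locus are read off from this matrix.
\begin{itemize}
\item Rank \(\leq2\) with \(\ell\) in the singular locus occurs iff \(\lambda_0=\lambda_1=0\), i.e.\ \(n\subset\mathcal Q_{Y_\ell}\).
\item The quadric is all of \(L\) iff moreover \(q_{t'}|_n=0\), i.e.\ \(n\subset\P^1\times Y_\ell\), which gives \(F_3(X)_\ell\) after using that \(L\subset X\) is independent of \(t\).
\item Rank \(1\), of the form \(2P\) with \(\ell\subset P\), occurs iff \(\lambda_j=0\) and \(q_{t'}|_n\) is a nonzero square, i.e.\ the fiber quadric restricted to \(n\) meets \(Y_\ell\) doubly, so \(n\) is tangent to \(Y_\ell\).
\item Rank \(\leq2\) containing \(\ell\) (but \(\ell\) not necessarily singular) occurs iff \(\lambda_0,\lambda_1,q_{t'}|_n\) have a common zero on \(n\), namely the point \(n\cap(\P^1\times Y_\ell)\).
\end{itemize}

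The main obstacle is this last, intersection-locus case. I would first show that the \(3\times3\) part of the matrix has rank \(\leq2\) iff \(\det\begin{pmatrix}0&0&\lambda_0&\lambda_1\\\cdots\end{pmatrix}\)-type minors vanish, which forces \(\lambda_0,\lambda_1\) to be proportional with a common root \(p\in n\). I would then check, using that \(u\) lies on the fiber quadric, that \(q_{t'}(p)=0\) as well, so that \(p\in\P^1\times Y_\ell\). The converse is a direct check. The scheme-theoretic identifications follow because all maps are restrictions of the isomorphism of Lemma~\ref{lem:r-planes-in-hyperbolic-reduction-loci}\eqref{item:relative-m-planes-in-Qr} to locally closed subschemes cut out by the same conditions.
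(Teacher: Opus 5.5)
Your overall route is valid: part~(1) and your first three bullets in part~(2) are correct. It differs from the paper mainly in the ``int'' case.

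\textbf{Comparison with the paper.} The paper computes explicitly only for \(n\subset\mathcal Q_{Y_\ell}\), where \(X\cap L\) is the cone with vertex \(\ell\) over \(n\cap Y_\ell\); this is exactly your first and third bullets. It then deduces the ``int'' and ``tan'' statements from part~(1). A point of \(n\cap(\P^1\times Y_\ell)\) is the same thing as a 2-plane \(P\) with \(\ell\subset P\subset X\cap L\). A quadric surface in \(L\) contains a 2-plane iff it has rank \(\leq 2\), and the component through \(\ell\) is automatic because \(\ell\subset X\cap L\) is irreducible. This avoids any rank computation. Your coordinate approach is more uniform and self-contained, but then the rank computation has to be done correctly.

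\textbf{The gap.} In the step you identify as the main obstacle, your stated justification does not work. That the common root \(p\) of \(\lambda_0,\lambda_1\) lies on \((\mathcal Q^{(1)}_\ell)_t\) gives \(l_{t,j}(p)=q_t(p)=0\), hence \(p\in\mathcal Q_{Y_\ell}\). It gives no information about \(q_{t'}(p)\).

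The implication can genuinely fail. Suppose \(\lambda_0,\lambda_1\) are proportional, not both zero, and \(q_{t'}(p)\neq 0\). Choose coordinates on \(n\) with \(p=[0:1]\) and on \(L\) so that \(\lambda_0=y_0\), \(\lambda_1=0\). Completing the square turns the equation into \(x_0'y_0+q_{t'}(p)\,y_1^2\). This is a rank \(3\) cone with vertex on \(\ell\), and here \(n\) meets \(\mathcal Q_{Y_\ell}\) at a point outside \(\P^1\times Y_\ell\).

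So \(q_{t'}(p)=0\) must come from the rank \(\leq 2\) condition itself. The Gram matrix has determinant a nonzero multiple of \((\det A)^2\), where \(A\) is the coefficient matrix of \(\lambda_0,\lambda_1\). Hence rank \(\leq 3\) forces the \(\lambda_j\) to be dependent, and by the normal form above, rank \(\leq 2\) is then equivalent to \(q_{t'}(p)=0\).

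You also need to handle \(\lambda_0=\lambda_1=0\) separately. There every point of \(n\) is a common root of the \(\lambda_j\), so you must choose \(p\) to be a zero of \(q_{t'}|_n\); one exists since \(k\) is algebraically closed. With these corrections your argument proves the statement.
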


\begin{proof}
    Part~\eqref{item:Q^1-points-interpretation} follows from Lemma~\ref{lem:r-planes-in-hyperbolic-reduction-loci}\eqref{item:image-of-Z-in-rel-Fr} and~\eqref{item:image-of-E-in-rel-Fr}.
    For part~\eqref{item:rel-lines-in-Q^1-in-rel-F3}, the isomorphisms \(F_1(\mathcal Q^{(1)}_\ell/\P^1)\cong F_3(\mathcal Q/\P^1)_\ell\) and \(F_1((\P^1\times Y_\ell)/\P^1)\cong \P^1\times F_3(X)_\ell\) over \(\P^1\) are in Lemma~\ref{lem:r-planes-in-hyperbolic-reduction-loci}\eqref{item:relative-m-planes-in-Qr}.
    The statements about \(F_1(\mathcal{Q}^{(1)}_\ell/\P^1)_{\mathbb P^1\times Y_{\ell}}^{\text{int}} \setminus F_1((\mathbb P^1\times Y_{\ell})/\P^1)\) and \(F_1(\mathcal{Q}_{Y_\ell}/\P^1)_{\P^1\times Y_\ell}^{\text{tan}} \setminus F_1((\mathbb P^1\times Y_{\ell})/\P^1)\) follow from part~\eqref{item:Q^1-points-interpretation}.
    
    To address \(F_1(\mathcal Q_{Y_{\ell}}/\mathbb P^1)\setminus F_1((\mathbb P^1\times Y_{\ell})/\P^1)\),
    let \(L_n=\{f_1=\cdots=f_{N-3}=0\} \subset \mathcal Q_{[s:t]}\) be a 3-plane containing \(\ell\), and let \(n=h(\widetilde{L}_n) \subset \mathcal (Q^{(1)}_\ell)_{[s:t]}\) be the corresponding line, where \(\widetilde{L}_n\) is the strict transform of \(L_n\) under \(\bl_{\P^1\times\ell}\).
    The intersection \(X \cap L_n\) is either \(L_n\) or a quadric surface in \(L_n\), and is defined by
    \[X \cap L_n = \{f_1=\cdots=f_{N-3}= x_0l_{i0}+x_1l_{i1}+q_i=0\}\]
    where \(i\in\{0,1\}\) if \(s\) and \(t\) are both nonzero, \(i=0\) if \(s=0\), and \(i=1\) if \(t=0\).
    Let \(i\) be as above.
    We have
    \(n\subset \mathcal Q_{Y_{\ell}}\) and \(n\not\subset\P^1\times Y_\ell\) if and only if \(X\cap L_n = \{f_1=\cdots=f_{N-3}= q_i=0\} \neq L_n\) if and only if \(X\cap L_n\) is a quadric surface realized as the cone with vertex \(\ell\) over \(n \cap Y_\ell=\{f_1=\cdots=f_{N-3}= q_i=0\} \subset \P^{N-2}_{[x_2:\cdots:x_N]}\). In this case, \(X\cap L_n\) is either a rank 2 quadric surface with singular locus \(\ell\) (if \(n\) meets \(Y_\ell\) in two distinct points) or a rank 1 quadric surface (if \(n\) is tangent to \(Y_\ell\)). This shows the statement about \(F_1(\mathcal Q_{Y_{\ell}}/\mathbb P^1)\setminus F_1((\mathbb P^1\times Y_{\ell})/\P^1)\).
\end{proof}

\begin{lem}\label{lem:planes-in-hyperbolic-reduction-loci}
Let \(P\) be a 2-plane on \(X\), and let \(\tilde{h}\) be the \(\P^{3}\)-bundle from diagram~\eqref{eqn:hyperbolic-red-blow-up-diagram}. Choose coordinates on \(\P^N\) so that \(P=\{x_3=\cdots=x_N=0\}\), and let \(l_{ij},q_i \in k[x_3,\ldots,x_N]\) be defined as in~\eqref{eqn:X}.
    \begin{enumerate}
        \item Sending \(x\in\mathcal P^{(2)}_P\setminus\mathcal Q^{(2)}_P\) to \(\bl_{\P^1\times P}(\tilde{h}^{-1}(x))\) defines an isomorphism over \(\mathbb P^1\)
        \begin{equation*}
            \begin{split}
                \mathcal P^{(2)}_P\setminus\mathcal Q^{(2)}_P \xrightarrow{\sim}& \{ (t,L)\in \mathbb P^1\times \Gr(4,N+1) \mid P \subset L \text{ and }\mathcal Q_t \cap L = 2P\}.
            \end{split}
        \end{equation*}
        Furthermore, for \(x\in\mathcal P^{(2)}_P\setminus\mathcal Q^{(2)}_P\), the intersection \(X\cap \bl_{\P^1\times P}(\tilde{h}^{-1}(x))\) is
        \[\begin{cases}
            2P & \text{ if }x\in\mathcal P_{Y_{P}}, \\
            \text{supported on \(P\) with non-reduced structure along a line} & \text{otherwise.}
        \end{cases}\]
        The first case above (i.e., \(x\in\mathcal P_{Y_{P}}\)) happens exactly when \(L\subset\mathcal Q_{t'}\) for some \(t'\neq t\).
        The second case above happens if and only if there is a unique \(t\in\P^1\) such that \(\mathcal Q_t\cap L = 2P\).
        
        Additionally, we have an isomorphism
        \[ \mathcal P^{(2)}_P\setminus(\mathcal Q^{(2)}_P \cup \mathcal P_{Y_P}) \xrightarrow{\sim} \{ L \in \Gr(4,N+1) \mid X \cap L \text{ is supported on \(P\) with a non-reduced line} \}. \]
        \label{item:image-of-P^2-in-rel-F3}
        \item\label{item:image-of-Q^2-loci-in-rel-F3} Sending \(x\in\mathcal Q^{(2)}_P\) to \(\bl_{\P^1\times P}(\tilde{h}^{-1}(x))\) defines isomorphisms over \(\mathbb P^1\)
        \begin{equation*}
            \begin{split}
                \mathcal Q^{(2)}_P\setminus \mathcal Q_{Y_P} &\xrightarrow{\sim} \{(t,L)\in F_3(\mathcal Q/\mathbb P^1) \mid P\subset L \text{ and }X \cap L \text{ is a rank 2 quadric surface} \} , \\
                \mathcal Q_{Y_P}\setminus (\mathbb P^1\times Y_P) &\xrightarrow{\sim} \{(t,L)\in F_3(\mathcal Q/\mathbb P^1) \mid P\subset L \text{ and } X \cap L = 2P \} , \\
                \mathbb P^1\times Y_P &\xrightarrow{\sim}\mathbb P^1\times F_3(X)_P .
            \end{split}
        \end{equation*}
        \item\label{item:image-of-P^2-2P-locus} \(\mathcal P_{Y_P}\setminus(\P^1\times Y_P) = (\mathcal P^{(2)}_P\cap\{l_{ij}=0\})\setminus(\P^1\times Y_P)\) is isomorphic over \(\P^1\) to
        \[ \P^1\times \{ L \in \Gr(4,N+1) \mid P \subset L \text{ and }X \cap L = 2P \}. \]
\end{enumerate}
\end{lem}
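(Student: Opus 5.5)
Throughout, we work in the coordinates of the statement: \(P=\{x_3=\cdots=x_N=0\}\) and \(X\) is given as in~\eqref{eqn:X} with \(r=2\). The plan is to make the \(\P^3\)-bundle \(\tilde h\) explicit. A point \(x=([s:t],[u])\in\P^1\times\P^{N-3}_{[x_3:\cdots:x_N]}\) corresponds to the 3-plane \(L_x\coloneqq\bl_{\P^1\times P}(\tilde h^{-1}(x))=\langle P,u\rangle\). Write \(l_j^{[s:t]}=sl_{0j}+tl_{1j}\) and \(q^{[s:t]}=sq_0+tq_1\). Restricted to \(L_x\) with coordinates \((x_0,x_1,x_2,\lambda)\) and \(y=\lambda u\), the quadric \(\mathcal Q_{[s:t]}\) is given by
\[\lambda\Big(\textstyle\sum_{j=0}^2 x_j\,l_j^{[s:t]}(u)\Big)+\lambda^2 q^{[s:t]}(u).\]
Every claim will be read off from this formula, in the same way as in the proofs of Lemmas~\ref{lem:hyperbolic-reduction-properties-not-pencil}\eqref{item:hyperbolic-reduction-not-pencil-embedding} and~\ref{lem:r-planes-in-hyperbolic-reduction-loci}.

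\textbf{Part~\eqref{item:image-of-P^2-in-rel-F3}.} If \(x\in\mathcal P^{(2)}_P\), then all \(l_j^{[s:t]}(u)\) vanish, so \(\mathcal Q_t\cap L_x=\{\lambda^2q^{[s:t]}(u)=0\}\). This equals \(2P\) exactly when \(q^{[s:t]}(u)\neq0\), i.e. when \(x\notin\mathcal Q^{(2)}_P\). Conversely, given \((t,L)\) with \(P\subset L\) and \(\mathcal Q_t\cap L=2P\), we have \(L=\langle P,u\rangle\) for a unique \([u]\), and the linear terms must vanish. This gives the inverse, and the map is an isomorphism because both sides are cut out in the same \(\P^1\times\P^{N-3}\) (equivalently, in the Grassmannian of 3-planes containing \(P\)) by the same equations.

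For \(X\cap L_x\), I will take a second member \(t'\neq t\) of the pencil. Then \(X\cap L_x=\{\lambda^2=0,\ \lambda\,\ell'(x_0,x_1,x_2)=0\}\), where \(\ell'=\sum_j x_jl^{t'}_j(u)\). If \(x\in\mathcal P_{Y_P}\), all \(l_{ij}(u)=0\), so \(\ell'=0\) and \(X\cap L_x=2P\). In this case \(L_x\subset\mathcal Q_{t'}\) for the unique \(t'\) with \(q^{t'}(u)=0\). Otherwise \(\ell'\neq0\), and \(X\cap L_x=(\lambda^2,\lambda\ell')\), which is supported on \(P\) with embedded non-reduced structure along the line \(\{\lambda=\ell'=0\}\). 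The equivalences with ``\(L\subset\mathcal Q_{t'}\) for some \(t'\neq t\)'' and with ``\(t\) unique'' follow from the same formula. If \(\mathcal Q_{t'}\cap L\) were also \(2P\) or all of \(L\), then both linear forms \(\ell^{t}\) and \(\ell^{t'}\) would vanish, which forces all \(l_{ij}(u)=0\).

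The last isomorphism is obtained by forgetting \(t\). This is injective by the uniqueness of \(t\). Conversely, if \(X\cap L\) is supported on \(P\) with a non-reduced line, then \(P\subset L\), and the pencil restricted to \(L\) consists of quadrics containing \(P\). This pencil has the form \(\{\lambda(\ell^{t})+\lambda^2 q^t\}\), and the member for which \(\ell^t=0\) gives \(t\).

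\textbf{Part~\eqref{item:image-of-Q^2-loci-in-rel-F3}.} Lemma~\ref{lem:hyperbolic-reduction-properties-not-pencil}\eqref{item:hyperbolic-reduction-not-pencil-embedding} identifies \(\mathcal Q^{(2)}_P\) with \(\{(t,L)\in F_3(\mathcal Q/\P^1) : P\subset L\}\). On this locus \(X\cap L=\mathcal Q_{t'}\cap L=\{\lambda\cdot(\ell'+\lambda q^{t'}(u))=0\}\). If some \(l_{ij}(u)\neq0\), i.e. \(x\notin\mathcal Q_{Y_P}\), the second factor is a plane different from \(P\), so \(X\cap L\) is a rank 2 quadric surface. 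On \(\mathcal Q_{Y_P}\setminus(\P^1\times Y_P)\) we get \(\lambda^2q^{t'}(u)\) with \(q^{t'}(u)\neq0\), hence \(X\cap L=2P\). On \(\P^1\times Y_P\) we get \(L\subset X\), and this case is Lemma~\ref{lem:r-planes-in-hyperbolic-reduction-loci}\eqref{item:relative-m-planes-in-Qr}.

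\textbf{Part~\eqref{item:image-of-P^2-2P-locus}.} The equality of sets is immediate from the definitions~\eqref{eqns:Z-and-Y}. For \(x\in\mathcal P_{Y_P}\setminus(\P^1\times Y_P)\), both \(l_{ij}(u)\) vanish and \(q_0(u),q_1(u)\) are not both zero. By the computations above, \(X\cap L_x=2P\), and this condition depends only on \([u]\), not on \([s:t]\). Conversely, \(X\cap L=2P\) forces all \(l_{ij}(u)=0\) and \(u\notin Y_P\). Hence the locus is \(\P^1\) times the subvariety \(\{[u]: l_{ij}(u)=0,\ u\notin Y_P\}\cong\{L\supset P: X\cap L=2P\}\).

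The main obstacle I expect is scheme-theoretic: proving that these bijections are isomorphisms rather than just bijections on points, and pinning down precisely what ``supported on \(P\) with non-reduced line'' means. I would handle the first by expressing both sides as subschemes of \(\P^1\times\P^{N-3}\) defined by identical equations. I would handle the second by using the explicit ideal \((\lambda^2,\lambda\ell')\).
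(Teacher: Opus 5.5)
Your proposal is correct and follows essentially the same route as the paper: you parametrize the 3-plane \(\langle P,u\rangle\), restrict the pencil to get \(\lambda\sum_j x_j\,l_j^{[s:t]}(u)+\lambda^2q^{[s:t]}(u)\), and read off every case. The only cosmetic difference is that you verify the rank~2 description in part~(2) by direct computation, where the paper cites Lemma~\ref{lem:r-planes-in-hyperbolic-reduction-loci}\eqref{item:image-of-Z-in-rel-Fr}. One small step is worth writing out, though the paper leaves it implicit as well: in the converse of the last isomorphism of part~(1), a member with \(\ell^t=0\) exists because, if the \(2\times 3\) matrix \((l_{ij}(u))\) had rank~2, then \(X\cap L\) would contain the line \(\{\ell^{0}+\lambda q_0(u)=\ell^{1}+\lambda q_1(u)=0\}\not\subset P\), contradicting that \(X\cap L\) is supported on \(P\).
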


\begin{proof}
    Let \(x = ([s:t],\underline{x}) \in \mathcal P^{(2)}_P\). Then \(\bl_{\P^1\times P}(\tilde{h}^{-1}(x))\) is the 3-plane \(\{[s:t]\}\times L_{\underline{x}}\) where \(L_{\underline{x}}\coloneqq \{[z_0:z_1:z_2:z_3x_3:\cdots:z_3 x_N] \mid [z_0:z_1:z_2:z_3]\in\mathbb P^3\}\) contains \(P\). The intersection \(\mathcal Q_{[s:t]}\cap \bl_{\P^1\times P}(\tilde{h}^{-1}(x))\) is defined by \(z_3^2(sq_0(x_3,\ldots,x_N)+tq_1(x_3,\ldots,x_N))\), so it is either the 3-plane \(\bl_{\P^1\times P}(\tilde{h}^{-1}(x))\) or it is the rank 1 quadric surface \(2P\), and the former case occurs if and only if \(x\in\mathcal Q^{(2)}_P\).

    If \(x\in\mathcal P^{(2)}_P\setminus\mathcal Q^{(2)}_P\), then \(sq_0(x_3,\ldots,x_N)+tq_1(x_3,\ldots,x_N) \neq 0\). The intersection \(X\cap\bl_{\P^1\times P}(\tilde{h}^{-1}(x))\) is defined by \(z_3(l_{i0}(x_3,\ldots,x_N) z_0+l_{i1}(x_3,\ldots,x_N) z_1 +l_{i2}(x_3,\ldots,x_N)z_2+q_i(x_3,\ldots,x_N)z_3)\) for \(i\in\{0,1\}\), so it is given by
    \[X\cap\bl_{\P^1\times P}(\tilde{h}^{-1}(x)) = \begin{cases}
        V\left(z_3(l_{10}(\underline{x}) z_0+l_{11}(\underline{x}) z_1 +l_{12}(\underline{x})z_2+ q_1(\underline{x})z_3) , (s q_0(\underline{x}) + t q_1(\underline{x}))z_3^2\right) & \text{if }s\neq 0, \\
        V\left( z_3(l_{00}(\underline{x}) z_0+l_{01}(\underline{x}) z_1 +l_{02}(\underline{x})z_2+ q_0(\underline{x})z_3) , (s q_0(\underline{x}) + t q_1(\underline{x})) z_3^2 \right) & \text{if }t\neq 0.
    \end{cases}\]
    Thus, this intersection is equal to \(2P\) if and only if \(l_{ij}(\underline{x})=0\) for all \(i,j\); otherwise, the intersection is supported on \(P\) and has non-reduced structure along the line \(l_{i0}(\underline{x}) z_0+l_{i1}(\underline{x}) z_1 +l_{i2}(\underline{x})z_2 = z_3 = 0\) for the appropriate \(i\).
    The assumption that \(x\notin\mathcal Q^{(2)}_P\) implies in particular that at least one of \(q_0(\underline{x})\) or \(q_1(\underline{x})\) is nonzero. If furthermore \(l_{ij}(\underline{x})=0\) for all \(i,j\), then the point \([s':t']\coloneqq [-q_1(\underline{x}):q_0(\underline{x})]\) satisfies \(L_{\underline{x}} \subset\mathcal Q_{[s':t']}\).

    On the other hand, let \(L\subset\P^N\) be a 3-plane containing \(P\). Then \(L\) is of the form \(\{[z_0:z_1:z_2:z_3x_3:\cdots:z_3x_N] \mid [z_0:z_1:z_2:z_3]\in\P^3\}\) for some \([x_3:\cdots:x_N]\in\P^{N-3}\).
    To prove~\eqref{item:image-of-P^2-in-rel-F3}, it remains to show the following:
    \begin{enumerate}[label=(\alph*)]
        \item\label{item:pf-image-of-P^2-1} If \(\mathcal Q_{[s:t]}\cap L = 2P\), then \(([s:t],L) = \bl_{\P^1\times P}(\tilde{h}^{-1}(x))\) for a unique \(x=([s:t],\underline{x}) \in \mathcal P^{(2)}_P\setminus\mathcal Q^{(2)}_P\).
        \item\label{item:pf-image-of-P^2-2} If \(X\cap L\) is supported on \(P\) with non-reduced structure along a line, then there exists a unique \(x=([s:t],\underline{x}) \in \mathcal P^{(2)}_P\setminus\{l_{ij}=0\}\) such that \(([s:t],L) = \bl_{\P^1\times P}(\tilde{h}^{-1}(x))\).
    \end{enumerate}

    For~\ref{item:pf-image-of-P^2-1}, the intersection \(\mathcal Q_{[s:t]}\cap L\) is defined by \(z_3((sl_{00}(\underline{x})+tl_{10}(\underline{x}))z_0 + (sl_{01}(\underline{x})+tl_{11}(\underline{x}))z_1 + (sl_{02}(\underline{x})+tl_{12}(\underline{x}))z_2 + (sq_0(\underline{x})+tq_1(\underline{x}))z_3)\). This is equal to \(2P\) if and only if \(sl_{0j}(\underline{x})+tl_{1j}(\underline{x})=0\) for all \(j\in\{0,1,2\}\) and \(sq_0(\underline{x})+tq_1(\underline{x}) \neq 0\), which is precisely the condition that \(([s:t],\underline{x})\in\mathcal P^{(2)}_P\setminus\mathcal Q^{(2)}_P\).
    For~\ref{item:pf-image-of-P^2-2}, if \(X\cap L\) is supported on \(P\) with non-reduced structure along a line, then there is a unique member \(\mathcal Q_{[s:t]}\) of the pencil such that \(\mathcal Q_{[s:t]} \cap L = 2P\), so~\ref{item:pf-image-of-P^2-1} and the previous paragraphs imply the desired result.
    This shows~\eqref{item:image-of-P^2-in-rel-F3}.

    For~\eqref{item:image-of-Q^2-loci-in-rel-F3}, the first statement is Lemma~\ref{lem:r-planes-in-hyperbolic-reduction-loci}\eqref{item:image-of-Z-in-rel-Fr}, and the third statement is Lemma~\ref{lem:r-planes-in-hyperbolic-reduction-loci}\eqref{item:image-of-E-in-rel-Fr}. Thus, by Lemma~\ref{lem:hyperbolic-reduction-properties-not-pencil}\eqref{item:hyperbolic-reduction-not-pencil-embedding}, it remains to show that if \(x = ([s:t],\underline{x}) \in \mathcal Q_{Y_P}\setminus (\mathbb P^1\times Y_P)\), then \(X\cap\bl_{\P^1\times P}(\tilde{h}^{-1}(x)) = 2P\). This follows from a direct computation. Indeed, if \(x\in\mathcal Q_{Y_P}\setminus (\mathbb P^1\times Y_P)\) then \(l_{ij}(\underline{x})=0\) for all \(i,j\), \(sq_0(\underline{x})+tq_1(\underline{x})=0\), and at least one of \(q_0(\underline{x})\) or \(q_1(\underline{x})\) is nonzero. The intersection \(X\cap\bl_{\P^1\times P}(\tilde{h}^{-1}(x))\) is defined by \(q_1(\underline{x})z_3^2\) if \(s\neq 0\) and by \(q_0(\underline{x})z_3^2\) if \(t\neq 0\); in either case this is \(2P\). This shows~\eqref{item:image-of-Q^2-loci-in-rel-F3}.

    Part~\eqref{item:image-of-P^2-2P-locus} follows from the statements about \((\mathcal P^{(2)}_P\setminus\mathcal Q^{(2)}_P)\cap\{l_{ij}=0\}\) and \(\mathcal Q_{Y_P}=\mathcal Q^{(2)}_P\cap\{l_{ij}=0\}\) in~\eqref{item:image-of-P^2-in-rel-F3} and~\eqref{item:image-of-Q^2-loci-in-rel-F3}.
\end{proof}

\begin{lem}\label{lem:plane-line-tangent-isom}
Let $\ell$ be a line on $X$, let $P$ be a $2$-plane on $X$ containing $\ell$, and let $y\in Y_\ell$ be the point corresponding to $P$ under the isomorphism $Y_\ell\cong F_2(X)_\ell$ in Lemma~\ref{lem:r-planes-in-hyperbolic-reduction-loci}\eqref{item:image-of-E-in-rel-Fr}.
Then $\P^1\times (T_yY_\ell/y)$ is isomorphic to $\mathcal P_{Y_P}$ over $\P^1$.
\end{lem}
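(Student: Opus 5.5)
The plan is to choose adapted coordinates and check that both sides are cut out by the same linear equations in the same projective space. Since both \(\mathcal P_{Y_P}\) and \(\P^1\times(T_yY_\ell/y)\) are (by \eqref{eqns:Z-and-Y} and the definition of the projective tangent space) a product of \(\P^1\) with a linear subspace, with no dependence on \([s:t]\), it suffices to identify the two linear subspaces of a common \(\P^{N-3}\).

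First, choose coordinates on \(\P^N\) so that \(\ell=\{x_2=\cdots=x_N=0\}\) and \(P=\{x_3=\cdots=x_N=0\}\). Write \(X\) in the form~\eqref{eqn:X} with respect to \(\ell\): its equations are \(x_0l_{i0}+x_1l_{i1}+q_i\) with \(l_{ij},q_i\in k[x_2,\ldots,x_N]\). Under \(Y_\ell\cong F_2(X)_\ell\) (Lemma~\ref{lem:r-planes-in-hyperbolic-reduction-loci}\eqref{item:image-of-E-in-rel-Fr}), the plane \(P\) corresponds to the point \(y=[1:0:\cdots:0]\in\P^{N-2}_{[x_2:\cdots:x_N]}\). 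The condition \(P\subset X\) means that \(l_{ij}\) has no \(x_2\)-term and \(q_i\) has no \(x_2^2\)-term. Hence
\[q_i=x_2m_i+q_i',\]
with \(m_i\) linear and \(q_i'\) quadratic in \(x_3,\ldots,x_N\).

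Second, rewrite the equations of \(X\) as \(x_0l_{i0}+x_1l_{i1}+x_2m_i+q_i'\). This is exactly the form~\eqref{eqn:X} with respect to the 2-plane \(P\), with linear forms \((l_{i0},l_{i1},m_i)\) and quadratic forms \(q_i'\) in \(x_3,\ldots,x_N\). By \eqref{eqns:Z-and-Y},
\[\mathcal P_{Y_P}=\P^1\times\{l_{i0}=l_{i1}=m_i=0 \mid i\in\{0,1\}\}\subset\P^1\times\P^{N-3}_{[x_3:\cdots:x_N]}.\]

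Third, compute \(T_yY_\ell\) from the defining equations \(l_{ij}=q_i=0\) of \(Y_\ell\subset\P^{N-2}\), using the notation convention for projective tangent spaces. Since \(l_{ij}\) is linear and involves only \(x_3,\ldots,x_N\), it is its own differential at \(y\). For \(q_i\), we have \(\partial q_i/\partial x_2(y)=m_i(y)=0\), and for \(j\geq 3\), \(\partial q_i/\partial x_j(y)\) is the coefficient of \(x_j\) in \(m_i\). So the linear form \(\sum_j \frac{\partial q_i}{\partial x_j}(y)x_j\) is exactly \(m_i\), and
\[T_yY_\ell=\{l_{ij}=m_i=0\}\subset\P^{N-2}.\]
This is a cone with vertex \(y\), because none of these forms involves \(x_2\). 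Projecting from \(y\) to \(\P^{N-3}_{[x_3:\cdots:x_N]}\) identifies \(T_yY_\ell/y\) with \(\{l_{i0}=l_{i1}=m_i=0\}\). Taking the product with \(\P^1\) gives the claimed isomorphism with \(\mathcal P_{Y_P}\) over \(\P^1\).

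The main obstacle is bookkeeping rather than geometry. One must make sure the two coordinate normalizations (for \(\ell\) and for \(P\)) are compatible, so that the form~\eqref{eqn:X} relative to \(P\) is really obtained by splitting \(q_i\) as above. One must also check that ``\(/y\)'' means the image under linear projection from \(y\), which lands in the same \(\P^{N-3}\) as \(\mathcal P_{Y_P}\). Finally, \(Y_\ell\) may be singular or non-generic at \(y\); this is harmless, because the tangent space is defined by the Jacobian formula regardless.
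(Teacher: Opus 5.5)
Your proposal is correct and follows essentially the same argument as the paper: the same adapted coordinates, the same splitting $q_i=x_2m_i+q_i'$ (the paper writes $l_{i2}$ for your $m_i$), the same Jacobian computation giving $T_yY_\ell=\{l_{ij}=0 \mid 0\le i\le 1,\ 0\le j\le 2\}$, and then projection from $y$. You also spell out two details the paper calls ``direct to check'': why $P\subset X$ forces the $l_{ij}$ and $q_i$ to have no $x_2$- and $x_2^2$-terms respectively, and why the differential of $q_i$ at $y$ is exactly $m_i$.
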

\begin{proof}
Choose coordinates on $\P^N$ so that $\ell=\{x_2=\cdots=x_N=0\}$ and $P=\{x_3=\cdots=x_N=0\}$.
Then $y=\{x_3=\cdots = x_N=0\}\in \P^{N-2}_{[x_2:\cdots:x_N]}$.
For $0\leq i\leq 1$ and $0\leq j\leq 1$, let $l_{ij}, q_i\in k[x_2,\ldots, x_N]$ be defined as in~\eqref{eqn:X} for $\Lambda =\ell$.
Then $l_{ij}\in k[x_3,\ldots, x_N]$ and we can write $q_i=l_{i2}x_{2}+q_i'$ for some $l_{i2}, q_i'\in k[x_3,\ldots, x_N]$.
It is direct to check that
\[
T_yY_\ell = \{l_{ij}=0\mid 0\leq i\leq 1, 0\leq j\leq 2\}\subset \P^{N-2}_{[x_2:\cdots:x_N]}.
\]
The projection $\P^{N-2}_{[x_2:\cdots:x_N]}\dashrightarrow \P^{N-3}_{[x_3:\cdots:x_N]}$ away from $y$ therefore induces an isomorphism 
\[T_yY_\ell/y\xrightarrow{\sim} \{l_{ij}=0\mid 0\leq i\leq 1, 0\leq j\leq 2\}\subset \P^{N-3}_{[x_3:\cdots: x_N]},\] 
and hence an isomorphism $\P^1\times(T_yY_\ell/y)\xrightarrow{\sim}\mathcal P_{Y_P}$ over $\P^1$, as desired.
\end{proof}

The following lemma gives the remaining possible intersection of \(X\) with a 3-plane in \(\P^N\) containing a 2-plane on \(X\).
\begin{lem}\label{lem:intersect-3plane-containing-2plane-cases}
Let \(N\geq 6\), and let \(X\subset\mathbb P^N\) be a smooth complete intersection of two quadrics.
Let \(P\) be a 2-plane in \(X\), and let \(L\) be a 3-plane in \(\mathbb P^N\) such that
\(P\subset L\).
Then \(X\cap L\) is one of the following:
\begin{enumerate}
\item\label{item:case-intersection-with-3plane-general-case} The union of \(P\) and a line \(m\subset L\) meeting \(P\) in a point,
\item\label{item:case-intersection-with-3plane-rk2-quadric-surface} A rank 2 quadric surface \(P + P'\) where \(P'\subset L\) is a different 2-plane,
\item\label{item:case-intersection-with-3plane-plane-nonred-along-line} A scheme supported on \(P\) that has non-reduced structure along a line in \(P\) and is reduced elsewhere,
\item\label{item:case-intersection-with-3plane-rk1-quadric-surface} A rank 1 quadric surface \(2P\), or
\item\label{item:case-intersection-with-3plane-contains-3plane} \(L\).
\end{enumerate}
The 3-plane \(L\) is contained in exactly one member of the pencil if and only if~\eqref{item:case-intersection-with-3plane-rk2-quadric-surface} or~\eqref{item:case-intersection-with-3plane-rk1-quadric-surface} occurs.
Furthermore, for a fixed 2-plane \(P\subset X\), case~\eqref{item:case-intersection-with-3plane-general-case} defines a (Zariski) open subset of \(\{L\in\Gr(4,N+1) \mid L \supset P\}\), and the other cases each define a (Zariski) locally closed subset.
\end{lem}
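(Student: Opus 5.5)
I would prove this by the same coordinate computation used in Lemma~\ref{lem:planes-in-hyperbolic-reduction-loci}. Choose coordinates with \(P=\{x_3=\cdots=x_N=0\}\), so that \(X\) has the form~\eqref{eqn:X} with \(r=2\). Every 3-plane \(L\supset P\) can then be written as \(L_{\underline{x}}=\{[z_0:z_1:z_2:z_3x_3:\cdots:z_3x_N]\}\) for a unique \(\underline{x}=[x_3:\cdots:x_N]\in\P^{N-3}\). Restricting the two defining quadrics to \(L_{\underline{x}}\) gives
\[
F_i \coloneqq z_3\bigl(l_{i0}(\underline{x})z_0+l_{i1}(\underline{x})z_1+l_{i2}(\underline{x})z_2+q_i(\underline{x})z_3\bigr), \qquad i\in\{0,1\}.
\]
So \(X\cap L=V(z_3\lambda_0,\,z_3\lambda_1)\), where \(\lambda_0,\lambda_1\) are the two linear forms in the brackets.

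The case analysis then runs on the linear span of \(\lambda_0,\lambda_1\).

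\begin{enumerate}
\item If \(\lambda_0=\lambda_1=0\), then \(X\cap L=L\), which is case~\eqref{item:case-intersection-with-3plane-contains-3plane}.
\item If \(\lambda_0,\lambda_1\) are proportional and not both zero, then \(X\cap L=V(z_3\lambda)\) is a quadric surface. It is \(P+P'\) with \(P'=V(\lambda)\) if \(\lambda\) is not a multiple of \(z_3\), giving~\eqref{item:case-intersection-with-3plane-rk2-quadric-surface}. It is \(2P\) if \(\lambda\propto z_3\), which happens exactly when all \(l_{ij}(\underline{x})=0\), giving~\eqref{item:case-intersection-with-3plane-rk1-quadric-surface}.
\item If \(\lambda_0,\lambda_1\) are independent, then \(V(z_3\lambda_0,z_3\lambda_1)=P\cup V(\lambda_0,\lambda_1)\), and \(V(\lambda_0,\lambda_1)\) is a line \(m\). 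If \(m\not\subset P\), then \(m\) meets \(P\) in a point, giving~\eqref{item:case-intersection-with-3plane-general-case}. If \(m\subset P\), equivalently some combination \(a\lambda_0+b\lambda_1\) is a nonzero multiple of \(z_3\), then the ideal becomes \((z_3^2,\,z_3\mu)\) after a change of generators. This scheme is supported on \(P\), non-reduced along the line \(V(z_3,\mu)\), and reduced elsewhere, giving~\eqref{item:case-intersection-with-3plane-plane-nonred-along-line}.
\end{enumerate}

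Next I would check the ``exactly one member'' criterion. The 3-plane \(L\) lies in \(\mathcal Q_{[s:t]}\) iff \(s\lambda_0+t\lambda_1=0\). This has exactly one solution \([s:t]\) iff \(\lambda_0,\lambda_1\) are proportional and not both zero, which is precisely cases~\eqref{item:case-intersection-with-3plane-rk2-quadric-surface} and~\eqref{item:case-intersection-with-3plane-rk1-quadric-surface}. In case~\eqref{item:case-intersection-with-3plane-contains-3plane} the plane \(L\) lies in every member, and in the remaining cases it lies in none.

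For the topological statement, I would describe each case by rank conditions on the \(2\times 4\) matrix \(\Lambda(\underline{x})\) of coefficients of \(\lambda_0,\lambda_1\) in \((z_0,z_1,z_2,z_3)\), together with the \(2\times 3\) submatrix \(\Lambda'(\underline{x})\) of \(z_0,z_1,z_2\)-coefficients (the \(l_{ij}(\underline{x})\)). The cases are:
\begin{itemize}
\item case~\eqref{item:case-intersection-with-3plane-general-case}: \(\rank\Lambda'=2\);
\item case~\eqref{item:case-intersection-with-3plane-plane-nonred-along-line}: \(\rank\Lambda=2\) and \(\rank\Lambda'=1\);
\item case~\eqref{item:case-intersection-with-3plane-rk2-quadric-surface}: \(\rank\Lambda=1\) and \(\rank\Lambda'=1\);
\item case~\eqref{item:case-intersection-with-3plane-rk1-quadric-surface}: \(\rank\Lambda=1\) and \(\Lambda'=0\);
\item case~\eqref{item:case-intersection-with-3plane-contains-3plane}: \(\Lambda=0\).
\end{itemize}
These are differences of determinantal closed subsets of \(\P^{N-3}\cong\{L\supset P\}\), so each is locally closed. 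Case~\eqref{item:case-intersection-with-3plane-general-case} is the open locus where some \(2\times2\) minor of \(\Lambda'\) is nonzero. Note that the line \(V(\lambda_0,\lambda_1)\) lies in \(P\) exactly when \(\rank\Lambda'<2\).

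The main point requiring care is that these five cases are exhaustive. In particular, a priori \(\Lambda'\) could vanish while \(\rank\Lambda=2\), but this is impossible since \(\Lambda'=0\) forces \(\rank\Lambda\le1\). Smoothness of \(X\) (and \(N\geq 6\)) is not really needed for the classification itself; it only ensures that the loci are of the expected sizes.
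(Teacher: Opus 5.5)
Your proof is correct, and the classification step is the same computation as the paper's. The paper puts \(L\) and \(P\) in coordinate position, so each quadric restricts to \(x_3(a_{i03}x_0+a_{i13}x_1+a_{i23}x_2+a_{i33}x_3)\), which is exactly your \(z_3\lambda_i\). It then runs through cases according to which coefficients vanish, writing out only a few of them.

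You differ from the paper in two places: how the cases are organized, and how the topological statement is proved.

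\begin{itemize}
\item Sorting the cases by the dimension of the span of \(\lambda_0,\lambda_1\), and by whether \(z_3\) lies in it, makes both exhaustiveness and the ``exactly one member'' criterion transparent.
\item Your rank stratification by \(\Lambda(\underline{x})\) and \(\Lambda'(\underline{x})\) proves local closedness directly on \(\P^{N-3}\cong\{L\supset P\}\).
\item The paper instead deduces local closedness from Lemma~\ref{lem:planes-in-hyperbolic-reduction-loci}. It gets openness of case~\eqref{item:case-intersection-with-3plane-general-case} as the complement of the image of \(\mathcal P^{(2)}_P\), and that image is precisely your locus \(\rank\Lambda'\le 1\).
\end{itemize}

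The paper's route identifies each stratum with the hyperbolic-reduction loci \(\mathcal P^{(2)}_P\), \(\mathcal Q^{(2)}_P\), \(\mathcal Q_{Y_P}\), \(\mathcal P_{Y_P}\), whose classes are computed later in Section~\ref{section:v-m+P}. Yours is more elementary and self-contained.

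One small point is worth making explicit. The columns of \(\Lambda\) have mixed degrees, since the \(l_{ij}\) are linear and the \(q_i\) quadratic. Rescaling \(\underline{x}\) by \(c\) multiplies \(\Lambda\) by \(c\) and the last column by one further factor of \(c\). Hence all \(2\times 2\) minors are homogeneous and the rank loci are well defined on \(\P^{N-3}\).
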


\begin{proof}
    We may choose coordinates on \(\P^N\) so that
    \[L = \{[x_0:x_1:x_2:x_3:0:\cdots:0]\}, \quad P = \{[x_0:x_1:x_2:0:0:\cdots:0]\}.\]
    The assumption that \(P\subset X\) implies that \(\mathcal Q_{[s:t]}\cap L \subset L\) is defined by \(s M_0 + t M_1\) where
    \[M_i = \begin{pmatrix} 0 & 0 & 0 & a_{i03}/2 \\ 0 & 0 & 0 & a_{i13}/2 \\ 0 & 0 & 0 & a_{i23}/2 \\ a_{i03}/2 & a_{i13}/2 & a_{i23}/2 & a_{i33} \end{pmatrix} .\]
    We may assume \(\#(\text{nonzero }a_{1j3}, 0\leq j \leq 3)\leq\#(\text{nonzero }a_{0j3}, 0\leq j \leq 3)\). Then the result follows by a direct computation of all possible cases for the entries of \(M_0\) and \(M_1\). For illustration, we compute a few of these cases here.

    For example, suppose that \(a_{003}=a_{013}=a_{023}=0\).
    If \(a_{033}=0\), then \(M_0=M_1\) is the zero matrix and \(L\subset X\) (case~\eqref{item:case-intersection-with-3plane-contains-3plane}). If \(a_{033}\neq 0\) and \(M_1\) is a scalar multiple of \(M_0\), then there is exactly one fiber of \(\mathcal Q\cap (\P^1\times L)\to\P^1\) that is equal to \(L\), and the intersection \(X\cap L\) is the rank 1 quadric surface \(2P\) (case~\eqref{item:case-intersection-with-3plane-rk1-quadric-surface}).
    If \(a_{033}\neq 0\) and \(M_1\) is not a scalar multiple of \(M_0\), then \(a_{1j3}\neq 0\) for exactly one \(j\in\{0,1,2\}\). In this case the fiber of \(\mathcal Q\cap (\P^1\times L)\to\P^1\) over \([s:t]\) is defined by \(x_3(a_{033}sx_3+a_{1j3}tx_j)\), so none of the fibers are \(L\). The intersection \(X\cap L\) is \(V(x_3^2,x_jx_3)\), which is supported on \(P\) and has non-reduced structure along the line \(V(x_j,x_3)\) (case~\eqref{item:case-intersection-with-3plane-plane-nonred-along-line}).

    Now suppose instead that \(a_{003}=a_{013}=a_{033}=0\) and \(a_{023}\neq 0\). If \(M_1\) is a scalar multiple of \(M_0\), then the intersection \(X\cap L\) is the rank 2 quadric surface \(P + V(x_2)\) (case~\eqref{item:case-intersection-with-3plane-rk2-quadric-surface}) and there is exactly one fiber of \(\mathcal Q\cap (\P^1\times L)\to\P^1\) that is equal to \(L\). If \(M_1\) is not a scalar multiple of \(M_0\), then \(a_{1j3}\neq 0\) for exactly one \(j\in\{0,1,3\}\).
    If \(j=3\), then the fiber of \(\mathcal Q\cap (\P^1\times L)\to\P^1\) over \([s:t]\) is the rank \(\leq 2\) quadric surface defined by \(x_3(a_{023}sx_2+a_{133}tx_3)\) and the intersection \(X\cap L\) is \(V(x_2x_3,x_3^2)\), which is supported on \(P\) and is non-reduced along the line \(V(x_2,x_3)\) (case~\eqref{item:case-intersection-with-3plane-plane-nonred-along-line}).
    If \(j\in\{0,1\}\), then the fiber of \(\mathcal Q\cap (\P^1\times L)\to\P^1\) over \([s:t]\) is the rank 2 quadric surface defined by \(x_3(a_{023}sx_2+a_{1j3}tx_j)\) and the intersection \(X\cap L\) is \(V(x_2x_3,x_jx_3)\), which is the union of \(P\) and the line \(V(x_2,x_j)\) (case~\eqref{item:case-intersection-with-3plane-general-case}).

    The statement about each of the cases defining a locally closed subset of \(\{L\in\Gr(4,N+1) \mid L \supset P\}\) follows from Lemma~\ref{lem:planes-in-hyperbolic-reduction-loci}. The locus defined by~\eqref{item:case-intersection-with-3plane-general-case} is open because it is the complement of the locus of 3-planes obtained as \(\bl_{\P^1\times P}(\tilde{h}^{-1}(x))\) for some \(x\in\mathcal P^{(2)}_P\).
\end{proof}

\section{The classes of exceptional loci of \texorpdfstring{\(F_1(X)\dashrightarrow\Sym^2\mathcal Q^{(1)}\)}{F1X-birational-map-toSym2Q1}}\label{sec:r=1}

Over an algebraically closed field $k$ of characteristic $\neq 2$, let $\varphi\colon \mathcal{Q}\to \P^1$ be a pencil of quadrics in $\P^N$ such that the base locus $X$ is smooth, and assume $N\geq 6$. Fix a general line $\ell\in F_1(X)$.
To prove Theorems~\ref{thm:main-odd} and~\ref{thm:main-even}, we will compute the classes of the exceptional loci of the birational map \(F_1(X)\dashrightarrow\Sym^2\mathcal Q^{(1)}_\ell\) from \cite[Theorem 1.3]{JS24}.
For this, we define the following loci in \(F_1(X)\) and \(\Sym^2\mathcal Q^{(1)}_\ell\).

Let $V\coloneqq F_1(X)$ and define its subsets
\begin{equation*}
\begin{split}
V_{\text{plane}}&\coloneqq\{m\mid \langle\ell, m\rangle=\P^2\text{ and }\langle\ell,m\rangle \in F_2(X)\},\\
V_{\ell+m}&\coloneqq\{m\mid \langle \ell, m\rangle =\P^2\text{ and }X\cap \langle\ell,m\rangle =\ell+m\},\\
V_{\text{$3$-plane}}&\coloneqq\{m\mid \langle\ell,m\rangle=\P^3 \text{ and }\langle\ell,m\rangle\in F_3(X)\},\\
V_{\text{quad,rk}\geq 3}&\coloneqq\{m\mid \langle\ell, m\rangle =\P^3\text{ and }X\cap \langle\ell,m\rangle\text{ is a quadric surface of rank $\geq 3$}\},\\
V_{\text{quad,rk}\leq 2}&\coloneqq\{m\mid \langle\ell, m\rangle =\P^3\text{ and }X\cap \langle\ell,m\rangle\text{ is a quadric surface of rank $\leq 2$}\},\\
V_{\ell+P}&\coloneqq\{m\mid \langle \ell,m\rangle =\P^3\text{ and }X\cap \langle \ell,m\rangle = \ell +P \text{ for some }P\in F_2(X)\},\\
V_{m+P}&\coloneqq\{m\mid \langle \ell,m\rangle =\P^3\text{ and }X\cap \langle \ell,m\rangle = m +P \text{ for some }P\in F_2(X)\},\\
V_{\ell+m+2m'}&\coloneqq\{m\mid \langle \ell,m\rangle =\P^3 \text{ and }X\cap \langle\ell,m\rangle = \ell+m+2m' \text{ for some }m'\in F_1(X)\}.
\end{split}
\end{equation*}
All of these sets are locally closed and non-empty, except that \(V_{\ell+P}=\emptyset\) for \(N=6\) by Lemma~\ref{lem:F_r(X)}, $V_{\text{plane}}=V_{\text{quad,rk}\leq 2}=V_{m+P}=\emptyset$ for $N=6,7$, and $V_{\text{$3$-plane}}=\emptyset$ for $N=6,7,8,9$ by Lemma \ref{lem:general-r-plane-contained-in-r+1-plane}. 

Lemma~\ref{lem:r-planes-in-hyperbolic-reduction-loci}\eqref{item:image-of-Z-in-rel-Fr} defines an isomorphism \(\mathcal Q^{(1)}_\ell\setminus\mathcal Q_{Y_\ell} \cong V_{\ell+m}\), so we may identify \(V_{\ell+m}\) with the open subset \(\mathcal Q^{(1)}_\ell\setminus\mathcal Q_{Y_\ell}\) of \(\mathcal Q^{(1)}_\ell\).
Now let $W\coloneqq \Sym^2(V_{\ell+m})$ and define its subsets 
\begin{equation*}
\begin{split}
W_{\ell+m_0}&\coloneqq \{(m_0,m_1)\mid \langle\ell,m_0,m_1\rangle =\P^2\text{ and }X\cap \langle \ell,m_0,m_1\rangle = \ell+m_0=\ell+m_1\}=\Delta_{W},\\
W_{\text{quad,rk}\geq 3}&\coloneqq\{(m_0,m_1)\mid \langle\ell, m_0,m_1\rangle=\P^3\text{ and }X\cap \langle \ell,m_0,m_1\rangle \text{ is a quadric surface of rank $\geq 3$}\},\\
W_{\text{quad,rk}\leq 2}&\coloneqq\{(m_0,m_1)\mid \langle \ell,m_0,m_1\rangle=\P^3 \text{ and }X\cap \langle\ell,m_0,m_1\rangle \text{ is a quadric surface of rank $\leq 2$} \},\\
W_{\ell+P}&\coloneqq\{(m_0,m_1)\mid \langle \ell,m_0,m_1\rangle =\P^3 \text{ and }X\cap \langle \ell,m_0,m_1\rangle =\ell+P\text{ for some }P\in F_2(X)\},\\
W_{2\ell+m_0+m_1}&\coloneqq \{(m_0,m_1)\mid \langle \ell,m_0,m_1\rangle =\P^3\text{ and }X\cap \langle \ell, m_0,m_1\rangle = 2\ell+m_0+m_1\},\\
W_{\ell+m_0+2m_1}&\coloneqq\{(m_0,m_1)\mid \langle \ell,m_0,m_1\rangle =\P^3 \text{ and }X\cap \langle \ell,m_0,m_1\rangle = \ell + m_0 + 2m_1\},\\
W_{\mathrlap{\times}+}&\coloneqq \left\{(m_0,m_1)\middle\vert \begin{array}{ll}
\langle \ell,m_0,m_1\rangle =\P^3, X\cap \langle \ell, m_0,m_1\rangle = \ell + m_0 + m_1 + m \\ \text{for some }m\in F_1(X) \setminus \{\ell,m_0,m_1\},\text{ and }\ell\cap m_0\cap m_1\cap m\neq\emptyset
\end{array}\right\}.
\end{split}
\end{equation*}
All of these sets are locally closed and non-empty, except that \(W_{\ell+P}=\emptyset\) for \(N=6\) by Lemma~\ref{lem:F_r(X)}, and $W_{\text{quad,rk}\leq 2}=\emptyset$ for $N=6,7$ by Lemma \ref{lem:general-r-plane-contained-in-r+1-plane}.

\begin{prop}\label{prop:v-w-isom}
In the above setting,
we get an isomorphism
\begin{equation}\label{eq:isom-V-W}
\begin{split}
&V\setminus (\{\ell\}\sqcup V_{\text{plane}}\sqcup V_{\ell+m}\sqcup V_{\text{$3$-plane}}\sqcup  V_{\text{quad,rk}\geq 3}\sqcup V_{\text{quad,rk}\leq 2}\sqcup V_{\ell+P}\sqcup V_{m+P}\sqcup V_{\ell+m+2m'})\\
&\xrightarrow{\sim}W\setminus (W_{\ell+m_0}\sqcup W_{\text{quad,rk}\geq 3}\sqcup W_{\text{quad,rk}\leq 2}\sqcup W_{\ell+P} \sqcup W_{2\ell+m_0+m_1}\sqcup W_{\ell+m_0+2m_1}\sqcup W_{\mathrlap{\times}+})
\end{split}
\end{equation}
by sending $m$ to $(m_0, m_1)$, where $X\cap \langle \ell, m\rangle = \ell+m + m_0 + m_1$.
\end{prop}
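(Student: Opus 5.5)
The plan is to show that the assignment \(m\mapsto(m_0,m_1)\) is well defined and bijective on \(\kbar\)-points between the two open sets, using the case analyses of Lemmas~\ref{lem:line-disjoint-from-l-cases}, \ref{lem:lines-meeting-l-span-P2}, \ref{lem:lines-meeting-l-span-P3-common-point} and Corollary~\ref{cor:lines-meeting-l-span-P3-disjoint}. We then upgrade it to an isomorphism by identifying it with the restriction of the birational map \(F_1(X)\dashrightarrow\Sym^2\mathcal Q^{(1)}_\ell\) of \cite[Theorem 1.3]{JS24} and applying Zariski's main theorem.

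\emph{Forward direction.} Let \(m\) lie in the left-hand side. If \(m\) meets \(\ell\) (so \(m\neq\ell\) and \(\langle\ell,m\rangle\cong\P^2\)), then \(X\cap\langle\ell,m\rangle\) is either the plane or the conic \(\ell+m\). These are the excluded loci \(V_{\text{plane}}\) and \(V_{\ell+m}\). Hence \(\ell\cap m=\emptyset\) and \(\langle\ell,m\rangle\cong\P^3\), and Lemma~\ref{lem:line-disjoint-from-l-cases} applies. Every row of its table except the last is excluded: \(V_{3\text{-plane}}\), the two quadric loci (two 2-planes give a rank \(\le 2\) quadric), \(V_{\ell+P}\), \(V_{m+P}\), and \(V_{\ell+m+2m'}\). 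So \(X\cap\langle\ell,m\rangle=\ell+m+m_0+m_1\) is singular of Reid type. In particular \(m_0\neq m_1\), each \(m_i\) meets \(\ell\) and \(m\) in a point, and \(m_0\cap m_1=\emptyset\).

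Next we check that \(m_i\in V_{\ell+m}\). If the plane \(\langle\ell,m_i\rangle\) were contained in \(X\), then \(X\cap\langle\ell,m\rangle\) would contain a 2-plane, which is false; hence \(X\cap\langle\ell,m_i\rangle=\ell+m_i\). Moreover \(\langle\ell,m_0,m_1\rangle=\langle\ell,m\rangle\cong\P^3\), and \(X\cap\langle\ell,m_0,m_1\rangle\) is of Reid type. By Corollary~\ref{cor:lines-meeting-l-span-P3-disjoint} this lies in none of the excluded \(W\)-loci; the loci \(W_{2\ell+m_0+m_1}\), \(W_{\ell+m_0+2m_1}\) and \(W_{\mathrlap{\times}+}\) require \(m_0\cap m_1\neq\emptyset\) or multiplicities.

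\emph{Inverse.} Let \((m_0,m_1)\) lie in the right-hand side, so \(m_0\neq m_1\). If \(\langle\ell,m_0,m_1\rangle\cong\P^2\), Lemma~\ref{lem:lines-meeting-l-span-P2} puts the plane in \(X\), contradicting \(m_i\in V_{\ell+m}\). Hence the span is \(\P^3\), and three cases remain.
\begin{itemize}
\item If \(m_0\cap m_1\) is a point off \(\ell\), then \(\ell,m_0,m_1\) are coplanar, which is impossible.
\item If \(\ell\cap m_0=\ell\cap m_1\), Lemma~\ref{lem:lines-meeting-l-span-P3-common-point} applies. Each of its rows is either excluded by definition of the \(W\)-loci, or forces \(\langle\ell,m_i\rangle\subset X\): this happens in the \(\P^3\) case and in the \(m_i+P\) case with \(P\supset\ell\). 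In both situations we get a contradiction.
\item Otherwise \(m_0\cap m_1=\emptyset\). By Corollary~\ref{cor:lines-meeting-l-span-P3-disjoint} the only surviving row is the Reid type one, \(X\cap\langle m_0,m_1\rangle=m_0+m_1+\ell+l'\). Here the \(m_i+P\) rows are again excluded because \(P\) would contain \(\ell\) and hence \(\langle\ell,m_i\rangle\).
\end{itemize}
In the last case set \(m\coloneqq l'\). Then \(l'\cap\ell=\emptyset\), and \(l'\) lies in the left-hand side by the forward analysis. The two constructions are mutually inverse, since the Reid-type configuration \(\ell+m+m_0+m_1\) determines each pair from the other.

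\emph{Scheme-theoretic isomorphism.} We expect this step to be the main obstacle, particularly in positive characteristic, since a bijective morphism might a priori be inseparable. The construction \(m\mapsto\{m_0,m_1\}\) is the map of \cite[Theorem 1.3]{JS24}, because the residual lines correspond to the two points of \(\mathcal Q^{(1)}_\ell\setminus\mathcal Q_{Y_\ell}\) given by the rank 2 fibers. Alternatively, one can realize it directly: on the left-hand open set, the residual curve \(X\cap\langle\ell,m\rangle\) minus \(\ell+m\) varies in a flat family whose two components are separated étale-locally, giving a morphism to \(\Sym^2\). Symmetrically, \((m_0,m_1)\mapsto l'\) is a morphism in the other direction, defined via the fourth line of the Reid configuration in the family of spans. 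If both morphisms can be written down in families, they are mutually inverse and we are done. If only the forward morphism is available, we use that it is birational by \cite{JS24} and bijective on points; since the target is an open subset of \(\Sym^2\) of the smooth variety \(\mathcal Q^{(1)}_\ell\setminus\mathcal Q_{Y_\ell}\), hence normal, Zariski's main theorem yields the isomorphism.
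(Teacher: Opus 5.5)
Your proposal is correct and follows essentially the same route as the paper. Both sides are identified, via Lemmas~\ref{lem:line-disjoint-from-l-cases}, \ref{lem:lines-meeting-l-span-P2}, \ref{lem:lines-meeting-l-span-P3-common-point} and Corollary~\ref{cor:lines-meeting-l-span-P3-disjoint}, with the loci where the intersection with the \(\P^3\) span is singular of Reid type; you spell out this pointwise bijection in more detail than the paper, which only asserts it. The isomorphism then follows from Zariski's main theorem, using that the map is the birational morphism of \cite[Theorem 1.3]{JS24} on this open set, that the target is smooth (hence normal), and that the source is open in the irreducible \(F_1(X)\).

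One trivial index slip: in the \(m_i+P\) rows the plane \(P\) contains \(\ell\) and the other line \(m_j\), so the contradiction is \(\langle\ell,m_j\rangle\subset X\) rather than \(\langle\ell,m_i\rangle\subset X\).
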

\begin{proof}
By Lemma~\ref{lem:line-disjoint-from-l-cases}, Corollary~\ref{cor:lines-meeting-l-span-P3-disjoint}, Lemma~\ref{lem:lines-meeting-l-span-P2}, and Lemma~\ref{lem:lines-meeting-l-span-P3-common-point}, the domain (resp. codomain) of \eqref{eq:isom-V-W} 
is exactly the set of $m$ (resp. $(m_0,m_1)$) such that $\langle \ell,m\rangle =\P^3$ (resp. $\langle \ell,m_0,m_1\rangle=\P^3$) and the intersection $X\cap \langle \ell, m\rangle$ (resp. $X\cap \langle \ell, m_0,m_1\rangle$) is singular of Reid type.
In particular, the domain (resp. codomain) of~\eqref{eq:isom-V-W} is open in \(V\) (resp. \(W\)), and hence is irreducible. Furthermore,
the domain and codomain of \eqref{eq:isom-V-W} are both smooth quasi-projective varieties by Lemma~\ref{lem:F_r(X)} and smoothness of \(\mathcal Q^{(1)}_\ell\) (Section~\ref{sec:prelim-F_r} and Proposition~\ref{prop:KS-hyperbolic-reduction}\eqref{item:KS-hyperbolic-red-degeneracy}), so by Zariski's main theorem, \eqref{eq:isom-V-W} is an isomorphism as desired.
\end{proof}

Proposition \ref{prop:v-w-isom} reduces computing the class of $V$ in \(\widetilde{K}_0(\mathrm{Var}/k)\) to computing the classes of $W$ and the exceptional loci of the birational map $V\dashrightarrow W$, which we will achieve in the following subsections. Here is the content of each subsection: \ref{section:v-plane}. $V_{\text{plane}}$; \ref{section:v-w-ell+m}. $V_{\ell+m}$, $W_{\ell+m_0}$, $W$; \ref{section:v-3plane}. $V_{\text{$3$-plane}}$; \ref{section:quad}. $V_{\text{quad,rk}\geq 3}$, $W_{\text{quad,rk}\geq 3}$, $V_{\text{quad,rk}\leq 2}$, $W_{\text{quad,rk}\leq 2}$; \ref{section:ell+P}. $V_{\ell+P}$, $W_{\ell+P}$; \ref{section:v-m+P}. $V_{m+P}$; \ref{section:v-ell+m+2m'}. $V_{\ell+m+2m'}$; \ref{section:w-2ell+m_0+m_1}. $W_{2\ell+m_0+m_1}$; \ref{section:w-concurrent}. $W_{\ell+m_0+2m_1}\sqcup W_{\mathrlap{\times}+}$.
See Proposition \ref{prop:excep-loci} for a summary of the results of the computation.

Throughout this section, brackets will denote classes in \(\widetilde{K}_0(\mathrm{Var}/k)\).

\subsection{\texorpdfstring{$V_{\text{plane}}$}{Vplane}}\label{section:v-plane}
Recall that
\[
F_2(X)_\ell \coloneqq\{P\in F_2(X)\mid \ell\subset P\}\cong Y_\ell
\]
by Lemma \ref{lem:r-planes-in-hyperbolic-reduction-loci}\eqref{item:image-of-E-in-rel-Fr}.
Let $U_2(X)_\ell\to F_2(X)_\ell$ be the pullback of the universal family $U_2(X)\to F_2(X)$,
let $F_1(U_2(X)_\ell/F_2(X)_\ell)\to F_2(X)_\ell$ be the natural map,
and $s_\ell\colon F_2(X)_\ell\to F_1(U_2(X)_\ell/F_2(X)_\ell)$ be the section corresponding to $\ell$.
The map
\[
F_1(U_2(X)_\ell/F_2(X)_\ell)\setminus s_\ell \to V_{\text{plane}}, \qquad (m,P)\mapsto m
\]
is a bijection with inverse $m\mapsto (m,\langle \ell,m\rangle)$, so by Lemma \ref{lem:bijection-same-class}, $[V_{\text{plane}}]=[F_1(U_2(X)_\ell/F_2(X)_\ell)\setminus s_\ell]$. 
Moreover, $F_1(U_2(X)_\ell/F_2(X)_\ell)\setminus s_\ell \to F_2(X)_\ell$ is a ($\P^2\setminus \text{point}$)-bundle, and hence Lemma~\ref{lem:fibration-general} implies that $[F_1(U_2(X)_\ell/F_2(X)_\ell)\setminus s_\ell]= [F_2(X)_\ell](\L^2+\L)=[Y_\ell](\L^2+\L)$.
We conclude that
\begin{equation}\label{eq:v-plane}
[V_{\text{plane}}]=[Y_\ell](\L^2+\L).
\end{equation}

\subsection{\texorpdfstring{$V_{\ell+m}$}{Vellplusm}, \texorpdfstring{$W_{\ell+m_0}$}{Wellplusm0}, and \texorpdfstring{$W$}{W}}\label{section:v-w-ell+m}
By Lemma \ref{lem:r-planes-in-hyperbolic-reduction-loci}\eqref{item:image-of-Z-in-rel-Fr}, we have \(\mathcal{Q}^{(1)}_\ell\setminus \mathcal{Q}_{Y_\ell}\xrightarrow{\sim} V_{\ell+m}\), so
\begin{equation}\label{eq:v-w-ell-m}
[V_{\ell+m}]=[W_{\ell+m_0}]=[\mathcal{Q}_{\ell}^{(1)}]-[\mathcal{Q}_{Y_\ell}].
\end{equation}
By Lemma~\ref{lem:K_0-sym-formulas}\eqref{item:formula-sym}, \(W=\Sym^2(V_{\ell+m})\) has class
\begin{equation}\label{eq:w}
[W]=[\Sym^2\mathcal{Q}^{(1)}_\ell]-[\Sym^2\mathcal{Q}_{Y_\ell}] + [\mathcal{Q}_{Y_\ell}]^2 - [\mathcal{Q}^{(1)}_\ell][\mathcal{Q}_{Y_\ell}].
\end{equation}

\subsection{\texorpdfstring{$V_{\text{$3$-plane}}$}{V3plane}}\label{section:v-3plane}

Recall that
\[
F_3(X)_\ell\coloneqq \{L\in F_3(X)\mid \ell\subset L\}\cong F_1(Y_\ell)
\]
by Lemma \ref{lem:r-planes-in-hyperbolic-reduction-loci}\eqref{item:relative-m-planes-in-Qr}.
Let $U_3(X)_\ell\to F_3(X)_\ell$ be the pullback of the universal family $U_3(X)\to F_3(X)$,
and define
\[\
F_1(U_3(X)_\ell/F_3(X)_\ell)_{\ell}^{\text{int}}\coloneqq \{(m,L)\in F_1(U_3(X)_\ell/F_3(X)_\ell)\mid \ell \cap m\neq \emptyset\}.
\]
The map
\[
F_1(U_3(X)_\ell/F_3(X)_\ell)\setminus F_1(U_3(X)_\ell/F_3(X)_\ell)_{\ell}^{\text{int}}\to V_{\text{$3$-plane}}, \qquad (m,L)\mapsto m
\]
is a bijection with inverse $m\mapsto (m,\langle \ell,m\rangle)$; hence, by Lemma \ref{lem:bijection-same-class}, $[V_{\text{$3$-plane}}]=[F_1(U_3(X)_\ell/F_3(X)_\ell)]- [F_1(U_3(X)_\ell/F_3(X)_\ell)_{\ell}^{\text{int}}]$.
Moreover, $F_1(U_3(X)_\ell/F_3(X)_\ell)\setminus F_1(U_3(X)_\ell/F_3(X)_\ell)_{\ell}^{\text{int}}\to F_3(X)_\ell$ is an $\A^4$-bundle whose fibers are isomorphic to standard affine covers of $\Gr(2,4)$, so by Lemma~\ref{lem:fibration-general}, $[F_1(U_3(X)_\ell/F_3(X)_\ell)]- [F_1(U_3(X)_\ell/F_3(X)_\ell)_{\ell}^{\text{int}}]=[F_3(X)_\ell]\L^4=[F_1(Y_\ell)]\L^4$.
Therefore
\begin{equation}\label{eq:v-3-plane}
[V_{\text{$3$-plane}}]=[F_1(Y_\ell)]\L^4.    
\end{equation}

\subsection{\texorpdfstring{$V_{\text{quad,rk}\geq 3}$}{Vquadrankgeq3}, \texorpdfstring{$W_{\text{quad,rk}\geq 3}$}{Wquadrankgeq3}, \texorpdfstring{$V_{\text{quad,rk}\leq 2}$}{Vquadrankleq2}, and \texorpdfstring{$W_{\text{quad,rk}\leq 2}$}{Wquadrankleq2}}\label{section:quad}
Let \(S_3(X)\) be the Hilbert scheme of quadric surfaces contained in \(X\),
and define
\begin{equation*}
\begin{split}
S_3(X)_\ell&\coloneqq \{Q\in S_3(X)\mid \ell\subset Q\}, \\
S_3(X)_{\ell,\text{$3$-plane}}&\coloneqq\{Q\in S_3(X)_\ell\mid Q\subset L\text{ for some }L\in F_3(X)\},\\
S_3(X)_{\ell,\text{rk}\geq3}&\coloneqq \{Q\in S_3(X)_\ell\setminus S_3(X)_{\ell,\text{$3$-plane}}\mid \text{rk}\, Q\geq 3\}, \\
S_3(X)_{\ell,\text{rk}\leq 2}^\circ&\coloneqq\{Q\in S_3(X)_\ell\setminus S_3(X)_{\ell,\text{$3$-plane}}\mid \text{rk}\, Q\leq 2\text{ and }\ell\not\subset \Sing Q\}.
\end{split}
\end{equation*}
All of these sets are non-empty, except that $S_3(X)_{\ell,\text{rk}\leq 2}^\circ=\emptyset$ for $N=6,7$, and $S_3(X)_{\ell,\text{$3$-plane}}=\emptyset$ for $N=6,7,8,9$ by Lemma \ref{lem:general-r-plane-contained-in-r+1-plane}.

We first aim to express the classes of $V_{\text{quad,rk}\geq 3}$, $W_{\text{quad,rk}\geq 3}$, $V_{\text{quad,rk}\leq 2}$, and $W_{\text{quad,rk}\leq 2}$ in terms of the classes of $S_3(X)_{\ell,\text{rk}\geq3}$ and $S_3(X)_{\ell,\text{rk}\leq 2}^\circ$.
We will first address $V_{\text{quad,rk}\geq 3}$ and $W_{\text{quad,rk}\geq 3}$. For \(i=3,4\), define
\[S_3(X)_{\ell,\text{rk}=i}\coloneqq \{Q\in S_3(X)_{\ell,\text{rk}\geq3} \mid \text{rk}\, Q= i\}.\]
The map \(V_{\text{quad,rk}\geq 3} \to S_3(X)_{\ell,\text{rk}\geq3}\) given by \(m\mapsto X\cap\langle\ell,m\rangle\) has image equal to \(S_3(X)_{\ell,\text{rk}=4}\), and over \(S_3(X)_{\ell,\text{rk}=4}\) it is an \(\mathbb A^1\)-bundle whose fiber over \(Q\) is the set of lines on \(Q\) in the same ruling as \(\ell\) and not equal to \(\ell\).
Next, consider the map \(W_{\text{quad,rk}\geq 3} \to S_3(X)_{\ell,\text{rk}\geq3}\) given by \((m_0,m_1)\mapsto X\cap\langle\ell,m_0,m_1\rangle\). Over \(S_3(X)_{\ell,\text{rk}=4}\) this is a $((\Sym^2 \P^1)\setminus \P^1)$-bundle, whose fiber over \(Q \in S_3(X)_{\ell,\text{rk}=4}\) is the set of pairs of distinct lines in the opposite ruling from \(\ell\), and over \(S_3(X)_{\ell,\text{rk}=3}\) this is a $((\Sym^2 \A^1)\setminus \A^1)$-bundle, whose fiber over \(Q \in S_3(X)_{\ell,\text{rk}=3}\) is the set of pairs of distinct lines not equal to \(\ell\). Therefore, by Lemma~\ref{lem:fibration-general},
\[
[V_{\text{quad,rk}\geq 3}]=[S_3(X)_{\ell,\text{rk}\geq3}]\L-[S_3(X)_{\ell,\text{rk}=3}]\L,\qquad [W_{\text{quad,rk}\geq3}]=[S_3(X)_{\ell,\text{rk}\geq 3}]\L^2-[S_3(X)_{\ell,\text{rk}=3}]\L.
\]
In addition, the maps
\begin{equation*}
\begin{split}
V_{\text{quad,rk}\leq 2}\to S_3(X)_{\ell,\text{rk}\leq 2}^\circ,&\qquad m\mapsto X\cap \langle \ell,m\rangle,\\
W_{\text{quad,rk}\leq 2}\to S_3(X)_{\ell,\text{rk}\leq 2}^\circ,&\qquad (m_0,m_1)\mapsto X\cap \langle \ell,m_0,m_1\rangle
\end{split}
\end{equation*}
are respectively $\A^2$- and $(\A^2\setminus \A^1)$-bundles, so by Lemma~\ref{lem:fibration-general} and Lemma \ref{lem:bijection-same-class}, 
\[
[V_{\text{quad,rk}\leq 2}]=[S_3(X)_{\ell,\text{rk}\leq 2}^\circ]\L^2,\qquad [W_{\text{quad,rk}\leq 2}]=[S_3(X)_{\ell,\text{rk}\leq 2}^\circ](\L^2-\L).
\]

It remains to compute the classes of $S_3(X)_{\ell,\text{rk}\geq  3}$ and $S_3(X)_{\ell,\text{rk}\leq 2}^\circ$. 
By Lemma \ref{lem:lines-in-hyperbolic-reduction-loci}\eqref{item:rel-lines-in-Q^1-in-rel-F3},
\begin{equation*}
\begin{split}
&F_1(\mathcal{Q}^{(1)}_\ell/\P^1)\setminus F_1(\mathcal{Q}^{(1)}_\ell/\P^1)_{\P^1\times Y_\ell}^{\text{int}}\xrightarrow{\sim} \{(L,t)\in F_3(\mathcal{Q}/\P^1)_\ell\mid X\cap L \text{ is a quadric surface of rank $\geq 3$}\}\\
&F_1(\mathcal{Q}^{(1)}_\ell/\P^1)_{\P^1\times Y_\ell}^{\text{int}}\setminus F_1(\mathcal{Q}_{Y_\ell}/\P^1)\xrightarrow{\sim} \left\{(L,t)\in F_3(\mathcal{Q}/\P^1)_\ell \,\middle\vert\, \substack{X\cap L\text{ is a quadric surface of rank $\leq 2$} \\ \text{and $\ell\not\subset \Sing (X\cap L)$}}\right\},
\end{split}
\end{equation*}
and the maps
\begin{equation*}
\begin{split}
&\{(L,t)\in F_3(\mathcal{Q}/\P^1)_\ell\mid X\cap L \text{ is a quadric surface of rank $\geq 3$}\}\to S_3(X)_{\ell,\text{rk}\geq 3}, \\
&\{(L,t)\in F_3(\mathcal{Q}/\P^1)_\ell\mid X\cap L \text{ is a quadric surface of rank $\leq 2$ and $\ell\not\subset \Sing (X\cap L)$}\}\to S_3(X)_{\ell,\text{rk}\leq 2}^\circ
\end{split}
\end{equation*}
given by $(L,t)\mapsto X\cap L$
are bijections with inverses $Q\mapsto (\langle Q\rangle,t)$, where $t\in \P^1$ is the unique point such that $L\subset \mathcal Q_t$.
Here the linear span of \(Q\) is a 3-plane since the assumption that \(\ell\not\subset\Sing Q\) implies \(Q\) has rank at least 2.
Thus, by Lemma \ref{lem:bijection-same-class}, 
\[
[S_3(X)_{\ell,\text{rk}\geq 3}]=[F_1(\mathcal{Q}^{(1)}_\ell/\P^1)]-[F_1(\mathcal{Q}^{(1)}_\ell/\P^1)_{\P^1\times Y_{\ell}}^{\text{int}}],\qquad [S_3(X)_{\ell,\text{rk}\leq 2}^\circ]=[F_1(\mathcal{Q}^{(1)}_\ell/\P^1)_{\P^1\times Y_\ell}^{\text{int}}]-[F_1(\mathcal{Q}_{Y_\ell}/\P^1)].
\]

Finally, we compute the class of $F_1(\mathcal{Q}^{(1)}_\ell/\P^1)_{\P^1\times Y_\ell}^{\text{int}}$.
First, if \(N=6,7\), then \(Y_\ell=\emptyset\) by Lemma~\ref{lem:general-l-hyperbolic-red}\eqref{item:general-l-Y-smooth-ci} so \(F_1(\mathcal{Q}^{(1)}_\ell/\P^1)_{\P^1\times Y_\ell}^{\text{int}}=\emptyset\). Therefore, for the remainder of the computation of \([F_1(\mathcal{Q}^{(1)}_\ell/\P^1)_{\P^1\times Y_\ell}^{\text{int}}]\) we assume \(N\geq 8\). Then
the map $\id_{Y_\ell}\times (\varphi^{(1)}_\ell|_{{\mathcal Q}_{Y_\ell}}) \colon Y_\ell\times \mathcal{Q}_{Y_\ell}\to Y_\ell\times \P^1$ has a section $s\colon Y_\ell\times \P^1\to Y_\ell\times \mathcal{Q}_{Y_\ell}$ with image $\Delta_{Y_\ell}\times \P^1\subset Y_\ell\times \mathcal{Q}_{Y_\ell}$ given by $(y,t)\mapsto (y,(y,t))$, and thus, so does $\id_{Y_\ell}\times \varphi^{(1)}_\ell\colon Y_\ell\times\mathcal{Q}^{(1)}_\ell\to Y_\ell\times \P^1$.
These sections are both nondegenerate, the first one by the fact that the base locus $Y_\ell$ of the pencil $\mathcal{Q}_{Y_\ell}\to \P^1$ is smooth by Lemma~\ref{lem:general-l-hyperbolic-red}\eqref{item:general-l-Y-smooth-ci}, and the second one by Lemma~\ref{lem:general-l-vertices-avoid-Z}.
Let \((Y_\ell\times \mathcal{Q}^{(1)}_\ell)^{(0)}_s\to Y_\ell\times\P^1\) and \((Y_\ell\times \mathcal{Q}_{Y_\ell})^{(0)}_s \to Y_\ell\times\P^1\) be the corresponding hyperbolic reductions. Then by Lemma~\ref{lem:hyperbolic-reduction-properties-not-pencil}\eqref{item:hyperbolic-reduction-not-pencil-embedding},
\[
(Y_\ell\times \mathcal{Q}^{(1)}_\ell)^{(0)}_s\setminus (Y_\ell\times \mathcal{Q}_{Y_\ell})^{(0)}_s \xrightarrow{\sim} \{(n,y,t) \in F_1(\mathcal Q^{(1)}_\ell/\P^1) \times Y_\ell \mid y \in n \subset (\mathcal Q^{(1)}_\ell)_t \text{ and }n\not\subset\mathcal Q_{Y_\ell}\}.
\]
Since \(\mathcal Q_{Y_\ell}\subset\mathcal Q^{(1)}_\ell\) is defined by linear equations~\eqref{eqns:Z-and-Y}, a line in \(F_1(\mathcal Q^{(1)}_\ell/\P^1) \setminus F_1(\mathcal Q_{Y_\ell}/\P^1)\) meets \(Y_\ell\) in at most one point. So the map
\[
(Y_\ell\times \mathcal{Q}^{(1)}_\ell)^{(0)}_s\setminus (Y_\ell\times \mathcal{Q}_{Y_\ell})^{(0)}_s\to F_1(\mathcal{Q}^{(1)}_\ell/\P^1)_{Y_\ell\times \P^1}^{\text{int}}\setminus F_1(\mathcal{Q}_{Y_\ell}/\P^1),\qquad (n,y,t)\mapsto (n,t)
\]
is a bijection with inverse $(n,t)\mapsto (n, Y_\ell\cap n, t)$, and hence by Lemma \ref{lem:bijection-same-class},
\[
[F_1(\mathcal{Q}^{(1)}_\ell/\P^1)_{Y_\ell\times \P^1}^{\text{int}}]=[F_1(\mathcal{Q}_{Y_\ell}/\P^1)]+ [(Y_\ell\times \mathcal{Q}^{(1)}_\ell)^{(0)}_s]-[(Y_\ell\times \mathcal{Q}_{Y_\ell})^{(0)}_s].
\]
By Proposition \ref{prop:KS-hyperbolic-reduction}\eqref{item:KS-hyperbolic-reduction-indep},
\[
[(Y_\ell\times \mathcal{Q}^{(1)}_\ell)^{(0)}_s]=[Y_\ell][(\mathcal{Q}^{(1)})^{(0)}]=[Y_\ell][\mathcal{Q}^{(2)}],\qquad [(Y_\ell\times \mathcal{Q}_{Y_\ell})^{(0)}_s]=[Y_\ell][(\mathcal{Q}_{Y_\ell})^{(0)}].
\]
Here the superscripts \(^{(r)}\) denote the hyperbolic reduction with respect to any nondegenerate \(r\)-section.
We then get
\[
[F_1(\mathcal{Q}^{(1)}_\ell/\P^1)_{Y_\ell\times \P^1}^{\text{int}}]
=[F_1(\mathcal{Q}_{Y_\ell}/\P^1)]+[Y_\ell]([\mathcal{Q}^{(2)}]-[(\mathcal{Q}_{Y_\ell})^{(0)}]).
\]

Combining the results in the last three paragraphs yields the following equalities for any \(N\geq 6\), where the terms involving \([Y_\ell]\) are taken to be 0 if \(N=6,7\):
\begin{equation*}
\begin{split}
[V_{\text{quad,rk}\geq 3}]&=([F_1(\mathcal{Q}^{(1)}_\ell/\P^1)]-[F_1(\mathcal{Q}_{Y_\ell}/\P^1)]-[Y_\ell]([\mathcal{Q}^{(2)}]-[(\mathcal{Q}_{Y_\ell})^{(0)}]))\L-[S_3(X)_{\ell,\text{rk}=3}]\L,\\
[W_{\text{quad,rk}\geq 3}]&=([F_1(\mathcal{Q}^{(1)}_\ell/\P^1)]-[F_1(\mathcal{Q}_{Y_\ell}/\P^1)]-[Y_\ell]([\mathcal{Q}^{(2)}]-[(\mathcal{Q}_{Y_\ell})^{(0)}]))\L^2 - [S_3(X)_{\ell,\text{rk}=3}]\L,\\
[V_{\text{quad,rk}\leq 2}]&=[Y_\ell]([\mathcal{Q}^{(2)}]-[(\mathcal{Q}_{Y_\ell})^{(0)}])\L^2,\\
[W_{\text{quad,rk}\leq 2}]&=[Y_\ell]([\mathcal{Q}^{(2)}]-[(\mathcal{Q}_{Y_\ell})^{(0)}])(\L^2-\L).
\end{split}
\end{equation*}
Since $[\mathcal{Q}^{(1)}_\ell]=[\P^1](1+\L^{N-5})+[\mathcal{Q}^{(2)}]\L$ and (if \(N\geq 8\))
$[\mathcal{Q}_{Y_\ell}]=[\P^1](1+\L^{N-7})+[(\mathcal{Q}_{Y_\ell})^{(0)}]\L$ by Corollary \ref{cor:formula-class-of-hyperbolic-reduction} and Proposition \ref{prop:KS-hyperbolic-reduction}\eqref{item:KS-hyperbolic-reduction-formula},
we get
\begin{equation}\label{eq:v-w-quad}
\begin{split}
[V_{\text{quad,rk}\geq 3}]&=([F_1(\mathcal{Q}^{(1)}_\ell/\P^1)]-[F_1(\mathcal{Q}_{Y_\ell}/\P^1)])\L-[Y_\ell]([\mathcal{Q}^{(1)}_\ell]
-[\mathcal{Q}_{Y_\ell}]+[\P^1](\L^{N-7}-\L^{N-5}))-[S_3(X)_{\ell,\text{rk}=3}]\L,\\
[W_{\text{quad,rk}\geq 3}]&=([F_1(\mathcal{Q}^{(1)}_\ell/\P^1)]-[F_1(\mathcal{Q}_{Y_\ell}/\P^1)])\L^2-[Y_\ell]([\mathcal{Q}^{(1)}_\ell]-[\mathcal{Q}_{Y_\ell}]+[\P^1](\L^{N-7}-\L^{N-5}))\L - [S_3(X)_{\ell,\text{rk}=3}]\L,\\
[V_{\text{quad,rk}\leq 2}]&=[Y_\ell]([\mathcal{Q}^{(1)}_\ell]-[\mathcal{Q}_{Y_\ell}]+[\P^1](\L^{N-7}-\L^{N-5}))\L,\\
[W_{\text{quad,rk}\leq 2}]&=[Y_\ell]([\mathcal{Q}^{(1)}_\ell]-[\mathcal{Q}_{Y_\ell}]+[\P^1](\L^{N-7}-\L^{N-5}))(\L-1).
\end{split}
\end{equation}
In~\eqref{eq:v-w-quad}, if \(N=6,7\), then \(Y_\ell=F_1(\mathcal{Q}_{Y_\ell}/\P^1)=\emptyset\) by Lemma~\ref{lem:general-l-hyperbolic-red}, and these formulas are equal to the following:
\begin{equation*}\begin{split}
[V_{\text{quad,rk}\geq 3}]&=[F_1(\mathcal{Q}^{(1)}_\ell/\P^1)]\L-[S_3(X)_{\ell,\text{rk}=3}]\L,\\
[W_{\text{quad,rk}\geq 3}]&=[F_1(\mathcal{Q}^{(1)}_\ell/\P^1)]\L^2 - [S_3(X)_{\ell,\text{rk}=3}]\L,\\
[V_{\text{quad,rk}\leq 2}]&=[W_{\text{quad,rk}\leq 2}]=0.
\end{split}\end{equation*}

\subsection{\texorpdfstring{$V_{\ell+P}$}{VellplusP} and \texorpdfstring{$W_{\ell+P}$}{WellplusP}}\label{section:ell+P}
Define
\[
F_2(X)_\ell^{\text{int}}\coloneqq \{P\in F_2(X)\mid \ell\cap P\neq \emptyset\}.
\]
Then $F_2(X)_\ell\subset F_2(X)_\ell^{\text{int}}$.
Let
\begin{equation*}
\begin{split}
F_2(X)_{\ell,\text{quad}}^{\text{int}}&\coloneqq \{P\in F_2(X)_\ell^{\text{int}}\mid \langle \ell, P\rangle = \P^3\text{ and }X\cap \langle \ell,P\rangle \text{ is a rank $2$ quadric surface}\},\\
F_2(X)_{\ell,\text{$3$-plane}}^{\text{int}}&\coloneqq\{P\in F_2(X)_\ell^{\text{int}}\mid \langle\ell, P\rangle =\P^3\text{ and }\langle \ell,P\rangle \in F_3(X)\},\\
(F_2(X)_{\ell}^{\text{int}})^\circ&\coloneqq F_2(X)_{\ell}^{\text{int}}\setminus (F_2(X)_\ell\sqcup F_2(X)_{\ell,\text{quad}}^{\text{int}}\sqcup F_2(X)_{\ell,\text{$3$-plane}}^{\text{int}}).
\end{split}
\end{equation*}
Let $(U_2(X)_\ell^{\text{int}})^\circ\to (F_2(X)_\ell^{\text{int}})^\circ$ be the pullback of the universal family $U_2(X)\to F_2(X)$,
and define
\[
F^{\text{int}}
\coloneqq \{(m,P)\in F_1((U_2(X)_\ell^{\text{int}})^\circ/(F_2(X)_\ell^{\text{int}})^\circ)\mid \ell\cap m\neq\emptyset \}.
\]
Any \((m,P)\in F_1((U_2(X)_\ell^{\text{int}})^\circ/(F_2(X)_\ell^{\text{int}})^\circ)\) satisfies \(m\subset P\) and therefore \(m\neq \ell\). So by Lemma~\ref{lem:intersect-3plane-containing-2plane-cases}, the maps
\begin{equation*}
\begin{split}
F_1((U_2(X)_{\ell}^{\text{int}})^\circ/(F_2(X)_\ell^{\text{int}})^\circ)\setminus F^{\text{int}} \to V_{\ell+P},&\qquad (m,P)\mapsto m,\\
\Sym^2(F^{\text{int}}/(F_2(X)_\ell^{\text{int}})^\circ)\setminus \Delta\to W_{\ell+P},&\qquad (m_0,m_1,P)\mapsto (m_0,m_1)
\end{split}
\end{equation*}
are bijections with inverses $m\mapsto (m,P)$ and $(m_0,m_1)\mapsto (m_0,m_1, P')$, where $X\cap \langle \ell,m\rangle = \ell + P$ and $X\cap \langle \ell, m_0,m_1\rangle =\ell + P'$, and hence by Lemma \ref{lem:bijection-same-class}, 
\begin{equation*}
\begin{split}
[V_{\ell+P}]&=[F_1((U_2(X)_{\ell}^{\text{int}})^\circ/(F_2(X)_\ell^{\text{int}})^\circ)\setminus F^{\text{int}} ],\\
[W_{\ell+P}]&=[\Sym^2(F^{\text{int}}/(F_2(X)_\ell^{\text{int}})^\circ)\setminus \Delta].
\end{split}
\end{equation*}
The map $F_1((U_2(X)_{\ell}^{\text{int}})^\circ/(F_2(X)_\ell^{\text{int}})^\circ)\setminus F^{\text{int}} \to (F_2(X)_\ell^{\text{int}})^\circ$ is a $(\P^2\setminus \P^1)=\A^2$-bundle, and the map 
$\Sym^2(F^{\text{int}}/(F_2(X)_\ell^{\text{int}})^\circ)\setminus \Delta\to (F_2(X)_{\ell}^{\text{int}})^\circ$ is a $((\Sym^2\P^1)\setminus \P^1)$-bundle.
Hence by Lemma~\ref{lem:fibration-general} and Lemma \ref{lem:bijection-same-class}, 
\[[F_1((U_2(X)_{\ell}^{\text{int}})^\circ/(F_2(X)_\ell^{\text{int}})^\circ)\setminus F^{\text{int}}] = [(F_2(X)_\ell^{\text{int}})^\circ]\L^2 =[\Sym^2(F^{\text{int}}/(F_2(X)_\ell^{\text{int}})^\circ)\setminus \Delta].\]
Therefore
\begin{equation}\label{eq:ell+P}
[V_{\ell+P}]=[W_{\ell+P}]=[(F_2(X)_\ell^{\text{int}})^\circ]\L^2.
\end{equation}

\subsection{\texorpdfstring{$V_{m+P}$}{VmplusP}}\label{section:v-m+P}
If \(N=6,7\) then \(V_{m+P}=\emptyset\) by Lemma~\ref{lem:general-r-plane-contained-in-r+1-plane}. So for the computation of the class of \(V_{m+P}\) we assume \(N\geq 8\).
Let
\[I\coloneqq \{(P,L)\in F_2(X)_\ell\times \Gr(4,N+1)\mid P\subset L\},\]
and define the following disjoint subsets of \(I\):
\begin{equation*}
\begin{split}
I_{\text{$3$-plane}}&\coloneqq \{(P,L)\in I\mid L\subset X\},\\
I_{\text{quad}}&\coloneqq\{(P,L)\in I\mid X\cap L\text{ is a quadric surface of rank}\leq 2\},\\
I_{\text{plane}^+}&\coloneqq\{(P,L)\in I\mid X\cap L\text{ is $P$ with an embedded line}\},\\
I_{m+P, \ell\text{-inc}}&\coloneqq\{(P,L)\in I\mid X\cap L = m+P\text{ for some }m\in V_{\ell+m}\}.
\end{split}
\end{equation*}
By Lemma~\ref{lem:intersect-3plane-containing-2plane-cases}, the map
\[
I\setminus (I_{\text{$3$-plane}}\sqcup I_{\text{quad}}\sqcup I_{\text{plane}^+}\sqcup I_{m+P,\ell\text{-inc}})\to V_{m+P},\qquad (P, L)\mapsto m,
\]
where $X\cap L = m+ P$, is a bijection with inverse $m\mapsto (P, \langle\ell,m\rangle)$, where $X\cap \langle \ell,m\rangle = m+P$;
hence, by Lemma \ref{lem:bijection-same-class},
\[
[V_{m+P}]=[I]-([I_{\text{$3$-plane}}]+ [I_{\text{quad}}]+ [I_{\text{plane}^+}]+ [I_{m+P,\ell\text{-inc}}]).
\]
The projection $I\to F_2(X)_\ell$ is a $\P^{N-3}$-bundle, and hence, by Lemma~\ref{lem:r-planes-in-hyperbolic-reduction-loci}\eqref{item:image-of-E-in-rel-Fr},
\[
[I]=[F_2(X)_\ell][\P^{N-3}]=[Y_\ell][\P^{N-3}].
\]
The projection $I_{\text{$3$-plane}}\to \Gr(4,N+1)$ induces a $\P^1$-bundle $I_{\text{$3$-plane}}\to F_3(X)_\ell$, so
\[
[I_{\text{$3$-plane}}]=[F_3(X)_\ell][\P^1]=[F_1(Y_\ell)][\P^1]
\]
by Lemma~\ref{lem:r-planes-in-hyperbolic-reduction-loci}\eqref{item:relative-m-planes-in-Qr}.
It remains to compute the classes of $I_{\text{quad}}$, $I_{\text{plane}^+}$, and $I_{m+P,\ell\text{-inc}}$.

As for $I_{\text{quad}}\sqcup I_{\text{plane}^+}$, the base change $\id_{F_2(X)_\ell}\times \varphi \colon F_2(X)_\ell\times \mathcal{Q}\to F_2(X)_\ell\times \P^1$ has a nondegenerate $2$-section $s\colon F_2(X)_\ell\times\P^1\to F_2(X)_\ell\times F_2(\mathcal{Q}/\P^1)$ given by $(P, t)\mapsto (P,(P,t))$.
By Lemma~\ref{lem:hyperbolic-reduction-properties-not-pencil}\eqref{item:hyperbolic-reduction-not-pencil-embedding}, the corresponding hyperbolic reduction satisfies
\[(F_2(X)_\ell\times\mathcal Q)^{(2)}_s \xrightarrow{\sim} \{(L,P,t) \in F_3(\mathcal Q/\P^1) \times F_2(X)_\ell \mid P \subset L \subset \mathcal Q_t\},\]
and in particular the fiber of \((F_2(X)_\ell\times\mathcal Q)^{(2)}_s\) over \(P\in F_2(X)_\ell\) is \(\mathcal Q^{(2)}_P\).
Let $\mathcal{P},\mathcal{Q}',\mathcal{R}, \mathcal{P}'$ be the schemes over $F_2(X)_\ell\times \P^1$ whose fibers over $P\in F_2(X)_\ell$ are respectively $\mathcal{P}^{(2)}_{P},\mathcal{Q}_{Y_P}, Y_P\times \P^1, \mathcal{P}_{Y_P}$, as defined in \eqref{eqns:P(r)-and-Q(r)} and~\eqref{eqns:Z-and-Y}.
We have
\[
\mathcal{R}\subset \mathcal{Q}'\subset (F_2(X)_\ell\times \mathcal Q)^{(2)}_s\subset \mathcal{P},\qquad  \mathcal{Q}'\subset \mathcal{P}'\subset \mathcal{P}, \qquad (F_2(X)_\ell\times \mathcal Q)^{(2)}_s\cap \mathcal{P}'=\mathcal{Q}'.
\]
By Lemma~\ref{lem:planes-in-hyperbolic-reduction-loci}\eqref{item:image-of-Q^2-loci-in-rel-F3},
the map
\[
(F_2(X)_\ell\times \mathcal{Q})^{(2)}_s\setminus \mathcal{R}\to I_{\text{quad}},\qquad (L,P, t)\mapsto (P,L)
\]
is a bijection with inverse $(P,L)\mapsto (L,P,t)$, where $t\in \P^1$ is the unique (by Lemma~\ref{lem:intersect-3plane-containing-2plane-cases}) point such that $L\subset \mathcal{Q}_t$.
By Lemma~\ref{lem:planes-in-hyperbolic-reduction-loci}\eqref{item:image-of-P^2-in-rel-F3},
the map
\[
\mathcal{P}\setminus ((F_2(X)_\ell\times \mathcal Q)^{(2)}_s\cup \mathcal{P'})\to I_{\text{plane}^+}, \qquad (L,P, t)\mapsto (P,L)
\]
is a bijection with inverse $(P,L)\mapsto (L,P,t)$, where $t\in \P^1$ is the unique point such that \(\mathcal Q_t\cap L = 2P\). 
Hence, by Lemma \ref{lem:bijection-same-class},
\[
[I_{\text{quad}}]=[(F_2(X)_\ell\times \mathcal{Q})^{(2)}_s]-[\mathcal{R}],\qquad [I_{\text{plane}^+}]=[\mathcal{P}]- [(F_2(X)_\ell\times \mathcal Q)^{(2)}_s\cup \mathcal{P'}],
\]
and thus
\[
[I_{\text{quad}}\sqcup I_{\text{plane}^+}]=[\mathcal{P}]-[\mathcal{P}']+[\mathcal{Q}'\setminus \mathcal{R}].
\]
The projections $\mathcal{P}\to F_2(X)_\ell\times \P^1$ and $\mathcal{P}'\to F_2(X)_\ell\times \P^1$ are $\P^{N-6}$- and $\P^{N-9}$-bundles, respectively, where for the latter one uses that Lemma~\ref{lem:plane-line-tangent-isom} induces an isomorphism $\mathcal{P}'\cong\P(T_{Y_\ell})\times \P^1$ over $F_2(X)_\ell\times \P^1\cong Y_\ell\times \P^1$ and that $\P(T_{Y_\ell})\to Y_\ell$ is a $\P^{N-9}$-bundle since \(Y_\ell\) is smooth of dimension \(N-8\) by Lemma~\ref{lem:general-l-hyperbolic-red}\eqref{item:general-l-Y-smooth-ci}.
%the map $R\to F_3(X)_\ell\times \P^1$ given by $(L,P,t)\mapsto (L,t)$ is a $\P^1$-bundle,
Hence, by Lemma~\ref{lem:fibration-general} and Lemma~\ref{lem:bijection-same-class},
\begin{equation*}
\begin{split}
[\mathcal{P}]&=[F_2(X)_\ell\times \P^1][\P^{N-6}]=[Y_\ell][\P^1][\P^{N-6}],\\
[\mathcal{P}']&=[F_2(X)_\ell\times \P^1][\P^{N-9}]=[Y_\ell][\P^1][\P^{N-9}].
%\\ &[\mathcal{R}]=[F_3(X)_\ell\times \P^1][\P^1]=[F_1(Y_\ell)][\P^1]^2.
\end{split}
\end{equation*}
Moreover, by~Lemma~\ref{lem:lines-in-hyperbolic-reduction-loci}\eqref{item:rel-lines-in-Q^1-in-rel-F3} and~Lemma~\ref{lem:planes-in-hyperbolic-reduction-loci}\eqref{item:image-of-Q^2-loci-in-rel-F3}, the map
\[
\mathcal{Q}'\setminus \mathcal{R}\to F_1(\mathcal{Q}_{Y_\ell}/\P^1)_{Y_\ell\times \P^1}^{\text{tan}}\setminus F_1((Y_\ell\times \P^1)/\P^1),\qquad (L,P,t)\mapsto (L,t)
\]
is a bijection with inverse \((L,t) \mapsto (L,P,t)\) where \(X\cap L=2P\), so by Lemma~\ref{lem:bijection-same-class},
\[
[\mathcal{Q}'\setminus \mathcal{R}]=[F_1(\mathcal{Q}_{Y_\ell}/\P^1)^{\text{tan}}_{Y_\ell\times \P^1}]-[F_1(Y_\ell)][\P^1].
\]
To compute the class of $F_1(\mathcal{Q}_{Y_\ell}/\P^1)_{Y_\ell\times \P^1}^{\text{tan}}$, we have a diagram
\[
\begin{tikzcd}
  &\left\{(n,t,y)\in F_1(\mathcal{Q}_{Y_\ell}/\P^1)\times Y_\ell \mid \text{$n$ is tangent to $Y_\ell$ at $y$}\right\}\arrow[ld] \arrow[rd] &   \\
F_1(\mathcal{Q}_{Y_\ell}/\P^1)^{\text{tan}}_{Y_\ell\times \P^1}  &                         & \P(T_{Y_\ell}),
\end{tikzcd}
\]
where the first projection is a bijection over $F_1(\mathcal{Q}_{Y_\ell}/\P^1)^{\text{tan}}_{Y_\ell\times \P^1}\setminus F_1((Y_\ell\times \P^1)/\P^1)$ and a $\P^1$-bundle over $F_1((Y_\ell\times \P^1)/\P^1)$.
Let \(U_1(Y_\ell)\to F_1(Y_\ell)\) be the universal family; then \(U_1(Y_\ell)\) embeds naturally into the projectivized tangent bundle $\P(T_{Y_\ell})$ of \(Y_\ell\).
The second projection is a bijection over $\P(T_{Y_\ell})\setminus U_1(Y_\ell)$ because letting $(n,y)\in \P(T_{Y_{\ell}})\setminus U_1(Y_\ell)$, we have $n\not\subset Y_\ell$, and for every $t\in \P^1$ the intersection $(\mathcal{Q}_{Y_\ell})_t\cap n$ contains the length $2$ zero-dimensional scheme $2y$, hence there is a unique $t'\in \P^1$ such that $n\subset (\mathcal{Q}_{Y_\ell})_{t'}$.
In addition, the second projection is a $\P^1$-bundle over $U_1(Y_\ell)$. 
Thus, by Lemma~\ref{lem:fibration-general} and Lemma \ref{lem:bijection-same-class},
\[
[F_1(\mathcal{Q}_{Y_\ell}/\P^1)^{\text{tan}}_{Y_\ell\times \P^1}]=-[F_1((Y_\ell\times \P^1)/\P^1)]\L+[\P(T_{Y_\ell})]+[U_1(Y_\ell)]\L,
\]
and using that $U_1(Y_\ell)\to F_1(Y_\ell)$ is a $\P^1$-bundle and $\P(T_{Y_\ell})\to Y_\ell$ is a $\P^{N-9}$-bundle by Lemma~\ref{lem:general-l-hyperbolic-red}\eqref{item:general-l-Y-smooth-ci},
\[
[F_1(\mathcal{Q}_{Y_\ell}/\P^1)^{\text{tan}}_{Y_\ell\times \P^1}]=[Y_\ell][\P^{N-9}].
\]
Hence
\[
[\mathcal{Q}'\setminus \mathcal{R}]=[Y_\ell][\P^{N-9}]-[F_1(Y_\ell)][\P^1],
\]
and therefore, combining the above results, we obtain
\[
[I_{\text{quad}}\sqcup I_{\text{plane}^+}]=[Y_\ell]([\P^1][\P^{N-6}]-[\P^{N-9}]\L)-[F_1(Y_\ell)][\P^1].
\]

As for $I_{m+P,\ell\text{-inc}}$, the map
\[
I_{m+P,\ell\text{-inc}}\to F_2(X)_\ell\times V_{\ell+m},\qquad (P,L)\mapsto (P, m),
\]
where $X\cap L = m+P$, is injective with image \(\{(P,m)\in F_2(X)_\ell \times V_{\ell+m} \mid X \cap \langle P,m\rangle=m+P\}\), and the map
\[
\{(P_1\cup P_2,m)\in S_3(X)_{\ell,\text{rk}\leq 2}^\circ\times F_1(X)\mid \ell\subset P_1, m\subset P_2, m\not\subset P_1, \text{ and }\ell\cap m\neq\emptyset\}\to F_2(X)_\ell\times V_{\ell+m} 
\]
given by $(P_1\cup P_2,m)\mapsto (P_1,m)$
is also injective (recall \(S_3(X)_{\ell,\text{rk}\leq 2}^\circ\) was defined in Section~\ref{section:quad}), and its image is \(\{(P,m)\in F_2(X)_\ell \times V_{\ell+m} \mid X \cap \langle P,m\rangle\) is a rank 2 quadric surface \(\}\). Note that any \((P,m)\in F_2(X)_\ell\times V_{\ell+m}\) satisfies \(\langle P,m\rangle\cong\P^3\), and the intersection \(X\cap\langle P,m\rangle\) is either equal to \(m+P\) or a rank 2 quadric surface by Lemma~\ref{lem:intersect-3plane-containing-2plane-cases}. So the image of the first map is the complement of the image of the second.
The domain of the second map is a $(\P^1\setminus \text{point})=\A^1$-bundle over $S_3(X)_{\ell,\text{rk}\leq 2}^\circ$, so by Lemma~\ref{lem:fibration-general}, Lemma \ref{lem:bijection-same-class}, \eqref{eq:v-w-ell-m}, and the computation \([S_3(X)_{\ell,\text{rk}\leq 2}^\circ] = [F_1(\mathcal{Q}^{(1)}_\ell/\P^1)_{\P^1\times Y_\ell}^{\text{int}}]-[F_1(\mathcal{Q}_{Y_\ell}/\P^1)] = [Y_\ell][\mathcal Q^{(2)}]-[Y_\ell][(\mathcal Q_{Y_\ell})^{(0)}]\) in Section~\ref{section:quad},
\[
[I_{m+P,\ell\text{-inc}}]=[F_2(X)_\ell\times V_{\ell+m}]-[S_3(X)_{\ell,\text{rk}\leq 2}^\circ]\L = [Y_\ell]([\mathcal{Q}^{(1)}_\ell]-[\mathcal{Q}^{(2)}]\L-([\mathcal{Q}_{Y_\ell}]-[(\mathcal{Q}_{Y_\ell})^{(0)}]\L)).
\]
Since $[\mathcal{Q}^{(1)}_\ell]=[\P^1](1+\L^{N-5})+[\mathcal{Q}^{(2)}]\L$ and $[\mathcal{Q}_{Y_\ell}]=[\P^1](1+\L^{N-7})+[(\mathcal{Q}_{Y_\ell})^{(0)}]\L$ by Corollary \ref{cor:formula-class-of-hyperbolic-reduction} and Proposition \ref{prop:KS-hyperbolic-reduction}\eqref{item:KS-hyperbolic-reduction-formula},
\[
[I_{m+P,\ell\text{-inc}}]=[Y_\ell][\P^1](\L^{N-5}-\L^{N-7}).
\]

Combining this with the equality \([V_{m+P}] = [I]-[I_{\text{$3$-plane}}]-[I_{\text{quad}}\sqcup I_{\text{plane}^+}]-[I_{m+P,\ell\text{-inc}}]\) and the above computations of \([I]\), \([I_{\text{$3$-plane}}]\), and \([I_{\text{quad}}\sqcup I_{\text{plane}^+}]\), we conclude
\begin{equation}\label{eq:v-m+P}
[V_{m+P}]=[Y_\ell][\P^1](\L^{N-4}-\L^{N-5}).
\end{equation}
This equality also holds for \(N=6,7\) because both sides are 0 by Lemma~\ref{lem:general-r-plane-contained-in-r+1-plane} and Lemma~\ref{lem:general-l-hyperbolic-red}\eqref{item:general-l-Y-smooth-ci}.

\subsection{\texorpdfstring{$V_{\ell+m+2m'}$}{Vellplusmplus2m'}}\label{section:v-ell+m+2m'}

Before computing the class of \(V_{\ell+m+2m'}\), recall that, by Lemma~\ref{lem:line-disjoint-from-l-cases}, for \(m\in F_1(X)\) disjoint from \(\ell\), we have \(X\cap\langle\ell,m\rangle=\ell+m+2m'\) for some \(m'\in F_1(X)\) if and only if \(\langle\ell,m'\rangle\not\subset X\), \(\langle m,m'\rangle\not\subset X\), and there is a unique \(t\in\P^1\) such that \(\mathcal Q_t\cap\langle\ell,m\rangle=\langle\ell,m'\rangle+\langle m,m'\rangle\).

Now we compute the class of \(V_{\ell+m+2m'}\).
The map $\id_{V_{\ell+m}}\times \varphi\colon V_{\ell+m}\times \mathcal{Q}\to V_{\ell+m}\times \P^1$ has a nondegenerate $1$-section $\sigma\colon V_{\ell+m}\times \P^1\to V_{\ell+m}\times F_1(\mathcal{Q}/\P^1)$ given by $(m',t)\mapsto (m',(m',t))$.
Define the morphism $s_{\text{inc}}\colon V_{\ell+m}\to V_{\ell+m}\times \P^1$ by $m'\mapsto (m',t)$, where $t\in \P^1$ is the unique point such that $\langle \ell,m'\rangle \subset \mathcal{Q}_t$,
and let $\mathcal{Q}_{\ell\text{-inc}}\to V_{\ell+m}$ be the pullback of the hyperbolic reduction $(V_{\ell+m}\times \mathcal{Q})^{(1)}_\sigma\to V_{\ell+m}\times \P^1$ along $s_{\text{inc}}\colon V_{\ell+m}\to V_{\ell+m}\times \P^1$.
Then $\mathcal{Q}_{\ell\text{-inc}}\to V_{\ell+m}$ is a quadric fibration of relative dimension $N-5$, and by Lemma~\ref{lem:hyperbolic-reduction-properties-not-pencil}\eqref{item:hyperbolic-reduction-not-pencil-embedding},
\[
\mathcal{Q}_{\ell\text{-inc}}\xrightarrow{\sim}\{(m',P,t)\in V_{\ell+m}\times F_2(\mathcal{Q}/\P^1)\mid m'\subset P\subset \mathcal{Q}_t \text{ and }\langle\ell,m'\rangle \subset \mathcal{Q}_t\}.
\]
By identifying the domain and codomain of the above isomorphism, let
\begin{equation*}
\begin{split}
\Sigma_1&\coloneqq\{(m',P,t)\in \mathcal{Q}_{\ell\text{-inc}} \mid \langle \ell,P\rangle \subset \mathcal{Q}_t\},\\
\Sigma_2&\coloneqq \{(m',P,t)\in \mathcal{Q}_{\ell\text{-inc}}\setminus \Sigma_1\mid X\cap P=m+m'\text{ for some }m\in F_1(X)\setminus \{\ell, m'\}\text{ and }\ell\cap m\cap m'\neq \emptyset\},\\
\Sigma_3&\coloneqq \{(m',P,t)\in \mathcal{Q}_{\ell\text{-inc}}\setminus \Sigma_1\mid X\cap P=2m'\},\\
\Sigma_4&\coloneqq\{(m',P,t)\in \mathcal{Q}_{\ell\text{-inc}}\setminus\Sigma_1 \mid P\subset X\}.
\end{split}
\end{equation*}
Then $(m',P,t)\in\mathcal{Q}_{\ell\text{-inc}}\setminus (\Sigma_1\sqcup \Sigma_2\sqcup \Sigma_3\sqcup \Sigma_4)$ if and only if $X\cap P=m+m'$ for some $m\in F_1(X)\setminus \{\ell, m'\}$ with \(\ell\cap m\cap m'=\emptyset\). Furthermore, in this case we have $\langle\ell,m,m'\rangle=\langle\ell,P\rangle\cong\P^3$ by Lemma~\ref{lem:lines-meeting-l-span-P2} and so $\ell\cap m=\emptyset$; $\mathcal{Q}_t\cap \langle \ell, m,m'\rangle=\langle \ell, m'\rangle +\langle m,m'\rangle$ (because \(\langle \ell, m'\rangle +\langle m,m'\rangle \subset \mathcal{Q}_t\cap \langle \ell, m,m'\rangle\), so this intersection must be equal to this rank 2 quadric surface or to \(\langle \ell, m,m'\rangle = \langle\ell,P\rangle\), and we have \(\langle\ell,P\rangle\not\subset\mathcal Q_t\) by assumption);
and the intersection \(X\cap\langle\ell,m,m'\rangle = X\cap\langle\ell,m\rangle\) is \(\ell+m+2m'\) by Lemma~\ref{lem:line-disjoint-from-l-cases} (using that \(\langle\ell,m'\rangle\not\subset X\) and \(P=\langle m,m'\rangle\not\subset X\)). Then,
the map
\[
\mathcal{Q}_{\ell\text{-inc}}\setminus (\Sigma_1\sqcup\Sigma_2\sqcup \Sigma_3\sqcup \Sigma_4)\to V_{\ell+m+2m'},\qquad (m',P,t)\mapsto m
\]
is a bijection with inverse $m\mapsto (m',\langle m,m'\rangle,t)$, where $t\in \P^1$ is the unique (by Lemma~\ref{lem:r-planes-in-hyperbolic-reduction-loci}\eqref{item:image-of-Z-in-rel-Fr}) point such that $\langle m,m'\rangle \subset \mathcal{Q}_t$.
Hence, by Lemma \ref{lem:bijection-same-class},
\[
[V_{\ell+m+2m'}]=[\mathcal{Q}_{\ell\text{-inc}}\setminus \Sigma_1]-[\Sigma_2\sqcup \Sigma_3\sqcup \Sigma_4].
\]

Next, we compute the class of \(\mathcal{Q}_{\ell\text{-inc}}\setminus \Sigma_1\).
The quadric fibration $Q_{\ell\text{-inc}}\to V_{\ell+m}$ has a section $s$ given by $m'\mapsto (m',\langle\ell,m'\rangle,t)$, where $t\in \P^1$ is the unique point such that $\langle \ell,m'\rangle \subset \mathcal{Q}_t$,
and the image of $s$ is contained in $\Sigma_1$.
By identifying the domain and codomain of the isomorphism $V_{\ell+m}\cong \mathcal{Q}^{(1)}_\ell\setminus \mathcal{Q}_{Y_\ell}$ of Lemma \ref{lem:r-planes-in-hyperbolic-reduction-loci}\eqref{item:image-of-Z-in-rel-Fr}, 
for $m'\in V_{\ell+m}$, the image $s(m')$ is a singular point of $\mathcal{Q}_{\ell\text{-inc},m'}$ if and only if $m'$ is the singular point of $(\mathcal{Q}^{(1)}_\ell)_{\varphi^{(1)}(m')}$.
Indeed, letting $t = \phi^{(1)}(m')\in \P^1$ be the unique point such that $\langle \ell,m'\rangle \subset \mathcal{Q}_t$, we have isomorphisms
\[
\mathcal{Q}_{\ell\text{-inc},m'}\xrightarrow{\sim} \{P\in F_2(\mathcal{Q}_t)\mid m'\subset P\subset \mathcal{Q}_t\},\qquad (\mathcal{Q}^{(1)}_\ell)_{\varphi^{(1)}(m')}\xrightarrow{\sim} \{P\in F_2(\mathcal{Q}_t)\mid \ell\subset P\subset \mathcal{Q}_t\},
\]
where the points $s(m')\in \mathcal{Q}_{\ell\text{-inc},m'}$ and $m' \in (\mathcal{Q}^{(1)}_\ell)_{\varphi^{(1)}(m')}$ both correspond to the 2-plane $\langle \ell,m'\rangle\in F_2(\mathcal{Q}_t)$ via Lemma~\ref{lem:hyperbolic-reduction-properties-not-pencil}\eqref{item:hyperbolic-reduction-not-pencil-embedding}.
By the Witt extension theorem, there is a linear automorphism of $\mathcal{Q}_t$ which switches $\ell$ and $m'$ and maps $\langle\ell,m'\rangle$ to itself, and this induces an isomorphism $\mathcal{Q}_{\ell\text{-inc},m'}\xrightarrow{\sim}(\mathcal{Q}^{(1)}_\ell)_{\varphi^{(1)}(m')}$ which maps $s(m')\in \mathcal{Q}_{\ell\text{-inc},m'}$ to $m' \in (\mathcal{Q}^{(1)}_\ell)_{\varphi^{(1)}(m')}$.
By Proposition~\ref{prop:KS-hyperbolic-reduction}\eqref{item:KS-hyperbolic-red-degeneracy}, $\varphi^{(1)}\colon \mathcal{Q}^{(1)}_\ell\to \P^1$ has $N+1$ singular fibers, which are of corank $1$ (i.e., rank $N-4$), and by Lemma \ref{lem:general-l-vertices-avoid-Z}, the singular points $m_1,\cdots, m_{N+1}$ of the singular fibers of $\varphi^{(1)}$ are disjoint from $\mathcal{Q}_{Y_\ell}$. 
Hence $s$ defines a nondegenerate section of $\mathcal{Q}_{\ell\text{-inc}}\setminus (\mathcal{Q}_{\ell\text{-inc},m_1}\sqcup\cdots \sqcup \mathcal{Q}_{\ell\text{-inc},m_{N+1}})\to V_{\ell+m}\setminus \{m_1,\cdots, m_{N+1}\}$, which we also denote by $s$.
Then the corresponding hyperbolic reduction can be identified with
\[(\mathcal{Q}_{\ell\text{-inc}}\setminus (\mathcal{Q}_{\ell\text{-inc},m_1}\sqcup\cdots \sqcup \mathcal{Q}_{\ell\text{-inc},m_{N+1}}))^{(0)}_s \xrightarrow{\sim}\left\{(n,m')\in F_1(\mathcal{Q}_{\ell\text{-inc}}/V_{\ell+m}) \,\middle\vert\, \substack{ (m',\langle\ell,m'\rangle,t)\in n \subset \mathcal{Q}_{\ell\text{-inc},m'} \\ \text{ and }m'\neq m_i}\right\}\]
\[
\xrightarrow{\sim}\{(m',L,t) \in V_{\ell+m}\times F_3(\mathcal{Q}/\P^1)\mid \langle\ell,m'\rangle\subset L\subset\mathcal Q_t\text{ and }m'\neq m_i\},\]
where we identify \(F_1(\mathcal{Q}_{\ell\text{-inc}}/V_{\ell}+m)\) with its image in \(V_{\ell+m}\times F_3(\mathcal Q/\P^1)\) using Lemma~\ref{lem:hyperbolic-reduction-properties-not-pencil}\eqref{item:hyperbolic-reduction-not-pencil-embedding}.
The map
\[
\Sigma_1\setminus (s\cup (\mathcal{Q}_{\ell\text{-inc},m_1}\sqcup\cdots \sqcup \mathcal{Q}_{\ell\text{-inc},m_{N+1}}))\to (\mathcal{Q}_{\ell\text{-inc}}\setminus (\mathcal{Q}_{\ell\text{-inc},m_1}\sqcup\cdots \sqcup \mathcal{Q}_{\ell\text{-inc},m_{N+1}}))^{(0)}_s,\quad (m',P,t) \mapsto (m',\langle \ell, P\rangle,t)
\]
is an $\A^1$-bundle (whose fiber over \((m',L,t)\) is \(F_2(L)_{m'} \setminus\{\langle\ell,m'\rangle\}\)), and hence by Lemma~\ref{lem:fibration-general} and Lemma \ref{lem:bijection-same-class}, 
\[
[\Sigma_1\setminus (s\cup (\mathcal{Q}_{\ell\text{-inc},m_1}\sqcup\cdots \sqcup \mathcal{Q}_{\ell\text{-inc},m_{N+1}}))] = [(\mathcal{Q}_{\ell\text{-inc}}\setminus (\mathcal{Q}_{\ell\text{-inc},m_1}\sqcup\cdots \sqcup \mathcal{Q}_{\ell\text{-inc},m_{N+1}}))^{(0)}_s] \L,
\]
and since $\Im(s)\cap (\Sigma_1\setminus (\mathcal{Q}_{\ell\text{-inc},m_1}\sqcup\cdots \sqcup \mathcal{Q}_{\ell\text{-inc},m_{N+1}}))\cong V_{\ell+m}\setminus \{m_1,\cdots, m_{N+1}\}$,
\[
[\Sigma_1\setminus (\mathcal{Q}_{\ell\text{-inc},m_1}\sqcup\cdots \sqcup \mathcal{Q}_{\ell\text{-inc},m_{N+1}})] = [(\mathcal{Q}_{\ell\text{-inc}}\setminus (\mathcal{Q}_{\ell\text{-inc},m_1}\sqcup\cdots \sqcup \mathcal{Q}_{\ell\text{-inc},m_{N+1}}))^{(0)}_s] \L + [V_{\ell+m}\setminus \{m_1,\cdots, m_{N+1}\}].
\]
Moreover, $\mathcal{Q}_{\ell\text{-inc},m_1},\cdots,\mathcal{Q}_{\ell\text{-inc},m_{N+1}} \subset \Sigma_1$.
Indeed, for each $m_i$, the fiber $\mathcal{Q}_{\ell\text{-inc},m_i}$ is an $(N-5)$-dimensional corank $1$ (i.e., rank $N-4$) quadric with vertex $s(m_i)$, and thus any point $(m_i,P,t)\in \mathcal{Q}_{\ell\text{-inc},m_i}\setminus s(m_i)$ lies on the line connecting the point and $s(m_i)$, which corresponds by Lemma~\ref{lem:hyperbolic-reduction-properties-not-pencil}\eqref{item:hyperbolic-reduction-not-pencil-embedding} to the $3$-plane $\langle\ell, P\rangle$ on $\mathcal{Q}_t$; hence, \((m_i,P,t)\in\Sigma_1\). Since the point $s(m_i)=(m_i, \langle \ell,m_i\rangle ,t) $ is also in $\Sigma_1$, this shows $\mathcal{Q}_{\ell\text{-inc},m_i} \subset \Sigma_1$.
Therefore
\begin{equation*}
\begin{split}
[\mathcal{Q}_{\ell\text{-inc}}\setminus \Sigma_1]&=[\mathcal{Q}_{\ell\text{-inc}}\setminus (\mathcal{Q}_{\ell\text{-inc},m_1}\sqcup\cdots \sqcup \mathcal{Q}_{\ell\text{-inc},m_{N+1}})]-[\Sigma_1\setminus (\mathcal{Q}_{\ell\text{-inc},m_1}\sqcup\cdots \sqcup \mathcal{Q}_{\ell\text{-inc},m_{N+1}})]\\
&=[\mathcal{Q}_{\ell\text{-inc}}\setminus (\mathcal{Q}_{\ell\text{-inc},m_1}\sqcup\cdots \sqcup \mathcal{Q}_{\ell\text{-inc},m_{N+1}})]-[(\mathcal{Q}_{\ell\text{-inc}}\setminus (\mathcal{Q}_{\ell\text{-inc},m_1}\sqcup\cdots \sqcup \mathcal{Q}_{\ell\text{-inc},m_{N+1}}))^{(0)}_s] \L\\
&\qquad -[V_{\ell+m}\setminus\{m_1,\cdots,m_{N+1}\}]\\
&=[V_{\ell+m}\setminus\{m_1,\cdots,m_{N+1}\}](1+\L^{N-5})-[V_{\ell+m}\setminus\{m_1,\cdots,m_{N+1}\}]\\
&=([V_{\ell+m}]-(N+1))\L^{N-5}
\end{split}
\end{equation*}
where we have used Proposition \ref{prop:KS-hyperbolic-reduction}\eqref{item:KS-hyperbolic-reduction-formula} for the third equality.
So by~\eqref{eq:v-w-ell-m}, we conclude
\begin{equation}\label{eq:v-ell+m+2m'}
[V_{\ell+m+2m'}]=([\mathcal{Q}^{(1)}_\ell]-[\mathcal{Q}_{Y_\ell}]-(N+1))\L^{N-5}-[\Sigma_2\sqcup \Sigma_3\sqcup\Sigma_4].
\end{equation}
We will not compute the explicit class of \([\Sigma_2\sqcup \Sigma_3\sqcup\Sigma_4]\) here; instead, this term will appear again in Section~\ref{section:w-concurrent}, so that it cancels out in the proofs of Theorems~\ref{thm:main-odd} and~\ref{thm:main-even}.

\subsection{\texorpdfstring{$W_{2\ell+m_0+m_1}$}{W2ellplusm0plusm1}}\label{section:w-2ell+m_0+m_1}
Recall that \(\mathcal P^{(1)}_\ell \subset \P^1\times\P^{N-2}\) with projection \(\mathcal P^{(1)}_\ell\to\P^1\) is the \(\P^{N-4}\)-bundle defined in Section~\ref{sec:hyperbolic-reductions-pencil} with \(\mathcal Q^{(1)}_\ell \subset\mathcal P^{(1)}_\ell\) as a divisor of relative degree 2 over \(\P^1\), and \(\mathcal{Q}_{Y_\ell} \subset \mathcal Q^{(1)}_\ell\) is defined by linear equations in the \(\P^{N-2}\) variables.
Define the following subsets of \(F_1(\mathcal{P}^{(1)}_\ell/\P^1)\):
\begin{equation*}
    \begin{split}
        F_1(\mathcal{P}^{(1)}_\ell/\P^1)^{\text{tan}}_{\mathcal{Q}^{(1)}_\ell} &\coloneqq \{(n,t)\in F_1(\mathcal{P}^{(1)}_\ell/\P^1) \mid n \text{ is tangent to }\mathcal{Q}^{(1)}_\ell\}, \\
        F_1(\mathcal{P}^{(1)}/\P^1)^{\text{int}}_{\mathcal{Q}_{Y_\ell}} & \coloneqq \{(n,t)\in F_1(\mathcal{P}^{(1)}_\ell/\P^1) \mid \mathcal{Q}_{Y_\ell} \cap n \neq\emptyset \}, \\
        F_1(\mathcal{P}^{(1)}/\P^1)^{\text{tan}}_{\mathcal{Q}_{Y_\ell}} & \coloneqq \{(n,t)\in F_1(\mathcal{P}^{(1)}_\ell/\P^1) \mid n \text{ is tangent to \(\mathcal{Q}^{(1)}_\ell\), and } \mathcal{Q}_{Y_\ell} \cap n \neq\emptyset\}.
    \end{split}
\end{equation*}
Since \(\mathcal Q^{(1)}_\ell \subset\mathcal P^{(1)}_\ell\) is a divisor of relative degree 2 over \(\P^1\), any \((n,t)\in F_1(\mathcal{P}^{(1)}_\ell/\P^1) \setminus F_1(\mathcal{P}^{(1)}_\ell/\P^1)^{\text{tan}}_{\mathcal{Q}^{(1)}_\ell}\) meets \(\mathcal{Q}^{(1)}_\ell\) in two distinct points. So the map
\[
F_1(\mathcal{P}^{(1)}_\ell/\P^1)\setminus (F_1(\mathcal{P}^{(1)}_\ell/\P^1)^{\text{tan}}_{\mathcal{Q}^{(1)}_\ell}\cup F_1(\mathcal{P}^{(1)}/\P^1)^{\text{int}}_{\mathcal{Q}_{Y_\ell}})\to W_{2\ell+m_0+m_1},\qquad (n,t)\mapsto (m_0,m_1),
\]
where $(m_0, m_1)$ corresponds to the pair of distinct points of $\mathcal{Q}^{(1)}\cap n$,
is a bijection with inverse $(m_0,m_1)\mapsto (\langle m_0,m_1\rangle,t)$, where
$t\in \P^1$ is the unique point such that $\mathcal{Q}_t\cap\langle \ell,m_0,m_1\rangle= \langle \ell,m_0\rangle + \langle \ell,m_1\rangle$, which exists by Corollary~\ref{cor:lines-meeting-l-span-P3-disjoint} and Lemma~\ref{lem:lines-meeting-l-span-P3-common-point}\eqref{item:case-sym-Q1-meeting-computation-general-rank-3-multiplicity-2-root}\ref{item:w-2ell+m_0+m_1-mult-2-root}.
Hence, by Lemma \ref{lem:bijection-same-class}, 
\[
[W_{2\ell+m_0+m_1}]=[F_1(\mathcal{P}^{(1)}_\ell/\P^1)\setminus (F_1(\mathcal{P}^{(1)}_\ell/\P^1)^{\text{tan}}_{\mathcal{Q}^{(1)}_\ell}\cup F_1(\mathcal{P}^{(1)}/\P^1)^{\text{int}}_{\mathcal{Q}_{Y_\ell}})].
\]
Thus, it remains to compute the classes of \(F_1(\mathcal{P}^{(1)}_\ell/\P^1)\), \(F_1(\mathcal{P}^{(1)}_\ell/\P^1)^{\text{tan}}_{\mathcal{Q}^{(1)}_\ell}\), \(F_1(\mathcal{P}^{(1)}_\ell/\P^1)^{\text{int}}_{\mathcal{Q}_{Y_\ell}}\), and the intersection \(F_1(\mathcal{P}^{(1)}_\ell/\P^1)^{\text{tan}}_{\mathcal{Q}^{(1)}_\ell}\cap F_1(\mathcal{P}^{(1)}_\ell/\P^1)^{\text{int}}_{\mathcal{Q}_{Y_\ell}}=F_1(\mathcal{P}^{(1)}/\P^1)_{\mathcal{Q}_{Y_\ell}}^\text{tan}\).

Since $\mathcal{P}^{(1)}_\ell\to \P^1$ is a $\P^{N-4}$-bundle, $F_1(\mathcal{P}^{(1)}_\ell/\P^1)\to \P^1$ is a $\Gr(2, N-3)$-bundle. So by Lemma~\ref{lem:fibration-general},
\[
[F_1(\mathcal{P}^{(1)}_\ell/\P^1)]=[\P^1][\Gr(2,N-3)].
\]
As for $F_1(\mathcal{P}^{(1)}_\ell/\P^1)^{\text{tan}}_{\mathcal{Q}^{(1)}_\ell}$, we have a diagram
\[
\begin{tikzcd}
  &\left\{(n,t,x)\in F_1(\mathcal P^{(1)}_\ell/\P^1)^{\text{tan}}_{\mathcal{Q}^{(1)}_\ell} \times_{\P^1} \mathcal{Q}^{(1)}_\ell\mid x\in n \right\}\arrow[ld] \arrow[rd] &   \\
F_1(\mathcal P^{(1)}_\ell/\P^1)^{\text{tan}}_{\mathcal{Q}^{(1)}_\ell}  &                         & \mathcal{Q}^{(1)}_\ell,
\end{tikzcd}
\]
where the first projection is a bijection over $F_1(\mathcal{P}^{(1)}/\P^1)^{\text{tan}}_{\mathcal{Q}^{(1)}_\ell}\setminus F_1(\mathcal{Q}^{(1)}_\ell/\P^1)$ and a $\P^1$-bundle over $F_1(\mathcal{Q}^{(1)}_\ell/\P^1)$.
Since \(\mathcal Q^{(1)}\to\P^1\) is a fibration of quadric \((N-5)\)-folds with \(N+1\) singular fibers that each have corank 1 (Proposition~\ref{prop:KS-hyperbolic-reduction}\eqref{item:KS-hyperbolic-red-degeneracy}), the second projection is a $\P^{N-6}$-bundle over the complement of the cone points of the $N+1$ singular fibers of $\varphi^{(1)}$, and the fiber over each cone point is $\P^{N-5}$.
Hence, by Lemma~\ref{lem:fibration-general} and Lemma~\ref{lem:bijection-same-class},
\[
[F_1(\mathcal{P}^{(1)}_\ell/\P^1)^{\text{tan}}_{\mathcal{Q}^{(1)}_\ell}]=-[F_1(\mathcal{Q}^{(1)}_\ell/\P^1)]\L+[\mathcal{Q}_\ell^{(1)}][\P^{N-6}] + (N+1)\L^{N-5}.
\]
As for $F_1(\mathcal{P}^{(1)}_\ell/\P^1)^{\text{int}}_{\mathcal{Q}_{Y_\ell}}$, we have a diagram
\[
\begin{tikzcd}
  &\{(n,t,x)\in F_1(\mathcal{P}^{(1)}_\ell/\P^1)^{\text{int}}_{\mathcal{Q}_{Y_\ell}} \times_{\P^1} \mathcal{Q}_{Y_\ell} \mid x\in n\} \arrow[ld, "pr_1"'] \arrow[rd] &   \\
F_1(\mathcal{P}^{(1)}_\ell/\P^1)^{\text{int}}_{\mathcal{Q}_{Y_\ell}}  &  & \mathcal{Q}_{Y_\ell}.
\end{tikzcd}
\]
Recall that \(\mathcal P_{Y_\ell} \subset \mathcal P^{(1)}_\ell \subset \P^1\times\P^{N-2}\) is defined by linear equations in \(\P^{N-2}\) (see \eqref{eqns:Z-and-Y}).
If \(N=6\), then by Lemma~\ref{lem:general-l-hyperbolic-red}, \(\mathcal P_{Y_\ell}\cong\mathbb P^1\) so \(F_1(\mathcal{P}_{Y_\ell}/\P^1)=\emptyset\), and \(\mathcal Q_{Y_\ell}\) is a (reduced) point lying over a unique \(t\in\P^1\). Then the relative lines in \(\mathcal P^{(1)}_\ell\) that meet \(\mathcal Q_{Y_\ell}\) are the lines in the fiber \((\mathcal P^{(1)}_\ell)_t\cong\mathbb P^2\) containing this point, and so \(F_1(\mathcal{P}^{(1)}_\ell/\P^1)^{\text{int}}_{\mathcal{Q}_{Y_\ell}}\cong\P^1\).
If \(N\geq 7\), there is an isomorphism \(\mathcal P_{Y_\ell} \cong \P^1\times\P^{N-6}\) over \(\P^1\) by Lemma~\ref{lem:general-l-hyperbolic-red}\eqref{item:general-l-Z-dimension}, and \(\mathcal Q_{Y_\ell}\subset\mathcal P_{Y_\ell}\) is a divisor of relative degree 2 over \(\P^1\).
Therefore, for a line \((n,t)\in F_1(\mathcal P^{(1)}_\ell/\P^1)\), \(n\subset\mathcal P_{Y_\ell}\) if and only if \(n\) either meets \(\mathcal Q_{Y_\ell}\) in a zero-dimensional scheme of length $\geq 2$ or $n\subset\mathcal Q_{Y_\ell}$. If \((n,t)\in F_1(\mathcal P^{(1)}_\ell/\P^1)\) is not contained in \(\mathcal P_{Y_\ell}\), then \(n\) meets \(\mathcal Q_{Y_\ell}\) in at most one point.
So, in the above diagram,
the first projection is a bijection over $F_1(\mathcal{P}^{(1)}_\ell/\P^1)^{\text{int}}_{\mathcal{Q}_{Y_\ell}}\setminus F_1(\mathcal{P}_{Y_\ell}/\P^1)$, and the restriction of the second projection to
\(pr_1^{-1}(F_1(\mathcal{P}_{Y_\ell}/\P^1))\)
is a $\P^{N-7}$-bundle.
The second projection is a $\P^{N-5}$-bundle.
Hence, by Lemma~\ref{lem:fibration-general} and Lemma \ref{lem:bijection-same-class},
\[
[F_1(\mathcal{P}^{(1)}_\ell/\P^1)^{\text{int}}_{\mathcal{Q}_{Y_\ell}}]=[\mathcal{Q}_{Y_\ell}]([\P^{N-5}]-[\P^{N-7}])+[F_1(\mathcal{P}_{Y_\ell}/\P^1)]=[\mathcal{Q}_{Y_\ell}]([\P^{N-5}]-[\P^{N-7}])+[\P^1][\Gr(2,N-5)],
\]
and this formula holds for any \(N\geq 6\).

Finally, as for $F_1(\mathcal{P}^{(1)}_\ell/\P^1)^{\text{tan}}_{\mathcal{Q}_{Y_\ell}}$, we have a diagram
\[
\begin{tikzcd}
  &\left\{(n,t,x)\in F_1(\mathcal P^{(1)}_\ell/\P^1)_{\mathcal{Q}_{Y_\ell}}^{\text{tan}} \times_{\P^1} \mathcal{Q}_{Y_\ell} \mid x\in n \right\} \arrow[ld] \arrow[rd] &   \\
F_1(\mathcal P^{(1)}_\ell/\P^1)_{\mathcal{Q}_{Y_\ell}}^{\text{tan}}  &  & \mathcal{Q}_{Y_\ell},
\end{tikzcd}
\]
where the first projection is a bijection over $F_1(\mathcal{P}^{(1)}_\ell/\P^1)_{\mathcal{Q}_{Y_\ell}}^{\text{tan}}\setminus F_1(\mathcal{Q}_{Y_\ell}/\P^1)$ and a $\P^1$-bundle over $F_1(\mathcal{Q}_{Y_\ell}/\P^1)$; the second projection is a $\P^{N-6}$-bundle because none of the cone points of the $N+1$ singular fibers of $\varphi^{(1)}$ are contained in $\mathcal{Q}_{Y_\ell}$ by Lemma~\ref{lem:general-l-vertices-avoid-Z}.
Hence, by Lemma~\ref{lem:fibration-general} and Lemma~\ref{lem:bijection-same-class}, 
\[
[F_1(\mathcal{P}^{(1)}_\ell/\P^1)_{\mathcal{Q}_{Y_\ell}}^{\text{tan}}
]=-[F_1(\mathcal{Q}_{Y_\ell}/\P^1)]\L+[\mathcal{Q}_{Y_\ell}][\P^{N-6}].\]

We conclude
\begin{equation}\label{eq:w-2ell+m_0+m_1}
\begin{split}
[W_{2\ell+m_0+m_1}]=&[\P^1]([\Gr(2,N-3)]-[\Gr(2,N-5)]) + ([F_1(\mathcal{Q}^{(1)}_\ell/\P^1)]-[F_1(\mathcal{Q}_{Y_\ell}/\P^1)])\L\\
&-[\mathcal{Q}^{(1)}_\ell][\P^{N-6}]+ [\mathcal{Q}_{Y_\ell}](-\L^{N-5}+[\P^{N-7}])-(N+1)\L^{N-5}.
\end{split}
\end{equation}

\subsection{\texorpdfstring{$W_{\ell+m_0+2m_1}$}{Wellplusm0plus2m1} and \texorpdfstring{$W_{\mathrlap{\times}+}$}{W4concurrent}}\label{section:w-concurrent}
We first explain the idea behind the computation.
First, let \(x\in X\) be any point, and let $T_{x}X\subset \P^{N}$ be the tangent space to $X$ at $x$. The intersection \(X\cap T_{x}X\) is the cone with vertex \(x\) over a subvariety \(Y_x\) of \(T_{x}X/x\cong \P^{N-3}\) defined by two quadratic equations (which do not both vanish identically because \(T_{x}X\not\subset X\) by Lemma~\ref{lem:F_r(X)}\eqref{item:lem-F_r(X)-dim}). Then points of \(Y_x\) correspond to lines on \(X\) passing through \(x\).
Furthermore, \(Y_x\) is the same as the intersection defined in~\eqref{eqns:Z-and-Y} for the hyperbolic reduction \(\mathcal Q^{(0)}_x\), so if \(x\in X\) is general then \(Y_x\subset\P^{N-3}\) is a smooth complete intersection of two quadrics by Lemma~\ref{lem:general-l-hyperbolic-red}\eqref{item:general-l-Y-smooth-ci}.

Now let $(m_0,m_1) \in W_{\mathrlap{\times}+}$, and let $x= \ell\cap m_0\cap m_1$.
Write \(X\cap\langle\ell,m_0,m_1\rangle=\ell+m_0+m_1+m\). Then the lines $\ell,m_0,m_1,m$ through \(x\) yield points $p_\ell, p_m,p_{m_0},p_{m_1}\in Y_x$ such that $\langle p_\ell,p_m,p_{m_0},p_{m_1}\rangle \cong \P^2$ and $Y_x\cap \langle p_\ell,p_m,p_{m_0},p_{m_1}\rangle=p_\ell+p_m+p_{m_0}+p_{m_1}$.
Conversely, to get $(m_0,m_1)$ as above, one needs to choose $x\in \ell$ and two points $p_0,p_1\in Y_x$, and this is generically equivalent to choosing $x\in \ell$, $q\in Y_x$, and a plane in $\P^{N-3}=T_{x}X/x$ containing $\langle p_\ell,q\rangle$.
Similarly, for every $(m_0,m_1)\in W_{\ell+m_0+2m_1}$, the lines $\ell, m_0, m_1$ intersect at a single point $x$, and they correspond to $p_\ell, p_{m_0}, p_{m_1}\in Y_x$ such that $\langle p_\ell,p_{m_0},p_{m_1}\rangle \cong\P^2$ and $Y_x\cap \langle p_\ell, p_{m_0}, p_{m_1}\rangle = p_\ell + p_{m_0} + 2p_{m_1}$. 

Now we compute the class of \(W_{\ell+m_0+2m_1} \sqcup W_{\mathrlap{\times}+}\).
Let \(U_1(X)\subset F_1(X)\times X\) be the universal family of lines on \(X\), and let \(M\coloneqq U_1(X)\times_X\ell\) be the preimage of \(\ell\) under the second projection. Then the fiber of the second projection
$M\to \ell$ over $x\in \ell$ is $Y_x$.
For general $x\in \ell$, the fiber \(Y_x\) is smooth by Lemma~\ref{lem:general-l-hyperbolic-red}\eqref{item:general-l-Y-smooth-ci}, generality of \(\ell\), and Lemma~\ref{lem:general-r-plane-contained-in-r+1-plane}.
Let $F_M\to M$ be the morphism whose fiber over $(q,x)\in M$ equals the set of planes in $T_{x}X/x \cong\P^{N-3}$ containing $\langle p_\ell,q\rangle$.
Let $M'\subset M$ be the closed subset whose fiber over $x\in \ell$ is
the union of $\{p_\ell\}$ and
the closed subset of $Y_x$ swept out by lines passing through $p_\ell$.
Note that, for general \(x\in\ell\), $Y_x$ contains lines through $p_\ell$ if and only if \(N-3\geq 5\) (i.e., \(N\geq 8\)) by Lemma~\ref{lem:general-r-plane-contained-in-r+1-plane} and smoothness of \(Y_x\).

Next, we show that, for any \(x\in\ell\), the point \(p_\ell\) is in the smooth locus of \(Y_x\), and \(Y_x\subset T_{x}X/x\cong\P^{N-3}\) is a complete intersection of two quadrics.
For smoothness at \(p_\ell\), choose coordinates on \(\P^N\) so that \(\ell=\{x_2=\cdots=x_N=0\}\) and \(x=[1:0:\cdots:0]\), and write \(X\) as in~\eqref{eqn:X}. The four linear forms \(l_{ij}\) are independent by Lemma~\ref{lem:general-l-hyperbolic-red}\eqref{item:general-l-Z-dimension}. The tangent space to \(X\) at \(x\) is \(T_x X = \{l_{00}=l_{10}=0\}\subset\P^N\), so \(T_x X/x = \{l_{00}=l_{10}=0\}\subset\P^{N-1}_{[x_1:\cdots:x_N]}\). In \(\P^{N-1}_{[x_1:\cdots:x_N]}\) we have \(p_\ell = [1:0:\cdots:0]\) and
\[Y_x = \{l_{00}=l_{10}=x_1 l_{01}+q_0 = x_1 l_{11}+q_1=0\} \subset\mathbb P^{N-1}_{[x_1:\cdots:x_N]}.\]
Then a direct computation shows that \(Y_x\) is smooth at \(p_\ell\), and \(Y_x\) has dimension \(N-5\) at \(p_\ell\).
Next, assume by contradiction that \(Y_x\subset T_x X/x\) is not a complete intersection. The above shows that \(Y_x\) is not a quadric in \(T_x X/x\), so \(Y_x\) contains a hyperplane \(H\subset T_x X/x\) by Lemma~\ref{lem:singular-intersection}\eqref{Y-possibilities}. Since \(X\cap T_xX\) is the cone with vertex \(x\) over \(Y_x\), this intersection contains the cone over \(H\cong\mathbb P^{N-4}\), so \(X\) contains an \((N-3)\)-plane. But this contradicts Lemma~\ref{lem:F_r(X)}\eqref{item:lem-F_r(X)-dim} because \(N-3>\lfloor\frac{N}{2}\rfloor-1\). Therefore \(Y_x\subset T_x X/x\) is a complete intersection.

Now let \(F_{M,1}\coloneqq F_M\times_{M}M'\), and define the following subsets of \(F_M\setminus F_{M,1}\):
\begin{equation*}
\begin{split}
F_{M,2}&\coloneqq \left\{(P,q,x)\in F_M\setminus F_{M,1}\,\middle\vert\, \substack{ Y_x\cap P\text{ is one of }2p_\ell+p+q\text{ for some }p\in Y_x, \\ q+n \text{ for some } n\in F_1(Y_x)_{p_\ell}, \text{ or a conic in }P }\right\},\\
F_{M,3}&\coloneqq\{(P,q,x)\in F_M\setminus F_{M,1}\mid Y_x\cap P = p_\ell + n \text{ for some }n\in F_1(Y_x) \text{ (not containing \(p_\ell\)), and \(q\in n\)}\},\\
F_{M,4}&\coloneqq\{(P,q,x)\in F_M\setminus F_{M,1}\mid Y_x\cap P = p_\ell + 2p + q\text{ for some }p\in Y_x\setminus \{p_\ell\} \text{ (possibly $p=q$)} \}.
\end{split}
\end{equation*}
By definition of \(F_{M,1}\), for \((P,q,x)\in F_{M,2} \sqcup F_{M,3} \sqcup F_{M,4}\) we have \(q\neq p_\ell\).

For a plane \(P\subset T_{x}X/x\), Lemma~\ref{lem:singular-intersection}\eqref{Y-possibilities} shows that
the intersection \(Y_x\cap P\) is
one of \(P\), a conic in \(P\), the union of a line and a point (possibly embedded in the line), or a zero-dimensional scheme of length four.
So \((P,q,x)\in F_M\setminus (F_{M,1}\sqcup F_{M,2}\sqcup F_{M,3}\sqcup F_{M,4})\) if and only if \(Y_x\cap P = p_\ell+q+p_0+p_1\) where \(q,p_0,p_1\in Y_x\setminus\{p_\ell\}\) are points with \(p_0\neq p_1\). Since \(Y_x \cap \langle p_\ell,p_i\rangle = p_\ell + p_i\), the line \(m_i\) corresponding to \(p_i\) is in \(V_{\ell+m}\).
The map
\[
F_M\setminus (F_{M,1}\sqcup F_{M,2}\sqcup F_{M,3}\sqcup F_{M,4})\to W_{\ell+m_0+2m_1}\sqcup W_{\mathrlap{\times}+},\qquad (P,q,x)\mapsto (m_0,m_1),
\]
where $Y_x\cap P=p_\ell + q + p_0 + p_1$ and $m_0,m_1$ correspond to $p_0, p_1$, is a bijection; its inverse is given by $(m_0,m_1)\mapsto (P,q,x)$, where $x=\ell\cap m_0\cap m_1$, $P=\langle \ell,m_0,m_1\rangle /x$, and $Y_x\cap P= p_\ell + p_{m_0} + p_{m_1} + q$.
Hence, by Lemma \ref{lem:bijection-same-class},
\[
[W_{\ell+m_0+2m_1}\sqcup W_{\mathrlap{\times}+}]=[F_M\setminus F_{M,1}]-([F_{M,2}]+[F_{M,3}]+[F_{M,4}]).
\]
It remains to compute the classes of $F_{M}\setminus F_{M,1}$, $F_{M,2}$, $F_{M,3}$, and $F_{M,4}$.

As for $F_{M}\setminus F_{M,1}$, the morphism $F_{M}\setminus F_{M,1}\to M\setminus M'$ is a $\P^{N-5}$-bundle.
In addition, the map
\[
M\setminus M'\to V_{\ell+m}, \qquad (q,x) \mapsto \langle x, q\rangle
\] 
is a bijection with inverse $m\mapsto (m/x,\ell\cap m)$.
Hence by Lemma~\ref{lem:fibration-general}, Lemma~\ref{lem:bijection-same-class}, and~\eqref{eq:v-w-ell-m},
\[
[F_M\setminus F_{M,1}]=[V_{\ell+m}][\P^{N-5}]=([\mathcal{Q}^{(1)}_\ell]-[\mathcal{Q}_{Y_\ell}])[\P^{N-5}].
\]
As for $F_{M,2}$, let $\widetilde{M}\to M$ be the blow-up along the section of $M\to \ell$ given by $x\mapsto p_\ell$, and let $E_M$ be the exceptional divisor. Since \(p_\ell\) is a smooth point of the complete intersection \(Y_x\subset\P^{N-3}\), the composition $E_M\to M\to \ell$ is a $\P^{N-6}$-bundle whose fiber over $x\in \ell$ is $(T_{p_\ell}Y_x)/p_\ell$.
For \((v,q,x) \in E_M\times_\ell (M\setminus M')\), the span \(\langle p_\ell, v,q\rangle\) is a 2-plane since \(\langle p_\ell,q\rangle \not\subset Y_x\), and the intersection \(Y_x\cap\langle p_\ell, v,q\rangle\) is equal to one of the following: a conic (necessarily reduced) containing \(p_\ell,q\) with tangent direction \(v\) at \(p_\ell\); the union of the line \(\langle p_\ell,v\rangle\) and \(\{q\}\); or \(2p_\ell+p+q\) for some point \(p\in Y_x\) (possibly \(p_\ell\) or \(q\)). So the map
\[
E_M\times_\ell (M\setminus M')\to F_{M,2},\qquad (v,q,x)\mapsto (\langle p_\ell, v,q\rangle,q,x)
\]
is well-defined, and it is a bijection with inverse $(P,q,x)\mapsto (v,q,x)$, where $v$ is the unique point in $((T_{p_\ell}Y_x)\cap P)/p_\ell$. Indeed, \(T_{Y_x,p_\ell}\cap P= T_{p_\ell}(Y_x\cap P)\) has dimension at least 1 for each of the cases in \(F_{M,2}\), and the dimension is exactly 1 because the assumption that \(q\notin M'\) implies \(q\notin T_{p_\ell} Y_x\) (since \(Y_x\cap T_{p_\ell} Y_x\) is a cone with vertex \(p_\ell\) by Lemma~\ref{lem:singular-intersection}\eqref{Y-tangentspace-intersection}) and hence \(P\not\subset T_{p_\ell} Y_x\).
Hence, by Lemma~\ref{lem:fibration-general} and Lemma~\ref{lem:bijection-same-class},
\[
[F_{M,2}]=[M\setminus M'][\P^{N-6}]=([\mathcal{Q}^{(1)}_\ell]-[\mathcal{Q}_{Y_\ell}])[\P^{N-6}].
\]
To address $F_{M,3}$ and $F_{M,4}$, we relate them to the loci \(\Sigma_2,\Sigma_3,\Sigma_4 \subset \mathcal{Q}_{\ell\text{-inc}}\) defined in Section~\ref{section:v-ell+m+2m'}. For $F_{M,3}$, the map
\[
\Sigma_4\to F_{M,3},\qquad (m',P,t)\mapsto (\langle\ell,P\rangle/(\ell\cap P), p_{m'}, \ell\cap P)
\]
is a bijection with inverse $(P,q,x)\mapsto (\langle x,q\rangle, \langle x,n\rangle,t)$, where $Y_x\cap P=p_\ell+n$ and $t\in \P^1$ is the unique point such that $\langle \ell, x,q\rangle =\langle\ell,q\rangle \subset \mathcal{Q}_t$.

For $F_{M,4}$, first note that if \((m',P,t)\in\Sigma_2\), then we necessarily have \(\langle\ell,m\rangle\not\subset \mathcal Q_t\) (otherwise the intersection \(\mathcal Q_t\cap\langle\ell,P\rangle\), which is defined by a quadratic equation in \(\langle\ell,P\rangle\cong\P^3\), contains the three distinct 2-planes \(\langle\ell,m'\rangle\), \(P=\langle m',m\rangle\), and \(\langle\ell,m\rangle\) and hence must be equal to \(\langle\ell,P\rangle\), which contradicts the definition of \(\Sigma_2\)). Since \(\langle\ell,m'\rangle\not\subset X\) and \(P=\langle m,m'\rangle\not\subset X\), Lemma~\ref{lem:lines-meeting-l-span-P3-common-point} then implies that \(X\cap\langle\ell,P\rangle=X\cap\langle\ell,m',m\rangle\) is equal to \(\ell+2m'+m\).
On the other hand, any \((m',P,t)\in\Sigma_3\) satisfies \(X\cap\langle\ell,P\rangle=\ell+3m'\) (by Lemma~\ref{lem:intersect-3plane-containing-2plane-meeting-in-2m'-cases}, since \(\langle\ell,m'\rangle\subset\mathcal Q_t\) and \(\langle\ell,m'\rangle\not\subset X\)).
So the map
\[
\Sigma_2\sqcup \Sigma_3\to F_{M,4}, \qquad (m',P,t)\mapsto (\langle \ell,P\rangle/(\ell\cap m\cap m'), p_m, \ell\cap m\cap m'),
\]
where $X\cap P=m+m'$ (possibly $m=m'$) is well defined; furthermore, it is a bijection with inverse \((P,q,x) \mapsto (m_p, \langle m_p,m_q\rangle,t)\) where \(Y_x\cap P=p_\ell+2p+q\), \(m_p\) (resp. \(m_q\)) is the line corresponding to \(p\) (resp. \(q\)), and \(t\in\P^1\) is the unique point with \(\langle\ell,m_p\rangle\subset\mathcal Q_t\) (which exists because \(\langle p_\ell,p\rangle\not\subset Y_x\), so \(m_p\in V_{\ell+m}\)); we have \(\langle m_p,m_q\rangle\subset\mathcal Q_t\) by Lemmas~\ref{lem:lines-meeting-l-span-P3-common-point}, \ref{lem:intersect-3plane-containing-2plane-meeting-in-2m'-cases}, and~\ref{lem:intersect-3m'+l-then-2plane}. Here, if \(p=q\), then \(\langle m_p,m_q\rangle\) is the unique 2-plane such that \(X\cap L \cap \langle m_p,m_q\rangle\) is nonreduced along the line \(m_p\) (Lemmas~\ref{lem:intersect-3plane-containing-2plane-meeting-in-2m'-cases} and~\ref{lem:intersect-3m'+l-then-2plane}), where \(L\subset\P^N\) is the 3-plane corresponding to \(P\subset T_{x}X/x\).
Hence by Lemma \ref{lem:bijection-same-class},
\[
[F_{M,3}]=[\Sigma_4],\qquad [F_{M,4}]=[\Sigma_2\sqcup \Sigma_3].
\]

We conclude
\begin{equation}\label{eqn:W_5}
[W_{\ell+m_0+2m_1}\sqcup W_{\mathrlap{\times}+}]=([\mathcal{Q}^{(1)}_\ell]-[\mathcal{Q}_{Y_\ell}])\L^{N-5} - [\Sigma_2\sqcup \Sigma_3\sqcup \Sigma_4].
\end{equation}

\subsection{Summary}

\begin{prop}\label{prop:excep-loci}
In the above notations, we have the following equalities. For \(N=6,7\), the terms involving \([Y_\ell]\) are omitted since \(Y_\ell=\emptyset\); the displayed formulas below are written uniformly for \(N\geq 8\).
\begin{equation*}
\begin{split}
& \eqref{eq:v-plane}\quad [V_{\text{plane}}]=[Y_\ell][\P^1]\L,\\
& \eqref{eq:v-w-ell-m}\quad [V_{\ell+m}]=[W_{\ell+m_0}]=[\mathcal{Q}^{(1)}_\ell]-[\mathcal{Q}_{Y_\ell}],\\
& \eqref{eq:w}\quad [W]=[\Sym^2\mathcal{Q}_\ell^{(1)}]-[\Sym^2\mathcal{Q}_{Y_\ell}]+[\mathcal{Q}_{Y_\ell}]^2-[\mathcal{Q}^{(1)}][\mathcal{Q}_{Y_\ell}],\\
& \eqref{eq:v-3-plane}\quad [V_{\text{$3$-plane}}]=[F_1(Y_\ell)]\L^4,\\
& \eqref{eq:v-w-quad}\quad [{V_{\text{quad,rk}\geq3}}]=([F_1(\mathcal{Q}^{(1)}_\ell/\P^1)]-[F_1(\mathcal{Q}_{Y_\ell}/\P^1)])\L - [Y_\ell]([\mathcal{Q}^{(1)}_\ell]-[\mathcal{Q}_{Y_\ell}]+[\P^1](\L^{N-7}-\L^{N-5})) \\ & \qquad\qquad\qquad\qquad -[S_3(X)_{\ell,\text{rk}=3}]\L,\\
& \eqref{eq:v-w-quad}\quad [{W_{\text{quad,rk}\geq3}}]=([F_1(\mathcal{Q}^{(1)}_\ell/\P^1)]-[F_1(\mathcal{Q}_{Y_\ell}/\P^1)])\L^2 - [Y_\ell]([\mathcal{Q}^{(1)}_\ell]-[\mathcal{Q}_{Y_\ell}]+[\P^1](\L^{N-7}-\L^{N-5}))\L \\ & \qquad\qquad\qquad\qquad -[S_3(X)_{\ell,\text{rk}=3}]\L,\\
& \eqref{eq:v-w-quad}\quad [V_{\text{quad,rk}\leq 2}]=[Y_\ell]([\mathcal{Q}^{(1)}_\ell]-[\mathcal{Q}_{Y_\ell}]+[\P^1](\L^{N-7}-\L^{N-5}))\L,\\
& \eqref{eq:v-w-quad}\quad [W_{\text{quad,rk}\leq 2}]=[Y_\ell]([\mathcal{Q}^{(1)}_\ell]-[\mathcal{Q}_{Y_\ell}]+[\P^1](\L^{N-7}-\L^{N-5}))(\L-1),\\
&\eqref{eq:ell+P}\;\; [V_{\ell+P}]=[W_{\ell+P}]=[(F_2(X)^{\text{int}}_\ell)^\circ]\L^2,\\
&\eqref{eq:v-m+P}\;\; [V_{m+P}]=[Y_\ell][\P^1](\L^{N-4}-\L^{N-5}),\\
&\eqref{eq:v-ell+m+2m'}\;\; [V_{\ell+m+2m'}]=([\mathcal{Q}^{(1)}_\ell]-[\mathcal{Q}_{Y_\ell}]-(N+1))\L^{N-5}-[\Sigma_2\sqcup \Sigma_3\sqcup \Sigma_4],\\
&\eqref{eq:w-2ell+m_0+m_1}\;\; [W_{2\ell+m_0+m_1}]=[\P^1]([\Gr(2,N-3)]-[\Gr(2,N-5)]) + ([F_1(\mathcal{Q}^{(1)}_\ell/\P^1)]-[F_1(\mathcal{Q}_{Y_\ell}/\P^1)])\L\\
&\qquad\qquad\qquad\qquad\quad -[\mathcal{Q}^{(1)}_\ell][\P^{N-6}]+ [\mathcal{Q}_{Y_\ell}](-\L^{N-5}+[\P^{N-7}])-(N+1)\L^{N-5}, \\
& \eqref{eqn:W_5}\;\; [W_{\ell+m_0+2m_1}\sqcup W_{\mathrlap{\times}+}]=([\mathcal{Q}^{(1)}_\ell]-[\mathcal{Q}_{Y_\ell}])\L^{N-5} - [\Sigma_2\sqcup \Sigma_3\sqcup \Sigma_4].
\end{split}
\end{equation*}
\end{prop}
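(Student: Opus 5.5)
Proposition~\ref{prop:excep-loci} is a summary: each line is an equation already established in Subsections~\ref{section:v-plane}--\ref{section:w-concurrent}. The plan is to go through it line by line, citing the corresponding displayed equation and making only the small rewrites needed to match the stated form. Everything takes place in \(\widetilde{K}_0(\mathrm{Var}/k)\), since several inputs use Lemma~\ref{lem:bijection-same-class}.

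\textbf{Lines that need a rewrite.}
\begin{itemize}
\item For \([V_{\text{plane}}]\), equation~\eqref{eq:v-plane} gives \([Y_\ell](\L^2+\L)\). This equals \([Y_\ell][\P^1]\L\) because \([\P^1]=1+\L\).
\item For \([W]\), equation~\eqref{eq:w} is quoted as is. The symbol \([\mathcal Q^{(1)}]\) appearing there means \([\mathcal Q^{(1)}_\ell]\); by Proposition~\ref{prop:KS-hyperbolic-reduction}\eqref{item:KS-hyperbolic-reduction-indep} this class does not depend on the chosen line.
\end{itemize}

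\textbf{Lines copied verbatim.} The formulas for \(V_{\ell+m}\), \(W_{\ell+m_0}\), \(V_{3\text{-plane}}\), the four quadric loci, \(V_{\ell+P}\), \(W_{\ell+P}\), \(V_{m+P}\), \(V_{\ell+m+2m'}\), \(W_{2\ell+m_0+m_1}\), and \(W_{\ell+m_0+2m_1}\sqcup W_{\mathrlap{\times}+}\) are exactly equations~\eqref{eq:v-w-ell-m}, \eqref{eq:v-3-plane}, \eqref{eq:v-w-quad}, \eqref{eq:ell+P}, \eqref{eq:v-m+P}, \eqref{eq:v-ell+m+2m'}, \eqref{eq:w-2ell+m_0+m_1}, and~\eqref{eqn:W_5}.

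\textbf{Cases \(N=6,7\).} Here I would use Lemma~\ref{lem:general-l-hyperbolic-red}, which gives \(Y_\ell=\emptyset\), so every term carrying a factor \([Y_\ell]\) or \([F_1(Y_\ell)]\) vanishes.
\begin{itemize}
\item The loci \(V_{\text{plane}}\), \(V_{3\text{-plane}}\), \(V_{\text{quad,rk}\le2}\), \(W_{\text{quad,rk}\le 2}\), and \(V_{m+P}\) are empty by Lemma~\ref{lem:general-r-plane-contained-in-r+1-plane}. This matches their right-hand sides becoming \(0\).
\item The remaining formulas were derived without assuming \(N\ge8\). In particular, the computation of \([F_1(\mathcal P^{(1)}_\ell/\P^1)^{\text{int}}_{\mathcal Q_{Y_\ell}}]\) was checked separately for \(N=6\).
\end{itemize}

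\textbf{Main obstacle.} There is essentially nothing hard here. The one point needing care is bookkeeping: the \(N=6,7\) specializations recorded after~\eqref{eq:v-w-quad} must agree with the uniform formulas once the \([Y_\ell]\) terms are dropped. They do, because \(F_1(\mathcal Q_{Y_\ell}/\P^1)=\emptyset\) in that range.
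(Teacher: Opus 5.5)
Your proposal is correct and matches the paper. Proposition~\ref{prop:excep-loci} has no separate proof: it collects equations \eqref{eq:v-plane}--\eqref{eqn:W_5} from Subsections~\ref{section:v-plane}--\ref{section:w-concurrent}, and the only rewrite needed is \([Y_\ell](\L^2+\L)=[Y_\ell][\P^1]\L\). Your handling of \(N=6,7\) via \(Y_\ell=\emptyset\) and \(F_1(\mathcal Q_{Y_\ell}/\P^1)=\emptyset\) also agrees with the paper's remarks after \eqref{eq:v-w-quad} and \eqref{eq:v-m+P}.
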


\begin{rem}
Among the list, $V_{\ell+m+2m'}, W_{2\ell+m_0+m_1}, W_{\ell+m_0+2m_1}\sqcup W_{\mathrlap{\times}+}$ are divisors, and the other exceptional loci are of codimension $>1$. 
This is consistent with the fact that $\rho(F_1(X))-\rho(\Sym^2\mathcal{Q}^{(1)})=-1$. 
Some of the loci of codimension $>1$ seem to form pairs, and one can speculate that they correspond to flopped-flopping or flipped-flipping loci of $F_1(X)\dashrightarrow \Sym^2\mathcal{Q}^{(1)}$. 
\end{rem}

\section{Proof of main theorems and corollaries}\label{sec:summing-together}

In this section, we prove Theorems~\ref{thm:main-odd} and~\ref{thm:main-even}.
As an intermediate step, we express the class of $F_1(X)$ in terms of the classes of $\mathcal{Q}^{(1)}_\ell, F_1(\mathcal{Q}^{(1)}_\ell/\P^1), Y_\ell, F_1(\mathcal{Q}_{Y_\ell}/\P^1)$.
The formula is independent of the parity of $N$.

\begin{thm}\label{thm:towards-thm}
Over an algebraically closed field $k$ of characteristic $\neq 2$, let $N\geq 6$, $X\subset \P^N$ be a smooth complete intersection of two quadrics, and $\ell\in F_1(X)$ be a general line.
Then the following equalities hold in $\widetilde{K}_0(\mathrm{Var}/k)$:

If \(N=6,7\), then
\begin{equation*}
\begin{split}
[F_1(X)]&=[\Sym^2\mathcal{Q}^{(1)}_\ell]-[F_1(\mathcal{Q}^{(1)}_\ell/\P^1)]\L^2 -[\P^1][\P^{N-5}]\L^{N-5}-[\Sym^2\P^{N-6}]+1.
\end{split}
\end{equation*}

If \(N\geq 8\), then
\begin{equation*}
\begin{split}
[F_1(X)]&=[\Sym^2\mathcal{Q}^{(1)}_\ell]+[F_1(Y_\ell)]\L^4-[\Sym^2 Y_\ell]\L^2
+[Y_\ell]([\P^1]-[\P^{N-8}])\L
+(-[F_1(\mathcal{Q}^{(1)}_\ell/\P^1)]+[F_1(\mathcal{Q}_{Y_\ell}/\P^1)])\L^2\\
&\quad -[\P^1][\P^{N-5}]\L^{N-5}-[\Sym^2\P^{N-6}]+1.
\end{split}
\end{equation*}
\end{thm}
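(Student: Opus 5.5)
The plan is to combine Proposition~\ref{prop:v-w-isom} with the summary of exceptional loci in Proposition~\ref{prop:excep-loci}. Since the two sides of \eqref{eq:isom-V-W} are isomorphic, we get in \(\widetilde{K}_0(\mathrm{Var}/k)\)
\[
[F_1(X)] = [W] + 1 + \sum_{V_\bullet}[V_\bullet] - \sum_{W_\bullet}[W_\bullet],
\]
where the first sum runs over the nine exceptional loci of \(V\) other than \(\{\ell\}\), the second over the seven exceptional loci of \(W\), and the \(+1\) accounts for \(\{\ell\}\). All these loci are disjoint and locally closed, so this is a cut-and-paste identity. I will then substitute the formulas of Proposition~\ref{prop:excep-loci} and simplify.

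First I would verify the cancellations that do not need any explicit formula. The term \([V_{\ell+m}]=[W_{\ell+m_0}]\) cancels. So do \([V_{\ell+P}]=[W_{\ell+P}]\), which removes the unknown \((F_2(X)^{\text{int}}_\ell)^\circ\), and the \([S_3(X)_{\ell,\text{rk}=3}]\L\) terms between \(V_{\text{quad,rk}\geq3}\) and \(W_{\text{quad,rk}\geq3}\). Most importantly, the term \([\Sigma_2\sqcup\Sigma_3\sqcup\Sigma_4]\) appears with the same sign in \(V_{\ell+m+2m'}\) and in \(W_{\ell+m_0+2m_1}\sqcup W_{\mathrlap{\times}+}\). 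Its contribution is \(-[\Sigma]\) from the \(V\)-sum and \(+[\Sigma]\) from subtracting the \(W\)-term, so it cancels. The \(\L^{N-5}([\mathcal Q^{(1)}_\ell]-[\mathcal Q_{Y_\ell}])\) parts of these two loci also cancel, leaving \(-(N+1)\L^{N-5}\) from \(V_{\ell+m+2m'}\). This is in turn cancelled by the \(+(N+1)\L^{N-5}\) coming from \(-[W_{2\ell+m_0+m_1}]\).

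Next, the \([F_1(\mathcal Q^{(1)}_\ell/\P^1)]-[F_1(\mathcal Q_{Y_\ell}/\P^1)]\) terms collect as \(\L-\L^2-\L\), giving the stated \(-\L^2\) coefficient. For the \(Y_\ell\)-terms, the quadric blocks \([Y_\ell](\cdots)\) from the four quad loci sum to \([Y_\ell](\L-1-\L+\L-(\L-1))(\cdots)=0\). What remains is \([V_{\text{plane}}]+[V_{m+P}]=[Y_\ell][\P^1](\L+\L^{N-4}-\L^{N-5})\) and \([V_{\text{3-plane}}]=[F_1(Y_\ell)]\L^4\).

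The remaining work is to handle \([W]\) and the leftover pieces of \(-[W_{2\ell+m_0+m_1}]\), namely
\[
-[\P^1]([\Gr(2,N-3)]-[\Gr(2,N-5)])+[\mathcal Q^{(1)}_\ell][\P^{N-6}]+[\mathcal Q_{Y_\ell}](\L^{N-5}-[\P^{N-7}]).
\]
Here I would insert the explicit classes from Corollary~\ref{cor:class-of-Qr-formula} and Proposition~\ref{prop:KS-hyperbolic-reduction}\eqref{item:KS-hyperbolic-reduction-formula}. For \(N\geq 8\), Lemma~\ref{lem:general-l-hyperbolic-red} makes \(\mathcal Q_{Y_\ell}\) a pencil of quadrics in \(\P^{N-5}\) with smooth base locus \(Y_\ell\), so \([\mathcal Q_{Y_\ell}]=[Y_\ell]\L+[\P^{N-5}]\). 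For \(N=6,7\), \(\mathcal Q_{Y_\ell}\) is respectively a point or a pencil of quadrics in \(\P^1\) with empty base locus.

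For \([\Sym^2\mathcal Q_{Y_\ell}]\) I would stratify \(\mathcal Q_{Y_\ell}=(\P^1\times Y_\ell)\sqcup U\), where \(U\to\P^{N-5}\setminus Y_\ell\) is an isomorphism. Lemma~\ref{lem:K_0-sym-formulas} together with Totaro's formula then gives \([\Sym^2 \mathcal Q_{Y_\ell}]\) in terms of \([\Sym^2 Y_\ell]\L^2\), \([Y_\ell]\), and \([\Sym^2\P^{N-5}]\). This is the source of the \(-[\Sym^2Y_\ell]\L^2\) term.

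The main obstacle is the final bookkeeping: checking that the products \([\mathcal Q_{Y_\ell}]^2-[\mathcal Q^{(1)}_\ell][\mathcal Q_{Y_\ell}]\), the \(\Sym^2\) and Grassmannian terms, and the \(\P^{N-6}\) and \(\P^{N-7}\) factors all collapse. The \([Y_\ell]\) terms should reduce to \([Y_\ell]([\P^1]-[\P^{N-8}])\L\), and the pure Tate part to \(-[\P^1][\P^{N-5}]\L^{N-5}-[\Sym^2\P^{N-6}]+1\). To make this tractable, I would express everything in terms of \(\L\), \([Y_\ell]\), and \([\mathcal Q^{(2)}]\) (eliminating \([\mathcal Q^{(1)}_\ell]\)), and use the Gaussian-binomial identity \([\Gr(2,n)]-[\Gr(2,n-1)]=\L^{n-2}[\P^{n-2}]\) twice. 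The \(N=6,7\) cases go the same way with all \(Y_\ell\)-terms set to zero.
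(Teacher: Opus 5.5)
Your route is the paper's: combine Proposition~\ref{prop:v-w-isom} with Proposition~\ref{prop:excep-loci}, then substitute the classes of $\mathcal Q_{Y_\ell}$ and $\Sym^2\mathcal Q_{Y_\ell}$ and use the Grassmannian recursion. Several of your cancellations are correct: $V_{\ell+m}$ against $W_{\ell+m_0}$, $V_{\ell+P}$ against $W_{\ell+P}$, the $[S_3(X)_{\ell,\text{rk}=3}]\L$ terms, the $\Sigma$-terms and the $(N+1)\L^{N-5}$ terms. So is the net coefficient $-\L^2$ of $[F_1(\mathcal Q^{(1)}_\ell/\P^1)]-[F_1(\mathcal Q_{Y_\ell}/\P^1)]$.

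\textbf{The four quadric blocks do not cancel.} Set $B\coloneqq[\mathcal Q^{(1)}_\ell]-[\mathcal Q_{Y_\ell}]+[\P^1](\L^{N-7}-\L^{N-5})$. The coefficient of $[Y_\ell]B$ in $[F_1(X)]$ collects four contributions:
$-1$ from $V_{\text{quad,rk}\geq 3}$;
$+\L$ from $V_{\text{quad,rk}\leq 2}$;
$+\L$ from $-[W_{\text{quad,rk}\geq 3}]$, since that class contains $-[Y_\ell]B\L$;
$-(\L-1)$ from $-[W_{\text{quad,rk}\leq 2}]$.
The total is $\L$, not $0$; you reversed the sign of the $W_{\text{quad,rk}\geq3}$ contribution. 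Geometrically the surviving term is $[S_3(X)^\circ_{\ell,\text{rk}\leq 2}]\L^2$.

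This term is indispensable. $[Y_\ell]B\L$ cancels the $-[\mathcal Q^{(1)}_\ell][Y_\ell]\L$ inside $-[\mathcal Q^{(1)}_\ell][\mathcal Q_{Y_\ell}]$ and the $[Y_\ell]^2\L^2$ inside $[\mathcal Q_{Y_\ell}]^2$. Together with $[V_{\text{plane}}]+[V_{m+P}]$ it contributes $[Y_\ell]\L[\P^1](\L^{N-7}-\L^{N-6}+1)$. If you drop it, your final expression differs from the asserted formula by $[Y_\ell]B\L$, which is nonzero: for $N=8$ one has $[Y_\ell]=4$ and $B=\L^3+11\L^2-2\L$. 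So the bookkeeping you anticipate cannot close as proposed.

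\textbf{There is also a dimension slip.} $\mathcal Q_{Y_\ell}$ is a pencil of quadrics in $\P^{N-6}$, not $\P^{N-5}$: Lemma~\ref{lem:general-l-hyperbolic-red} with $r=1$ gives $\P^{N-3r-3}$, and your own $N=6,7$ descriptions presuppose this. Hence, for every $N\geq 6$,
\[
[\mathcal Q_{Y_\ell}]=[\P^{N-6}]+[Y_\ell]\L,\qquad [\Sym^2\mathcal Q_{Y_\ell}]=[\Sym^2\P^{N-6}]+[Y_\ell][\P^{N-6}]\L+[\Sym^2Y_\ell]\L^2 .
\]
With $\P^{N-5}$ you could not produce the $-[\Sym^2\P^{N-6}]$ in the target.

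\textbf{With both corrections the computation closes directly,} with no need to trade $[\mathcal Q^{(1)}_\ell]$ for $[\mathcal Q^{(2)}]$. The terms in $[\mathcal Q^{(1)}_\ell][\P^{N-6}]$, $[\mathcal Q^{(1)}_\ell][Y_\ell]\L$, $[Y_\ell]^2\L^2$ and $[Y_\ell][\P^{N-6}]\L$ all cancel. The remaining coefficient of $[Y_\ell]\L$ is $\L^{N-5}-[\P^{N-7}]+[\P^1](\L^{N-7}-\L^{N-6}+1)=[\P^1]-[\P^{N-8}]$. For the Tate part, use $[\P^{N-6}]^2-[\P^{N-6}][\P^{N-7}]=[\P^{N-6}]\L^{N-6}$ and $[\Gr(2,N-3)]-[\Gr(2,N-5)]=\L^{N-6}[\P^{N-6}]+\L^{N-5}[\P^{N-5}]$. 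It then reduces to $-[\P^1][\P^{N-5}]\L^{N-5}-[\Sym^2\P^{N-6}]+1$, as claimed.
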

\begin{proof}
By Proposition \ref{prop:v-w-isom} and Proposition \ref{prop:excep-loci},
\begin{equation}\label{eq:towards-thm-1}
\begin{split}
[F_1(X)]&=[V_{\text{plane}}] + ([V_{\ell+m}]-[W_{\ell+m_0}]) + [V_{\text{$3$-plane}}]+ ([V_{\text{quad,rk}\geq 3}]-[W_{\text{quad,rk}\geq 3}])+([V_{\text{quad,rk}\leq 2}]-[W_{\text{quad,rk}\leq 2}])\\
&\quad+([V_{\ell+P}]-[W_{\ell+P}]) + [V_{m+P}] + [V_{\ell+m+2m'}]-[W_{2\ell+m_0+m_1}]-[W_{\ell+m_0+2m_1}\sqcup W_{\mathrlap{\times}+}]+[W] +1\\
&=[\Sym^2\mathcal{Q}^{(1)}_\ell]+[F_1(Y_\ell)]\L^4+(-[F_1(\mathcal{Q}^{(1)}_\ell/\P^1)]+[F_1(\mathcal{Q}_{Y_\ell}/\P^1)])\L^2\\
&\quad -[\Sym^2 \mathcal{Q}_{Y_\ell}] +[\mathcal{Q}_{Y_\ell}]^2-[\mathcal{Q}^{(1)}_\ell][\mathcal{Q}_{Y_\ell}] + [\mathcal{Q}^{(1)}_\ell][\P^{N-6}]-[\mathcal{Q}_{Y_\ell}](-\L^{N-5}+[\P^{N-7}])\\
&\quad +[Y_\ell]\L([\mathcal{Q}^{(1)}_\ell]-[\mathcal{Q}_{Y_\ell}]+[\P^1](\L^{N-7}-\L^{N-6}+1)) -[\P^1]([\Gr(2,N-3)]-[\Gr(2,N-5)]) +1.
\end{split}
\end{equation}
By Lemma~\ref{lem:general-l-hyperbolic-red}\eqref{item:general-l-Y-smooth-ci}, $\mathcal{Q}_{Y_\ell}$ is the blow-up of $\P^{N-6}$ along $Y_\ell$, so 
\begin{equation}\label{eq:towards-thm-2}
[\mathcal{Q}_{Y_\ell}]=[\P^{N-6}]+[Y_\ell]\L,
\end{equation}
and accordingly,
\begin{equation}\label{eq:towards-thm-3}
\begin{split}
[\mathcal{Q}_{Y_\ell}]^2 &=[\P^{N-6}]^2 + 2[Y_\ell]\L[\P^{N-6}] + [Y_\ell]^2 \L^2,\\
[\Sym^2\mathcal{Q}_{Y_\ell}] &=[\Sym^2\P^{N-6}]+[Y_\ell]\L[\P^{N-6}] + [\Sym^2 Y_\ell]\L^2.
\end{split}
\end{equation}
In addition, by \cite[Proposition 2.1]{Martin16} (which is valid over any field),
\begin{equation}\label{eq:towards-thm-4}
[\Gr(2,N-3)]=[\Gr(2,N-5)] + \L^{N-6}[\P^{N-6}] + \L^{N-5}[\P^{N-5}].
\end{equation}
It is straightforward to deduce from \eqref{eq:towards-thm-1}, \eqref{eq:towards-thm-2}, \eqref{eq:towards-thm-3}, \eqref{eq:towards-thm-4} the desired formula.
For \(N=6,7\) we have \(Y_\ell=F_1(\mathcal{Q}_{Y_\ell}/\P^1)=\emptyset\) by Lemma~\ref{lem:general-l-hyperbolic-red}, and all terms involving these classes vanish.
\end{proof}

Now we are ready to prove Theorem~\ref{thm:main-odd}.

\begin{proof}[Proof of Theorem~\ref{thm:main-odd}]
    If \(N=2g+1\), then \([\mathcal Q^{(1)}_\ell] = [C]\L^{g-2} + [\P^1][\P^{g-3}](1+\L^{g-1})\) and
    \begin{equation*}
        \begin{split}
            [\Sym^2\mathcal Q^{(1)}_\ell] &= [\Sym^2 C]\L^{2g-4} + [C][\P^1][\P^{g-3}](1+\L^{g-1})\L^{g-2} \\ & \quad + 
        [\Sym^2(\P^1 \times \P^{g-3})](1+\L^{2(g-1)}) + [\P^1]^2[\P^{g-3}]^2 \L^{g-1}
        \end{split}
    \end{equation*}
    by Corollary~\ref{cor:class-of-Qr-formula} and Lemma~\ref{lem:K_0-sym-formulas}\eqref{item:formula-sym},
    so Theorem~\ref{thm:towards-thm} implies
    \begin{equation}\begin{split}\label{eq:N=2g+1-expression}
    [F_1(X)] &= [\Sym^2 C]\L^{2g-4} + [C][\P^1][\P^{g-3}](1+\L^{g-1})\L^{g-2} \\
    & \quad +[F_1(Y_\ell)]\L^4+(-[F_1(\mathcal{Q}^{(1)}_\ell/\P^1)]+[F_1(\mathcal{Q}_{Y_\ell}/\P^1)])\L^2 -[\Sym^2 Y_\ell]\L^2 +[Y_\ell]([\P^1]-[\P^{2g-7}])\L \\
    &\quad + [\Sym^2(\P^1 \times \P^{g-3})](1+\L^{2(g-1)}) + [\P^1]^2[\P^{g-3}]^2 \L^{g-1} -[\P^{2g-4}][\P^1]\L^{2g-4} - [\Sym^2 \P^{2g-5}] +1 .
    \end{split}\end{equation}
    To prove the formula in Theorem~\ref{thm:main-odd}, we induct on \(g\). There are three base cases: \(g=3,4,5\).

    If \(g=3\), then \(\mathcal Q_{Y_\ell}\cong\P^1\), \(Y_{\ell}=F_1(Y_{\ell})=F_1(\mathcal{Q}_{Y_\ell}/\P^1)=\emptyset\), and \([F_1(\mathcal{Q}^{(1)}_\ell/\P^1)]=[C][\P^1]\) by Corollary~\ref{cor:relative-F_1}, so~\eqref{eq:N=2g+1-expression} and Lemma~\ref{lem:formulas-Gr-quadric-Sym-Pn}\eqref{item:class-of-Sym-Pn} yield
    \begin{equation*}
        \begin{split}
            [F_1(X)] &= [\Sym^2 C]\L^2 + [C]\L(\L^3+1) + \L^6 + 1 \\
            &= \textstyle [\Sym^2 C] \L^2 + [C] \L \left( \sum_{i=0}^3 \L^i - \L - \L^2 \right) \\ & \qquad + \textstyle \left(\sum_{i=0}^4 \L^i\right) \left(\sum_{i=0}^1\L^{2i}\right)- (\L + \L^5) - (\L^2 + \L^3 + \L^4 + \L^2 + \L^3 + \L^4) .
        \end{split}
    \end{equation*}

    If \(g=4\), then \(Y_\ell\cong C_\ell\) is the genus 1 curve from Definition~\ref{defn:C_Y-curve} by Lemma~\ref{lem:general-l-hyperbolic-red}\eqref{item:general-l-Y-smooth-ci} and \cite[Proposition 4.2]{Reid-thesis}, and \([\Sym^2 C_\ell] = [C_\ell][\P^1]\) by Riemann--Roch. We have \([F_1(\mathcal Q^{(1)}_\ell/\P^1)] = [\P^3] ( [C]\L + \L^3+1)\) and \([F_1(\mathcal Q_{Y_\ell}/\P^1)] = [C_\ell][\P^1]\) by Corollary~\ref{cor:relative-F_1}, so~\eqref{eq:N=2g+1-expression} and Lemma~\ref{lem:formulas-Gr-quadric-Sym-Pn}\eqref{item:class-of-Sym-Pn} yield
    \([F_1(X)] = [\Sym^2 C]\L^4 + [C]\L^2 (\L^5+\L^4+\L+1) + (\L^2 - \L + 1) (\L^2 + \L + 1)^2 (\L^4 - \L^2 + 1)\).
    \begin{equation*}
        \begin{split}
            [F_1(X)] &= [\Sym^2 C]\L^4 + [C]\L^2 (\L^5+\L^4+\L+1) + (\L^2 - \L + 1) (\L^2 + \L + 1)^2 (\L^4 - \L^2 + 1) \\
            &= \textstyle [\Sym^2 C] \L^4 + [C] \L^2 \left( \sum_{i=0}^5 \L^i - \L^2 - \L^3 \right) \\
            & \qquad \textstyle  + \left(\sum_{i=0}^6 \L^i\right) \left(\sum_{i=0}^2\L^{2i}\right)- (\L^2 + \L^6) \sum_{i=0}^2\L^i - (\L^3 + \L^4 + \L^5 + \L^5 + \L^6 + \L^7) .
        \end{split}
    \end{equation*}

    If \(g=5\), then \(Y_\ell\) is a smooth complete intersection of two quadrics in \(\P^5\) by Lemma~\ref{lem:general-l-hyperbolic-red}\eqref{item:general-l-Y-smooth-ci}, so \(F_1(Y_\ell)\) is isomorphic to the Jacobian of the genus 2 curve \(C_\ell\) by Lemma~\ref{lem:F_r(X)}\eqref{item:lem-F_r(X)-maximal-odd} and hence \([F_1(Y_\ell)] = [\Sym^2 C_\ell]-\L\) (see, e.g., \cite[Proposition 4.6]{BGM2023decomposition}).
    By Corollary~\ref{cor:relative-F_1} we have
    \begin{equation*}
        \begin{split}
            [F_1(\mathcal Q^{(1)}_\ell/\P^1)] &= [C][\P^5]\L^2 +[\P^5]([\P^5] - \L^3 - \L^2) \\
            [F_1(\mathcal Q_{Y_\ell}/\P^1)] &= [C_\ell][\P^3]\L + [\P^3]([\P^3] - \L^2 - \L)
        \end{split}
    \end{equation*}
    By Proposition~\ref{prop:r=0-case}, \([Y_\ell] = [C_\ell]\L + [\P^3] - \L[\P^1]\), so \([\Sym^2 Y_\ell] = [\Sym^2 C_\ell]\L^2 + [C_\ell](\L^4+\L)+\L^6+\L^3+1\) by Lemma~\ref{lem:K_0-sym-formulas}\eqref{item:formula-sym}. Then~\eqref{eq:N=2g+1-expression} and Lemma~\ref{lem:formulas-Gr-quadric-Sym-Pn}\eqref{item:class-of-Sym-Pn} yield
    \begin{equation*}
        \begin{split}
            [F_1(X)] &= [\Sym^2 C]\L^6 + [C]\L^3 (\L^7 + \L^6 + \L^5 + \L^2 + \L + 1) \\ & \qquad + (\L^2 + 1) (\L^4 + 1) (\L^8 + \L^7 + \L^6 - \L^4 + \L^2 + \L + 1) \\
            &= \textstyle [\Sym^2 C] \L^6 + [C] \L^3 \left( \sum_{i=0}^7 \L^i - \L^3 - \L^4 \right) \\
            & \qquad \textstyle  + \left(\sum_{i=0}^8 \L^i\right) \left(\sum_{i=0}^3\L^{2i}\right)- (\L^3 + \L^7) \sum_{i=0}^4 \L^i - (\L^4 + \L^5 + \L^6 + \L^8 + \L^{9} + \L^{10}) .
        \end{split}
    \end{equation*}

    Now let \(g\geq 6\) and assume Theorem~\ref{thm:main-odd} holds for \(g-3\). Then \(Y_\ell\) is a smooth complete intersection of two quadrics in \(\P^{2(g-3)+1}\) by Lemma~\ref{lem:general-l-hyperbolic-red}\eqref{item:general-l-Y-smooth-ci}, so the induction hypothesis implies
    \begin{equation*}
        \begin{split}
            [F_1(Y_\ell)] &= \textstyle [\Sym^2 C_\ell] \L^{2g-10} + [C_\ell] \L^{g-5} \left( \sum_{i=0}^{2g-9} \L^i - \L^{g-5} - \L^{g-4} \right) + \left(\sum_{i=0}^{2g-8} \L^i\right) \left(\sum_{i=0}^{g-5}\L^{2i}\right) \\
            & \textstyle \phantom{=} - (\L^{g-5} + \L^{g-1}) \sum_{i=0}^{2g-12}\L^i - (\L^{g-4} + \L^{g-3} + \L^{g-2} + \L^{3g-16} + \L^{3g-15} + \L^{3g-14}) .
        \end{split}
    \end{equation*}
    Corollary~\ref{cor:relative-F_1}, Proposition~\ref{prop:r=0-case}, and Lemma~\ref{lem:K_0-sym-formulas}\eqref{item:formula-sym} imply
    \begin{equation*}
        \begin{split}
            [F_1(\mathcal Q^{(1)}_\ell/\P^1)] &= [C][\P^{2g-5}]\L^{g-3} +[\P^{2g-5}]([\P^{2g-5}] - \L^{g-2} - \L^{g-3})  \\
            [F_1(\mathcal Q_{Y_\ell}/\P^1)] &= [C_\ell][\P^{2g-7}]\L^{g-4} + [\P^{2g-7}]([\P^{2g-7}] - \L^{g-3} - \L^{g-4}) \\
            [\Sym^2 Y_\ell] &= [\Sym^2 C_\ell]\L^{2g-8} + [C_\ell]\L^{g-4}([\P^{2g-7}] - \L^{g-4}[\P^1]) + [\Sym^2 \P^{2g-7}] - [\P^{2g-7}][\P^1]\L^{g-4} + \L^{2g-7} \\
            [Y_\ell] &= [C_\ell]\L^{g-4} + [\P^{2g-7}] - \L^{g-4}[\P^1] \\
        \end{split}
    \end{equation*}
    Plugging these into equation~\eqref{eq:N=2g+1-expression}, we find that
    the coefficient of \([\Sym^2 C_\ell]\) is \(0\), the coefficient of \([C_\ell]\) is \(\L^{g-3}+\frac{\L^{g-3}}{\L-1}+\L^{g-2}-\frac{\L^{g-1}}{\L-1}=0\), the coefficient of \([\Sym^2 C]\) is \(\L^{2g-4}\), the coefficient of \([C]\) is equal to
    \(\L^{g-2} \left( \sum_{i=0}^{2g-3} \L^i - \L^{g-2} - \L^{g-1} \right)\), and the constant term is equal to \(\left(\sum_{i=0}^{2g-2} \L^i\right) \left(\sum_{i=0}^{g-2}\L^{2i}\right)- (\L^{g-2} + \L^{g+2}) \sum_{i=0}^{2g-6}\L^i - (\L^{g-1} + \L^g + \L^{g+1} + \L^{3g-7} + \L^{3g-6} + \L^{3g-5})\). These claims can be verified using the attached \texttt{Mathematica} code.
    Therefore
    \begin{equation*}
        \begin{split}
            [F_1(X)] &= \textstyle [\Sym^2 C] \L^{2g-4} + [C] \L^{g-2} \left( \sum_{i=0}^{2g-3} \L^i - \L^{g-2} - \L^{g-1} \right) + \left(\sum_{i=0}^{2g-2} \L^i\right) \left(\sum_{i=0}^{g-2}\L^{2i}\right) \\
            & \textstyle \phantom{=} - (\L^{g-2} + \L^{g+2}) \sum_{i=0}^{2g-6}\L^i - (\L^{g-1} + \L^g + \L^{g+1} + \L^{3g-7} + \L^{3g-6} + \L^{3g-5})
        \end{split}
    \end{equation*}
    as required, completing the induction. 
    
    Finally, the coefficients in Theorem~\ref{thm:main-odd} are effective and are equal to the \(M_{g,1,i}\) by Lemma~\ref{lem:coeffs-effective-odd} below.
    This finishes the proof of Theorem~\ref{thm:main-odd}.
\end{proof}

\begin{lem}\label{lem:coeffs-effective-odd}
    Let \(g\geq 3\) be an integer, and let \(M_{g,1,i}(q)\) be the polynomials in the variable \(q\) such that \(q\mapsto\L\) gives the expressions in \cite[(4)]{BBFGHLPRAS}.
    The following equalities of polynomials hold in \(\mathbb Z[q]\):
    \begin{equation*}
        \begin{split}
            M_{g,1,2}(q) &= q^{2g-4} ,\\
            M_{g,1,1}(q) &= q^{g-2} \left( \sum_{i=0}^{2g-3} q^i - q^{g-2} - q^{g-1} \right) = \sum_{i=g-2}^{2g-5}q^i+\sum_{i=2g-2}^{3g-5}q^i, \\
            M_{g,1,0}(q) &= \left(\sum_{i=0}^{2g-2} q^i\right) \left(\sum_{i=0}^{g-2}q^{2i}\right)- (q^{g-2} + q^{g+2}) \left(\sum_{i=0}^{2g-6}q^i\right) - (q^{g-1} + q^g + q^{g+1} + q^{3g-7} + q^{3g-6} + q^{3g-5}) \\
            &= \left(\sum_{-2g+3\leq i\leq 2g-3}\left(g-1-\left\lfloor\frac{|i|}{2}\right\rfloor\right)q^{2g-3+i}\right)-\sum_{i=g-2}^{3g-5}(q^{i}+q^{i+1}) .
        \end{split}
    \end{equation*}
    Furthermore, these expressions are all effective, i.e., they are polynomials with nonnegative integer coefficients.
\end{lem}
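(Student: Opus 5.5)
The lemma is a polynomial identity in $\mathbb Z[q]$, so the plan is to compare with the explicit formulas in \cite[(4)]{BBFGHLPRAS} and then expand. First I will recall the definition of $M_{g,1,i}$ there. For $r=1$ and $N=2g+1$, these are built from Gaussian binomials of the form $\binom{a}{b}_q$ with powers of $q$. I specialize them to $r=1$, which gives closed forms involving $q$-integers $[n]_q=\sum_{i=0}^{n-1}q^i$ and $[g-1]_{q^2}$-type sums. The identity $M_{g,1,2}=q^{2g-4}$ should come out immediately, as the leading shift $q^{(r+1)(N-2r-2)/\dots}$ in the formula.

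For $M_{g,1,1}$, I first verify the stated middle expression, then simplify it. Expanding, $q^{g-2}\sum_{i=0}^{2g-3}q^i=\sum_{i=g-2}^{3g-5}q^i$. Removing the two terms $q^{2g-4},q^{2g-3}$ leaves $\sum_{i=g-2}^{2g-5}q^i+\sum_{i=2g-2}^{3g-5}q^i$. This form is manifestly effective.

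For $M_{g,1,0}$, the cleanest route is to multiply both sides by $(1-q)(1-q^2)$ and compare. On the right, $\sum_{|i|\le 2g-3}(g-1-\lfloor |i|/2\rfloor)q^{2g-3+i}$ is the product $\bigl(\sum_{i=0}^{2g-2}q^i\bigr)\bigl(\sum_{i=0}^{g-2}q^{2i}\bigr)$. I will check this by convolution: the coefficient of $q^{k}$ counts pairs $(a,2b)$ with $a+2b=k$, $0\le a\le 2g-2$, $0\le b\le g-2$, and this count is the symmetric tent value $g-1-\lfloor|k-(2g-3)|/2\rfloor$. It remains to show that $(q^{g-2}+q^{g+2})\sum_{i=0}^{2g-6}q^i+q^{g-1}+q^g+q^{g+1}+q^{3g-7}+q^{3g-6}+q^{3g-5}=\sum_{i=g-2}^{3g-5}(q^i+q^{i+1})$. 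Both sides are sums of monomials, so I compare coefficients directly; each exponent in $[g-2,3g-4]$ has multiplicity $2$ except at the endpoints, and this is a direct count.

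For effectivity, write $c_k=g-1-\lfloor|k-(2g-3)|/2\rfloor$ for the tent coefficients. The subtracted polynomial $\sum_{i=g-2}^{3g-5}(q^i+q^{i+1})$ has coefficient $1$ at $q^{g-2}$ and at $q^{3g-4}$, and $2$ in between. So I need $c_k\ge 2$ for $g-1\le k\le 3g-5$ and $c_k\ge1$ at the endpoints. Since $|k-(2g-3)|\le g-1$ on this range, we get $c_k\ge g-1-\lfloor (g-1)/2\rfloor$, and equality occurs only at the endpoints. For interior $k$ we have $|k-(2g-3)|\le g-2$, so $c_k\ge g-1-\lfloor (g-2)/2\rfloor\ge 2$ when $g\ge3$. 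At the endpoints, $c_k\ge 1$.

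The main obstacle is matching with the exact conventions of \cite[(4)]{BBFGHLPRAS}. Their $M_{g,r,i}$ are defined via a generating product, and specializing it to $r=1$ needs care with indexing. If that specialization is messy, I would instead evaluate the defining product at $r=1$ by expanding the relevant $q$-binomials $\binom{g}{2}_{q}$-type factors symbolically, and check the identity after clearing the denominators $(1-q)(1-q^2)$.
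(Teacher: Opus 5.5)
Your proposal is correct and follows the paper's proof. You expand the definition, identify the first term of $M_{g,1,0}$ with the tent sum by convolution, combine the second and third terms into $\sum_{i=g-2}^{3g-5}(q^i+q^{i+1})$, and check effectivity by comparing coefficients on $[g-2,3g-4]$.

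The part you left as a plan is routine once you note that \cite[(4)]{BBFGHLPRAS} is the explicit five-term formula \eqref{equation:M}, not a generating product. At $k=1$, every Gaussian binomial with negative lower index vanishes, so the only survivors are:
\begin{itemize}
\item $\binom{2g-3}{0}_q$, with prefactor $q^{2g-4}$;
\item $\binom{2g-2}{1}_q$ and $\binom{2g-6}{0}_q$, with prefactor $q^{g-2}$;
\item $\binom{2g-1}{2}_q$, $\binom{2g-5}{1}_q$ and $\binom{2g-5}{0}_q$.
\end{itemize}
These give the first equality in each line directly.
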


\begin{proof}
    In each of the first three lines, the first equality holds by expanding the definition of \(M_{g,1,i}(q)\) from \cite[(4)]{BBFGHLPRAS} using that \(\binom{2g-2}{1}_q = \sum_{i=0}^{2g-3}q^i\), \(\binom{2g-1}{2}_q = \left(\sum_{i=0}^{2g-2} q^i\right)\left(\sum_{i=0}^{g-2} q^{2i}\right)\), and \(\binom{2g-5}{1}_q = \sum_{i=0}^{2g-6}q^i\).
    In the second line, the second equality holds by expanding the middle polynomial, and effectivity of the right-hand polynomial is clear.
    Thus, it remains to show the final equality and to show that this polynomial is effective.
    The final equality follows from expanding the right-hand polynomial in the third line and combining the second and third terms, and the polynomial in the fourth line is effective by observing that the coefficient of $q^i$ in the first term is $\geq 2$ for $2\leq i\leq 4g-8$ and $1$ for $i\in \{1,4g-7\}$ while that in the second term is $2$ for $g-1\leq i\leq 3g-5$, $1$ for $i\in\{g-2, 3g-4\}$, and $0$ otherwise.
\end{proof}
\begin{rem}
    Theorem~\ref{theorem:appendix-main} in the appendix for the case of \(M_{g,1,i}\) shows that
    \begin{equation*}
    \begin{aligned}
     M_{g,1,2}(q)
     &=q^{2g-4},\\
    M_{g,1,1}(q)
    &=q^{g-2}(1+q^g)\binom{g-2}{1}_q,\\
    M_{g,1,0}(q)
    &=(1+q^{2g})\binom{g-1}{2}_q+q^g\binom{g-3}{1}_q\binom{g-1}{1}_q,
    \end{aligned}
    \end{equation*}
    giving another proof of effectivity of these polynomials.
\end{rem}

\begin{proof}[Proof of Theorem~\ref{thm:main-even}]
    If \(N=2g\), then \([\mathcal Q^{(1)}] = (2g+1)\L^{g-2} + [\P^1][\P^{g-3}](1+\L^{g-2})\) and
    \begin{equation*}
        \begin{split}
            [\Sym^2\mathcal Q^{(1)}] &= (2g+1)(g+1)\L^{2g-4} + (2g+1)[\P^1][\P^{g-3}](1+\L^{g-2})\L^{g-2} \\
        & \quad + [\Sym^2(\P^1\times\P^{g-3})](1+\L^{2g-4})+[\P^1]^2[\P^{g-3}]^2\L^{g-2}
        \end{split}
    \end{equation*}
    by Corollary~\ref{cor:class-of-Qr-formula} and Lemma~\ref{lem:K_0-sym-formulas}\eqref{item:formula-sym},
    so Theorem~\ref{thm:towards-thm} implies
    \begin{equation}\begin{split}\label{eq:N=2g-expression}
    [F_1(X)]&= (2g+1)(g+1)\L^{2g-4} + (2g+1)[\P^1][\P^{g-3}](1+\L^{g-2})\L^{g-2} \\
    & \quad +[F_1(Y_\ell)]\L^4 +(-[F_1(\mathcal{Q}^{(1)}_\ell/\P^1)]+[F_1(\mathcal{Q}_{Y_\ell}/\P^1)])\L^2 -[\Sym^2 Y_\ell]\L^2 +[Y_\ell]([\P^1]-[\P^{2g-8}])\L\\
    &\quad + [\Sym^2(\P^1\times\P^{g-3})](1+\L^{2g-4})+[\P^1]^2[\P^{g-3}]^2\L^{g-2} -[\P^1][\P^{2g-5}]\L^{2g-5}-[\Sym^2\P^{2g-6}]+1.
    \end{split}\end{equation}
    We induct on \(g\). There are three base cases: \(g=3,4,5\).

    If \(g=3\), then \(\mathcal Q_{Y_\ell}\) is a point by Lemma~\ref{lem:general-l-hyperbolic-red}, \(Y_\ell=F_1(Y_\ell)=F_1(\mathcal Q_{Y_\ell}/\P^1)=\emptyset\), and \([F_1(\mathcal Q^{(1)}_\ell/\P^1)] = 14\) by Corollary~\ref{cor:relative-F_1}, so~\eqref{eq:N=2g-expression} and Lemma~\ref{lem:formulas-Gr-quadric-Sym-Pn}\eqref{item:class-of-Sym-Pn} yield
    \[[F_1(X)] = \L^4 + 8 \L^3 + 30 \L^2 + 8 \L + 1 .\]

    If \(g=4\), then \(Y_\ell\) is a smooth complete intersection of two quadrics in \(\P^2\) by Lemma~\ref{lem:general-l-hyperbolic-red}\eqref{item:general-l-Y-smooth-ci}, so \([Y_\ell]=4\), \([\Sym^2 Y_\ell]=10\), and \(F_1(Y_\ell)=\emptyset\). We have \([F_1(\mathcal Q^{(1)}_\ell/\P^1)] = 9([\P^2]+\L)\L + [\P^2]^2-\L^2\) and \([F_1(\mathcal Q_{Y_\ell}/\P^1)] = 6\) by Corollary~\ref{cor:relative-F_1}, so~\eqref{eq:N=2g-expression} and Lemma~\ref{lem:formulas-Gr-quadric-Sym-Pn}\eqref{item:class-of-Sym-Pn} yield
    \[\L^8 + \L^7 + 11 \L^6 + 11 \L^5 + 48 \L^4 + 11 \L^3 + 11 \L^2 + \L + 1 .\]

    If \(g=5\), then \(Y_\ell\) is a smooth complete intersection of two quadrics in \(\P^4\) by Lemma~\ref{lem:general-l-hyperbolic-red}\eqref{item:general-l-Y-smooth-ci}, so \(F_1(Y_\ell)\) is a reduced finite scheme of length \(16\) by Lemma~\ref{lem:F_r(X)}\eqref{item:lem-F_r(X)-maximal-even} and thus \([F_1(Y_\ell)]=16\). We have
    \begin{align*}
        [F_1(\mathcal Q^{(1)}_\ell/\P^1)] &= 11([\P^4]+\L^2)\L^2 + [\P^4]^2-\L^4 , & [\Sym^2 Y_\ell] &= [\Sym^2\P^2] + 5[\P^2]\L + 15\L^2 , \\
        [F_1(\mathcal Q_{Y_\ell}/\P^1)] &= 5([\P^2]+\L)\L + [\P^2]^2-\L^2 , & [Y_\ell] &= [\P^2] +5\L
    \end{align*}
    by Corollary~\ref{cor:relative-F_1}, Proposition~\ref{prop:r=0-case}, and Lemma~\ref{lem:K_0-sym-formulas}\eqref{item:formula-sym}, so~\eqref{eq:N=2g-expression} and Lemma~\ref{lem:formulas-Gr-quadric-Sym-Pn}\eqref{item:class-of-Sym-Pn} yield
    \[[F_1(X)] = \L^{12} + \L^{11} + 2 \L^{10} + 13 \L^9 + 14 \L^8 + 14 \L^7 + 70 \L^6 + 14 \L^5 + 14 \L^4 + 13 \L^3 + 2 \L^2 + \L + 1.\]

    Now let \(g\geq 6\) and assume Theorem~\ref{thm:main-even} holds for \(g-3\). Then \(Y_\ell\) is a smooth complete intersection of two quadrics in \(\P^{2(g-3)}\) by Lemma~\ref{lem:general-l-hyperbolic-red}\eqref{item:general-l-Y-smooth-ci}, so the induction hypothesis implies
    \[\textstyle [F_1(Y_\ell)] = 2(g-3)(g-1)\L^{2g-10} + (\L+1) \sum_{i=0}^{g-6}(i+1)(\L^{2i} + \L^{4g-21-2i})+(2g-5)(\L^{g-5}+\L^{2g-9})\sum_{i=0}^{g-6}\L^i .\]
    By Corollary~\ref{cor:relative-F_1}, Proposition~\ref{prop:r=0-case}, and Lemma~\ref{lem:K_0-sym-formulas}\eqref{item:formula-sym} we have
    \begin{equation*}\begin{split}
        [F_1(\mathcal Q^{(1)}_\ell/\P^1)] &= (2g+1)([\P^{2g-6}]+\L^{g-3})\L^{g-3} + [\P^{2g-6}]^2-\L^{2g-6} , \\
        [F_1(\mathcal Q_{Y_\ell}/\P^1)] &= (2g-5)([\P^{2g-8}]+\L^{g-4})\L^{g-4} + [\P^{2g-8}]^2-\L^{2g-8} , \\
        [Y_\ell] &= [\P^{2g-8}] +(2g-5)\L^{g-4} , \\
        [\Sym^2 Y_\ell] &= [\Sym^2 \P^{2g-8}] +(2g-5)[\P^{2g-8}]\L^{g-4} + (2g-5)(g-2)\L^{2g-8} .
    \end{split}\end{equation*}
    Plugging these equalities and Lemma~\ref{lem:formulas-Gr-quadric-Sym-Pn}\eqref{item:class-of-Sym-Pn} into equation~\eqref{eq:N=2g-expression} yields
    \[\textstyle [F_1(X)] = 2g(g+2)\L^{2g-4} + (\L+1) \sum_{i=0}^{g-3}(i+1)(\L^{2i} + \L^{4g-9-2i})+(2g+1)(\L^{g-2}+\L^{2g-3})\sum_{i=0}^{g-3}\L^i \]
    as required (this can be verified using the attached \texttt{Mathematica} code). This proves Theorem~\ref{thm:main-even}.
\end{proof}

\subsection{Consequences of Theorems~\ref{thm:main-odd} and~\ref{thm:main-even}}

First we prove a lemma that will be used to compute the Betti and Hodge numbers of \(F_1(X)\). Recall that by Lemma~\ref{lem:F_r(X)}\eqref{item:lem-F_r(X)-dim}, if \(N=2g+1\) is odd, then \(\dim F_1(X)=4g-6\), and if $N=2g$ is even, then $\dim F_1(X)=4g-8$.

\begin{lem}\label{lem:odd-case-N-and-betti-hodge-numbers}
Fix $g\geq 3$, and let $N(g-1,g-j;i)$ be the coefficient of $q^{i}$ in the Laurent polynomial
\[
q^{-(2-j)(2g-3)}(1-q^{4(g-j)})\frac{\prod_{l=3-j}^{2g-j-3}(1-q^{2l})}{\prod_{l=1}^{2g-4} (1-q^{2l})}.
\]
For $0\leq i\leq 8g-12$, let $b_{g,i}^{\text{odd}}=\sum_{j=0}^2 N(g-1,g-j;i-4g+6)\binom{2g}{j}$,
and for $0\leq p,q\leq 4g-6$, let 
\[h^{p,q}_g=
\begin{cases}
N(g-1,g,2p-4g+6) + N(g-1,g-2,2p-4g+6)g^2 & \text{if }p=q,\\
N(g-1,g-1,p+q-4g+6)g & \text{if }|p-q|=1, \\
N(g-1,g-2,p+q-4g+6)\binom{g}{2} & \text{if }|p-q|=2,\\
0&\text{otherwise}.
\end{cases}
\]
Then the following hold:
\begin{enumerate}
\item\label{item:odd-case-N} We have
\begin{equation*}
\begin{split}
N(g-1,g;i)&= \begin{cases}
g-2 & \text{if }i=0,\\
g-1 - \left\lfloor\frac{|i|}{4}\right\rfloor & \text{if }-2(2g-3)\leq i\leq 2(2g-3) \text{ and $i$ is non-zero even},\\
0&\text{otherwise},
\end{cases}\\
N(g-1,g-1;i)&=\begin{cases}
1 & \text{if } -(2g-3)\leq i\leq 2g-3\text{ and $i$ is odd} \\
0 & \text{otherwise},
\end{cases}\\
N(g-1,g-2;i)& = \begin{cases}
1 & \text{if }i=0,\\
0 & \text{otherwise};
\end{cases}
\end{split}
\end{equation*}
\item\label{item:odd-case-poincare-serre-duality-hodge-decomposition} $b_{g,i}^{\text{odd}}=b_{g,8g-12-i}^{\text{odd}}$, $h^{p,q}_g=h^{q,p}_g$, $h^{p,q}_g=h^{4g-6-p, 4g-6-q}_g$, and $b_{g,i}^{\text{odd}}=\sum_{\substack{0\leq p,q\leq 4g-6 \\ p+q=i}}h^{p,q}_g$;
\item\label{item:odd-case-betti-numbers} For $0\leq i\leq 4g-6$, we have
\[
b_{g,i}^{\text{odd}}=\begin{cases}
\left\lfloor\frac{i}{4}\right\rfloor+1 & 0\leq i\leq 4g-8\text{ and $i$ is even}\\
2g & 2g-3\leq i\leq 4g-7 \text{ and $i$ is odd}, \\
\binom{2g}{2}+g-2 & i=4g-6,\\
0&\text{otherwise};
\end{cases}
\]
\item\label{item:odd-case-hodge-numbers} For $0\leq p, q\leq 4g-6$ and $0\leq p+q\leq 4g-6$, we have
\[
h^{p,q}_g=\begin{cases}
    \lfloor\frac{p}{2}\rfloor +1 & p=q\text{ and }0\leq p\leq 2g-4,\\
    g & |p-q|=1\text{ and }g-1\leq \max\{p,q\}\leq 2g-3,\\
    g^2+g-2 & p=q=2g-3,\\
    \binom{g}{2} & |p-q|=2\text{ and }\max\{p,q\}=2g-2,\\
    0&\text{otherwise}.
\end{cases}
\]
\end{enumerate}
\end{lem}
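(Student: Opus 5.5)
The lemma is a purely combinatorial statement about explicit Laurent polynomials, so I will prove it by direct manipulation of the Gaussian-binomial-type products, in the order (1), (2), (3), (4).

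For (1), I treat the three values of \(j\) separately. For \(j=2\) the product \(\prod_{l=1}^{2g-5}(1-q^{2l})\) divided by \(\prod_{l=1}^{2g-4}(1-q^{2l})\) is \(1/(1-q^{2(2g-4)})\). Multiplying by \(1-q^{4(g-2)}\) gives exactly \(1\), and the prefactor is \(q^0\), which yields \(N(g-1,g-2;i)=\delta_{i,0}\).

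For \(j=1\), the quotient is \(1/(1-q^2)\). Multiplying by \(1-q^{4(g-1)}\) gives \(\sum_{k=0}^{2g-3}q^{2k}\), and shifting by \(q^{-(2g-3)}\) gives the odd exponents from \(-(2g-3)\) to \(2g-3\), each with coefficient \(1\).

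For \(j=0\), the quotient is \((1-q^{2(2g-3)})(1-q^{2(2g-4)})^{-1}\cdots\). More precisely, it is \(\prod_{l=3}^{2g-3}(1-q^{2l})/\prod_{l=1}^{2g-4}(1-q^{2l}) = (1-q^{2(2g-3)})/((1-q^2)(1-q^4))\). Hence the whole expression is
\[
q^{-2(2g-3)}(1-q^{4g})(1-q^{4g-6})/((1-q^2)(1-q^4)),
\]
a Gaussian-binomial-like product in \(u=q^2\). I will expand it as \(\sum_{a,b}\) of \(u^{a+2b}\) over the relevant ranges; the cleanest route is \(\frac{1-u^{2g}}{1-u^2}\cdot\frac{1-u^{2g-3}}{1-u}\), a product of two explicit geometric sums. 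The coefficient count is then a convolution of \(\sum_{b=0}^{g-1}u^{2b}\) with \(\sum_{a=0}^{2g-4}u^a\). I expect this to be the main (though elementary) obstacle: verifying the formula \(g-1-\lfloor |i|/4\rfloor\) at nonzero even \(i\) and the exceptional value \(g-2\) at the center \(i=0\). At the center, parity of \(2g-3\) causes one fewer term; I will handle this by counting, for exponent \(e\) in \(u\), the \(b\) with \(0\le e-2b\le 2g-4\), and checking the symmetric centering at \(e=2g-3\).

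Parts (2)–(4) then follow from (1). The symmetry \(i\mapsto -i\) of each \(N(g-1,g-j;\cdot)\), which is visible from the explicit formulas in (1), gives \(b_{g,i}^{\text{odd}}=b_{g,8g-12-i}^{\text{odd}}\) and \(h^{p,q}_g=h^{4g-6-p,4g-6-q}_g\). The identity \(h^{p,q}_g=h^{q,p}_g\) is immediate from the definition. The sum identity \(b=\sum_{p+q=i}h^{p,q}\) is checked parity by parity:
\begin{itemize}
\item for \(i\) even, the only contributing terms are \(p=q\) and \(|p-q|=2\), and \(g^2+\binom g2\cdot 2=\binom{2g}{2}\) matches the \(j=2\) coefficient;
\item for \(i\) odd, the two terms with \(|p-q|=1\) give \(2g=\binom{2g}{1}\).
\end{itemize}

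Finally, (3) and (4) are read off by substituting (1) with the shift \(i-4g+6\) (respectively \(p+q-4g+6\)). Even \(i\le 4g-8\) gives \(g-1-\lfloor(4g-6-i)/4\rfloor=\lfloor i/4\rfloor+1\) after simplifying the floor. The middle degree \(i=4g-6\) picks up \(N(g-1,g;0)+\binom{2g}{2}=g-2+\binom{2g}{2}\). The odd range comes from the support \(|i-4g+6|\le 2g-3\). The Hodge numbers follow similarly; for instance \(\lfloor p/2\rfloor+1\) comes from \(2p-4g+6\), and \(h^{2g-3,2g-3}_g=g-2+g^2\).
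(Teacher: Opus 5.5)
Your proposal is correct and follows the paper's route: the paper also obtains (1) by direct computation of the three Laurent polynomials, and deduces (2)--(4) from the $i\mapsto -i$ symmetry and the explicit supports of the $N(g-1,g-j;\cdot)$. Your factorization of the $j=0$ case as $\bigl(\sum_{b=0}^{g-1}u^{2b}\bigr)\bigl(\sum_{a=0}^{2g-4}u^{a}\bigr)$ with $u=q^2$, and the count of $b\in[1,g-2]$ at the center, are exactly the computation the paper leaves implicit. The one check you omit, that the off-diagonal pairs $(p,q)$ lie in $[0,4g-6]$ whenever the relevant $N$ is nonzero, holds because $g\geq 3$.
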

\begin{proof}
\eqref{item:odd-case-N} follows from direct computation. For~\eqref{item:odd-case-poincare-serre-duality-hodge-decomposition}, the equalities $b_{g,i}^{\text{odd}}=b_{g,8g-12-i}^{\text{odd}}$ and $h^{p,q}_g=h^{4g-6-p, 4g-6-q}_g$ follow from the symmetry of $N(g-1,g-j;i)$ with respect to $i=0$, $h^{p,q}_g=h^{q,p}_g$ by definition, and
$b_{g,i}^{\text{odd}}=\sum_{\substack{0\leq p,q\leq 4g-6\\ p+q=i}}h^{p,q}_g$ follows by direct computation.
\eqref{item:odd-case-betti-numbers} and \eqref{item:odd-case-hodge-numbers} are straightforward from \eqref{item:odd-case-N}.
\end{proof}

\begin{lem}[{\cite{CVX20}}]\label{lem:even-case-betti-number-CVX}
    Fix \(g\geq 3\), and let \(M(2,j;i)\) be the coefficient of \(q^{i-j(g-2)}\) in the polynomial \(g_{2-j,2g-2-j}(q)\) defined as the Gaussian binomial coefficient \[g_{d,e}(q) \coloneqq \frac{\prod_{l=e-d+1}^e (1-q^l)}{\prod_{l=1}^d (1-q^l)} = \binom{e}{d}_q.\]
    For \(0\leq i\leq 4g-8\), let \(b_{g,i}^{\text{even}} = \sum_{j=0}^2 M(2,j;i)\binom{2g+1}{j}\). Then the following hold:
    \begin{enumerate}
        \item\label{item:even-case-poincare-duality} \(b_{g,i}^{\text{even}} = b_{g,4g-8-i}^{\text{even}}\); and
        \item\label{item:even-case-betti-numbers} For \(0\leq i\leq 2g-4\), we have \[b_{g,i}^{\text{even}} = \begin{cases} \left\lfloor\frac{i}{2}\right\rfloor + 1 & \text{ if }0\leq i \leq g-3, \\ 2g+2+\left\lfloor\frac{i}{2}\right\rfloor & \text{ if }g-2 \leq i\leq 2g-5, \\ 2g(g+2) & \text{ if }i=2g-4. \end{cases} \]
    \end{enumerate}
\end{lem}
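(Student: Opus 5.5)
The statement is purely combinatorial: both parts are identities about the integers \(M(2,j;i)\), i.e. about coefficients of three explicit Gaussian binomials. So the plan is to write each \(g_{2-j,2g-2-j}(q)\) out explicitly and read off coefficients. There are only three cases.

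For \(j=2\), \(g_{0,2g-4}=1\). Hence \(M(2,2;i)=1\) exactly when \(i=2(g-2)=2g-4\), and is \(0\) otherwise. This contributes \(\binom{2g+1}{2}\) to \(b^{\text{even}}_{g,2g-4}\) only.

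For \(j=1\), \(g_{1,2g-3}=\sum_{l=0}^{2g-4}q^l\). Hence \(M(2,1;i)=1\) for \(g-2\le i\le 3g-6\) and \(0\) otherwise. This contributes \(2g+1\) in that range.

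For \(j=0\), \(g_{2,2g-2}=\binom{2g-2}{2}_q\). This counts partitions with at most \(2\) parts, each of size at most \(2g-4\). The coefficient of \(q^i\) is \(\lfloor i/2\rfloor+1\) for \(0\le i\le 2g-4\), and by symmetry it equals the coefficient of \(q^{4g-8-i}\).

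For part~\eqref{item:even-case-poincare-duality}, I would use the palindromicity of Gaussian binomials. The polynomial \(\binom{e}{d}_q\) has degree \(d(e-d)\) and is palindromic about \(d(e-d)/2\). For each \(j\), the shifted polynomial \(q^{j(g-2)}g_{2-j,2g-2-j}\) then has center
\[
j(g-2)+\tfrac{(2-j)(2g-4)}{2}=2g-4 .
\]
So \(M(2,j;i)=M(2,j;4g-8-i)\) for every \(j\). Summing with the weights \(\binom{2g+1}{j}\), which do not depend on \(i\), gives \(b^{\text{even}}_{g,i}=b^{\text{even}}_{g,4g-8-i}\).

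For part~\eqref{item:even-case-betti-numbers}, I would add the three contributions for \(0\le i\le 2g-4\), splitting into the three ranges in the statement.

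For \(0\le i\le g-3\), only \(j=0\) contributes, giving \(\lfloor i/2\rfloor+1\).

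For \(g-2\le i\le 2g-5\), the \(j=0\) and \(j=1\) terms contribute, giving \(\lfloor i/2\rfloor+1+2g+1=2g+2+\lfloor i/2\rfloor\).

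For \(i=2g-4\), all three terms contribute:
\[
(g-2)+1+(2g+1)+\tbinom{2g+1}{2}=3g+g(2g+1)=2g^2+4g=2g(g+2).
\]

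The only step needing care is the coefficient count for \(\binom{2g-2}{2}_q\) at \(i=2g-4\), since that is the top of the range where \(\lfloor i/2\rfloor+1\) is valid. It is still valid there: all partitions of \(2g-4\) into at most \(2\) parts automatically have parts of size at most \(2g-4\). Everything else is bookkeeping.
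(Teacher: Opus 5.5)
Your proposal is correct, but it takes a different route from the paper.

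For (1), the paper invokes \cite[Theorem 1.1]{CVX20} to identify $b^{\text{even}}_{g,i}$ with $\dim H^{2i}(F_1(X'),\mathbb C)$ for a smooth $X'\subset\P^{2g}_{\mathbb C}$, and then applies Poincar\'e duality. It mentions only parenthetically that a direct computation also works. For (2), it simply cites the values computed in \cite[Example 6.5]{CVX20}.

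You instead make everything explicit. The three polynomials are $1$, $\sum_{l=0}^{2g-4}q^l$, and $\binom{2g-2}{2}_q$, and the shifts by $q^{j(g-2)}$ center all three at $2g-4$. Palindromicity then gives (1) termwise, and the partition count for $\binom{2g-2}{2}_q$ gives (2). Your range conditions hold for $g\geq 3$ (namely $g-3<2g-4$ and $2g-5\leq 3g-6$). Your arithmetic at $i=2g-4$, namely $(g-1)+(2g+1)+g(2g+1)=2g(g+2)$, also checks out.

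The paper's route is shorter and explains the symmetry conceptually, since the numbers are Betti numbers of a smooth projective variety. Yours is fully elementary and independent of the geometric content of \cite{CVX20}. That independence matters here. Corollary~\ref{cor:complex} is presented as a new proof of the $r=1$ case of \cite[Theorem 1.1]{CVX20}, and it uses this lemma to match its polynomial with $\sum_i b^{\text{even}}_{g,i}\L^i$. Deriving (1) from that very theorem would make the argument lean on what it reproves, and your computation removes that dependence.
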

\begin{proof}
    Let \(X'\subset\P^{2g}_{\mathbb C}\) be a smooth complete intersection of two quadrics.
    Then \(\dim F_1(X')=4g-8\) by Lemma~\ref{lem:F_r(X)}\eqref{item:lem-F_r(X)-dim} and \(b_{g,i}^{\text{even}} = \dim_{\mathbb C} H^{2i}(F_1(X'),\mathbb C)\) by \cite[Theorem 1.1]{CVX20} (where \(M(2,j;i)\) is denoted by \(M_2(i,j)\)), so~\eqref{item:even-case-poincare-duality} follows from Poincar\'e duality. (Alternatively, \eqref{item:even-case-poincare-duality} can also be shown by direct computation.) For~\eqref{item:even-case-betti-numbers}, the values of \(b_{g,i}^{\text{even}}\) are computed in \cite[Example 6.5]{CVX20}.
\end{proof}

\begin{cor}\label{cor:complex}
Over the complex numbers, let $N\geq 6$, and let $X\subset \P^N$ be a smooth complete intersection of two quadrics.
\begin{enumerate}
\item\label{item:cor-odd-HS} (\cite[Theorem C]{BBFGHLPRAS} for $k=1$) Assume that $N=2g+1$ for some $g\geq 3$ and let $N(g-1,g-j;i)$, $b_{g,i}^{\text{odd}}$, and $h^{p,q}_g$ be as defined in Lemma~\ref{lem:odd-case-N-and-betti-hodge-numbers}.
For $0\leq i\leq 8g-12$, there exists an isomorphism of $\Q$-Hodge structures
\begin{equation}\label{eq:odd-case-hodge-str-isom}
H^i(F_1(X),\Q)\cong \bigoplus_{\substack{0\leq j\leq 2\\i-j\text{ is even}}} H^j(\Jac C, \Q((j-i)/2))^{\oplus N(g-1,g-j,i-4g+6)}.
\end{equation}
In addition, we have $b_i(F_1(X))=b_{g,i}^{\text{odd}}$ and $h^{p,q}(F_1(X))=h^{p,q}_g$.
\item\label{item:cor-even-HS} (\cite[Proposition E]{BBFGHLPRAS} for $k=1$) Assume that $N=2g$ for some $g\geq 3$ and let $M(2,j;i)$ and $b_{g,i}^{\text{even}}$ be as defined in Lemma~\ref{lem:even-case-betti-number-CVX}.
The cohomology of $F_1(X)$ is of Hodge-Tate type, and for $0\leq i\leq 2g-4$, there exists an isomorphism of $\Q$-Hodge structures
\begin{equation}\label{eq:even-case-hodge-str-isom}
H^{2i}(F_1(X),\Q)\cong \bigoplus_{0\leq j\leq 2}\Q(-i)^{\oplus M(2,j;i)\binom{2g+1}{j}}.
\end{equation}
In addition, we have $b_{2i}(F_1(X))=h^{i,i}(F_1(X))=b_{g,i}^{\text{even}}$.
\item\label{item:cor-cohomology-tf} $H^*(F_1(X),\Z)$ is torsion free.
\end{enumerate}
\end{cor}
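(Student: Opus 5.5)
The plan is to apply the Hodge realization of \(K_0(\mathrm{Var}/\mathbb C)\) to the identities of Theorems~\ref{thm:main-odd} and~\ref{thm:main-even}, and then upgrade from classes to actual isomorphisms using semisimplicity of polarizable Hodge structures. Concretely, the map sending a smooth projective \(Y\) to \(\sum_i (-1)^i[H^i(Y,\Q)]\) in the Grothendieck ring \(K_0(\mathrm{HS}_\Q)\) of polarizable pure \(\Q\)-Hodge structures extends to a ring homomorphism from \(K_0(\mathrm{Var}/\mathbb C)\) sending \(\L\mapsto[\Q(-1)]\). Since \(F_1(X)\), \(C\), and \(\Sym^2 C\) are smooth projective, all cohomology of \(F_1(X)\) is pure of weight \(i\) in degree \(i\). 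Taking weight-\(i\) parts therefore recovers \([H^i(F_1(X),\Q)]\) from the image of the identity, with no cancellation between degrees. Because the category of polarizable pure \(\Q\)-Hodge structures is semisimple, equality of classes of pure Hodge structures of a fixed weight gives an actual isomorphism.

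For~\eqref{item:cor-odd-HS}, I will use \(H^*(\Sym^2 C)=\Sym^2 H^*(C)\), together with \(H^1(C)\cong H^1(\Jac C)\) and \(\wedge^2H^1(C)\cong H^2(\Jac C)\). This rewrites the realization of \([\Sym^2C]\), \([C]\), and the constant term in terms of \(H^j(\Jac C)\) for \(j=0,1,2\), with Tate twists. The key step is to check that the multiplicity of \(H^j(\Jac C)(-\tfrac{i-j}{2})\) in degree \(i\) is exactly \(N(g-1,g-j;i-4g+6)\). To do this I will substitute the explicit coefficient polynomials \(M_{g,1,i}\) from Lemma~\ref{lem:coeffs-effective-odd} and collect terms. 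The \(H^2(\Jac C)\) contributions come only from \(\Sym^2 C\), which accounts for the single term \(N(g-1,g-2;\cdot)\). The \(H^1\) contributions come from \(C\) and from \(H^1(C)\otimes H^0\) inside \(\Sym^2 C\), and should give the odd-index run of ones. The Tate part collects the rest. Matching this against Lemma~\ref{lem:odd-case-N-and-betti-hodge-numbers}\eqref{item:odd-case-N} is a finite bookkeeping of polynomials in \(q\), and it is the step I expect to be the main obstacle, because twist indices must line up exactly. The Betti and Hodge numbers then follow from Lemma~\ref{lem:odd-case-N-and-betti-hodge-numbers} together with \(h^{1,0}(\Jac C)=g\), \(h^{2,0}=\binom g2\), and \(h^{1,1}=g^2\).

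For~\eqref{item:cor-even-HS}, the class in Theorem~\ref{thm:main-even} is a polynomial in \(\L\) with nonnegative coefficients. Its realization is therefore \(\bigoplus \Q(-i)^{c_i}\), so odd cohomology vanishes and \(H^{2i}\) is Hodge–Tate of rank \(c_i\). I then only need \(c_i=b^{\text{even}}_{g,i}\). By the symmetry in Lemma~\ref{lem:even-case-betti-number-CVX}\eqref{item:even-case-poincare-duality} it suffices to treat \(i\le 2g-4\). For \(i\le 2g-4\) I will read off the coefficients of the formula directly: the terms \((\L+1)(i+1)\L^{2i}\) give \(\lfloor i/2\rfloor+1\), the \((2g+1)\L^{g-2}\sum\L^i\) term adds \(2g+1\) starting at degree \(g-2\), and the central coefficient equals \(2g(g+2)\). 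These should agree with Lemma~\ref{lem:even-case-betti-number-CVX}\eqref{item:even-case-betti-numbers}.

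For~\eqref{item:cor-cohomology-tf}, I would argue that the same piecewise structure holds integrally. The proof of the theorems only uses cut-and-paste decompositions into pieces that are affine-space bundles over pieces built from cellular varieties, symmetric powers of curves, and open subvarieties of smooth quadric fibrations. Rather than redo that, a cleaner route is to invoke a realization into the Grothendieck ring of \(\mathrm{Gal}\)-modules of \(\ell\)-adic cohomology for every prime \(\ell\) after spreading out. Equivalently, I would compare \(\F_p\)-Betti numbers with rational ones via point counts over finite fields. Torsion-freeness of \(H^*(\Sym^2C,\Z)\) and of the Jacobian, combined with a realization valued in \(K_0\) of \(\Z/p\)-modules for compactly supported cohomology, should show \(\dim H^i(F_1(X),\F_p)=b_i\) for all \(p\). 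I expect this integral step to need the most care, and if that realization argument fails I would instead check it against the known torsion-freeness results of \cite{CVX17,CVX20}.
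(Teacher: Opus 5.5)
Parts (1) and (2) follow the paper's route. That route is a Bittner-type realization $K_0(\mathrm{Var}/\C)\to K_0(\mathrm{HS}_\Q)$ with $\L\mapsto[\Q(-1)]$, then separation by weight, then matching of multiplicities against Lemmas~\ref{lem:coeffs-effective-odd}, \ref{lem:odd-case-N-and-betti-hodge-numbers} and~\ref{lem:even-case-betti-number-CVX}, and finally semisimplicity to turn equalities of classes into isomorphisms.

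There is one slip in your odd-case sketch. The odd part of $H^*(\Sym^2C)$ is $H^1(\Sym^2C)\oplus H^3(\Sym^2C)$, with $H^3(\Sym^2C)\cong H^1(C)\otimes H^2(C)\cong H^1(C)(-1)$; it is not only $H^1(C)\otimes H^0(C)$. Twisted by $\L^{2g-4}$, these two pieces give $H^1(\Jac C)(-(2g-4))$ and $H^1(\Jac C)(-(2g-3))$. Those are exactly the two twists missing between the runs $g-2\le i\le 2g-5$ and $2g-2\le i\le 3g-5$ that come from the coefficient of $[C]$. Without the $H^3$ term, the run of ones in $N(g-1,g-1;\cdot)$ would have a hole.

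Part (3) has a genuine gap: none of the tools you propose can see torsion.
\begin{itemize}
\item Point counts over finite fields only compute traces of Frobenius on $\Q_\ell$-cohomology.
\item A realization built from compactly supported cohomology of cut-and-paste pieces must be an alternating sum, because the long exact sequences do not split. Alternating sums with $\F_p$-coefficients are blind to torsion. By the universal coefficient theorem, $\dim H^i(Z,\F_p)=b_i+t_i+t_{i+1}$, where $t_i$ is the number of cyclic $p$-primary summands of $H^i(Z,\Z)_{\mathrm{tors}}$. Hence $\sum_i(-1)^i\dim H^i(Z,\F_p)=\sum_i(-1)^ib_i$.
\item Remembering a Galois action on $\F_\ell$- or $\Z_\ell$-cohomology does not help. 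For a finite module $T$, the classes of $T/\ell T$ and $T[\ell]$ agree, so they still cancel.
\item Without weights you cannot separate degrees, so you cannot reach $\dim H^i(F_1(X),\F_p)=b_i$.
\end{itemize}
For the same reason, propagating integrality through the cut-and-paste pieces of the proofs cannot work. Appealing to \cite{CVX17,CVX20} would replace the deduction by an outside input.

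The missing idea is to use Bittner's presentation of $K_0(\mathrm{Var}/\C)$ by smooth projective varieties together with \emph{unsigned} integral cohomology. The blow-up formula $H^*(\mathrm{Bl}_ZX,\Z)\oplus H^*(Z,\Z)\cong H^*(X,\Z)\oplus H^*(E,\Z)$ is a direct-sum decomposition over $\Z$. So $[Z]\mapsto[H^*(Z,\Z)]$ defines a homomorphism into the split Grothendieck group of integral Hodge structures (or just of finitely generated abelian groups). Composing with $H\mapsto H_{\mathrm{tors}}$ lands in $K_0(\mathrm{FinAb})$, which is free on the indecomposable cyclic $p$-groups.

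The groups $H^*(C,\Z)$, $H^*(\Sym^2C,\Z)$ and $H^*(\P^n,\Z)$ are torsion-free. Multiples $[Z]\L^n$ are handled by writing $[Z]\L^n=[Z\times\P^n]-[Z\times\P^{n-1}]$, since Künneth against $\P^n$ has no Tor terms. Theorems~\ref{thm:main-odd} and~\ref{thm:main-even} therefore force $[H^*(F_1(X),\Z)_{\mathrm{tors}}]=0$, hence $H^*(F_1(X),\Z)_{\mathrm{tors}}=0$. This is the paper's argument.

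An equivalent route: the unsigned total dimension $\sum_i\dim H^i(Z,\F_p)$ is additive for the same reason. It agrees with $\sum_ib_i(Z)$ on $C$, $\Sym^2C$ and $\P^n$, hence on $F_1(X)$. Then $\sum_i\dim H^i(F_1(X),\F_p)=\sum_ib_i+2\sum_it_i$ forces every $t_i=0$.
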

\begin{proof}
Let $\mathrm{HS}_\Q$ be the category of polarizable pure $\Q$-Hodge structures.
By Bittner's theorem, there is a well-defined group homomorphism
$K_0(\mathrm{Var}/\C)\to K_0(\mathrm{HS}_\Q)$ by setting $[Z]\mapsto [H^{*}(Z,\Q)]\coloneqq\bigoplus_{i=0}^{2\dim Z}[H^i(Z,\mathbb Q)]$ whenever $Z$ is smooth, connected, and projective,
and it is moreover a $\Z[\L]$-module homomorphism where $\L^n\mapsto [\Q(-n)]$.
Accordingly, Theorems~\ref{thm:main-odd} and~\ref{thm:main-even} express $[H^*(F_1(X),\Q)]$ in $K_0(\mathrm{HS}_\Q)$ as a $\Z$-linear combination of Tate twists of $[H^*(\Sym^2 C,\Q)]$, $[H^*(C,\Q)]$, and $[\Q]$ with nonnegative coefficients.
To prove~\eqref{item:cor-odd-HS} and~\eqref{item:cor-even-HS}, we first show that the expressions match with the sums of the images of the decompositions \eqref{eq:odd-case-hodge-str-isom} and \eqref{eq:even-case-hodge-str-isom} in $K_0(\mathrm{HS}_\Q)$.

If $N=2g+1$ for some $g\geq 3$, 
then Theorem~\ref{thm:main-odd} and Lemma~\ref{lem:coeffs-effective-odd} give the following identity in $K_0(\mathrm{Var}/\C)$:
\begin{equation*}\begin{split}
[F_1(X)]&= \textstyle [\Sym^2 C]\L^{2g-4} + [C]\left(\sum_{i=g-2}^{2g-5}\L^i+\sum_{i=2g-2}^{3g-5}\L^i\right) \\ & \textstyle \qquad\qquad + \left(\sum_{-2g+3\leq i\leq 2g-3}\left(g-1-\left\lfloor\frac{|i|}{2}\right\rfloor\right)\L^{2g-3+i}\right)-\left(\sum_{i=g-2}^{3g-5}(\L^{i}+\L^{i+1})\right),
\end{split}\end{equation*}
which we aim to pass to $K_0(\mathrm{HS}_\Q)$. 
We have the following isomorphisms of $\Q$-Hodge structures
\begin{equation*}
\begin{split}
H^0(\Jac C,\Q)&\cong H^0(C,\Q)\cong H^2(C,\Q(1))\cong H^0(\Sym^2C,\Q)\cong H^4(\Sym^2C,\Q(2))\cong \Q,\\
H^1(\Jac C,\Q)&\cong H^1(C,\Q)\cong H^1(\Sym^2C,\Q)\cong H^3(\Sym^2C,\Q(1)), \\
H^2(\Jac C,\Q)&\cong \textstyle\bigwedge^2H^1(C,\Q),\\
H^2(\Sym^2C,\Q)&\cong H^2(C\times C,\Q)^{S_2}\cong \wedge^2H^1(C,\Q)\oplus \Q(-1)\cong H^2(\Jac C,\Q)\oplus \Q(-1),
\end{split}
\end{equation*}
where we have used Poincar\'e duality for $C$ and $\Sym^2C$.
We thus have the following identities in $K_0(\mathrm{HS}_\Q)$:
\begin{equation*}
\begin{split}
[H^*(C,\Q)]&= [\Q] + [H^1(\Jac C,\Q)] + [\Q(-1)],\\
[H^{*}(\Sym^2 C,\Q)]&=[\Q] + [H^1(\Jac C,\Q)] + ([H^2(\Jac C,\Q)]+[\Q(-1)]) + [H^1(\Jac C,\Q(-1))]+[\Q(-2)].
\end{split}
\end{equation*}
The contributions to $[H^*(F_1(X),\Q)]\in K_0(\mathrm{HS}_\Q)$ of twists of $H^2(\Jac C,\Q), H^1(\Jac C,\Q), H^0(\Jac C,\Q)\cong \Q$ are then respectively as follows:
{\footnotesize\begin{equation*}
\begin{split}
&[H^2(\Jac C,\Q(-2g+4))],\\
&([H^1(\Jac C,\Q(-2g+4))]+[H^1(\Jac C,\Q(-2g+3))]) + \left(\sum_{i=g-2}^{2g-5}[H^1(\Jac C,\Q(-i))]+ \sum_{i=2g-2}^{3g-5}[H^1(\Jac C,\Q(-i))]\right)\\
&\qquad=\sum_{i=g-2}^{3g-5}[H^1(\Jac C,\Q(-i))],\\
&\left([\Q(-2g+4)]+[\Q(-2g+3)]+[\Q(-2g+2)]\right)+ \left(\sum_{i=g-2}^{2g-5}([\Q(-i)]+[\Q(-i-1)])+\sum_{i=2g-2}^{3g-5}([\Q(-i)]+[\Q(-i-1)])\right)\\
&\qquad\phantom{=} +\left(\sum_{-2g+3\leq i\leq 2g-3}\left(g-1-\left\lfloor\frac{|i|}{2}\right\rfloor\right)[\Q(-2g+3-i)]\right)-\left(\sum_{i=g-2}^{3g-5}([\Q(-i)]+[\Q(-i-1)])\right)\\
&\qquad=\left(\sum_{-2g+3\leq i\leq 2g-3}\left(g-1-\left\lfloor\frac{|i|}{2}\right\rfloor\right)[\Q(-2g+3-i)]\right)-[\Q(-2g+3)].
\end{split}
\end{equation*}}\ignorespaces
It is now straightforward to check that $[H^{*}(F_1(X),\Q)]$ agrees with the sum of the images of the decompositions \eqref{eq:odd-case-hodge-str-isom} using Lemma~\ref{lem:odd-case-N-and-betti-hodge-numbers}.

If $N=2g$ for some $g\geq 3$, then the assertion follows from direct computation, by expanding the polynomial in Theorem~\ref{thm:main-even} and comparing coefficients. Indeed,
    \begin{equation*}
        \begin{split}
            &\phantom{=} 2g(g+2)\L^{2g-4} + (\L+1) \sum_{i=0}^{g-3}(i+1)(\L^{2i} + \L^{4g-9-2i})+(2g+1)(\L^{g-2}+\L^{2g-3})\sum_{i=0}^{g-3}\L^i \\
            &= 2g(g+2)\L^{2g-4} + \sum_{i=0}^{g-3}(i+1)\L^{2i} + \sum_{i=0}^{g-3}(i+1) \L^{4g-8-(2i+1)} + \sum_{i=0}^{g-3}(i+1)\L^{2i+1} + \sum_{i=0}^{g-3}(i+1)\L^{4g-8-2i} \\ & \qquad +(2g+1)\sum_{i=g-2}^{2g-5}\L^i + (2g+1)\sum_{i=2g-3}^{3g-6}\L^i \\
            &= 2g(g+2)\L^{2g-4} + \sum_{i=0}^{2 g - 5}\left(\left\lfloor\tfrac{i}{2}\right\rfloor+1\right)\L^i + \sum_{i=0}^{2g-5}\left(\left\lfloor\tfrac{i}{2}\right\rfloor+1\right) \L^{4g-8-i} +(2g+1)\sum_{i=g-2}^{2g-5}\L^i + (2g+1)\sum_{i=2g-3}^{3g-6}\L^i,
        \end{split}
\end{equation*}
so the coefficient of \(\L^i\) and \(\L^{4g-8-i}\) is \(\lfloor\frac{i}{2}\rfloor+1\) if \(0\leq i\leq g-3\), \(2g+2+\lfloor\frac{i}{2}\rfloor\) if \(g-2 \leq i\leq 2g-5\), and \(2g(g+2)\) if \(i=2g-4\). So by Lemma~\ref{lem:even-case-betti-number-CVX}, this polynomial is equal to \(\sum_{i=0}^{4g-8} b_{g,i}^{\text{even}} \L^i\). 
By passing to $K_0(\mathrm{HS}_\Q)$, we get the sum of the images of the decompositions \eqref{eq:even-case-hodge-str-isom}, as desired.

Since $\mathrm{HS}_\Q$ is semi-simple by \cite{deligne1973}, the identities in $K_0(\mathrm{HS}_{\mathbb Q})$ induce actual isomorphisms of $\Q$-Hodge structures at each weight.
Hence we get \eqref{eq:odd-case-hodge-str-isom} and \eqref{eq:even-case-hodge-str-isom}.
The assertions about the Betti and Hodge numbers of $F_1(X)$ are immediate from Lemmas~\ref{lem:odd-case-N-and-betti-hodge-numbers} and \ref{lem:even-case-betti-number-CVX}.
This proves \eqref{item:cor-odd-HS} and \eqref{item:cor-even-HS}.

For \eqref{item:cor-cohomology-tf}, we follow the idea of \cite{shinder-mathoverflow-answer}.
Let $\mathrm{HS}$ be the category of polarizable pure $\Z$-Hodge structures, let $\mathrm{FinAb}$ be the category of finite abelian groups, and let \(K_0(\mathrm{HS})\) and \(K_0(\mathrm{FinAb})\) be the Grothendieck groups with respect to the direct sum operation.
Consider a composition of group homomorphisms
\[
K_0(\mathrm{Var}/\C)\to K_0(\mathrm{HS})\to K_0(\mathrm{FinAb}),
\]
where the first map sends $[Z]$ to $[H^*(Z,\Z)]$ whenever $Z$ is smooth, connected, and projective, and the second map sends the class $[H]$ of an integral Hodge structure to the class $[H_{\operatorname{tors}}]$ of its torsion subgroup (forgetting the weights).
Theorems~\ref{thm:main-odd} and \ref{thm:main-even} then express $[H^*(F_1(X),\Z)_{\operatorname{tors}}]$ in $K_0(\mathrm{FinAb})$ as a $\Z$-linear combination of $[H^*(\Sym^2 C,\Z)_{\operatorname{tors}}]$, $[H^*(C,\Z)_{\operatorname{tors}}]$, and $[\Z_{\operatorname{tors}}]$,
and since the groups $H^*(C,\Z), H^*(\Sym^2 C,\Z), \Z$ are all torsion-free, we get $[H^*(F_1(X),\Z)_{\operatorname{tors}}]=0$ in $K_0(\mathrm{FinAb})$.
Since every finite abelian group uniquely decomposes into indecomposables,
it follows that $H^*(F_1(X),\Z)$ is torsion-free, as desired.
\end{proof}

\begin{cor}\label{cor:finite}
Over a finite field $\F_q$ of characteristic \(\neq 2\), let $N\geq 6$, and let $X\subset \P^N$ be a smooth complete intersection of two quadrics. Then there exists $e_0\geq 1$ such that for every $e\geq 1$ that is divisible by $e_0$, the following holds:
\begin{enumerate}
\item\label{item:cor-finite-odd} If $N=2g+1$ for some $g\geq 3$, then
the number of lines on $X$ defined over $\F_{q^e}$ equals 
{\footnotesize
\begin{equation*}
    \begin{split}
        & \frac{|C(\F_{q^e})|^2+|C(\F_{q^{2e}})|}{2} q^{e(2g-4)} + |C(\F_{q^e})| q^{e(g-2)} \left( \sum_{i=0}^{2g-3} q^{ei} - q^{e(g-2)} - q^{e(g-1)} \right) \\
        & \phantom{=}  + \left(\sum_{i=0}^{2g-2} q^{ei}\right) \left(\sum_{i=0}^{g-2} q^{2ei}\right)- (q^{e(g-2)} + q^{e(g+2)}) \sum_{i=0}^{2g-6} q^{ei} - (q^{e(g-1)} + q^{eg} + q^{e(g+1)} + q^{e(3g-7)} + q^{e(3g-6)} + q^{e(3g-5)}) .
    \end{split}
\end{equation*}
}
\item\label{item:cor-finite-even} If $N=2g$ for some $g\geq 3$, then
the number of lines on $X$ defined over $\F_{q^e}$ equals
\begin{equation*}
\begin{split}
&2g(g+2)q^{e(2g-4)} + (q^e+1)\sum_{i=0}^{g-3}(i+1)(q^{2ei}+q^{e(4g-9-2i)})+(2g+1)(q^{e(g-2)}+q^{e(2g-3)})\sum_{i=0}^{g-3}q^{ei}.
\end{split}
\end{equation*}
\end{enumerate}
\end{cor}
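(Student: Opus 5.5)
The plan is to apply the point-counting realization of the Grothendieck ring to the identities in Theorems~\ref{thm:main-odd} and~\ref{thm:main-even}. Those theorems are stated over algebraically closed fields, so first I would pick a finite extension $\F_{q^{e_0}}$ over which all the relevant geometry is defined. Concretely, I want the following to be defined over $\F_{q^{e_0}}$: the pencil and its $N+1$ singular members, the branch points of $C\to\P^1$ (so that $C$ and $\Sym^2 C$ are defined over it), a general line $\ell\in F_1(X)$, and all auxiliary varieties and stratifications used in the proof (hyperbolic reductions, the $Y_\ell$ loci, the exceptional strata of Section~\ref{sec:r=1}). In the even case I also want the finitely many points of $F_{g-1}(X)$ and of the discriminant to be rational over $\F_{q^{e_0}}$.

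With these choices made, the whole cut-and-paste argument of Sections~\ref{sec:relative-Fano-scheme}--\ref{sec:summing-together} should go through over $\F_{q^{e}}$ for every $e$ divisible by $e_0$. Then the identity holds in $\widetilde{K}_0(\mathrm{Var}/\F_{q^{e}})$, and not merely after base change to $\overline{\F}_q$. The ingredients are Zariski-local triviality of the bundles, Lemma~\ref{lem:fibration-general} (which works over any field), and the bijectivity criterion Lemma~\ref{lem:bijection-same-class} (which is stated for arbitrary $k$).

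The step I expect to be the main obstacle is justifying this descent. In particular, the existence of the needed sections (the \cite{GHS03,deJongStarr03} arguments, and the splitness of quadrics used in Lemma~\ref{lem:class-of-OG}) is only guaranteed over $\overline{k}$. My first approach would be to note that every statement ultimately concerns finitely many varieties and finitely many isomorphisms and sections, each defined over some finite extension, and then take $e_0$ to be a common multiple. An alternative that avoids retracing the proof is to spread out: the identity over $\overline{\F}_q$ involves finitely many varieties and piecewise isomorphisms, so it is already witnessed over some finite field $\F_{q^{e_0}}$, and hence holds in $\widetilde{K}_0(\mathrm{Var}/\F_{q^{e_0}})$ and in all of its extensions.

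Once this is done, apply the counting homomorphism $\widetilde{K}_0(\mathrm{Var}/\F_{q^e})\to\Z$, $[Z]\mapsto |Z(\F_{q^e})|$. It factors through $\widetilde{K}_0$ because a radicial surjective morphism over a finite field induces a bijection on rational points. It sends $\L\mapsto q^e$ and $[F_1(X)]\mapsto$ the number of $\F_{q^e}$-lines on $X$. The remaining input is the standard formula $|\Sym^2 C(\F_{q^e})|=\tfrac12\bigl(|C(\F_{q^e})|^2+|C(\F_{q^{2e}})|\bigr)$: an $\F_{q^e}$-point of $\Sym^2C$ is either an unordered pair of rational points or a conjugate pair of $\F_{q^{2e}}$-points. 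Substituting this into the formula of Theorem~\ref{thm:main-odd} gives \eqref{item:cor-finite-odd}, and Theorem~\ref{thm:main-even} directly gives \eqref{item:cor-finite-even}.
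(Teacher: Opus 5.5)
Your proposal is correct and matches the paper's proof. Both descend the identities of Theorems~\ref{thm:main-odd} and~\ref{thm:main-even} to $\widetilde{K}_0(\mathrm{Var}/\F_{q^e})$ for $e$ divisible by some $e_0$, apply the point-counting homomorphism (which factors through $\widetilde{K}_0$ by \cite[Proposition 7.26]{mustata-notes}), and use $|\Sym^2 C(\F_{q^e})|=\tfrac12\bigl(|C(\F_{q^e})|^2+|C(\F_{q^{2e}})|\bigr)$; the paper asserts the existence of $e_0$ without comment, and your spreading-out argument is the right justification (fix models of $F_1(X)$ and $C$, and note that the finitely many varieties, piecewise isomorphisms and radicial surjections witnessing the identity over $\overline{\F}_q$ all descend to a finite field), so the retracing-the-proof alternative is unnecessary.
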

\begin{proof}
By Theorems~\ref{thm:main-odd} and~\ref{thm:main-even}, there exists \(e_0\) such that for every \(e\) divisible by \(e_0\), the class of \(F_1(X_{\mathbb F_{q^e}})\) in \(\widetilde{K}_0(\mathrm{Var}/\F_{q^e})\) is equal to the right-hand expression in the respective theorem statement.
By \cite[Proposition 7.26]{mustata-notes}, the ring homomorphism $K_0(\mathrm{Var}/\F_{q^e})\to \Z, \, [Z]\mapsto |Z(\F_{q^e})|$
factors through $\widetilde{K}_0(\mathrm{Var}/\F_{q^e})$.
If \(N=2g\) is even, then~\eqref{item:cor-finite-even} follows from the equality \(|\mathbb A^1(\mathbb F_{q^e})|=q^e\) and the expression in Theorem~\ref{thm:main-even}. If \(N=2g+1\) is odd, then~\eqref{item:cor-finite-odd} follows from the expression in Theorem~\ref{thm:main-odd} and the equality \(|(\Sym^2 C)(\mathbb F_{q^e})| = \frac{|C(\F_{q^e})|^2+|C(\F_{q^{2e}})|}{2}\).
\end{proof}

\begin{cor}[{\cite[Example 2.8, Lemma 3.17, and Lemma 4.7]{BBFGHLPRAS}}]\label{cor:K_0-cat}
    Over an algebraically closed field \(k\) of characteristic zero, let \(N\geq 6\), and let \(X\subset\P^N\) be a smooth complete intersection of two quadrics. Then the class of the derived category of \(F_1(X)\) in the Grothendieck ring of categories \(K_0(\mathrm{Cat}/k)\) (see \cite[Definition 2.7]{BBFGHLPRAS}) is equal to
    \[[D^b(F_1(X))] = \begin{cases}
        [D^b(\Sym^2 C)] + (2g-4)[D^b(C)] + (2 g-5)(g-1)\cdot 1 & \text{if \(N=2g+1\) is odd,} \\
        8g(g-1) \cdot 1 & \text{if \(N=2g\) is even,}
    \end{cases}\]
    where \(1 = [D^b(\Spec k)]\) and, if \(N=2g+1\) is odd, \(C\) is the genus \(g\) hyperelliptic curve associated to the pencil.
    In particular, this is the image in \(K_0(\mathrm{Cat}/k)\) of the semiorthogonal decomposition for \(D^b(F_1(X))\) conjectured in \cite[Conjectures A and D]{BBFGHLPRAS}.
\end{cor}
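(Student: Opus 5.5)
The plan is to push the motivic identities of Theorems~\ref{thm:main-odd} and~\ref{thm:main-even} through the Bondal--Larsen--Lunts ring homomorphism \(K_0(\mathrm{Var}/k)\to K_0(\mathrm{Cat}/k)\), \([Y]\mapsto[D^b(Y)]\) for smooth projective \(Y\), exactly as in the proof of Corollary~\ref{cor:rel-F_r-K_0-cat}. In characteristic zero \(\widetilde{K}_0(\mathrm{Var}/k)=K_0(\mathrm{Var}/k)\), so the theorems hold in \(K_0(\mathrm{Var}/k)\). Since \(\L\mapsto 1\), every polynomial coefficient in \(\L\) is sent to its value at \(\L=1\). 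Moreover \(F_1(X)\), \(C\) and \(\Sym^2 C\) are smooth and projective (Lemma~\ref{lem:F_r(X)}), so their classes go to the classes of their derived categories.

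For \(N=2g+1\), I will use the coefficients \(M_{g,1,2},M_{g,1,1},M_{g,1,0}\) from Lemma~\ref{lem:coeffs-effective-odd} and evaluate each at \(q=1\).

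First, \(M_{g,1,2}(1)=1\).

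Next, \(M_{g,1,1}(q)=\sum_{i=g-2}^{2g-5}q^i+\sum_{i=2g-2}^{3g-5}q^i\), so \(M_{g,1,1}(1)=(g-2)+(g-2)=2g-4\).

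Finally, I take \(M_{g,1,0}(q)\) in the form \(\bigl(\sum_{i=0}^{2g-2}q^i\bigr)\bigl(\sum_{i=0}^{g-2}q^{2i}\bigr)-(q^{g-2}+q^{g+2})\sum_{i=0}^{2g-6}q^i-(\text{six monomials})\). At \(q=1\) this is \((2g-1)(g-1)-2(2g-5)-6\). The first product is \(2g^2-3g+1\), and subtracting \(4g-10\) and \(6\) gives \(2g^2-7g+5=(2g-5)(g-1)\), as claimed. As a cross-check, Belmans' formula \(M_{g,1,0}=(1+q^{2g})\binom{g-1}{2}_q+q^g\binom{g-3}{1}_q\binom{g-1}{1}_q\) evaluates at \(q=1\) to \((g-1)(g-2)+(g-3)(g-1)=(g-1)(2g-5)\).

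For \(N=2g\), I will evaluate the polynomial of Theorem~\ref{thm:main-even} at \(\L=1\):
\[
2g(g+2)+2\cdot 2\sum_{i=0}^{g-3}(i+1)+2(2g+1)(g-2).
\]
The middle term is \(4\cdot\frac{(g-2)(g-1)}{2}=2(g-1)(g-2)\). The total is
\[
2g^2+4g+2g^2-6g+4+4g^2-6g-4=8g^2-8g=8g(g-1).
\]
Equivalently, this is the sum of the Betti numbers \(\sum_i b^{\text{even}}_{g,i}\), which gives a sanity check against Lemma~\ref{lem:even-case-betti-number-CVX}.

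For the final sentence of the statement, I will compare with the image of the conjectured semiorthogonal decompositions of \cite[Conjectures A and D]{BBFGHLPRAS} as computed in \cite[Example 2.8, Lemmas 3.17 and 4.7]{BBFGHLPRAS}. Their image is obtained by setting \(\L=1\) in the conjectured motivic class, since the numbers of copies of \(D^b(\Sym^iC)\) and of exceptional objects are the values \(M_{g,1,i}(1)\) (respectively the even analogue). Theorems~\ref{thm:main-odd} and~\ref{thm:main-even} already identify the motivic class with the conjectured one, so the two images agree.

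The only genuine obstacle is bookkeeping: confirming that the cited lemmas of \cite{BBFGHLPRAS} indeed express the conjectural decomposition's \(K_0(\mathrm{Cat})\)-image as the \(q=1\) specialization of the same \(M\)-polynomials. The arithmetic itself is routine.
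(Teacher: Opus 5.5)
The identity itself is handled exactly as in the paper: you push Theorems~\ref{thm:main-odd} and~\ref{thm:main-even} through the Bondal--Larsen--Lunts homomorphism with $\L\mapsto 1$. Your evaluations $M_{g,1,2}(1)=1$, $M_{g,1,1}(1)=2g-4$, $M_{g,1,0}(1)=(2g-5)(g-1)$, and $8g(g-1)$ in the even case are all correct.

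The step that is not actually justified is the final sentence in the even case. You argue that the two images agree because the theorems identify the motivic class with the conjectured one. That reasoning is sound only when the conjectured semiorthogonal components are derived categories of smooth projective varieties, so that the BLL map applies to them directly.

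This holds in the odd case. There the components are copies of $D^b(\Sym^iC)$ with multiplicities $M_{g,1,i}(1)$ by \cite[Lemma 3.17]{BBFGHLPRAS}, giving $1$, $2g-4$, and $(2g-5)(g-1)$.

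In the even case, \cite[Conjecture D]{BBFGHLPRAS} predicts one copy of $D^b(\widetilde{\Sym}^2\widehat{C})$, $2g-5$ copies of $D^b(\widehat{C})$, and $(2g-5)(g-2)$ exceptional objects, where $\widehat{C}$ is a root stack over $\P^1$. No variety here feeds into the BLL map. So ``setting $\L=1$ in the even analogue'' does not by itself compute the $K_0(\mathrm{Cat}/k)$-class of this decomposition. Deferring this as bookkeeping leaves the ``in particular'' claim unproved.

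The missing input is \cite[Lemma 4.7]{BBFGHLPRAS}, which gives $[D^b(\widetilde{\Sym}^2\widehat{C})]=(2g^2+5g+5)\cdot 1$ and $[D^b(\widehat{C})]=(2g+3)\cdot 1$. The latter is consistent with $D^b(\widehat{C})$ decomposing into $D^b(\P^1)$ and $2g+1$ exceptional objects, as in \cite{IshiiUeda}.

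With these values, the image of Conjecture D is $(2g^2+5g+5)+(2g-5)(2g+3)+(2g-5)(g-2)=8g(g-1)$, which matches your computation from Theorem~\ref{thm:main-even}. This explicit check is what the paper does, and it is what your proof needs in place of the deferred verification.
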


\begin{proof}
    By \cite{BondalLarsenLunts}, \([Y] \mapsto [D^b(Y)]\) for smooth projective varieties \(Y\) over \(k\) defines a ring homomorphism \(K_0(\mathrm{Var}/k) \to K_0(\mathrm{Cat}/k)\), and the image of \(\L\) under this homomorphism is \(1\). Since \(\Sym^2 C\), \(C\), and \(F_1(X)\) are smooth and projective (Lemma~\ref{lem:F_r(X)}), the above identities hold by Theorems~\ref{thm:main-odd} and~\ref{thm:main-even}.
    If \(N=2g+1\) is odd, the above class is the image in \(K_0(\mathrm{Cat}/k)\) of \cite[Conjecture A]{BBFGHLPRAS} for \(F_1(X)\).

    If \(N=2g+1\) is odd, then \cite[Conjecture A]{BBFGHLPRAS} predicts a semiorthogonal decomposition of \(D^b(F_1(X))\) consisting of one copy of \(D^b(\Sym^2 C)\), \(2g-4\) copies of \(D^b(C)\), and \((2g-5)(g-1)\) copies of \(D^b(\Spec k)\); the image of this conjectured semiorthogonal decomposition in \(K_0(\mathrm{Cat}/k)\) is the first equation in the corollary statement.
    If \(N=2g\) is even, let \(\widehat{C}\) be the associated stacky curve (Section~\ref{sec:prelim-F_r}).
    Then \cite[Conjecture D]{BBFGHLPRAS} predicts a semiorthogonal decomposition of \(D^b(F_1(X))\) consisting of one copy of \(D^b(\widetilde{\Sym}^2\widehat{C})\), \(2g-5\) copies of \(D^b(\widehat{C})\), and \((2g-5)(g-2)\) copies of \(D^b(\Spec k)\); here \(\widetilde{\Sym}^2\widehat{C}\) is defined in \cite[Section 4.3]{BBFGHLPRAS}. By \cite[Lemma 4.7]{BBFGHLPRAS}, we have
    \([D^b(\widetilde{\Sym}^2\widehat{C})]=(2g^2+5g+5) \cdot 1\) and \([D^b(\widehat{C})]=(2g+3)\cdot 1\) in \(K_0(\mathrm{Cat}/k)\), so
    \[[D^b(\widetilde{\Sym}^2\widehat{C})] + (2g-5)[D^b(\widehat{C})] + (2g-5)(g-2)[D^b(\Spec k)] = 8g(g-1)\cdot 1.\]
\end{proof}

\section{\texorpdfstring{\(F_r(X)\)}{FrX} for arbitrary \texorpdfstring{\(r\)}{r}}\label{sec:artibrary-r}

In this section, we describe a possible strategy to prove the formula in \cite[Conjecture B]{BBFGHLPRAS} and its even analogue for arbitrary \(0\leq r\leq\lfloor\frac{N}{2}\rfloor-2\), which generalizes the approach for \(r=0,1\) that we used in this paper.

The following definition specializes to Definition~\ref{defn:reid-type-r=1} in the case \(r=1\).
\begin{defn}\label{defn:reid-type}
    Over a field \(k\) of characteristic \(\neq 2\), a subscheme $X'\subset \P^{2r+1}$ defined by two quadratic equations is said to be \defi{singular of Reid type} if over an algebraic closure \(\kbar\) of \(k\) there is a choice of coordinates so that \(X'_{\kbar}\) is defined by symmetric matrices of the form \[\begin{pmatrix} 0 & I_{r+1} \\ I_{r+1} & 0 \end{pmatrix}, \quad \begin{pmatrix} 0 & D \\ D & 0 \end{pmatrix}\] where \(D\) is diagonal with distinct diagonal entries.
    By \cite[Lemma 3.4]{Reid-thesis}, if \(X'\) is singular of Reid type, then it contains \(r\)-planes over \(\kbar\), and for each \(r\)-plane \(\rplane\) on \(X'_{\kbar}\) there are exactly \(r+1\) distinct \(r\)-planes \(M_1,\ldots,M_{r+1}\) on \(X'_{\kbar}\) that intersect \(\rplane\) along an \((r-1)\)-plane.
\end{defn}

Using the interpretation of points on \(\mathcal Q^{(r)}_{\rplane}\setminus\mathcal Q_{Y_{\rplane}}\) from Lemma~\ref{lem:r-planes-in-hyperbolic-reduction-loci}\eqref{item:image-of-Z-in-rel-Fr}, the following birational description of \(F_r(X)\) was first proven by Reid for \(N=2g+1\) and \(r=g-1\) \cite{Reid-thesis}, \cite{CTSSD} for \(r=0\), and later proven by \cite{JS24} in general:

\begin{thm}[{\cite[Theorem 1.3 and Section 3.2]{JS24}}]\label{thm:js-bir-description-F_r(X)}
    Let \(X\) be a smooth complete intersection of two quadrics in \(\P^N\) over a field \(k\) of characteristic \(\neq 2\), and let \(0\leq r<\frac{N}{2}-1\). Let \(\rplane\) be an \(r\)-plane on \(X\) defined over \(k\).
    Then there is an open subset of \(F_r(X)\) defined over \(k\) parametrizing the \(r\)-planes \(M\) such that \(X \cap\langle\rplane,M\rangle\) is singular of Reid type, and \(M \mapsto (M_1,\ldots,M_{r+1})\) (where the \(M_i\) are as in Definition~\ref{defn:reid-type}) defines a birational map over \(k\)
    \[\psi_{\rplane}\colon F_r(X)\dashrightarrow \Sym^{r+1}(\mathcal Q^{(r)}_{\rplane}\setminus\mathcal Q_{Y_{\rplane}}).\]
\end{thm}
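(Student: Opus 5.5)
The plan is to construct $\psi_{\rplane}$ on an explicit open set, construct an inverse on an open subset of $\Sym^{r+1}(\mathcal Q^{(r)}_{\rplane}\setminus\mathcal Q_{Y_{\rplane}})$, and check that the two maps are mutually inverse. After that, birationality follows because $F_r(X)$ is smooth and geometrically irreducible by Lemma~\ref{lem:F_r(X)} for $r<\frac{N}{2}-1$.

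\textbf{Step 1: the forward map.} Let $U\subset F_r(X)$ be the locus of $M$ such that $\langle\rplane,M\rangle\cong\P^{2r+1}$ and $X\cap\langle\rplane,M\rangle$ is singular of Reid type. This condition is open: on the locus where the span has dimension $2r+1$, it says that the restricted pencil, viewed as a pencil of $(2r+2)\times(2r+2)$ symmetric forms, is simultaneously of the block form in Definition~\ref{defn:reid-type}. Equivalently, the determinant of the restricted pencil is a perfect square of a degree $r+1$ polynomial with distinct roots, and the forms have full rank generically. This is a nonvanishing-discriminant condition, so $U$ is open, and $U$ is defined over $k$ since $\rplane$ is. For $M\in U$, Definition~\ref{defn:reid-type} supplies the $r+1$ planes $M_i$ meeting $\rplane$ in an $(r-1)$-plane.

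In the normal form, the $M_i$ are the coordinate $r$-planes spanned by $\rplane\cap M_i$ together with one coordinate direction in the complementary block. This shows that $\langle\rplane,M_i\rangle\not\subset X$ and $\dim(\rplane\cap M_i)=r-1$. Hence, by Lemma~\ref{lem:r-planes-in-hyperbolic-reduction-loci}\eqref{item:image-of-Z-in-rel-Fr}, each $M_i$ is a point of $\mathcal Q^{(r)}_{\rplane}\setminus\mathcal Q_{Y_{\rplane}}$. The unordered tuple $\{M_i\}$ is Galois-stable, so it defines a $k$-morphism $U\to\Sym^{r+1}(\mathcal Q^{(r)}_{\rplane}\setminus\mathcal Q_{Y_{\rplane}})$. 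One can see this scheme-theoretically by applying the same construction over $U$ to the family of spans, giving a finite étale degree $r+1$ cover of $U$.

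\textbf{Step 2: the inverse.} Given general distinct points $M_1,\dots,M_{r+1}$ of $\mathcal Q^{(r)}_{\rplane}\setminus\mathcal Q_{Y_{\rplane}}$, the $(r-1)$-planes $\rplane\cap M_i$ are general hyperplanes of $\rplane$. Their span $\langle M_1,\dots,M_{r+1}\rangle$ is a $(2r+1)$-plane $L$ containing $\rplane$, and $X\cap L$ should again be singular of Reid type. Reid's Lemma 3.4 then shows that there is a unique $r$-plane $M$ on $X\cap L$ meeting each $M_i$ in an $(r-1)$-plane and meeting $\rplane$ only in the "opposite" configuration. Concretely, $M$ is the opposite vertex $r$-plane in the normal form, namely $\{x_0=\dots=x_r=0\}$ when $\rplane=\{x_{r+1}=\dots=x_{2r+1}=0\}$. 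Setting $M_i\mapsto M$ gives the inverse. To check that the compositions are the identity, I would pass to the normal form, where both directions are explicit coordinate combinatorics.

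\textbf{Main obstacle.} The hard point is nonemptiness of $U$ together with genericity in Step 2: one must show that a general point of $\Sym^{r+1}$ actually spans a $(2r+1)$-plane on which $X$ cuts a Reid-type intersection. To handle this, I would first exhibit a single explicit configuration using the diagonal coordinates of \cite[Proposition 2.1]{Reid-thesis}, in the style of Example~\ref{exmp:smooth-Y-example}: pair up coordinates and choose square roots of differences of the $\lambda_i$ to write down $\rplane$ and $M$ explicitly. Openness of both conditions, combined with irreducibility of $F_r(X)$ and of $\Sym^{r+1}$ of the irreducible variety $\mathcal Q^{(r)}_{\rplane}$, then propagates this to general points. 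As a cross-check, both $F_r(X)$ and the target have dimension $(r+1)(N-2r-2)$, since $\mathcal Q^{(r)}_{\rplane}$ has dimension $N-2r-2$. A generically injective dominant map between them is therefore birational, at least in characteristic $0$; in characteristic $p$ one needs the explicit inverse from Step 2 rather than a degree count.
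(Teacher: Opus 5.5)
Your architecture is the right one: restrict to the Reid-type locus, send $M$ to the $r+1$ planes meeting $\Lambda$ in codimension one, and invert via the span $L$ and the plane opposite to $\Lambda$. This is the construction the paper imports from \cite{JS24}; the paper does not reprove it, and for $r=1$ only upgrades it to an isomorphism of open sets in Proposition~\ref{prop:v-w-isom}.

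There is, however, a genuine gap in the nonemptiness step. The statement is for the \emph{given} $r$-plane $\Lambda$, for instance a $k$-rational one, which need not be general. Your explicit configuration in Reid's diagonal coordinates chooses $\Lambda$ together with $M$. Openness then only shows that Reid-type pairs $(\Lambda,M)$ are dense in $F_r(X)\times F_r(X)$, i.e.\ that $U_\Lambda\neq\emptyset$ for $\Lambda$ in some dense open subset of $F_r(X)$. For a fixed $\Lambda$, irreducibility of $F_r(X)$ or of $\Sym^{r+1}\mathcal Q^{(r)}_\Lambda$ only spreads a nonemptiness you have not established. For the same reason, your Step~2 claims that the hyperplanes $\Lambda\cap M_i$ are ``general'' and that $X\cap L$ ``should'' be of Reid type are unproved for this $\Lambda$. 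Your argument would suffice for Section~\ref{sec:r=1}, where $\ell$ is general, but not for the theorem as stated.

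Separately, your verbal description of $M$ is wrong for $r\geq 2$. In the normal form, $M=\{x_0=\cdots=x_r=0\}$ meets each $M_i=\{x_i=0,\ x_{r+1+j}=0\ (j\neq i)\}$ in a single point, and the only $r$-plane meeting every $M_i$ in an $(r-1)$-plane is $\Lambda$. The correct characterization, matching your coordinate formula, is that $M$ is the unique $r$-plane of $X\cap L$ disjoint from $\Lambda$.

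A fix valid for every $\Lambda$ goes as follows. Write $L=\Lambda\oplus W$ with $W$ spanned by lifts $w_i$ of the points $M_i\in\mathcal Q^{(r)}_\Lambda\setminus\mathcal Q_{Y_\Lambda}$. The pencil on $L$ is then $\left(\begin{smallmatrix}0&A_{s,t}\\ A_{s,t}^{T}&C_{s,t}\end{smallmatrix}\right)$, and if $M_i$ lies over $t_i\in\P^1$, the $i$th column of $A_{t_i}$ and the entry $(C_{t_i})_{ii}$ both vanish.

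Suppose the $t_i$ are distinct and the hyperplanes $\Lambda\cap M_i$ are linearly independent; the latter forces $\dim L=2r+1$. After changing bases of $\Lambda$ and of the pencil you get $A_0=I$, $A_1=D=\mathrm{diag}(\mu_i)$ with distinct $\mu_i$, and $(C_1)_{ii}=\mu_i(C_0)_{ii}$. These diagonal relations are exactly what makes the system $\Phi+\Phi^{T}+C_0=0$, $D\Phi+\Phi^{T}D+C_1=0$ uniquely solvable: put $\Phi=-\tfrac12C_0+K$ with $K$ antisymmetric and use $\mu_i\neq\mu_j$. The graph of $\Phi$ is then the unique $r$-plane of $X\cap L$ disjoint from $\Lambda$, and in coordinates adapted to $\Lambda$ and this plane the pencil is in Reid normal form. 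So Reid type is automatic.

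Nonemptiness then reduces to showing that the hyperplanes $\Lambda\cap M'$, for $M'\in\mathcal Q^{(r)}_\Lambda\setminus\mathcal Q_{Y_\Lambda}$, have no common point $p$. Let $L_t=L_0+tL_1$ be the matrix of the forms $l_{ij}$ in~\eqref{eqn:X}. Such a $p$ would force $p^{T}L_0$ to vanish on a general fiber of $\mathcal Q^{(r)}_\Lambda$, which is a smooth quadric spanning $\P(\ker L_t)$, hence on $\ker L_t$ for general $t$. But $L_t$ has rank $r+1$ for every $t\in\P^1$, by nondegeneracy of $\Lambda$ (Proposition~\ref{prop:KS-hyperbolic-reduction}\eqref{item:KS-smooth-nondegenerate-section}). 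So its Kronecker form consists only of blocks $L_\varepsilon$, and checking on those blocks shows that this vanishing forces $p=0$.
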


For \(r=0,1\), we explicitly described the exceptional loci of \(\psi_\Lambda\) in Sections~\ref{sec:r=0}, \ref{sec:computations}, and~\ref{sec:r=1} to obtain formulas for the classes of \(X\) and \(F_1(X)\) in \(\widetilde{K}_0(\mathrm{Var}/k)\).
For \(r\geq 2\), the same strategy should work, but the computations for the classes of the exceptional loci of \(\psi_\Lambda\) may be more complicated.
For instance, even the analogue of Section~\ref{sec:computation-exceptional-cases} for \(r\geq 2\) involves more cases.

As evidence for this possible strategy to compute the class of \(F_r(X)\), we note that
if \(N=2g+1\) is odd, then by Corollary~\ref{cor:class-of-Qr-formula} and Lemma~\ref{lem:K_0-sym-formulas}\eqref{item:formula-sym-times-L} the coefficient of \([\Sym^{r+1}C]\) in \([\Sym^{r+1} \mathcal Q^{(r)}_\Lambda]\) is \(\L^{(r+1)(g-r-1)}\). This agrees with the coefficient of \([\Sym^{r+1} C]\) in \cite[Conjecture B]{BBFGHLPRAS}.

\appendix
\section{An effective expression for the decomposition of Fano schemes of intersections of two quadrics, by Pieter Belmans}\label{appendix}
  The polynomials~$\mathrm{M}_{g,k,i}(t)\in\mathbb Z[t]$ introduced in \cite{BBFGHLPRAS}
  encode the multiplicities of Lefschetz twists of symmetric powers of a hyperelliptic curve
  in a conjectural motivic decomposition of Fano schemes of intersections of two quadrics.
  We show that these polynomials are effective, i.e.,~$\mathrm{M}_{g,k,i}(t)\in\mathbb N[t]$,
  settling the effectivity statement of the refined decomposition conjecture in loc.\ cit.
  We moreover give a manifestly effective closed expression for~$\mathrm{M}_{g,k,i}(t)$.

\subsection{Introduction}
\label{section:introduction}
Consider the polynomial in~$\mathbb Z[t]$ introduced in \cite[Equation~(4)]{BBFGHLPRAS},
given by
\begin{equation}
  \label{equation:M}
  \begin{aligned}
    \mathrm{M}_{g,k,i}(t)
    &\coloneqq
    t^{i(g-k-1)}
    \Biggl(
      \binom{2g-k-i}{k+1-i}_t
      - \left( t^{g-k-1} + t^{g+2k-3i} \right)\binom{2g-k-i-4}{k-i}_t \\
      &\qquad
      - \left( t^{g-k} + t^{g-i} + t^{g+k-2i} + t^{3g-3k-4} + t^{3g-2k-4-i} + t^{3g-k-2i-4} \right)\binom{2g-k-i-4}{k-i-1}_t \\
      &\qquad
      - \left( t^{3(g-k-1)} + t^{3(g-k-1)+1} + t^{3g-2k-i-3} + t^{3g-2k-i-2} \right)\binom{2g-k-i-4}{k-i-2}_t \\
      &\qquad
      - t^{4(g-k)-2}\binom{2g-k-i-4}{k-i-3}_t
    \Biggr),
  \end{aligned}
\end{equation}
where~$g\geq 2$,~$k=0,\ldots,g-2$ and~$i=0,\ldots,k+1$.
Here, we write
\begin{equation}
  \label{equation:gauss}
  \binom{n}{m}_t
  \coloneqq
  \frac{(1-t^n)(1-t^{n-1})\cdots(1-t^{n-m+1})}
  {(1-t)(1-t^2)\cdots(1-t^m)}
\end{equation}
for the Gaussian binomial coefficient in the variable~$t$,
with integers~$0\le m\le n$,
and the convention that~$\binom{n}{m}_t=0$ for~$m<0$ or~$m>n$.
%It is the generating function of the partitions whose Young diagram
%fits in an~$m\times(n-m)$ box, see \cite[Theorem~3.1]{MR1634067}.

Although the four subtractions in \eqref{equation:M} might suggest otherwise,
the polynomial~$\mathrm{M}_{g,k,i}(t)$ was expected to have non-negative coefficients,
as expressed by the first half of \cite[Conjecture~B]{BBFGHLPRAS}.
The following result confirms this expectation.

\begin{thm}
  \label{theorem:appendix-main}
  For all~$g\geq 2$,~$k=0,\ldots,g-2$,
  and~$i=0,\ldots,k+1$
  the polynomial~$\mathrm{M}_{g,k,i}(t)$ defined in \eqref{equation:M} is effective,
  i.e.,~$\mathrm{M}_{g,k,i}(t)\in\mathbb N[t]$.

  Moreover, with the convention that an empty product equals~$1$,
  it is given by the manifestly effective expression
  \begin{equation}
    \label{equation:main}
    \mathrm{M}_{g,k,i}(t)
    =
    t^{i(g-k-1)}\,[x^{k+1-i}]
    \Biggl(
      \prod_{j=0}^{g-k-3}\frac{1}{1-t^jx}\prod_{j=g-k+1}^{2g-2k-1}\frac{1}{1-t^jx}
      +t^{g-k-2}x\prod_{j=0}^{g-k-2}\frac{1}{1-t^jx}\prod_{j=g-k+2}^{2g-2k-1}\frac{1}{1-t^jx}
    \Biggr),
  \end{equation}
  where~$[x^r]$ extracts the coefficient of~$x^r$ from the bracketed power series.
\end{thm}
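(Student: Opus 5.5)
Throughout I reparametrize by $n\coloneqq g-k-1\geq 1$ and $r\coloneqq k+1-i\in\{0,\ldots,k+1\}$. The plan is to show that every exponent and every Gaussian binomial in \eqref{equation:M} depends only on $n$ and $r$, to rewrite each term as a coefficient extraction $[x^r]$ of a rational function in $x$ and $t$, and to reduce \eqref{equation:main} to a polynomial identity. Effectivity is then immediate, because the right-hand side of \eqref{equation:main} is a coefficient of a product of geometric series $1/(1-t^jx)$ with $j\geq 0$, multiplied by $t^{i(g-k-1)}$ and, in the second summand, by $t^{g-k-2}x$. All of these have nonnegative coefficients.

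\textbf{Step 1 (reparametrization).} One checks directly that $2g-k-i=2n+1+r$ and $2g-k-i-4=2n-3+r$. The $i$-dependent exponents also become functions of $(n,r)$ alone: for example $g+2k-3i=n-2+3r$, $g-i=n+r$, and $g+k-2i=n-1+2r$, and similarly for the others. The bottom entries are $k+1-i=r$, $k-i=r-1$, $k-i-1=r-2$, $k-i-2=r-3$ and $k-i-3=r-4$. The prefactor $t^{i(g-k-1)}$ is common to both sides, so I drop it.

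\textbf{Step 2 (generating functions).} I use the standard identity
\[\binom{a+b}{b}_t=[x^b]\prod_{j=0}^{a}\frac{1}{1-t^jx},\]
together with the convention that $[x^{s}]$ of a power series is $0$ for $s<0$; this matches the convention $\binom{n}{m}_t=0$ for $m<0$. With this, $\binom{2n+1+r}{r}_t=[x^r]P_{0,2n+1}$, where $P_{a,b}\coloneqq\prod_{j=a}^{b}(1-t^jx)^{-1}$. Each subtracted term has the shape $t^{c+dr}\binom{2n-3+r}{r-s}_t$ with $s\in\{1,2,3,4\}$ and $d\in\{0,1,2,3\}$. A factor $t^{dr}$ is absorbed by the substitution $x\mapsto t^dx$. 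Hence such a term equals
\[[x^r]\,t^{c+ds}x^s P_{d,\,2n-2+d}(x).\]
After this step the bracket in \eqref{equation:M} becomes $[x^r]\,F_n(x,t)$ for an explicit rational function $F_n$ that does not depend on $r$. It therefore suffices to prove the identity of rational functions $F_n=G_n$, where $G_n$ is the bracketed series in \eqref{equation:main} (written in terms of $n$, its two products are $P_{0,n-2}P_{n+2,2n+1}$ and $P_{0,n-1}P_{n+3,2n+1}$).

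\textbf{Step 3 (clearing denominators).} I multiply both sides by $D\coloneqq\prod_{j=0}^{2n+1}(1-t^jx)$. On the right, this gives
\[(1-t^{n-1}x)(1-t^nx)(1-t^{n+1}x)+t^{n-1}x(1-t^nx)(1-t^{n+1}x)(1-t^{n+2}x).\]
On the left, $D\cdot P_{d,2n-2+d}$ is a product of at most three linear factors: the $j$'s in $[0,2n+1]$ not in $[d,2n-2+d]$. The resulting identity is polynomial in $x$, $t$ and $T\coloneqq t^n$, of bounded degree independent of $n$. I would verify it by expanding both sides, symbolically or by hand, comparing coefficients of $x^0,\ldots,x^4$.

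\textbf{Main obstacle.} I expect the hardest step to be this final bookkeeping. It requires tracking the six plus four plus one correction terms with the exact powers $t^{c+ds}$, and checking that the identity holds uniformly in $T$. It also requires handling small $n$ (e.g.\ $n=1$), where some ranges such as $[0,n-2]$ are empty or the shifted ranges overlap differently; these cases I would check separately. If a discrepancy appears, I would first recheck the exponent translations in Step 1, since an off-by-one there propagates to all terms.
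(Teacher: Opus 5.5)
Your strategy is the paper's: use $\sum_b\binom{a+b}{b}_tx^b=\prod_{j=0}^{a}(1-t^jx)^{-1}$, clear denominators by $D=\prod_{j=0}^{2n+1}(1-t^jx)$, and compare coefficients of $x^0,\dots,x^4$. However, Step~2 contains an error that makes the identity you propose to verify in Step~3 false. In $\binom{2n-3+r}{r-s}_t$ the top entry minus the bottom entry is $2n-3+s$, not $2n-2$. The correct translation is therefore
\[
t^{c+dr}\binom{2n-3+r}{r-s}_t=[x^r]\,t^{c+ds}x^sP_{d,\,2n-3+s+d}(x),
\]
where the product has length $2n-2+s$ and so depends on $s$. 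Your $P_{d,2n-2+d}$ is right only for $s=1$. For instance, $\binom{2}{1}_t=1+t=[x^1]P_{0,1}$, whereas $[x^1]P_{0,0}=1$. With your ranges, setting $t=1$ gives $D\cdot F_n=1-(2x+6x^2+4x^3+x^4)(1-x)^3$, a polynomial of degree $7$, while $D\cdot G_n=(1+x)(1-x)^3$. With the corrected ranges one gets $1-2x(1-x)^3-6x^2(1-x)^2-4x^3(1-x)-x^4=(1+x)(1-x)^3$, as it must be. So the discrepancy you anticipate would indeed appear, but its source is Step~2; the exponent translations in Step~1 are correct.

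After this correction, $D\cdot P_{d,2n-3+s+d}$ is a product of $4-s$ linear factors, since every occurring term satisfies $s+d\le 4$. Step~3 then becomes exactly Lemma~\ref{lemma:collapse} (with $a=n$). There the paper shows $(x;t)_{2n+2}F_n=(1-t^nx)(1-t^{n+1}x)(1-t^{2n+1}x^2)$ by sorting monomials by their slope in $n$.

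It helps to notice that your right-hand side factors as $(1-t^nx)(1-t^{n+1}x)\bigl[(1-t^{n-1}x)+t^{n-1}x(1-t^{n+2}x)\bigr]$, which is the same product. The paper's splitting $1-uv=(1-u)+u(1-v)$ in Lemma~\ref{lemma:decomposition} is this factorization read backwards.

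No separate check of small $n$ is needed. After cancelling, both sides are polynomials in $x$, $t^{\pm1}$ and $T=t^n$ (e.g.\ $t^{3n-1}=T^3t^{-1}$), and the identity holds in that ring. It therefore specializes to every $n\geq 1$, with empty products equal to $1$.

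Finally, this coefficient comparison is the entire content of the proof, so you need to actually carry it out rather than assert it.
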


Here each factor~$\tfrac{1}{1-t^jx}=\sum_{r\ge0}t^{jr}x^r$
has non-negative coefficients,
hence,
the right-hand side of \eqref{equation:main} is a non-negative combination of monomials in~$t$.

\subsection*{Geometric motivation}
Let us briefly describe the geometric context for \cite[Conjecture~B]{BBFGHLPRAS}.
For this geometric motivation, work over an algebraically closed field of characteristic zero.
The variable~$t$ in \eqref{equation:main}
corresponds to the Lefschetz motive~$\mathbb{L}=[\mathbb{A}^1]$,
the class of the affine line in~$K_0(\mathrm{Var})$.
For the smooth intersection of two quadrics~$Q_1\cap Q_2\subset\mathbb{P}^{2g+1}$,
with associated hyperelliptic curve~$C$,
\cite[Conjecture~B]{BBFGHLPRAS} predicts the identity
\begin{equation}
  \label{equation:conjecture}
  [\mathrm{Fano}_k(Q_1\cap Q_2)]
  =
  \sum_{i=0}^{k+1}
  \mathrm{M}_{g,k,i}(\mathbb{L})\,[\Sym^iC]
  \qquad\text{in }K_0(\mathrm{Var}),
\end{equation}
where~$\mathrm{Fano}_k(Q_1\cap Q_2)$ denotes the Fano scheme of~$k$-planes,
and~$\Sym^iC$ the~$i$th symmetric power.
In other words,~$\mathrm{M}_{g,k,i}(\mathbb{L})$ plays the role of a motivic multiplicity,
recording the multiplicity of each Lefschetz twist of~$[\Sym^iC]$ in \eqref{equation:conjecture}.
In \cite{BBFGHLPRAS}, the effectivity was established only after setting~$\mathbb{L}=1$,
where~$\mathrm{M}_{g,k,i}(1)$ is an explicit positive integer.
In Corollary~\ref{corollary:euler} we give a proof of this fact
using our approach to Theorem~\ref{theorem:appendix-main}.

% to be used in the main body of the paper?
%The case~$k=1$ of \cite[Conjecture~B]{BBFGHLPRAS}
%gives
%\begin{equation}
%  \begin{aligned}
%    \mathrm{M}_{g,1,0}(t)
%    &=(1+t^{2g})\binom{g-1}{2}_t+t^g\binom{g-3}{1}_t\binom{g-1}{1}_t,\\
%    \mathrm{M}_{g,1,1}(t)
%    &=t^{g-2}(1+t^g)\binom{g-2}{1}_t,\\
%    \mathrm{M}_{g,1,2}(t)
%    &=t^{2g-4}.
%  \end{aligned}
%\end{equation}

\subsection*{AI disclosure}
The effectivity part of \cite[Conjecture~B]{BBFGHLPRAS}
was submitted as a benchmark problem to IMProofBench~\cite{2509.26076v2}.
The initial proof of Theorem~\ref{theorem:appendix-main} was obtained by GPT-5.4
in March~2026.
In order to turn the one-shot proof from IMProofBench
into something digestible,
this write-up was produced by the author,
with LLMs used for proofreading.

\subsection*{Acknowledgements}
P.B.~was partially supported by NWO (\href{https://doi.org/10.61686/RZKLF82806}{\texttt{doi:10.61686/RZKLF82806}}).
We thank the creators of IMProofBench
for having built this benchmark system
that led to the proof of Theorem~\ref{theorem:appendix-main}.

\subsection{Proof}
\label{section:proof}
We will prove Theorem~\ref{theorem:appendix-main} in four steps:
\begin{enumerate}
  \item First we perform an easy change of variables in Lemma~\ref{lemma:reindexing},
    and encode the coefficients we are interested in
    using a generating function defined in \eqref{equation:Fa}.
  \item Next,
    we recall a standard generating function for certain Gaussian binomial coefficients
    using Pochhammer symbols in Proposition~\ref{proposition:pochhammer},
    which is the main external input for the proof.
    In Lemma~\ref{lemma:rational} we use this to give a new expression for \eqref{equation:Fa}.
  \item In Lemma~\ref{lemma:collapse} and Corollary~\ref{corollary:closed}
    we will further reduce this expression by elementary means.
  \item Finally, in Lemma~\ref{lemma:decomposition}
    we express the generating function in Corollary~\ref{corollary:closed}
    as a sum of monomial multiples of reciprocals of products of Pochhammer symbols,
    making its effectivity explicit.
\end{enumerate}

%\begin{enumerate}
%  \item We split off the monomial prefactor~$t^{i(g-k-1)}$,
%    and reindex by~$a=g-k-1$,~$m=k-i$ (Lemma~\ref{lemma:reindexing});
%    effectivity then amounts to that
%    of a single two-parameter family~$\mathrm{B}_{a,m}(t)$.
%    This is merely book-keeping.
%  \item We recall the generating function of the Gaussian binomial coefficients
%    in Pochhammer form (Proposition~\ref{proposition:pochhammer}); we isolate it as a
%    proposition because it is the conceptual crux of the argument, not because it
%    is deep.
%  \item We package the~$\mathrm{B}_{a,m}(t)$ into a generating function~$\mathrm{F}_a(t,x)$, compute it in closed form, and observe a collapse
%    (Lemma~\ref{lemma:rational}, Lemma~\ref{lemma:collapse}, and Corollary~\ref{corollary:closed}).
%  \item An elementary splitting (Lemma~\ref{lemma:decomposition}) writes~$\mathrm{F}_a(t,x)$ as a sum of two manifestly effective series, from which
%    Theorem~\ref{theorem:appendix-main} is read off.
%\end{enumerate}

\subsection*{Reindexing}
First, a change of variables removes the dependence on~$i$ and turns the five-term sum
into a single two-parameter family.

\begin{lem}
  \label{lemma:reindexing}
  Set~$a\coloneqq g-k-1$
  (and thus~$a\geq 1$)
  and~$m\coloneqq k-i$
  (and thus~$m\in\{-1,0,\ldots,k\}$).
  Then~$\mathrm{M}_{g,k,i}(t)=t^{ia}\,\mathrm{B}_{a,m}(t)$, where
  \begin{equation}
    \label{equation:B}
    \begin{aligned}
      \mathrm{B}_{a,m}(t)
      \coloneqq{}&\binom{2a+m+2}{m+1}_t
      -\bigl(t^a+t^{a+1+3m}\bigr)\binom{2a+m-2}{m}_t \\
      &-\bigl(t^{a+1}+t^{a+1+m}+t^{a+1+2m}+t^{3a-1}+t^{3a-1+m}+t^{3a-1+2m}\bigr)\binom{2a+m-2}{m-1}_t \\
      &-\bigl(t^{3a}+t^{3a+1}+t^{3a+m}+t^{3a+m+1}\bigr)\binom{2a+m-2}{m-2}_t \\
      &-t^{4a+2}\binom{2a+m-2}{m-3}_t.
    \end{aligned}
  \end{equation}
\end{lem}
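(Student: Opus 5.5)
The plan is direct substitution: express every exponent and every binomial index in \eqref{equation:M} through $a=g-k-1$ and $m=k-i$, then factor out $t^{i(g-k-1)}=t^{ia}$. The identities needed are $g=a+k+1$ and $k=m+i$; the proof is pure bookkeeping, and the only risk is an arithmetic slip.

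First, the Gaussian binomials. The leading term has top index $2g-k-i=2a+2+k-i=2a+m+2$ and bottom index $k+1-i=m+1$, which gives $\binom{2a+m+2}{m+1}_t$. For the remaining terms the top index is
\[
2g-k-i-4=2a+m-2,
\]
and the bottom indices $k-i,\,k-i-1,\,k-i-2,\,k-i-3$ become $m,\,m-1,\,m-2,\,m-3$.

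Second, the exponents. It is convenient to note $g-k=a+1$, $g-i=a+1+m$, $3g-3k-4=3a-1$, and $3(g-k-1)=3a$. Each exponent is then rewritten in turn:
\begin{itemize}
\item $t^{g-k-1}=t^a$, and $g+2k-3i=(a+k+1)+2k-3i=a+1+3k-3i=a+1+3m$.
\item In the third group: $g+k-2i=a+1+2m$, $3g-2k-4-i=3a-1+m$, and $3g-k-2i-4=3a-1+2m$.
\item In the fourth group: $3g-2k-i-3=3a+m$, and hence $3g-2k-i-2=3a+m+1$.
\item Finally, $4(g-k)-2=4a+2$.
\end{itemize}
These match \eqref{equation:B} term by term.

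Third, the ranges. The constraint $k\le g-2$ gives $a\ge1$, and $0\le i\le k+1$ gives $-1\le m\le k$. The convention that $\binom{n}{m}_t=0$ outside $0\le m\le n$ is the same in both formulas, so the edge cases (for example $m=-1$, where only the leading term survives as $\binom{2a+1}{0}_t=1$) need no separate argument.

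There is no real obstacle here. The only step needing care is checking the mixed exponents such as $g+2k-3i$ and $3g-2k-i-3$, which carry $k$ and $i$ with different coefficients, so I would verify each of them explicitly.
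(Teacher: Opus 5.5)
Your proposal is correct and is the same direct substitution $g=a+k+1$, $i=k-m$ that the paper uses. Every exponent and binomial index you computed (e.g.\ $g+2k-3i=a+1+3m$, $3g-2k-4-i=3a-1+m$, $3g-2k-i-3=3a+m$, $4(g-k)-2=4a+2$) checks out against \eqref{equation:B}; you are simply more explicit than the paper's proof, which just states that the exponents become affine in $a$ and $m$.
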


\begin{proof}
  Substitute~$g=a+k+1$ and~$i=k-m$ into \eqref{equation:M}.
  We get
  \begin{equation}
    2g-k-i=2a+m+2,\quad k+1-i=m+1,\quad 2g-k-i-4=2a+m-2,\quad k-i-j=m-j,
  \end{equation}
  and every monomial exponent becomes an affine function of~$a$ and~$m$.
  Reading off the five terms of \eqref{equation:M}
  gives~\eqref{equation:B}.
\end{proof}

For a fixed~$a\geq 1$,
and allowing~$k$ to vary so that we consider all integers~$m\geq -1$,
we assemble the polynomials~$\mathrm{B}_{a,m}(t)$ of Lemma~\ref{lemma:reindexing}
into the following generating function in a new variable~$x$,
\begin{equation}
  \label{equation:Fa}
  \mathrm{F}_a(t,x)\coloneqq\sum_{m\geq-1}\mathrm{B}_{a,m}(t)\,x^{m+1}\ \in\ \mathbb Z[t][\![x]\!].
\end{equation}
All admissible polynomials~$\mathrm{M}_{g,k,i}(t)$ are effective
if and only if all polynomials~$\mathrm{B}_{a,m}(t)$ with~$a\geq1$ and~$m\geq-1$ are effective,
equivalently if~$\mathrm{F}_a(t,x)\in\mathbb N[t][\![x]\!]$ for every~$a\geq1$.
Our goal is to give an explicit expression for~$\mathrm{F}_a(t,x)$,
which we will accomplish in Corollary~\ref{corollary:closed}.

\subsection*{Packaging Gaussian binomial coefficients in a generating function}
For an integer~$n\ge0$ write the \defi{$t$-Pochhammer symbol}
\begin{equation}
  \label{equation:pochhammer}
  (y;t)_n\coloneqq\prod_{r=0}^{n-1}\bigl(1-t^r y\bigr)
  =(1-y)(1-t y)\cdots(1-t^{n-1}y),
  \qquad (y;t)_0=1.
\end{equation}

Its inverse gives the following generating function of the Gaussian binomial coefficients
along a fixed difference of indices, see \cite[Theorem~3.3, Equation~(3.3.7)]{MR1634067}.
\begin{prop}
  \label{proposition:pochhammer}
  For every~$n\geq0$, as an identity of formal power series in~$y$,
  we have that
  \begin{equation}
    \label{equation:pochhammer-series}
    \sum_{r\geq0}\binom{n+r}{r}_ty^r=\frac{1}{(y;t)_{n+1}}.
  \end{equation}
\end{prop}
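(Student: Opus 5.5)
This is the $q$-binomial theorem (the generating function of the Gaussian binomial coefficients with fixed difference $n=(n+r)-r$ of indices), and I would prove it by induction on $n$ using a functional equation in $y$.

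\textbf{Base case.} For $n=0$ we have $\binom{r}{r}_t=1$, so the left-hand side is $\sum_{r\ge0}y^r=\frac{1}{1-y}=\frac{1}{(y;t)_1}$.

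\textbf{Inductive step.} Write $G_n(y)\coloneqq\sum_{r\ge0}\binom{n+r}{r}_t y^r$. The key input is the $t$-Pascal recurrence
\[
\binom{n+r}{r}_t=\binom{n+r-1}{r}_t+t^{r}\binom{n+r-1}{r-1}_t .
\]
This is the identity $\binom{N}{R}_t=\binom{N-1}{R}_t+t^{R}\binom{N-1}{R-1}_t$, which follows directly from \eqref{equation:gauss} by putting both terms over a common denominator. It is the companion of the identity \cite[(1.67)]{Stanley12} already used in Lemma~\ref{lem:class-of-OG}, under the symmetry $\binom{N}{R}_t=\binom{N}{N-R}_t$.

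Multiplying the recurrence by $y^r$ and summing over $r$, with the convention $\binom{\cdot}{-1}_t=0$, gives
\[
G_n(y)=G_{n-1}(y)+y\,G_n(ty).
\]
The second term arises from $\sum_{r\ge1}t^r\binom{n+r-1}{r-1}_t y^r=ty\sum_{s\ge0}\binom{n+s}{s}_t(ty)^s$. I would rather use the recurrence in the other orientation, $\binom{N}{R}_t=t^{N-R}\binom{N-1}{R}_t+\binom{N-1}{R-1}_t$, since it yields the cleaner relation
\[
G_n(y)=G_{n-1}(ty)\cdot(\text{factor})+yG_n(y).
\]
Concretely, we get $(1-y)G_n(y)=\sum_r t^{n}\ldots$. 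To avoid this bookkeeping, the more robust route is instead to prove the single functional equation
\[
(1-y)\,G_n(y)=(1-t^{n+1}y)\,G_n(ty)
\]
by comparing coefficients of $y^r$. Comparing the coefficients of $y^r$ on both sides reduces it to
\[
\binom{n+r}{r}_t-\binom{n+r-1}{r-1}_t=t^r\binom{n+r}{r}_t-t^{n+r}\binom{n+r-1}{r-1}_t .
\]
This is equivalent to $(1-t^r)\binom{n+r}{r}_t=(1-t^{n+r})\binom{n+r-1}{r-1}_t$, which is immediate from the product formula \eqref{equation:gauss}.

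\textbf{Solving the functional equation.} The series $H(y)\coloneqq 1/(y;t)_{n+1}$ satisfies the same functional equation: $(1-y)H(y)=\prod_{r=1}^{n}(1-t^ry)^{-1}=(1-t^{n+1}y)H(ty)$. Both $G_n$ and $H$ have constant term $1$. The functional equation determines the coefficients uniquely: writing $G=\sum c_ry^r$, it gives $c_r(1-t^r)=c_{r-1}(1-t^{n+r})$, and $1-t^r$ is a nonzerodivisor in $\mathbb Z[t]$ for $r\ge1$, so $c_r$ is determined from $c_{r-1}$. Hence $G_n=H$. This argument in fact makes the induction on $n$ unnecessary.

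The only delicate point is choosing the correct functional equation $y\mapsto ty$ so that the coefficient comparison collapses to the one-line product identity. After that, uniqueness follows because $\mathbb Z[t]$ is an integral domain.
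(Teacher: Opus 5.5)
Your final argument is correct and complete. Comparing coefficients reduces $(1-y)G_n(y)=(1-t^{n+1}y)G_n(ty)$ to $(1-t^r)\binom{n+r}{r}_t=(1-t^{n+r})\binom{n+r-1}{r-1}_t$, which is immediate from \eqref{equation:gauss}. The series $1/(y;t)_{n+1}$ satisfies the same equation. Since $1-t^r$ is a nonzerodivisor in $\mathbb Z[t]$, the recursion $c_r(1-t^r)=c_{r-1}(1-t^{n+r})$ together with $c_0=1$ forces the two series to be equal.

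This is a different route from the paper's. The paper cites \cite{MR1634067} and remarks that the identity follows by summing the $t$-Pascal recursion $\binom{N}{R}_t=\binom{N-1}{R}_t+t^{N-R}\binom{N-1}{R-1}_t$. With $N=n+r$ and $R=r$, the power $t^{N-R}=t^n$ does not depend on $r$. Summing over $r$ therefore gives $(1-t^ny)G_n(y)=G_{n-1}(y)$, and the result follows by induction on $n$ from $G_0=1/(1-y)$. Your argument instead treats each $n$ separately and uses only the product formula, which is the appendix's definition, rather than the Pascal recursion. It is the standard $q$-difference-equation proof of the $q$-binomial theorem. The paper's argument is a one-line induction once Pascal is available.

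You should delete the discarded first attempt, because both recurrences you wrote there are false.
\begin{itemize}
\item The recurrence $\binom{N}{R}_t=\binom{N-1}{R}_t+t^{R}\binom{N-1}{R-1}_t$ fails already for $N=3$, $R=1$: it gives $1+2t$ instead of $1+t+t^2$.
\item The ``other orientation'' $\binom{N}{R}_t=t^{N-R}\binom{N-1}{R}_t+\binom{N-1}{R-1}_t$ also fails there: it gives $1+t^2+t^3$.
\end{itemize}
The correct versions pair $t^{N-R}$ with $\binom{N-1}{R-1}_t$ and $t^{R}$ with $\binom{N-1}{R}_t$. The second of these yields $(1-y)G_n(y)=G_{n-1}(ty)$, which also closes the induction immediately. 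These misstatements are what produced the spurious $y\mapsto ty$ terms and the abandoned bookkeeping. Your initial induction plan was sound, and with the correctly stated recurrence it is exactly the paper's argument.
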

This is the tool that allows us to convert \eqref{equation:B}
from a difference of five terms using Gaussian binomials
into a rational generating function.
Note that~\eqref{equation:pochhammer-series} follows by summing
the~$t$-Pascal recursion~$\binom{n}{m}_t=\binom{n-1}{m}_t+t^{n-m}\binom{n-1}{m-1}_t$.
We apply this generating-function identity in Lemma~\ref{lemma:rational};
the subsequent proof of Lemma~\ref{lemma:collapse} consists of clearing denominators and collecting coefficients.

We use this to rewrite \eqref{equation:Fa} as follows.
\begin{lem}
  \label{lemma:rational}
  For every~$a\geq1$,
  \begin{equation}
    \label{equation:rational}
    \begin{aligned}
      \mathrm{F}_a(t,x)={}&\frac{1}{(x;t)_{2a+2}}
      -\frac{t^a x}{(x;t)_{2a-1}}
      -\frac{t^{a+1}x}{(t^3x;t)_{2a-1}} \\
      &-x^2\left(
        \frac{t^{a+1}+t^{3a-1}}{(x;t)_{2a}}
        +\frac{t^{a+2}+t^{3a}}{(t x;t)_{2a}}
        +\frac{t^{a+3}+t^{3a+1}}{(t^2x;t)_{2a}}
      \right) \\
      &-x^3\left(
        \frac{t^{3a}+t^{3a+1}}{(x;t)_{2a+1}}
        +\frac{t^{3a+2}+t^{3a+3}}{(t x;t)_{2a+1}}
      \right) \\
      &-\frac{t^{4a+2}x^4}{(x;t)_{2a+2}}.
    \end{aligned}
  \end{equation}
\end{lem}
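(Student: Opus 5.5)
The plan is to compute $\mathrm{F}_a(t,x)=\sum_{m\ge -1}\mathrm{B}_{a,m}(t)x^{m+1}$ term by term. We substitute the five summands of \eqref{equation:B} and apply Proposition~\ref{proposition:pochhammer} to each resulting series separately. The Gaussian binomials vanish for negative lower index, so every sum effectively begins where the lower index is $0$. Each sum will then be reindexed into the form $\sum_{r\ge0}\binom{n+r}{r}_t y^r$ for a suitable $n$ and a suitable $y\in\{x,tx,t^2x,t^3x\}$.

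\emph{First and last terms.} For the first term, set $r=m+1$. Then $\sum_{r\ge0}\binom{2a+1+r}{r}_t x^r=1/(x;t)_{2a+2}$.

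For the last term, set $r=m-3$. Then $x^{m+1}=x^4x^r$ and the binomial is $\binom{2a+1+r}{r}_t$, which gives $t^{4a+2}x^4/(x;t)_{2a+2}$.

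\emph{Second group.} Here $r=m$ and the binomial is $\binom{2a-2+r}{r}_t$, so the relevant denominator has length $2a-1$.
\begin{itemize}
\item[] The monomial $t^a$ contributes $t^a x\sum_r\binom{2a-2+r}{r}_t x^r = t^ax/(x;t)_{2a-1}$.
\item[] The monomial $t^{a+1+3m}=t^{a+1}(t^3)^m$ yields $t^{a+1}x/(t^3x;t)_{2a-1}$.
\end{itemize}

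\emph{Third group.} Here $r=m-1$, the binomial is $\binom{2a-1+r}{r}_t$, and $x^{m+1}=x^2x^r$. The six monomials pair up according to how they depend on $m$.
\begin{itemize}
\item[] $t^{a+1}$ and $t^{3a-1}$ do not depend on $m$, so they give $(t^{a+1}+t^{3a-1})x^2/(x;t)_{2a}$.
\item[] $t^{a+1+m}=t^{a+2}t^r$ and $t^{3a-1+m}=t^{3a}t^r$ give the variable $y=tx$, hence $(t^{a+2}+t^{3a})x^2/(tx;t)_{2a}$.
\item[] The $2m$ monomials, $t^{a+1+2m}=t^{a+3}t^{2r}$ and $t^{3a-1+2m}=t^{3a+1}t^{2r}$, give the variable $y=t^2x$, hence $(t^{a+3}+t^{3a+1})x^2/(t^2x;t)_{2a}$.
\end{itemize}

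\emph{Fourth group.} Here $r=m-2$, the binomial is $\binom{2a+r}{r}_t$, and $x^{m+1}=x^3x^r$.
\begin{itemize}
\item[] $t^{3a}+t^{3a+1}$ gives $(t^{3a}+t^{3a+1})x^3/(x;t)_{2a+1}$.
\item[] $t^{3a+m}+t^{3a+m+1}=(t^{3a+2}+t^{3a+3})t^r$ gives $(t^{3a+2}+t^{3a+3})x^3/(tx;t)_{2a+1}$.
\end{itemize}
Summing all of these contributions produces exactly \eqref{equation:rational}.

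\emph{Expected obstacle.} The only delicate point is the edge case $a=1$ in the second group, where the binomial $\binom{2a+m-2}{m}_t=\binom{m}{m}_t$ has top index $0$ at $m=0$. One has to check that the convention \eqref{equation:gauss} and Proposition~\ref{proposition:pochhammer} with $n=0$ are still consistent there: they give $1/(y;t)_1$, which matches. Beyond that, the main risk is bookkeeping, namely getting each shift of the exponent of $t$ and each starting index right so that the lower-index-negative terms drop out. Since both sides are formal power series in $x$, termwise manipulation is legitimate.
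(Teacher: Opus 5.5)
Your proposal is correct and follows the same route as the paper. In both, you sum $\mathrm{B}_{a,m}(t)x^{m+1}$ over $m\ge -1$, shift $r=m-j$ so that the terms with negative lower index drop out, absorb each $m$-dependent monomial $t^{dm}$ into the variable $y=t^dx$, and apply Proposition~\ref{proposition:pochhammer}. The paper just packages your group-by-group computations into the single identity $\sum_{m\ge j}t^{c+dm}\binom{2a+m-2}{m-j}_t x^{m+1}=t^{c+dj}x^{j+1}/(t^dx;t)_{2a+j-1}$.
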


\begin{proof}
  Multiply \eqref{equation:B} by~$x^{m+1}$ and sum over~$m\geq-1$.
  By Proposition~\ref{proposition:pochhammer}, the leading term gives~$1/(x;t)_{2a+2}$.
  For the remaining terms, the zero convention means that a summand with lower index~$m-j$ contributes only for~$m\geq j$.
  For~$j=0,1,2,3$ and non-negative integers~$c,d$,
  setting~$r=m-j$ and applying Proposition~\ref{proposition:pochhammer} gives
  \begin{equation}
    \begin{aligned}
      \sum_{m\geq j}t^{c+dm}\binom{2a+m-2}{m-j}_tx^{m+1}
      &=t^{c+dj}x^{j+1}\sum_{r\geq0}\binom{2a+j+r-2}{r}_t(t^dx)^r\\
      &=\frac{t^{c+dj}x^{j+1}}{(t^dx;t)_{2a+j-1}}.
    \end{aligned}
  \end{equation}
  For example, the two parts of the second term of \eqref{equation:B} contribute~$-t^a x/(x;t)_{2a-1}$ and~$-t^{a+1}x/(t^3x;t)_{2a-1}$.
\end{proof}

This allows us to prove that the following product of a Pochhammer symbol
with the generating function~$\mathrm{F}_a(t,x)$ from \eqref{equation:Fa}
is an explicit polynomial.

\begin{lem}
  \label{lemma:collapse}
  For every~$a\geq1$
  we have that
  \begin{equation}
    \label{equation:collapse}
    (x;t)_{2a+2}\,\mathrm{F}_a(t,x)
    =\bigl(1-t^a x\bigr)\bigl(1-t^{a+1}x\bigr)\bigl(1-t^{2a+1}x^2\bigr).
  \end{equation}
\end{lem}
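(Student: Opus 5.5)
The plan is to start from the expression for \(\mathrm{F}_a(t,x)\) in Lemma~\ref{lemma:rational} and multiply each of its terms by \((x;t)_{2a+2}=\prod_{r=0}^{2a+1}(1-t^rx)\). Each denominator appearing there is a consecutive block of the factors \(1-t^rx\) with \(0\le r\le 2a+1\). Concretely, \((x;t)_n\) uses \(r=0,\dots,n-1\), \((tx;t)_{2a}\) uses \(r=1,\dots,2a\), \((t^2x;t)_{2a}\) uses \(r=2,\dots,2a+1\), \((tx;t)_{2a+1}\) uses \(r=1,\dots,2a+1\), and \((t^3x;t)_{2a-1}\) uses \(r=3,\dots,2a+1\). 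So every term becomes a polynomial in \(x\): a product of at most three linear factors \((1-t^rx)\) times a monomial in \(x\) and \(t\).

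The cofactors are as follows.
\begin{itemize}
\item The leading term gives \(1\), and the last term gives \(-t^{4a+2}x^4\).
\item \((x;t)_{2a+2}/(x;t)_{2a-1}=(1-t^{2a-1}x)(1-t^{2a}x)(1-t^{2a+1}x)\).
\item \((x;t)_{2a+2}/(t^3x;t)_{2a-1}=(1-x)(1-tx)(1-t^2x)\).
\item \((x;t)_{2a+2}/(x;t)_{2a}=(1-t^{2a}x)(1-t^{2a+1}x)\).
\item \((x;t)_{2a+2}/(tx;t)_{2a}=(1-x)(1-t^{2a+1}x)\).
\item \((x;t)_{2a+2}/(t^2x;t)_{2a}=(1-x)(1-tx)\).
\item \((x;t)_{2a+2}/(x;t)_{2a+1}=1-t^{2a+1}x\).
\item \((x;t)_{2a+2}/(tx;t)_{2a+1}=1-x\).
\end{itemize}

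The left-hand side of \eqref{equation:collapse} is therefore a polynomial in \(x\) of degree at most \(4\) with coefficients in \(\mathbb Z[t]\). The next step is to collect the coefficients of \(x^0,\dots,x^4\) and compare them with the expansion of the right-hand side,
\[
1-(t^a+t^{a+1})x+(t^{2a+1}-t^{2a+1})x^2+\cdots,
\]
that is, \(1-(t^a+t^{a+1})x+0\cdot x^2+(t^{3a+1}+t^{3a+2})x^3-t^{4a+2}x^4\). Expanding the product carefully: \((1-t^ax)(1-t^{a+1}x)=1-(t^a+t^{a+1})x+t^{2a+1}x^2\), and multiplying by \(1-t^{2a+1}x^2\) gives
\[
1-(t^a+t^{a+1})x+(t^{2a+1}-t^{2a+1})x^2+(t^{3a+1}+t^{3a+2})x^3-t^{4a+2}x^4 .
\]

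The checks degree by degree are these.
\begin{itemize}
\item \(x^0\): only the leading term contributes, giving \(1\).
\item \(x^1\): the contributions are \(-t^a\) and \(-t^{a+1}\), which matches.
\item \(x^2\): the \(x\)-linear parts of the two cubic cofactors give \(t^a(t^{2a-1}+t^{2a}+t^{2a+1})+t^{a+1}(1+t+t^2)\). The constant parts of the quadratic cofactors give \(-(t^{a+1}+t^{3a-1}+t^{a+2}+t^{3a}+t^{a+3}+t^{3a+1})\). These cancel term by term, giving \(0\).
\item \(x^3\) and \(x^4\): the same bookkeeping applies. One collects the second elementary symmetric terms of the cubic cofactors, the linear terms of the quadratic cofactors, the constants of the \(x^3\) terms, and so on, then checks that everything telescopes to \(t^{3a+1}+t^{3a+2}\) and \(-t^{4a+2}\) respectively.
\end{itemize}

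This is a finite, mechanical verification in \(\mathbb Z[t]\) that is uniform in \(a\), since all exponents are affine in \(a\). The main obstacle is this bookkeeping in degrees \(3\) and \(4\). There, about a dozen monomials with exponents such as \(3a,3a+1,\dots,5a+2\) must cancel, for example \(t^a\cdot t^{2a-1}t^{2a}\) against the \(x^2\)-term products. I would organize it by listing the monomials in each degree by their exponent as an affine function of \(a\) and pairing off terms. I would also sanity-check against the small case \(a=1\) by direct expansion, being careful that coincidences of exponents there, such as \(3a-1=a+1\), do not hide a sign error.
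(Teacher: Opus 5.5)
Your proposal is correct and is essentially the paper's proof. Your cofactors are exactly the paper's $T_0,\dots,T_5$, your $x^2$ cancellation is the same, and the paper finishes the $x^3$ and $x^4$ checks the way you propose, by grouping monomials by the coefficient of $a$ in their exponent. Carrying this out: in degree $3$ the slope-$1$ and slope-$5$ terms cancel, leaving $t^{3a+1}+t^{3a+2}$; in degree $4$ the slopes $1,3,5,7$ all cancel, leaving $-t^{4a+2}$. Since these cancellations are between identical exponents as formal affine functions of $a$, small values such as $a=1$ need no separate care.
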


\begin{proof}
  Multiply \eqref{equation:rational} by~$(x;t)_{2a+2}=\prod_{r=0}^{2a+1}(1-t^rx)$.
  Every denominator divides this product,
  so clearing all of them leaves a polynomial of degree~$\le4$ in~$x$.
  We will write~$(x;t)_{2a+2}\,\mathrm{F}_a(t,x)=T_0(t,x)+\cdots+T_5(t,x)$,
  where
  \begin{equation}
    \begin{aligned}
      T_0(t,x)&=1,\\
      T_1(t,x)&=-t^a x(1-t^{2a-1}x)(1-t^{2a}x)(1-t^{2a+1}x),\\
      T_2(t,x)&=-t^{a+1}x(1-x)(1-tx)(1-t^2x),\\
      T_3(t,x)&=-x^2\bigl[(t^{a+1}+t^{3a-1})(1-t^{2a}x)(1-t^{2a+1}x)\\
      &\qquad\quad{}+(t^{a+2}+t^{3a})(1-x)(1-t^{2a+1}x)+(t^{a+3}+t^{3a+1})(1-x)(1-tx)\bigr],\\
      T_4(t,x)&=-x^3\bigl[(t^{3a}+t^{3a+1})(1-t^{2a+1}x)+(t^{3a+2}+t^{3a+3})(1-x)\bigr],\\
      T_5(t,x)&=-t^{4a+2}x^4.
    \end{aligned}
  \end{equation}
  After expanding the products, the coefficient of each power of~$x$ is a sum of signed monomials~$t^{sa+c}$.
  We group these contributions by their \defi{slope}~$s$,
  the coefficient of~$a$ in the exponent.

  The coefficients of~$x^0$ and~$x^1$ are immediate,
  and equal~$1$ and~$-(t^a+t^{a+1})$, respectively.
  For~$x^2$, the contribution~$(t^{a+1}+t^{3a-1})(1+t+t^2)$ from~$T_1+T_2$ cancels that from~$T_3$.

  The coefficients of~$x^3$ and~$x^4$ are more involved,
  as monomials of several slopes contribute and partly cancel.
  For~$x^3$, the slopes~$1$,~$3$, and~$5$ occur before cancellation.
  The contributions of slopes~$1$ and~$5$ cancel, leaving only that of slope~$3$:
  \begin{equation}
    \begin{aligned}
      s=1:&\quad -(t^{a+2}+t^{a+3}+t^{a+4})+t^{a+2}+(t^{a+3}+t^{a+4})=0,\\
      s=5:&\quad -(t^{5a-1}+t^{5a}+t^{5a+1})+(t^{5a-1}+t^{5a})+t^{5a+1}=0,\\
      s=3:&\quad (t^{3a+1}+t^{3a+2})+(t^{3a}+t^{3a+3})+(t^{3a+1}+t^{3a+2})\\
      &\qquad{}-(t^{3a}+t^{3a+1})-(t^{3a+2}+t^{3a+3})=t^{3a+1}+t^{3a+2},
    \end{aligned}
  \end{equation}
  while for~$x^4$ four slopes cancel and only~$s=4$ survives:
  \begin{equation}
    \begin{aligned}
      s=1:&\quad t^{a+4}-t^{a+4}=0,\\
      s=3:&\quad -(t^{3a+2}+t^{3a+3})+(t^{3a+2}+t^{3a+3})=0,\\
      s=5:&\quad -(t^{5a+1}+t^{5a+2})+(t^{5a+1}+t^{5a+2})=0,\\
      s=7:&\quad t^{7a}-t^{7a}=0,\\
      s=4:&\quad -t^{4a+2}.
    \end{aligned}
  \end{equation}
  These five coefficients are exactly those of~$(1-t^a x)(1-t^{a+1}x)(1-t^{2a+1}x^2)$,
  which proves \eqref{equation:collapse}.
\end{proof}

\begin{cor}
  \label{corollary:closed}
  For every~$a\geq1$,
  \begin{equation}
    \label{equation:closed}
    \mathrm{F}_a(t,x)=\frac{1-t^{2a+1}x^2}{(x;t)_a\,(t^{a+2}x;t)_a}.
  \end{equation}
\end{cor}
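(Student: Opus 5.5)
This is a direct consequence of Lemma~\ref{lemma:collapse}: we divide both sides of \eqref{equation:collapse} by the Pochhammer symbol $(x;t)_{2a+2}$ and cancel the linear factors that appear on both sides.

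\emph{Step 1: invertibility.} The symbol $(x;t)_{2a+2}=\prod_{r=0}^{2a+1}(1-t^rx)$ has constant term $1$ in $x$. It is therefore a unit in $\mathbb Z[t][\![x]\!]$, so
\[
\mathrm{F}_a(t,x)=\frac{(1-t^ax)(1-t^{a+1}x)(1-t^{2a+1}x^2)}{(x;t)_{2a+2}}
\]
holds as an identity of formal power series.

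\emph{Step 2: splitting the symbol.} Among the factors $1-t^rx$ with $0\le r\le 2a+1$, the indices $r=a$ and $r=a+1$ occur exactly once each, since $a\ge1$ implies $0\le a<a+1\le 2a+1$. Removing them leaves two blocks:
\begin{itemize}
\item the indices $0,\dots,a-1$, whose product is $(x;t)_a$;
\item the indices $a+2,\dots,2a+1$, whose product is $\prod_{j=0}^{a-1}(1-t^{a+2+j}x)=(t^{a+2}x;t)_a$.
\end{itemize}
This gives the factorization
\[
(x;t)_{2a+2}=(x;t)_a\,(1-t^ax)(1-t^{a+1}x)\,(t^{a+2}x;t)_a .
\]

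\emph{Step 3: cancellation.} Every factor in the factorization of Step 2 is a unit in $\mathbb Z[t][\![x]\!]$. We may therefore cancel $(1-t^ax)(1-t^{a+1}x)$ from the numerator and denominator, which yields \eqref{equation:closed}.

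The only point requiring care is the index bookkeeping in Step 2, namely checking that the two removed indices lie in the range $0,\dots,2a+1$ and that the leftover indices split exactly into the two stated blocks. Both facts rely on $a\ge1$.
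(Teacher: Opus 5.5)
Your proposal is correct and follows the paper's proof: the paper also removes the $r=a$ and $r=a+1$ factors of $(x;t)_{2a+2}$ in \eqref{equation:collapse} and identifies the remaining product as $(x;t)_a\,(t^{a+2}x;t)_a$. Your explicit remark that every factor $1-t^rx$ is a unit in $\mathbb Z[t][\![x]\!]$ is the one point the paper leaves implicit, and it justifies the cancellation.
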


\begin{proof}
  In \eqref{equation:collapse} the factors~$1-t^a x$ and~$1-t^{a+1}x$ are the~$r=a$
  and~$r=a+1$ factors of~$(x;t)_{2a+2}$; cancelling them leaves~$1-t^{2a+1}x^2$
  over~$\prod_{r=0}^{a-1}(1-t^rx)\cdot\prod_{r=a+2}^{2a+1}(1-t^rx)=(x;t)_a(t^{a+2}x;t)_a$.
\end{proof}

\subsection*{Finishing the proof}
One elementary splitting of the numerator clears the last minus sign and exhibits~$\mathrm{F}_a(t,x)$ as a sum of two effective series.

\begin{lem}
  \label{lemma:decomposition}
  For every~$a\geq1$,
  \begin{equation}
    \label{equation:decomposition}
    \mathrm{F}_a(t,x)
    =\frac{1}{(x;t)_{a-1}\,(t^{a+2}x;t)_a}
    +\frac{t^{a-1}x}{(x;t)_a\,(t^{a+3}x;t)_{a-1}}
    \ \in\ \mathbb N[t][\![x]\!].
  \end{equation}
  In particular~$\mathrm{B}_{a,m}(t)\in\mathbb N[t]$ for all~$a\geq1$,~$m\geq-1$.
\end{lem}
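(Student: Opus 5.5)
We start from the closed form in Corollary~\ref{corollary:closed},
\[
\mathrm{F}_a(t,x)=\frac{1-t^{2a+1}x^2}{(x;t)_a\,(t^{a+2}x;t)_a},
\]
and prove \eqref{equation:decomposition} as an identity of rational functions. We put both summands over the common denominator $(x;t)_a\,(t^{a+2}x;t)_a$.

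For the first summand, we use $(x;t)_a=(x;t)_{a-1}(1-t^{a-1}x)$. Multiplying numerator and denominator by $(1-t^{a-1}x)$ turns its numerator into $1-t^{a-1}x$.

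For the second summand, we use $(t^{a+2}x;t)_a=(1-t^{a+2}x)\,(t^{a+3}x;t)_{a-1}$. Multiplying by $(1-t^{a+2}x)$ turns its numerator into $t^{a-1}x(1-t^{a+2}x)$.

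Adding the two numerators gives
\[
(1-t^{a-1}x)+t^{a-1}x-t^{2a+1}x^2=1-t^{2a+1}x^2,
\]
which matches Corollary~\ref{corollary:closed}. The identity therefore holds in $\mathbb Z[t][\![x]\!]$. All denominators have constant term $1$ in $x$, so they are invertible there.

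For effectivity, we expand each factor $1/(1-t^jx)$ with $j\ge0$ as $\sum_{r\ge0}t^{jr}x^r\in\mathbb N[t][\![x]\!]$. Each reciprocal Pochhammer symbol appearing is a finite product of such factors. The prefactor $t^{a-1}x$ is a monomial because $a\ge1$. Since $\mathbb N[t][\![x]\!]$ is closed under sums and products, both summands lie in it, and hence so does $\mathrm{F}_a(t,x)$.

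Reading off the coefficient of $x^{m+1}$ via \eqref{equation:Fa} gives $\mathrm{B}_{a,m}(t)\in\mathbb N[t]$ for all $m\ge-1$. Finally, substituting $a=g-k-1$ and $m+1=k+1-i$ as in Lemma~\ref{lemma:reindexing} recovers the expression \eqref{equation:main} for $\mathrm{M}_{g,k,i}(t)=t^{ia}\mathrm{B}_{a,m}(t)$.

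There is no real obstacle here; the substantive work was done in Lemma~\ref{lemma:collapse}. The only point needing care is the edge case $a=1$. There $(x;t)_{a-1}=1$ and $(t^{a+3}x;t)_{a-1}=1$ are empty products, and the factorizations used above still hold with these conventions.
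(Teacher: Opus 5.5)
Your proposal is correct and is essentially the paper's argument run in reverse. The paper sets $u=t^{a-1}x$, $v=t^{a+2}x$, splits the numerator of the closed form as $1-uv=(1-u)+u(1-v)$, and cancels $1-u$ and $1-v$ against the last factor of $(x;t)_a$ and the first factor of $(t^{a+2}x;t)_a$. You instead recombine the two summands over the common denominator, and your effectivity argument via geometric series matches the paper's.
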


\begin{proof}
  Set~$u=t^{a-1}x$ and~$v=t^{a+2}x$.
  The identity~$1-uv=(1-u)+u(1-v)$ splits the numerator of \eqref{equation:closed}
  into terms divisible by factors already present in the denominator:
  the factor~$1-u$ is the last factor of~$(x;t)_a$ and~$1-v$ is the
  first factor of~$(t^{a+2}x;t)_a$, so cancelling them yields
  \eqref{equation:decomposition}.
  Each remaining factor~$1/(1-t^jx)=\sum_{r\ge0}t^{jr}x^r$ has non-negative coefficients.
  Hence~$\mathrm{B}_{a,m}(t)=[x^{m+1}]\mathrm{F}_a(t,x)\in\mathbb N[t]$.
\end{proof}

We can now prove the main result of this appendix.

\begin{proof}[Proof of Theorem~\ref{theorem:appendix-main}]
  By Lemma~\ref{lemma:decomposition},~$\mathrm{B}_{a,m}(t)\in\mathbb N[t]$ for all~$a\geq1$ and~$m\geq-1$. Hence~$\mathrm{M}_{g,k,i}(t)=t^{ia}\mathrm{B}_{a,m}(t)$ lies in~$\mathbb N[t]$ by
  Lemma~\ref{lemma:reindexing}, which is the asserted effectivity.

  For the explicit formula, expand each factor in \eqref{equation:decomposition}
  as a geometric series~$1/(1-t^jx)=\sum_{r\ge0}t^{jr}x^r$ and extract the
  coefficient of~$x^{k+1-i}$; restoring the prefactor~$t^{i(g-k-1)}$ then gives
  \eqref{equation:main}.
\end{proof}

%The coefficient extraction in \eqref{equation:main} has a clean
%symmetric-function interpretation. Recall that the complete homogeneous symmetric
%polynomial~$\mathrm{h}_r(u_1,\ldots,u_\ell)$ is the sum of all monomials of degree~$r$ in~$u_1,\ldots,u_\ell$ (repetitions allowed), equivalently the coefficient~$[x^r]\prod_{p=1}^{\ell}(1-u_px)^{-1}$.
%
%\begin{cor}
%  \label{corollary:homogeneous}
%  With~$\mathrm{h}_r$ as above,
%  \begin{equation}
%    \label{equation:homogeneous}
%    \begin{aligned}
%      \mathrm{M}_{g,k,i}(t)
%      =t^{i(g-k-1)}\Bigl(
%      &\mathrm{h}_{k+1-i}\bigl(1,t,\ldots,t^{g-k-3},t^{g-k+1},\ldots,t^{2g-2k-1}\bigr)\\
%      &{}+t^{g-k-2}\mathrm{h}_{k-i}\bigl(1,t,\ldots,t^{g-k-2},t^{g-k+2},\ldots,t^{2g-2k-1}\bigr)
%      \Bigr).
%    \end{aligned}
%  \end{equation}
%\end{cor}
%
%\begin{proof}
%  By the description of~$\mathrm{h}_r$ above,~$[x^r]\prod_p 1/(1-t^{j_p}x)
%  =\mathrm{h}_r(t^{j_1},t^{j_2},\ldots)$. Applying this to the two products of
%  \eqref{equation:main} gives \eqref{equation:homogeneous}.
%\end{proof}

Setting~$t=1$ gives the sum of the coefficients of each polynomial~$\mathrm{M}_{g,k,i}(t)$.
The following corollary recovers the formula for these sums established in \cite[Lemma~3.17]{BBFGHLPRAS}.

\begin{cor}
  \label{corollary:euler}
  Setting~$t=1$ in \eqref{equation:main},
  we have~$\mathrm{M}_{g,k,k+1}(1)=1$ and, for~$i=0,\ldots,k$,
  \begin{equation}
    \label{equation:euler}
    \mathrm{M}_{g,k,i}(1)
    =\binom{2g-4-k-i}{k+1-i}+2\binom{2g-4-k-i}{k-i}.
  \end{equation}
\end{cor}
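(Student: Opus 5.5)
We prove Corollary~\ref{corollary:euler} by evaluating the generating function at $t=1$. By Lemma~\ref{lemma:reindexing} and \eqref{equation:Fa}, $\mathrm{M}_{g,k,i}(1)=\mathrm{B}_{a,m}(1)=[x^{m+1}]\mathrm{F}_a(1,x)$, where $a=g-k-1$ and $m=k-i$. Every factor $1/(1-t^jx)$ is a power series in $x$ whose coefficients are polynomials in $t$, so specialising $t=1$ commutes with extracting coefficients of $x$. This lets us set $t=1$ directly in the decomposition \eqref{equation:decomposition}. The term $(x;t)_{a-1}(t^{a+2}x;t)_a$ becomes $(1-x)^{2a-1}$, and $(x;t)_a(t^{a+3}x;t)_{a-1}$ also becomes $(1-x)^{2a-1}$. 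Hence
\[
\mathrm{F}_a(1,x)=\frac{1+x}{(1-x)^{2a-1}}.
\]
As a check, Corollary~\ref{corollary:closed} at $t=1$ gives $(1-x^2)/(1-x)^{2a}$, which agrees.

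Next we extract coefficients using $[x^r](1-x)^{-n}=\binom{n-1+r}{r}$ with $n=2a-1$, which gives
\[
[x^{m+1}]\mathrm{F}_a(1,x)=\binom{2a+m-1}{m+1}+\binom{2a+m-2}{m}.
\]
We then translate back to $g,k,i$ via $2a+m=2g-k-i-2$. This yields
\[
\mathrm{M}_{g,k,i}(1)=\binom{2g-k-i-3}{k+1-i}+\binom{2g-k-i-4}{k-i}.
\]
For $i=k+1$, i.e.\ $m=-1$, this is $\binom{2a-2}{0}+0=1$. For $i\le k$, we apply Pascal's rule $\binom{n+1}{r+1}=\binom{n}{r+1}+\binom{n}{r}$ with $n=2g-4-k-i$ and $r=k-i$. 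This rewrites the first binomial as $\binom{2g-4-k-i}{k+1-i}+\binom{2g-4-k-i}{k-i}$, which gives exactly \eqref{equation:euler}.

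The only delicate point is justifying the specialisation at $t=1$. It is legitimate because each coefficient of $x^r$ in $\mathrm{F}_a(t,x)$ is a genuine polynomial $\mathrm{B}_{a,m}(t)$, and the identity \eqref{equation:decomposition} holds coefficientwise in $\mathbb Z[t]$. We must also check the edge case $a=1$, where the first product is empty; the formula $(1-x)^{-(2a-1)}$ still holds there. Everything else is routine binomial bookkeeping.
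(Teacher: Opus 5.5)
Your proposal is correct and follows essentially the same route as the paper. Both specialize the generating function at $t=1$ to get $\mathrm{F}_a(1,x)=(1+x)/(1-x)^{2a-1}$, extract the coefficient of $x^{k+1-i}$ to obtain $\binom{2g-k-i-3}{k+1-i}+\binom{2g-k-i-4}{k-i}$, and finish with Pascal's identity. The only cosmetic difference is that the paper specializes the closed form of Corollary~\ref{corollary:closed} directly, whereas you specialize \eqref{equation:decomposition} and use the closed form as a check.
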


\begin{proof}
  At~$t=1$ each Pochhammer factor degenerates as~$(y;t)_n\mapsto(1-y)^n$,
  so by Corollary~\ref{corollary:closed}
  \begin{equation}
    \mathrm{F}_a(1,x)=\frac{1-x^2}{(1-x)^{2a}}=\frac{1+x}{(1-x)^{2a-1}}.
  \end{equation}
  The constant coefficient is~$1$, giving~$\mathrm{M}_{g,k,k+1}(1)=1$.
  For~$i=0,\ldots,k$, extracting the coefficient of~$x^{k+1-i}$ (with~$a=g-k-1$) gives~$\binom{2g-k-i-3}{k+1-i}+\binom{2g-k-i-4}{k-i}$,
  which equals \eqref{equation:euler} by Pascal's identity.
\end{proof}

\begin{rem}
  \label{remark:pascal}
  The value~$\mathrm{M}_{g,k,i}(1)$ of Corollary~\ref{corollary:euler} is obtained in
  \cite[Lemma~3.17]{BBFGHLPRAS} by applying Pascal's identity repeatedly to
  the leading binomial of \eqref{equation:M},
  but only \emph{after} having set~$t=1$.
  For the polynomial~$\mathrm{M}_{g,k,i}(t)\in\mathbb Z[t]$ itself,
  a direct repetition of this argument does not seem to make effectivity apparent:
  the~$t$-Pascal identity~$\binom{n}{m}_t=\binom{n-1}{m}_t+t^{n-m}\binom{n-1}{m-1}_t$
  introduces additional monomial weights whose cancellations must be controlled.

  The remedy is to package the whole family
  into the generating function \eqref{equation:Fa},
  and pass to its closed form.
  The~$t$-Pascal recursion enters through the generating-function identity of Proposition~\ref{proposition:pochhammer},
  which gives \eqref{equation:rational}.
  Clearing denominators and collecting coefficients then gives \eqref{equation:collapse}.
\end{rem}

\bibliographystyle{alpha}
\bibliography{references.bib}

\end{document}